\tikzset{>=latex}
\theoremstyle{plain}
\newtheorem{thm}{Theorem}
\newtheorem{lemma}{Lemma}
\newtheorem{corollary}{Corollary}
\newtheorem{proposition}{Proposition}
\newtheorem{conj}{Conjecture}
\theoremstyle{definition} \theoremstyle{definition}
\newtheorem{remark}{Remark}
\theoremstyle{remark}
\newcommand{\g}{\mathfrak{g}}
\newcommand{\A}{\mathbb{A}}
\newcommand{\Q}{\mathbb{Q}}
\newcommand{\LG}{{}^L{G}}
\newcommand{\LT}{{}^L{T}}
\newcommand{\LM}{{}^L{M}}
\newcommand{\kk}{\mathfrak{k}}
\newcommand{\p}{\mathfrak{p}}
\newcommand{\ad}{{\rm ad}}
\newcommand{\ttt}{\mathfrak{t}}
\newcommand{\U}{\mathcal{U}}
\newcommand{\Z}{\mathbb{Z}}
\newcommand{\G}{\mathbb{G}}
\newcommand{\R}{\mathbb{R}}
\newcommand{\Gm}{\mathbb{G}_m}
\newcommand{\C}{\mathbb{C}}
\newcommand{\Si}{\mathbb{S}}
\newcommand{\F}{\mathbb{F}}
\newcommand{\N}{\mathbb{N}}
\newcommand{\out}{\rm op}
\newcommand{\wG}{\widehat{G}}
\newcommand{\wM}{\widehat{M}}
\newcommand{\wT}{\widehat{T}}
\newcommand{\wS}{\widehat{S}}
\newcommand{\der}{\rm der}
\newcommand{\scon}{\rm sc}
\newcommand{\Fr}{\rm Fr}
\newcommand{\Nm}{\mathbb{N}m}
\newcommand{\Hom}{{\rm Hom}}
\newcommand{\Ind}{{\rm Ind}}
\def\M{{\rm M}}
\def\G{{\rm G}}
\def\Aut{{\rm Aut}}
\def\SL{{\rm SL}}
\def\Spin{{\rm Spin}}
\def\PSO{{\rm PSO}}
\def\PGSO{{\rm PGSO}}
\def\PGSp{{\rm PGSp}}
\def\Sp{{\rm Sp}}
\def\St{{\rm St}}
\def\U{{\rm U}}
\def\M{{\rm M}}
\def\GL{{\rm GL}}
\def\PGL{{\rm PGL}}
\def\Gal{{\rm Gal}}
\def\SO{{\rm SO}}
\def\OO{{\rm O}}
\def\ad{{\rm ad\, }}
\begin{document}

\title[A relative local Langlands correspondence]
{A `relative' local Langlands correspondence}
\author{Dipendra Prasad}
\address{Tata Institute of Fundamental 
Research, Homi Bhabha Road, Bombay - 400 005, INDIA.}
\email{dprasad@math.tifr.res.in}

\subjclass{Primary 11F70; Secondary 22E55}

\begin{abstract}
For $E/F$ quadratic extension of local fields and $G$ a reductive algebraic group over $F$, 
the paper formulates a conjecture classifying irreducible admissible representations of $G(E)$ which
carry a $G(F)$ invariant linear form, and the dimension of the space of these invariant forms,
 in terms of the Langlands parameter of the representation. 
The paper studies parameter spaces of Langlands parameters, and morphisms between them associated to 
morphisms of $L$-groups. The conjectural answer to the question on 
the space of $G(F)$-invariant linear forms is in terms of fibers of a 
particular finite map between parameter spaces. 
\end{abstract}

\maketitle
{\hfill \today}

\tableofcontents

\newpage

\section{Introduction}

`Relative' Langlands program, as this phrase has come to be used, is the analogue 
for $L^2(H\backslash G)$, say for spectral decomposition,  for suitably chosen subgroups $H$ of a reductive group 
$G$, of what the `usual' 
Langlands program is for $L^2(G)$; see for instance the work of Sakellaridis and Venkatesh \cite{sav} where 
the authors setup a large framework to deal with this question in considerable generality.   
The relative Langlands program  has both local and global aspects, and just as for the usual
Langlands program where trace formulae play a pivotal role, for the relative Langlands program, the relative trace 
formula of H. Jacquet introduced in \cite{JLai} ---itself inspired by the work [HLR] on Tate conjectures for Hilbert modular surfaces--- plays a pivotal role. 

For a reductive algebraic group $G$ over a local field $F$, 
and a subgroup $H$ of $G(F)$, a version of the local question  is to simply classify
irreducible admissible representations $\pi$ of $G(F)$ such that $\Hom_{H}[\pi, \C] \not = 0,$ 
and to understand these dimensions
(or, multiplicities) when nonzero; this is actually what we will be considering in this paper. A more specific
motivating question for this paper was  how does $\dim \Hom_{H}[\pi, \C] $ vary 
as we vary the representations $\pi$ of $G$ 
inside an $L$-packet of $G$; there is the further possibility to vary the pairs $(G,H)$ in their innerforms.   
Representations of $G(F)$ which carry nonzero $H$-invariant
linear form are often said to be distinguished by $H$; more generally, if $\omega: H \rightarrow \C^\times$ 
is a character, one talks of $\omega$-distinguished representations as those irreducible admissible
representations of $G(F)$ for which $\Hom_{H}[\pi, \omega] \not = 0.$ 

Among many examples of $H$, the most studied cases have been 
the symmetric spaces $H\backslash G$ where $H$ is the fixed points of an involution on $G$. There is a vast literature 
on this in the Archimedean case 
extending the work of Harish-Chandra for $\Delta(G)\backslash G \times G$; especially one knows when there are 
discrete summands in $L^2(H\backslash G)$, and even a complete knowledge of the Plancherel measure is known.

 There is an even 
simpler example where $H$ is the fixed points of a Galois involution, i.e., $G=G(E)$, $H=G(F)$ for $E/F$ a separable quadratic extension of local fields. This 
has been studied at great length especially by H. Jacquet for the pairs $(\GL_n(E),\GL_n(F))$ and $(\GL_n(E), \U_n(F))$
but  even in these cases, the results obtained are not in the most general cases
but essentially only for discrete series representations of $\GL_n(E)$. As has been known to workers 
in this field, especially for the pair $(\GL_n(E), \U_n(F))$, multiplicities which are one for discrete 
series representations can be higher for non-discrete series representations, and in spite of many attempts,
have continued to be a bit elusive.

The aim of this work is to make some beginnings for this Galois case of the relative Langlands program.
Thus, let $G$ be a reductive algebraic group over a local field $F$, and $E$ 
a quadratic extension of $F$. The principal aim of this paper is  to 
formulate a conjecture about representations of $G(E)$ distinguished by $G(F)$
in terms of the Langlands parameters of the representations of $G(E)$, or what is also called the 
Langlands-Vogan parametrization.
A special case of  this conjecture is for the  degenerate 
case of $E=F+F$, in which case the question amounts to understanding
 the contragredient of a representation of $G(F)$ in terms of Langlands-Vogan
parameters, a question that we will take up first.

As the reader will notice, the conjectured answer to the question: which 
irreducible admissible representations of $G(E)$ are distinguished by $G(F)$, and then the refined question on 
multiplicities 
$\dim \Hom_{H}[\pi, \C] $,  are given in terms of the algebraic geometry
of (functorial) maps between affine algebraic varieties---the parameter spaces of Langlands parameters---which appear in the following commutative diagram (for a homomorphism $\varphi: {}^LG_1 \longrightarrow {}^LG_2$ of $L$-groups):
 $$
\begin{CD}
X_1 = \Hom(W'_F, {}^LG_1(\C)  )
@>\Phi'>>  X_2 = \Hom(W'_F, {}^LG_2(\C))  \\ 
 @ V{\pi_1}VV  @VV{\pi_2}V  \\
\Sigma_F(G_1)= X_1/\!/\widehat{G}_1(\C)
@> \Phi >>  \Sigma_F(G_2)=X_2/\!/\widehat{G}_2(\C).
\end{CD}
$$

We remind the reader that the map of algebraic varieties $X_1/\!/\widehat{G}_1(\C) \stackrel{\Phi}\rightarrow X_2/\!/\widehat{G}_2(\C)$, `remembers' the fact that $X_1/\!/\widehat{G}_1(\C)$  
and $X_2/\!/\widehat{G}_2(\C)$ arose 
as quotients, and therefore for points of $X_1/\!/\widehat{G}_1(\C)$  corresponding to
 closed orbits of $\wG_1(\C)$ on $X_1$ (closed orbits $\widehat{G}_1(\C)\cdot x_1 \subset X_1$ correspond exactly to elements in 
$x \in X_1 = \Hom(W'_F, {}^LG_1(\C)  )$ which are called {\it admissible} in [Bo] or also $F$-admissible in the literature),
$\Phi$ induces a map from stabilizer in $\widehat{G}_1(\C)$ of such a point in $X_1$ to the stabilizer in $\widehat{G}_2(\C)$ of the 
corresponding point in $X_2$. In particular, there is a map on  the group of connected components of the stabilizers.
The mapping $\Phi$ together with these `extra' features corresponding to associated {\it stacks} 
will play a dominant role in our considerations in this paper in which the functorial map 
${}^LG_1(C)\rightarrow {}^LG_2(\C)$ corresponds to quadratic basechange for a group $G_1$ closely associated
to the group $G$ that we have denoted in this paper by $G^{\out}$ with $G^{\out}(E)=G(E)$, and the group $G_2 =R_{E/F}G = R_{E/F}G^{\out}$.

The proposed conjecture in this paper was first observed in the case of $\SL_2$ where 
the author noticed that given an irreducible admissible representation $\pi$ of $\SL_2(E)$, the dimension of the space of 
$\SL_2(F)$ invariant forms on $\pi$ is either 0 or a number $m(\pi)$ which depends not so much on $\pi$ but only 
on the $L$-packet to which $\pi$ belongs, and that $m(\pi)$ is the number of distinct lifts of the (equivalence classes) of the parameter 
$\sigma_\pi: W_E\rightarrow \PGL_2(\C)$ to the (equivalence classes) of the parameters $\tilde{\sigma}_\pi: W_F \rightarrow \PGL_2(\C)$. The 
conjecture proposed in this paper --- Conjecture \ref{conj3} in section 13 --- is considerably more complicated than this, but the observation above on $\SL_2$  was an important starting point for this work. The first refinement to counting the number of lifts was to count them with multiplicites which takes 
into account the ramification of the map on parameter spaces (the map $\Phi$ in the above diagram). 
Then, since parameters see not just one group, but all
groups which are pure innerforms of each other, presumably the number we get 
(by counting elements in the fiber together with multiplicity) does not relate  to 
$\dim \Hom_{G(F)}[\pi, \C]$ for one $G(F)$ but a sum of terms for 
all possible pure innerforms $G'(F)$ with $G'(E)=G(E)$. So we need to isolate 
terms responsible for a given $G(F)$. This was easy enough for $p$-adics using by now well-known theorems of Kottwitz on 
Galois cohomology of $p$-adic groups, but for reals it needed inputs on 
Galois cohomology of real groups from works of Adams and Borovoi, some of which is relatively new.

Although not a new point of view, cf. the book [ABV] in the real case, the paper emphasizes how geometry 
of Langlands parameters influences questions in representation theory, which here is about the dimension of the space
of invariant linear forms under a specific subgroup.    The word `relative' thus has 
similar connotation as its usage in Algebraic Geometry where it is important not only to study `spaces' but also morphisms between them.
  
 Results about $\SL_2(F) \subset 
\SL_2(E)$ involving nontrivial $L$-packets from [AP03] 
were an important guide in formulating the conjectural answer in this paper, 
besides many works of Jacquet on distinction of representations of
$\GL_n(E)$ by $\U_n(F)$ which were refined and extended in the work of Feigon-Lapid-Offen in [FLO].

We end the introduction by mentioning two obvious questions. First, as far as the author 
has been able to understand from the existing literature on unitary groups, there is no 
approach to parametrizing the space of invariant linear forms in terms of fibers of the basechange map. (One simplifying aspect 
of the pair $(\GL_n(E),\U_n(F))$ is that at least for discrete series representations of $\GL_n(E)$, the space of $\U_n(F)$-invariant forms is one, something which is not true in general for $(G(E),G(F))$, for example as [AP03] shows, not even for $\SL_2$. Our conjecture suggests, however, that there should be multiplicity one for $(G(E),G(F))$ for discrete series representations of $G(E)$ 
which are `stable', i.e., 
the centralizer of the parameter is just the center of the $L$-group.) 
The work [Pr1] in the 
finite fields, and the further work of Lusztig [Lu], suggests that a `character theoretic' proof might work in some generality as was the case with the work of Waldspurger proving the branching laws from $\SO_n$ to $\SO_{n-1}$.
Nonetheless, it would be 
good to have some geometric proofs say in the case of function fields.

Second, one would like to understand  what's the natural larger context in which one could view the questions on classification of
 irreducible admissible representations of $G(E)$ distinguished by $G(F)$ for $E/F$ quadratic, and then the multiplicities? Perhaps
the best place is for pairs $(G,H)$ where $H$ arises as fixed points of a `diagram' automorphism of $G$ of order 2. 
Jacquet in [J91] has suggested that 
representations of $\GL_n(F)$ distinguished by $\OO_n(F)$ arise from
the metaplectic correspondence of representations due to Kazhdan and Patterson 
between $\widetilde{\GL}_n(F)$ (a two fold cover of $\GL_n(F)$) and $\GL_n(F)$ (say for cuspidal automorphic representations of 
$\GL_n(\A)$). Our suggestion amounts to pleasanter properties for multiplicities for $n$ odd, which corresponds to fixed points of a 
diagram automorphism. The work of J. Hakim and J. Lansky for the pair $(\GL_n(F), \OO_n(F))$ in [Ha], [H-L] gives 
some confidence to this hope.

\section{Summary}
It may be helpful to  the reader if I summarized what's done in this paper.

In section 3,  I basically set the notation, and some of the concepts that we will
repeatedly use in the paper often without explicit mention.

In section 4, I make the conjecture on the Langlands-Vogan parameter of the 
contragredient representation. It is built on three ingredients. The first, the 
most obvious being that the parameter, must go to the `dual parameter', effected 
by the Chevalley involution on $\LG$; second, the character of the component group must
go to the dual character; finally, we have to take into account 
the effect on `base point':
since if  $\pi$  has a Whittaker model for a character $\psi: N \rightarrow \C^\times$, 
$\pi^\vee$  has a Whittaker model not for the character $\psi$, but for $\psi^{-1}$.

In section 5, I compare the proposal on the contragredient given in section 4 to the work of J. Adams for 
real reductive groups. The conjecture in section 4 is in terms of an involution on $G_0(\R)$, $G_0$ quasi-split over $\R$, 
 which comes from an element of $T_0(\R)$ which operates by $-1$ on all simple roots of a Borel subgroup $B_0$ over $\R$
containing $T_0$. J. Adams uses an involution on $G_0(\R)$ which operates by $-1$ on a fundamental Cartan subgroup.
The main part of this section identifies the two involutions by relating most split torus in $G_0(\R)$ with most compact
torus in $G_0(\R)$ by use of the Jacobson-Morozov theorem and the `Cayley transform' in $\SL_2(\R)$, 
and noting that the element 
$j = \left ( \begin{array}{cc} i & 0 \\ 0 & -i\end{array}\right ) \in \SL_2(\C)$ 
is the desired one both for me and for J. Adams.

In section 6, I discuss the question of distinction of representations of $G(E)$ by $G(F)$ for $E/F$ a quadratic extension 
of finite fields, in which I basically recall a theorem I proved in [Pr1], but which was not stated in this form there.

In section 7, using $-w_0$, where $w_0$ is the longest element in the Weyl group, 
I construct an involution  on the set of quasi-split reductive groups over any field $F$, denoted
$G \rightarrow G^{\out}$. Over reals, this involution  interchanges $\Si$ and $\R^\times$, and 
fixes the torus $R_{\C/\R} \Gm = \C^\times$.

In section 8, we define a character $\omega_G: G(F) \rightarrow \Z/2$ for any reductive group $G$ over any 
local field $F$, and a quadratic extension $E$ of $F$. This character factors through $G^{\ad}(F)$, and is the `same'
for all groups which are innerforms of each other. This character 
is a measure of the half sum of positive roots of $G$ being not a weight for $G$. 

In section 9, we analyze what are the possible lifts of a parameter for $G(E)$ to parameters for $G(F)$, and find
that these are described by some cohomology  spaces --- by some well-known simple generalities--- but 
which will play a basic role in the rest of the paper.

In section 10, we analyze the parameter spaces for Langlands parameters, mostly in the non-Archimedean case.

In section 11, using a theorem of Kottwitz, we find that to lifts of parameters of $G(E)$ to $G^{\out}(F)$, 
we can associate a set 
of pure innerforms of $G$. This is done by analyzing the action of character twists 
(characters of $G^{\out}(F)$ which become trivial on $G(E)$) on these lifted parameters
to $G^{\out}(F)$. This way  we associate as many pure innerforms of $G(F)$ (to a particular orbit of character twists 
of a parameter of $G^{\out}(F)$) as the number of elements in the orbit.

In section 12, gives a way to associate to lifts of a parameter  of $G(\C)$ to $G^{\out}(\R)$, 
a 
pure innerform of $G(\R)$. It hinges on  theorems of Adams and Borovoi 
calculating 
cohomology $H^1(\Z/2, G(\C))$ 
in terms of a Galois cohomology, when $\Z/2$ 
operates on $G(\C)$  by an involution. The cohomology groups 
$H^1(\Z/2, M(\C))$, for $M$ a Levi subgroup in $\wG(\C) $ 
appear in this section as they parametrize the  set of lifts of  a 
parameter of $G(\C)$ to $G^{\out}(\R)$.

In section 13, the conjecture on the dimension of the space of $G(F)$-invariant forms on irreducible admissible representations 
of $G(E)$ is made as Conjecture \ref{conj3} in this section. It is the {\it raison d'\^etre} for the whole work.

In section 14, we verify the conjecture for tori. The most subtle part in this section is to confirm that the condition 
on the character of the component group which appears in Conjecture \ref{conj3} is exactly what naturally comes up in this analysis.

In section 15, we discuss $\SL_2$. In particular, we prove that a representation of $\SL_2(E)$ distinguished by 
$\SL_2(F)$ must have a Whittaker model  by a character of $E$ trivial on $F$. We also analyze the condition 
on component groups. The factor $d_0(\tilde{\sigma})= \left |{\rm coker} \{\pi_0(Z(\tilde{\sigma})) \rightarrow 
\pi_0(Z(\sigma))^{\Gal{(E/F)}} \} \right|, $ in Conjecture \ref{conj3} has its origin in 
case II of the table at the end of the section 
(this is the case of base change of a principal series for $\GL_2(F)$ which has no selftwists over $F$, 
but has one after basechange to $E$).

In section 16, we relate Conjecture \ref{conj3} to known and expected results for $\GL_n$ and $\U_n$.

In section 17, we specialize to real groups, and state some simpler forms of Conjecture \ref{conj3} made in section 13, as well as some 
natural analogue of it for $L^2(G(\R)\backslash G(\C))$.

In section 18, we try to prove Conjecture \ref{conj3} for (certain) irreducible principal series representations of $G(\C)$ by using the 
(unique) closed orbit of $G(\R)$ on $B(\C)\backslash G(\C)$. A noteworthy feature of the analysis is the appearance
of the quadratic character $\omega_G: G(\R) \rightarrow \Z/2$.

In section 19, we try to prove Conjecture \ref{conj3} for (certain) 
irreducible principal series representations of $G(\C)$ by using open 
 orbit of $G(\R)$ on $B(\C)\backslash G(\C)$. In the process, we completely analyze open orbits 
of $G(\R)$ on $B(\C)\backslash G(\C)$, something which is well-known, but we give independent proofs.

In section 20, specific suggestions on $\dim \Hom_{\U(p,q)}[\pi, \C]$ are made for irreducible principal series
representations of $\GL_{p+q}(\C)$.

\section{Notation and other preliminaries} 
In this paper, $G$ will always be a connected reductive algebraic group 
over a local field $F$, which can be either Archimedean, or non-Archimedean. When speaking of connected real reductive 
groups, we mean $G(\R)$ for $G$ connected as an algebraic group; thus $G(\R)$ might well be disconnected such as
$\SO(p,q)(\R)$. We will use $\Si$ to denote the group  of norm 1 elements in $\C^\times$, and $\Si^n$, its $n$-th power.  

There will be another source of reductive groups in the paper, the dual 
group $\wG$ which will interchangeably be written as $\wG(\C)$, and the $L$-group ${}^L G= \wG \rtimes W_F$ where $W_F$ is the Weil group of the local field $F$.  We will use $W'_F$ for the Weil-Deligne group of $F$, which is 
$W_F$ itself if $F$ is Archimedean, and is $W_F \times \SL_2(\C)$ if $F$ is non-Archimedean. We will need to consider subgroups $Z_\varphi$ 
of $\wG(\C)$ which arise as centralizer in $\wG$ of Langlands parameters $\varphi: W'_F \rightarrow {}^LG$. These 
groups $Z_\varphi$ need not be connected, and we denote the group of their connected components as $\pi_0(Z_\varphi)$, 
a finite group. 

Center of a group $G$ will be denoted by $Z(G)$, or simply $Z$ if the context makes the group $G$ clear. The center
of an algebraic group $G$ defined over $F$ is an algebraic group defined over $F$.

We will often find it convenient to use cohomological language in this paper. For example, equivalence classes
of Langlands parameters will be considered as elements of $H^1(W_F,\wG)$, 
or of $H^1(W'_F,\wG)$. Because of the inflation-restriction exact sequence, for any finite extension $E$ of $F$,
$H^1(\Gal(E/F),\wG^{W_E})$ 
sits inside $H^1(W_F,\wG)$. Thus  for $E/F$ quadratic, we will have many occasions to
consider the group cohomology, $H^1(\Gal(E/F),\wG^{W_E})$ of the group $\Gal(E/F) = \Z/2$, with possibly
complicated action on $\wG^{W_E}$. 

There will be another reason to use cohomology in this paper, what's usually called Galois cohomology,
$H^1(\Gal(\bar{F}/F), G(\bar{F}))$ where $\bar{F}$ is a fixed algebraic closure of $F$;
these are often  written as $H^1(F,G)$. These groups appear in this paper 
as they classify forms of $G$, more precisely the pure innerforms of $G$ over $F$; for example,
$H^1(\Gal(\bar{F}/F), \SO_n(\bar{F}))$ classifies isomorphism classes of 
quadratic forms over $F$ of a given discriminant; it 
is possible that two  non-isomorphic quadratic forms give rise to the same isometry groups, in which case the 
isometry groups $\SO(V_1)$ and $\SO(V_2)$ will be treated as distinct objects.

It will be useful to us that the two usages of the cohomology in the last two paras do come together, say for a connected 
reductive group $G$ over $\R$. In this case, by a theorem due to J. Adams, for a reductive 
group $G$ over $\R$, $H^1(\Gal(\C/\R), G(\C)) \cong H^1(\Z/2, G(\C))$ 
where $\Z/2$ operates on $G(\C)$ by an algebraic automorphism which is the complexification of 
a Cartan involution on $G(\R)$. This will allow us to associate to  various lifts to $G^{\out}(\R)$ of a 
parameter for $G(\C)$ both a certain multiplicity as well as a pure innerform $G_\alpha(\R)$ of $G(\R)$ which will be related
to $\dim\Hom_{G_\alpha(\R)}[\pi, \C]$.

Explicit Galois twistings will often be used in this paper. Recall that two algebraic groups $G_1$ and $G_2$ 
over a field $F$ are said to be 
forms of each other if they become isomorphic over $\bar{F}$, the algebraic closure of $F$. We will be especially 
considering instances where the groups become isomorphic over a quadratic extension $E$ of $F$. 
Given an algebraic group $G$
over $F$, an automorphism $\phi$ of $G$ over $E$, a quadratic extension of $F$, with the property that 
$\phi \circ \phi^\sigma = 1$, define an algebraic group $G^\phi$ over $F$, said to be obtained from $G$ by twisting 
by the cocycle $\phi$,  having  the characteristic 
property $G^\phi(F) = \{g \in G(E)| \phi(g) = \sigma(g)\}$; we will  often use the notation
$g^\sigma$ for $\sigma(g)$.      

We will use this Galois twisting especially for maximal tori $T$ inside a reductive group $G$. 
Recall that if $N(T)$ is the normalizer of a maximal torus $T$, then essentially by definitions,
the set of tori of $G$ defined over $F$ 
is the set $(G/N(T))(F)$. Therefore, the set of tori of $G$ defined over $F$ up to $G(F)$-conjugacy
can be identified 
to the kernel of the natural map of pointed sets $H^1(\Gal(\bar{F}/F), N(T)) \rightarrow H^1(\Gal(\bar{F}/F),G)$. 
Two maximal tori $T_1$ and $T_2$ in $G$ are said to be 
stably conjugate if there is an isomorphism $\lambda: T_1 \rightarrow T_2$ defined over $F$ which is 
given by conjugation of an element in $G(\bar{F})$. By Proposition 6.1 of [Re],  the natural map
$H^1(\Gal(\bar{F}/F), N(T)) \rightarrow H^1(\Gal(\bar{F}/F),W)$, where $W=N(T)/T$ is the Weyl group of $T$,
identifies the set of stable conjugacy classes of maximal 
tori in $G$ with $H^1(\Gal(\bar{F}/F),W)$, when $G$ is quasi-split and
$T$ is a  maximally split maximal torus in $G$. (This is, in essence, a theorem of M.S. Raghunathan.)

In particular, if an automorphism $\phi$ 
of $T$ arises from an element $w$ of $W(E)$, where $W$ is the Weyl group $W= N(T)/Z(T)$, with the property 
that $w\cdot w^\sigma =1$ where $\sigma$ is the generator of $\Gal(E/F)$, a quadratic extension, 
then  
the twisted torus $T^\phi= T^w$ is a well-defined stable conjugacy class of a torus contained in $G$. 

The exact sequence of algebraic groups over $F$:
$$1 \rightarrow T \rightarrow N(T) \rightarrow W \rightarrow 1,$$
gives rise to an exact sequence of pointed sets:
$$ \cdots \rightarrow W(F) \stackrel{\delta}\rightarrow H^1(F, T) \rightarrow H^1(F,N(T)) \rightarrow \cdots;$$
in fact,  more is true. The group $W(F)$ operates on $H^1(F,T)$, an affine action, not one which preserves the group
structure on $H^1(F,T)$. (A group $G$ operating on an abelian group $A$ is said to operate by an affine action
if it operates on $A$ through a homomorphism into the group $ A \rtimes \Aut(A)$.) The fibers of the map from 
$H^1(F,T)$ to $H^1(F,N(T))$ are exactly the orbits of $W(F)$. To describe the action of $W(F)$ on $H^1(F,T)$ note that 
$W(F)$ naturally operates on $H^1(F,T)$ through its action on $T$ which we denote  as $w\cdot \alpha$ 
Then the action of $w\in W(F)$ on $\alpha \in H^1(F,T)$, denoted $\alpha^w$,  is  $\alpha^w = w^{-1}\cdot \alpha + 
\delta(\alpha)$. See Serre [Se], Proposition 40, I, \S5.6, for all this.  An example to keep in mind is 
the maximal compact torus $T=\Si^n$ in $\Sp_{2n}(\R)$, in which case $H^1(\R,T) = (\Z/2)^n$ acted upon by $W(\R)$ 
 which is $(\Z/2)^n \rtimes S_n$ by the natural affine action, so there is exactly one orbit. Contrast it with
the maximal compact torus $T=\Si^n$ in $\U_{p,n-p}(\R)$, in which case $H^1(\R,T) = (\Z/2)^n$ acted upon by $W(\R)$ 
 which is $S_n$ with its  natural action on $(\Z/2)^n$, so there are exactly $(n+1)$ orbits. 

Stably conjugate tori share many properties: they are isomorphic as algebraic groups, their 
Weyl groups are isomorphic as algebraic groups---in particular $W(F)$ is meaningful, 
and the image of $H^1(F,T) \rightarrow H^1(F,G)$ are the same
for stably conjugate tori: 
$$
\xymatrix{
T_1\ar@{->}[dd]_{\lambda}\ar@{->}[drr]^{i_1}&  & \\
& & G. \\
T_2 \ar@{->}[urr]_{i_2} &   &
}
$$

Let $G$ be 
a quasi-split reductive group over $F$,  and $(B,T)$ a pair 
consisting of a Borel subgroup $B$, and a maximal torus $T$ contained in $B$
and $S$ the maximal split torus in $T$. 
The Weyl group $W = N(T)/T$ has  $W(F) = N(T)(F)/T(F) = N(S)(\bar{F})/T(\bar{F})$.

One of the most important elements in the 
Weyl group for us will be $w_0$, which takes $(B,T)$ to $(B^{-},T)$ with $B^- \cap B = T$. The 
element $w_0 \in W$ is defined over $F$. We will denote by $-w_0$, the automorphism of $T$ which is
$t\rightarrow w_0(t^{-1})$. There is an automorphism $C_0$, called the Chevalley involution, 
of $G$ of order 1 or 2 defined over $F$ 
which fixes the pair $(B,T)$, and has the effect $-w_0$ on $T$, and is usually defined by fixing a `pinning', i.e.,
a set of nonzero elements $\{X_\alpha \}$ in the various simple root spaces $N_\alpha $ contained in the Lie algebra
$\g$ of $G$.   For a quadratic extension $E$ of $F$, sending the non-trivial element of
$\Gal(E/F)$ to $C_0$, and twisting the group $G$ by this involution, we get a quasi-split 
group over $F$ which
 is denoted in this paper as $G^{\out}$. If $G=T$ is a 
torus, then $T^{\out}$ is obtained by twisting $T$ by the automorphism $t\rightarrow t^{-1}$. 
The groups $T$ and $T^{\out}$ sit in the following exact sequence:
$$1 \rightarrow T \rightarrow R_{E/F}(T) \rightarrow T^{\out} \rightarrow 1.$$

We will also use the notation $A^{\out}$ for any commutative group scheme $A$ over a field $F$ twisted by the automorphism
$x\rightarrow -x$ on $A$ through a quadratic extension $E$ of $F$.
 
We will have many occasions to use a theorem of Kottwitz which calculates $H^1(\Gal(\bar{F}/F), G(\bar{F}))$
for $G$ connected reductive, and $F$ non-Archimedean. According to this theorem, $H^1(\Gal(\bar{F}/F), G(\bar{F}))$ 
is a finite abelian group which is isomorphic to the character group of $\pi_0(\wG^{W_F})$.

For any finite or more generally topological group $H$, we let $H^\vee$ denote the character group of $H$, i.e.,
the group of continuous homomorphisms from $H$ to $\C^\times$; $H^\vee$ is a topological group.

For an abelian algebraic group $A$ over a field $F$, we will let $A^\vee = \Hom(A,\Gm)$ be 
the Cartier dual of $A$. For diagonalizable groups $A$ such as tori or finite abelian groups over $F$, 
$A^\vee$ is a finitely generated $\Z$-module with an action of $\Gal(\bar{F}/F)$. 
For $A=T$, a torus, $A^\vee= X^\star(T)$, the character group of $T$. 

Thus the notation $H^\vee$ and $A^\vee$ in the last two paras do create a bit of ambiguity say for finite groups which naturally happen to be algebraic, but
in such situations, $H^\vee=A^\vee$, as abstract groups, but $A^\vee$ carries the extra information of being a 
$\Gal(\bar{F}/F)$-module. For instance, for a reductive group $G$ over a local field $F$, the identification 
$Z(G)^\vee = \pi_1(\wG)$ (of finitely generated $\Z$-modules) is as $W_F$-modules.

We will often use the Tate-Nakayama duality for a diagonalizable group $A$, according to which there is a perfect
pairing for $F$ a local field:
$$H^1(W_F, A) 
\times H^1( W_F,  A^\vee) \rightarrow H^2(W_F, \Gm),$$
with $H^2(W_F, \Gm) = \Q/\Z$ if $F$ is non-Archimedean, and $\Z/2$ if $F=\R$.

Representations of $G(F)$ will be smooth representations for $F$ non-Archimedean, and smooth Frechet representations 
of moderate growth for $F$ Archimedean.

For a representation $\pi$ of $G(F)$, $\pi^\vee$ denotes its contragredient.

For a reductive group $G$ 
over $\C$ (possible a finite group) 
operating on a variety $X$ over $\C$, we will use the standard notation
$X/\!/G$ to denote what's usually called the categorical quotient, which for affine varieties corresponds to ring of $G$ invariants.

\section{The contragredient}

Let $W_F$ be the Weil group of $F$, and $W'_F$ the Weil-Deligne group of $F$.
Let ${}^LG(\C) = \widehat{G}(\C) \rtimes W_F$, 
be the $L$-group of $G$ which comes equipped with a map onto
$W_F$. An admissible homomorphism $\varphi: W'_F\rightarrow {}^LG$
is called a Langlands parameter for $G$.  To an admissible homomorphism 
$\varphi$, is associated the group of connected components $\pi_0(Z_{\varphi} )
= Z_{\varphi}/ Z^0_{\varphi}$ where ${Z_{\varphi}}$ is the centralizer of
$\varphi$ in $\widehat{G}(\C)$, 
and ${Z^0_{\varphi}}$ is its connected component of 
identity.

According to the Langlands-Vogan parametrization, to an irreducible
admissible representation $\pi$ of $G(F)$, there corresponds a pair 
$(\varphi, \mu)$ 
consisting of an admissible homomorphism 
$\varphi: W'_F\rightarrow {}^LG$, and a representation $\mu$ of 
its group of connected components $\pi_0(Z_{\varphi})$. The pair $(\varphi, \mu)$ 
determines $\pi$ uniquely, and one knows which pairs arise in this way.
The Langlands-Vogan parametrization depends on fixing a base point, 
consisting of a pair $(\psi,N)$ where $\psi$ is a 
nondegenerate character on $N=N(F)$ which is a maximal unipotent subgroup in 
a quasi-split innerform of $G(F)$.

Before we come to the description of the Langlands parameter of $\pi^\vee$ in terms of that of $\pi$, we recall that for a simple algebraic group $G$ over 
$\C$, one has either:
\begin{enumerate}
\item  $G(\C)$ has no outer automorphism, or $G(\C)$ is $D_{2n}$ for some $n$; 
in these cases,
 all irreducible finite dimensional representations of $G(\C)$ are self-dual; or,
\item $G(\C)$ has an outer automorphism of order 2  which has the property
that it takes all irreducible
finite dimensional representations of $G(\C)$ to their contragredient.
\end{enumerate}

The above results about simple algebraic groups (over $\C$) can be extended to all
semisimple algebraic groups, and then further to all reductive algebraic groups, and 
give in this generality of reductive algebraic groups, an automorphism $\iota$ of $G(\C)$ 
of order 1 or 2 which is well defined as an element in $\Aut(G)(\C)/j(G(\C))$, where $j$ is the natural map 
$j: G(\C)\rightarrow \Aut(G)(\C)$,  which takes any finite dimensional
representation of $G(\C)$ to its dual. (For example, for a torus $T$, $\iota$ is just the inversion, 
$\iota: z\rightarrow z^{-1}, z \in T$.)
 
One can also define such an automorphism $\iota$ for a quasi-split 
group $G(F)$, for $F$ any field. For this we fix a Borel subgroup $B$ containing a maximal torus $T$, 
and $\psi: N \rightarrow F$ a non-degenerate character. The group $G$ has an automorphism $\iota_{B,T,N,\psi}$ defined over $F$ 
which takes the pair $(T,B)$ to itself, $\psi$ to $\psi^{-1}$, and its effect on simple roots of $T$ on 
$N(\bar{F})$ is that of the diagram automorphism $-w_0$ where $w_0$ is the longest element in the Weyl group
of $G$ over $\bar{F}$. Given the quadruple $(B,T,N,\psi)$, the automorphism $\iota_{B,T,N,\psi}$ is uniquely defined; 
since any two tori in $B$ are conjugate by 
$N(F)$, the dependence of the automorphism $\iota_{B,T,N,\psi}$ on $T$ is only by conjugation by elements of $N(F)$. 
However, since it is the group $G^{\ad}(F)$ (and not $G(F)$)  which acts transitively on pairs
$(B,\psi)$, the element $\iota_{B,T,N,\psi}$ 
is well-defined only up to $G^{\ad}(F)$, but if one has moreover
fixed a $(B,\psi)$, which we will when  we discuss Langlands-Vogan parametrization, $\iota_{B,T,N,\psi}$
 is well-defined up to $N(F)$,
and abbreviate  $\iota_{B,T,N,\psi}$   to $\iota$, the duality automorphism of $G(F)$; its dependence on $N(F)$ is of 
no consequence as we consider the effect of $\iota$ on representations of $G(F)$.

\begin{remark}
It is easy to see that the automorphism $\iota_{B,T,N,\psi}$ considered as an element of 
$\Aut(G)(F)/jG(F)$ is independent of all choices if the action of $-w_0$ on
$H^1(F,Z(G))$,  through the action of $-w_0$ on $Z(G)$, is trivial. 
\end{remark}

Now we come to a basic lemma regarding the contragredient.

\begin{lemma} Let $G(F)$ be a quasi-split reductive algebraic group over a local field $F$, with $B=TN$ a Borel subgroup of $G$,
and $\psi: N(F)\rightarrow \C^\times$ a non-degenerate character. Then if an irreducible admissible representation $\pi$ of $G(F)$
has a Whittaker model for the character $\psi$, then the contragredient $\pi^\vee$ has a Whittaker model for the character
$\psi^{-1}: N \rightarrow \C^\times$.
\end{lemma}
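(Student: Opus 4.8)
The plan is to reformulate the assertion in terms of $\Hom$-spaces and then reduce it to the standard fact that genericity is preserved under passing to the contragredient, with the inducing character inverted. First I would recall the dictionary: $\pi$ has a $\psi$-Whittaker model precisely when $\Hom_{N(F)}[\pi,\psi]\neq 0$, which by Frobenius reciprocity is the same as an embedding $\pi\hookrightarrow\Ind_{N(F)}^{G(F)}(\psi)$ into the smoothly induced representation; by the multiplicity-one property of Whittaker models this $\Hom$-space has dimension at most one, so the statement to be proved is just the non-vanishing of $\Hom_{N(F)}[\pi^\vee,\psi^{-1}]$.

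The second step is a dualisation. Since $G$ and $N$ are unimodular, $N(F)\backslash G(F)$ carries a $G(F)$-invariant measure and one has the standard identification $(\mathrm{ind}_{N(F)}^{G(F)}\psi)^\vee\cong\Ind_{N(F)}^{G(F)}(\psi^{-1})$ of the compactly induced Gelfand--Graev representation with the smoothly induced one. Exactness of the contragredient functor together with reflexivity of admissible representations then gives $\Hom_{N(F)}[\pi^\vee,\psi^{-1}]=\Hom_{G(F)}[\pi^\vee,\Ind_{N(F)}^{G(F)}\psi^{-1}]\cong\Hom_{G(F)}[\mathrm{ind}_{N(F)}^{G(F)}\psi,\pi]$, so the conclusion of the lemma is exactly that $\pi$ is a \emph{quotient} of the Gelfand--Graev representation $\mathrm{ind}_{N(F)}^{G(F)}\psi$. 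As the hypothesis says that $\pi$ is a \emph{sub}representation of $\Ind_{N(F)}^{G(F)}\psi$, the lemma is equivalent to the assertion that an irreducible admissible representation which is $\psi$-generic ``as a subobject'' is also $\psi$-generic ``as a quotient''.

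To prove this I would invoke the structure theory of principal series and the behaviour of Jacquet/Whittaker functionals. Embed $\pi\hookrightarrow\Ind_B^G(\chi)$ for a character $\chi$ of $T(F)$ by Casselman's subrepresentation theorem (or its Archimedean counterpart); a full principal series induced from a Borel is $\eta$-generic with a one-dimensional Whittaker space for every nondegenerate $\eta$, via the Bruhat filtration and the Jacquet integral, so $\pi$ is its unique $\psi$-generic constituent; one then tracks this constituent through $(\Ind_B^G\chi)^\vee=\Ind_B^G\chi^{-1}$ and the long intertwining operator to $\Ind_{B^-}^G\chi^{-1}$, which intertwines the relevant Jacquet integrals up to a meromorphic factor, and concludes that $\pi^\vee$ is $\psi^{-1}$-generic. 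In the non-Archimedean case this is what the theory of Whittaker functionals and Jacquet modules (Rodier, Casselman, Casselman--Shalika) provides, and for $\GL_n$ it is Bernstein--Zelevinsky; in the Archimedean case one uses the work of Kostant, Casselman--Wallach and Matumoto. One may also note, in the spirit of the preceding section, that pulling back the Whittaker functional along the automorphism $\iota_{B,T,N,\psi}$ of $G$ over $F$ --- which preserves $N$ and sends $\psi$ to $\psi^{-1}$ --- shows at once that $\pi\circ\iota_{B,T,N,\psi}$ is $\psi^{-1}$-generic; but since the lemma neither asserts nor needs $\pi^\vee\cong\pi\circ\iota_{B,T,N,\psi}$, this observation does not by itself close the argument.

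I expect the main obstacle to be precisely this last step. The passage from ``generic subobject'' to ``generic quotient'' --- equivalently, the stability of genericity under the contragredient with the character inverted --- is not a formal consequence of the $\Hom$-space manipulations: it rests on the principal-series/Whittaker machinery recalled above, and giving a proof uniform in the two cases the lemma covers requires invoking the appropriate version of that machinery in each case. Everything preceding it --- the Frobenius-reciprocity reformulation, multiplicity one for Whittaker models, and the $\mathrm{ind}/\Ind$ duality --- is routine.
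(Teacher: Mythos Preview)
Your reformulation via Frobenius reciprocity and the $\mathrm{ind}/\Ind$ duality is correct, and you rightly identify that the crux is the passage from ``generic as a sub'' to ``generic as a quotient''.  But the route you propose through Casselman's embedding, Bruhat filtrations, Jacquet integrals and intertwining operators is considerably heavier than necessary, and as you yourself note does not quite close: knowing that $\pi$ is the unique $\psi$-generic constituent of $\Ind_B^G(\chi)$ does not by itself tell you that $\pi^\vee$ is the $\psi^{-1}$-generic constituent of the dual principal series rather than some other constituent.

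The paper's argument is much shorter and bypasses all this.  The key observation you missed is that for a \emph{unitary} representation $\pi$ one has $\pi^\vee\cong\bar\pi$, the complex conjugate representation; then if $\ell:\pi\to\C$ is a $\psi$-Whittaker functional, the complex conjugate $\bar\ell$ is automatically a $\bar\psi=\psi^{-1}$-Whittaker functional on $\bar\pi=\pi^\vee$.  This one line settles the lemma for all tempered representations.  For a general generic $\pi$, the paper invokes the standard module conjecture (now a theorem): $\pi$ is the full induced representation $\Ind_P^G(\mu)$ from a generic tempered representation $\mu$ (up to a real twist) of a Levi $M$, and one then matches the Whittaker data $\psi_M\leftrightarrow\psi$ under the known recipe relating genericity of $\mu$ to genericity of the induced.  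Since the tempered case on $M$ is already done, the general case follows.

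So both approaches ultimately feed through parabolic induction, but the paper's use of the complex-conjugation trick for the unitary base case is the elementary step that makes the argument short; your approach replaces this by an appeal to the full Whittaker-functional machinery on principal series, which is correct in spirit but both harder to make uniform across the Archimedean and non-Archimedean cases and, in the form you wrote it, not yet a proof.
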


\begin{proof}Although this lemma must be a standard one, the author has not found a proof in the literature, so here is one.
Observe that if $\pi$ is unitary, then $\pi^\vee$ is nothing but the complex conjugate $\bar{\pi}$ of $\pi$, i.e., 
$\bar{\pi} = \pi \otimes_{\C} \C$ where $\C$ is considered as a $\C$ module using the complex conjugation action. By applying complex conjugation to a  linear form $\ell: 
\pi \rightarrow \C$ on which $N(F)$ operates by $\psi$, we see that $\bar{\ell}$ defines a $\bar{\psi} = \psi^{-1}$-linear form on $\bar{\pi} = \pi^\vee$, proving the lemma for unitary representations, in particular for tempered representations of any reductive group. The general case of the Lemma follows by realizing a general generic representation of $G(F)$ as an irreducible  principal 
series representation arising from a  generic tempered representation $\mu$ up to a twist on a Levi subgroup $M(F)$ of $G(F)$; that this
can be done is what's called the standard module conjecture which has been proved.
Then one notes that if $N_M$ is the unipotent radical of a Borel subgroup in $M$,
then there is a general recipe relating the character $\psi_M:N_M(F)\rightarrow \C^\times$ with respect to which $\mu$ is generic, and 
the character $\psi: N(F)\rightarrow \C^\times$ with respect to which the induced principal series ${\rm Ind}_{P(F)}^{G(F)}\mu$
is generic. This allows one to prove the lemma, 
but we omit the details. 
\end{proof}

We next recall that in [GGP], section 9, denoting $G^{\rm ad}$ the adjoint group of $G$ 
(assumed without loss of generality at this point to be quasi-split since we want to construct 
something for the $L$-group),  there is constructed a homomorphism from 
$  G^{\rm ad}(F)/G(F) \rightarrow 
 {\pi_0(Z_{\varphi})}^\vee$, denoted $g \mapsto \eta_g$ which we recall here. For doing this, let 
$\widehat{G}^{\scon}
$ be the universal cover of $\widehat{G}$:
$$1 \rightarrow 
 \pi_1(\widehat{G}) 
\rightarrow \widehat{G}^{\scon} 
\rightarrow \widehat{G} \rightarrow 1 . $$
By the definition of universal cover, any automorphism of $\widehat{G}$ lifts to $\widehat{G}^{\scon}$ uniquely, 
and thus if we are given an action of $W_F$ on $\widehat{G}$ (through a parameter $\varphi: W_F\rightarrow {}^L{G}$),
 it lifts uniquely to an action of $W_F$ on $\widehat{G}^{\scon}$,
preserving  $ \pi_1(\widehat{G})$. Treating the above as an exact sequence of $W_F$-modules, and taking 
$W_F$-cohomology, we get the boundary map:
$$ \widehat{G}^{W_F} = Z_\varphi \rightarrow H^1(W_F,  \pi_1(\widehat{G})),$$ 
which gives rise to the map $$ \pi_0(Z_\varphi) \rightarrow H^1(W_F,  \pi_1(\widehat{G})).$$
On the other hand by the Tate duality (using the identification $Z(G)^\vee = \pi_1(\wG)$), 
there is a perfect pairing:
$$H^1(W_F, Z(G)) 
\times H^1( W_F,  \pi_1(\widehat{G})) \rightarrow \Q/\Z.$$
This allows one to  think of elements in $H^1(W_F, Z(G)) $ as characters on $H^1(W_F,\pi_1(\widehat{G}))$, and therefore 
using the map $ \pi_0(Z_\varphi) \rightarrow H^1(W_F,  \pi_1(\widehat{G})),$ as characters on $\pi_0(Z_\varphi)$.
Finally, given the natural map $G^{\ad}(F)/G(F) \rightarrow H^1(F,Z(G))$, we have constructed a group homomorphism from
$G^{\ad}(F)/G(F)$ to characters on  $\pi_0(Z_\varphi)$.

  Let $g_0$ be the unique conjugacy class in $G^{\rm ad}(F)$ 
representing   an element in $T^{\rm ad}(F)$ (with $T^{\rm ad}$ a maximally split, maximal torus in $G^{\rm ad}(F)$) which acts by $-1$ on all simple root spaces of $T$ on $B$. Denote the corresponding $\eta_{g_0}$ by
$\eta_{-1}$, a character on $\pi_0(Z_{\varphi})$, which will be the trivial character 
for example if $g_0$ can be lifted to $G(F)$.

The automorphism $\iota$ defined using a fixed pinning 
$(\wG,\widehat{B}, \widehat{N},\{X_\alpha\})$   of $\wG$ 
belongs to the center of diagram automorphisms of $\widehat{G}$, 
and hence extends to an 
automorphism of  ${}^LG(\C) = \widehat{G}(\C) \rtimes W_F$, which will again be denoted by 
$\iota$, and will be called the Chevalley involution of $\LG$.

\begin{conj} \label{conj1} Let $G$ be a reductive algebraic group over a local field $F$ which is a pure innerform of a 
quasi-split group $G_0$ over $F$ which comes equipped with a fixed triple $(B_0,N_0,\psi_0)$ used to parametrize 
representations on all pure innerforms of $G_0$. 
For an irreducible admissible representation $\pi$ of $G(F)$
with Langlands-Vogan parameter $(\varphi,\mu)$, the Langlands-Vogan parameter 
of $\pi^\vee$ is $(\iota\circ \varphi, (\iota\circ\mu)^\vee \otimes \eta_{-1})$, where $\iota$ is the Chevalley involution 
of ${}^L{G}(\C)$. 

For $G_0$ quasi-split over $F$, let $\iota = i_{B_0,N_0,\psi_0}$ be an
automorphism of $G_0$ defined over $F$ corresponding to the diagram automorphism $-w_0$ and taking $\psi_0$ to $\psi_{0}^{-1}$. If  $\pi_0(Z_{\varphi})$ 
is an elementary abelian 2-group (or its irreducible representations are selfdual), the dual representation $\pi^\vee$ 
is obtained by using the automorphism $\iota$ of $G_0(F)$.

\end{conj}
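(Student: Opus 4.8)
The statement is a conjecture, so the plan is not to prove it from nothing but to split it into a part that is forced by structure, a part that can be checked against the known cases of the Langlands--Vogan correspondence, and a formal reduction of the second, concrete assertion to the first.

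\emph{The three forced ingredients.} First, the parameter of $\pi^\vee$ must be $\iota\circ\varphi$: the passage $\varphi\mapsto\varphi^\vee$ from a parameter to its contragredient is, by the very definition of the dual representation of $\wG$, effected by the automorphism of $\wG$ carrying every finite-dimensional representation to its dual; the construction of $\iota$ from the dichotomy recalled just above the conjecture identifies that automorphism with the Chevalley involution, and the ``center of diagram automorphisms'' observation shows it extends to $\LG$ compatibly with the map onto $W_F$. Second, the component-group representation must be replaced by $(\iota\circ\mu)^\vee$: one transports $\mu$ along the isomorphism $\iota\colon\pi_0(Z_\varphi)\xrightarrow{\sim}\pi_0(Z_{\iota\circ\varphi})$ and dualizes, the dualization reflecting that the spaces of invariant forms for $\pi$ and for $\pi^\vee$ are in perfect duality (equivalently that $\pi$ sits inside $(\pi^\vee)^\vee$). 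Third, the base-point correction: by the Lemma above, if $\pi$ is $\psi_0$-generic then $\pi^\vee$ is $\psi_0^{-1}$-generic, and since the Langlands--Vogan bijection is normalized by the fixed datum $(B_0,N_0,\psi_0)$, re-normalizing from $\psi_0^{-1}$ back to $\psi_0$ twists the component character by the character $\eta_{g_0}=\eta_{-1}$ attached, through the homomorphism $G^{\rm ad}(F)/G(F)\to\pi_0(Z_\varphi)^\vee$ recalled above, to the element $g_0\in T^{\rm ad}(F)$ conjugating $\psi_0$ to $\psi_0^{-1}$. Assembling the three gives precisely $(\iota\circ\varphi,(\iota\circ\mu)^\vee\otimes\eta_{-1})$.

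\emph{Verification and the second assertion.} I would then check the resulting formula for $G=T$ a torus (the correspondence is local class field theory, $\iota$ is inversion, $\pi_0(Z_\varphi)$ and $\eta_{-1}$ are trivial, and $\pi^\vee$ is indeed the inverse character); for $G=\GL_n$ (the contragredient parameter is the dual $n$-dimensional representation, the component group is trivial, and $\pi^\vee\cong\pi\circ(g\mapsto{}^tg^{-1})$, which is the second assertion); for quasi-split classical groups and $\SL_2$, where $\pi_0(Z_\varphi)$ is an elementary abelian $2$-group and the conjecture reduces to the second assertion; and for $F=\R$ via the identification of $\iota$ with J.~Adams's involution carried out in section 5. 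The second assertion itself follows formally from the first: if $G_0$ is quasi-split and $\iota=\iota_{B_0,T_0,N_0,\psi_0}$ is the corresponding automorphism of $G_0(F)$ over $F$, then $\pi\circ\iota$ is $\psi_0^{-1}$-generic, and, since $\iota$ induces the Chevalley involution on $\LG$, its parameter in the $\psi_0^{-1}$-normalization is $(\iota\circ\varphi,\iota\circ\mu)$ in the notation of the conjecture; the same re-normalization as above converts this to $(\iota\circ\varphi,(\iota\circ\mu)\otimes\eta_{-1})$ in the $\psi_0$-normalization. Comparing with the parameter $(\iota\circ\varphi,(\iota\circ\mu)^\vee\otimes\eta_{-1})$ of $\pi^\vee$ from the first part, the two agree exactly when $\iota\circ\mu$ is self-dual, which holds whenever $\pi_0(Z_\varphi)$ (hence $\pi_0(Z_{\iota\circ\varphi})$) is an elementary abelian $2$-group, or more generally has only self-dual irreducible representations.

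\textbf{Main obstacle.} The one genuinely non-formal ingredient is the equivariance of the Langlands--Vogan correspondence under the duality automorphism --- that $\iota$ acting on $G_0(F)$ transports the correspondence by the Chevalley involution on $\LG$ with the predicted effect on $\pi_0(Z_\varphi)$ --- which for $\GL_n$ and quasi-split classical groups rests on the endoscopic characterization of the correspondence but in general is part of the still-conjectural local Langlands correspondence; this is exactly why the statement remains a conjecture. A subsidiary delicate point is the base-point bookkeeping: one must verify that re-normalizing the packet from $\psi_0$ to $\psi_0^{-1}$ twists $\mu$ by \emph{precisely} $\eta_{-1}$ and by no other character of $\pi_0(Z_\varphi)$, which requires matching the $G^{\rm ad}(F)/G(F)$-action on packets with whichever construction of the correspondence (Vogan's, or a rigid-inner-form version) one takes as definitive.
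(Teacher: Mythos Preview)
The statement is a conjecture, and the paper offers no proof but rather the same motivational decomposition you give: the three ``forced'' ingredients (Chevalley involution on the parameter, dual of the component character, and the $\eta_{-1}$ base-point correction coming from Lemma~1 on Whittaker models) are exactly the three ingredients the paper isolates in Section~2's summary and in the paragraphs preceding the conjecture. Your verification plan also tracks the paper's --- the explicit $\SL_n(F)$ example after the conjecture and the comparison with Adams's Chevalley involution for real groups in Section~5 --- and your formal reduction of the second assertion to the first (via self-duality of $\mu$ when $\pi_0(Z_\varphi)$ is elementary abelian $2$) is a clean articulation of what the paper leaves implicit.
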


\begin{remark} A consequence of the conjecture is that for pure innerforms $G(F)$ of a quasi-split 
reductive group $G_0(F)$, if the component groups are known to be abelian, such as for $F=\R$
for any reductive group, or classical groups defined using fields alone, 
then all irreducible admissible representations 
of $G(F)$ are self-dual if and only if $-1$ belongs to the Weyl group of $\wG$, and there is an automorphism
$\iota_{B_0,N_0,\psi_0}$ of $G_0$ 
which arises by conjugation of an element in $T_0(F)$, a maximal torus in $B_0$, and which takes $\psi_0$ to $\psi_0^{-1}$.
\end{remark}

\vspace{3mm}

\noindent{\bf Example: } We explicate  conjecture \ref{conj1} for $\SL_n(F)$ which in this case is an easy exercise. 
Let $\ell_a$ denote the natural action 
of $F^\times/F^{\times n}$ on $\SL_n(F)$ (corresponding to the conjugation action of 
$\GL_n(F)/F^\times \SL_n(F) $ on $\SL_n(F)$, ignoring inner conugation action of $\SL_n(F)$ on itself). Clearly, $(\ell_a \pi)^\vee \cong \ell_a \pi^\vee$ for any irreducible representation $\pi$ of $\SL_n(F)$. 
Now fix an irreducible representation $\pi$ 
of $\SL_n(F)$ which has a Whittaker model for a character $\psi:N \rightarrow \C^\times$ where $N$ is the group of upper triangular unipotent matrices in $\SL_n(F)$. Then by Lemma 1, $\pi^\vee$ 
has a Whittaker model for $\psi^{-1}$. Thus, if $\phi$ is the outer automorphism $\phi(g) = J {}^t\!g^{-1} J^{-1}$
where $J$ is the anti-diagonal matrix with alternating 1 and $-1$, 
 so that it takes $(B,N,\psi)$ to $(B,N,\psi^{-1})$, then $\phi(\pi) = \pi^\vee$, 
by a combination of the Gelfand-Kazhdan theorem regarding the contragredient of an irreducible admissible representation  of 
$\GL_n(F)$ and the uniqueness of Whittaker model in an $L$-packet 
(i.e., given $\psi: N \rightarrow \C^\times$, there is a unique member in an $L$-packet of $\SL_n(F)$ with this Whittaker model), since $\pi^\vee$ is the unique irreducible representation of $\SL_n(F)$ in its $L$-packet with Whittaker model by $\psi^{-1}$. Having understood the action of $\phi$ on $\pi$, how about on other members $\ell_a\pi$ of the $L$-packet of $\pi$? Observe that, $\phi \circ \ell_a = \ell_{a^{-1}} \circ \phi$ (up to an element of $\SL_n(F)$). 
Therefore, $\phi(\ell_a \pi)= \ell_{a^{-1}}(\phi \pi) = \ell_{a^{-1}}\pi^\vee$. The upshot is that although
$\phi$ takes $\pi$ to its dual $\pi^\vee$, it does not take $\ell_a \pi$ to its dual, but 
to the dual of $\ell_{a^{-1}} \pi$. Our conjectures say exactly this for $\SL_n(F)$, and in general too it says something similar except that we cannot prove it for other groups since unlike for $\SL_n(F)$ where Whittaker models can be used to describe all members in an $L$-packet of 
representations,  this is not the case in general. (The change that we see for $\SL_n(F)$ from $\ell_a$ to $\ell_{a^{-1}}$ is in general the dual representation on the component group).

\vspace{3mm}

\begin{remark}
Conjecture \ref{conj1} allows for the possibility,
for groups such as  $G_2, F_4$, or $E_8$ to have 
non-selfdual representations arising out of component groups (which can be 
$\Z/3,\Z/4$, and $\Z/5$ in these respective cases). 
Indeed $G_2, F_4$, and  $E_8$ are known to have non 
self-dual representations over $p$-adic fields 
arising from compact induction of cuspidal unipotent 
representations of corresponding
finite groups of Lie type.
\end{remark}

\begin{remark}
For a quaternion division algebra $D$ over a local field $F$, 
the projective (or, adjoint) symplectic group $\PGSp_{2n}(F)$ 
has a nontrivial pure innerform, call it 
$\PGSp_n(D)$.
It is known that every irreducible  representation of $\PGSp_{2n}(F)$ is selfdual (since by [MVW], any irreducible
representation of $\Sp_{2n}(F)$ is taken to its contragredient by conjugation action of  $-1 \in F^\times/F^{\times 2} = 
\PGSp_{2n}(F)/\Sp_{2n}(F).$) This is in conformity with our conjecture as $\PGSp_{2n}(F)$ is an adjoint group, 
with no diagram automorphisms; the $L$-group in this case is $\Spin_{2n+1}(\C)$, and the possible component groups are easily seen to be extensions of  the component groups for $\SO_{2n+1}(\C)$ (which are elementary abelian 2 groups) by $\Z/2$, but since we are looking at $\PGSp_{2n}(F)$, the character of the component group is supposed to be trivial on the $\Z/2$ 
coming from the center of $\Spin_{2n+1}(\C)$, therefore such representations of the component group are actually representations of an elementary abelian 2 group, in particular selfdual.

In the work \cite{LST}, the authors prove that there is no analogue of MVW theorem for $\Sp_n(D), n \geq 3$, or for $ \SO_n(D), n \geq 5$. Looking at their proof, 
it is clear that their argument also proves that not every irreducible  representation of $\PGSp_{n}(D)$, or of $\PGSO_n(D)$ 
is selfdual. How does this compare with our conjecture \ref{conj1}? The only way out is to have more complicated component groups for 
$\Spin_{2n+1}(\C)$, in particular having non-selfdual representations for the component groups. 
The author has not seen any literature
on component groups for Spin groups. Since $\Spin_5(\C) \cong \Sp_4(\C)$, the component groups are abelian in this case, 
therefore the component group in $\Spin_{2n+1}(\C)$ 
could have non-selfdual representations only for $n \geq 3$, a condition consistent with [LST]. 
\end{remark}

\section{Comparing with the work of Adams}

Jeff Adams in \cite {adams} has constructed the contragredient of any representation of $G(\R)$ in terms of 
what he calls the Chevalley involution $C$: an automorphism of $G(\R)$ of order 2 which leaves a fundamental
Cartan subgroup $H_f$ invariant and acts by $h\rightarrow h^{-1}$ on $H_f$. (A fundamental Cartan subgroup 
of $G(\R)$ is a Cartan subgroup of $G$ defined over $\R$  which is of minimal split rank; such Cartan subgroups are conjugate under $G(\R)$.) One of the main theorems of Adams is that $\pi^C \cong \pi^\vee$ for any irreducible representation $\pi$ of $G(\R)$. He uses this to prove that 
every irreducible representation of $G(\R)$ is selfdual if and only if  $-1$ belongs to the Weyl group of $H_f$ in $G(\R)$ (in particular $-1$ belongs to the Weyl group of $G(\C)$).

Assuming $G$ to be quasi-split, with $(B,T)$ a pair of a Borel subgroup and a maximal torus in it, 
 our recipe for constructing the contragredient is in terms of an automorphism of $G(\R)$ 
of the form $t_{-} \cdot \iota$ where $t_{-}$ in an element in $T^{\ad}(\R)$ which acts by $-1$ 
on all simple root spaces, and $\iota$ is a diagram automorphism of $G$ corresponding to $-w_0$. 
We must therefore check that our automorphism constructed out of considerations with the Whittaker model
are the same as 
that of Adams constructed using semisimple elements, and this is what we do in this section.

The following well-known lemma has the effect of saying that the contragredient which appears in the 
character of component group plays no role for real groups.

\begin{lemma} Let $G$ be a connected real reductive group with a Langlands parameter $\phi: W_{\R} \rightarrow {}^LG.$ 
Then, the group $\pi_0(Z_\phi)$ is an elementary abelian 2-group.
\end{lemma}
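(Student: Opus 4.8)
The plan is to identify $Z_\phi$ with the fixed-point group of an involution of a connected reductive group, and then to invoke the (standard) structure theory of such fixed-point groups.

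First I would write $W_\R=\C^\times\cup j\C^\times$ with $j^2=-1$ and $jzj^{-1}=\bar z$, and use the Galois form ${}^LG=\wG\rtimes\Gal(\C/\R)$, so that $\phi(\C^\times)\subseteq\wG$. Since $\phi$ is a Langlands parameter, $\phi(\C^\times)$ consists of commuting semisimple elements and so lies in a maximal torus of $\wG$; let $S$ be the Zariski closure of $\phi(\C^\times)$, a subtorus of $\wG$, and put $M:=Z_{\wG}(S)$, which is connected reductive (centralizer of a torus). As $j$ normalizes $\C^\times$ in $W_\R$, the element $\phi(j)$ normalizes $\phi(\C^\times)$, hence $S$, hence $M$; let $\theta$ denote conjugation by $\phi(j)$ restricted to $M$. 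Then $\theta^2$ is conjugation by $\phi(j)^2=\phi(-1)$, and $\phi(-1)\in\phi(\C^\times)\subseteq S\subseteq Z(M)$, so $\theta^2=\mathrm{id}$ and $\theta$ is an involution of $M$ (possibly trivial). Finally, an element $g\in\wG$ centralizes $\phi(W_\R)$ if and only if it centralizes both $\phi(\C^\times)$ and $\phi(j)$, i.e. if and only if $g\in M$ and $\theta(g)=g$; hence $Z_\phi=M^\theta$.

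It then remains to prove the purely group-theoretic statement that for any involution $\theta$ of a connected reductive group $M$ over $\C$, the component group $\pi_0(M^\theta)$ is elementary abelian of exponent $2$. For a torus $M=S$ this is immediate: applying $\langle\theta\rangle$-cohomology to $1\to X_\star(S)\to X_\star(S)\otimes_{\Z}\C\to S(\C)\to 1$, and using that the $\C$-vector space $X_\star(S)\otimes_{\Z}\C$ has vanishing higher cohomology for the finite group $\langle\theta\rangle$, identifies $\pi_0(S^\theta)$ with $H^1(\langle\theta\rangle,X_\star(S))$, which is finite and annihilated by $2$. For general $M$ I would invoke the structure theory of symmetric subgroups (Steinberg's theorem that $(M^\theta)^\circ$ is reductive, together with results of Vust and Richardson on maximal $\theta$-split tori): one can choose a $\theta$-stable maximal torus $S$ of $M$, containing a maximal $\theta$-split torus, such that $S^\theta$ meets every connected component of $M^\theta$, that is $M^\theta=(M^\theta)^\circ\cdot S^\theta$. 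This presents $\pi_0(M^\theta)$ as a quotient of $\pi_0(S^\theta)$, which is elementary abelian of exponent $2$ by the torus case, completing the proof.

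The routine parts are the reduction $Z_\phi=M^\theta$ and the torus computation. The step I expect to be the main obstacle to a self-contained argument is the structural input that a maximal $\theta$-split torus already detects all of $\pi_0(M^\theta)$: a purely cohomological descent along the isogenies $M_{\mathrm{sc}}\to M_{\mathrm{der}}\hookrightarrow M$ and $Z(M)^\circ\to M$ — using that $M_{\mathrm{sc}}^\theta$ is connected (again Steinberg) and that the relevant obstruction groups $H^1(\langle\theta\rangle,-)$ of finite central subgroups are $2$-torsion — shows readily that $\pi_0(M^\theta)$ is a $2$-group, but does not by itself exclude a cyclic group of order $4$ arising as an extension; ruling this out appears to require the structure theory in some form.
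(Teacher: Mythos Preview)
Your reduction $Z_\phi = M^\theta$ with $M = Z_{\wG}(\phi(\C^\times))$ connected reductive and $\theta$ an involution is exactly the paper's first step. From there the two arguments diverge.

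The paper does not stay inside the algebraic theory of symmetric spaces. Instead it invokes Cartan: the involution $\theta$ on $M$ determines a real form $M_\R$ for which $\theta$ is the Cartan involution, so $M^\theta$ is the complexification of a maximal compact subgroup $K$ of $M_\R(\R)$, and hence $\pi_0(M^\theta)=\pi_0(K)=\pi_0(M_\R(\R))$. It then cites the theorem of Matsumoto (Borel--Tits, Corollaire~14.5) that the component group of the real points of a connected reductive real group is an elementary abelian $2$-group. So the paper trades one black box (your Vust--Richardson input) for another (Matsumoto), and the trade is natural here because the paper is already working in the setting of real groups.

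Your route through a maximally $\theta$-split torus is correct, and the structural claim that $S^\theta$ meets every component of $M^\theta$ does hold; in fact it can be recovered from Matsumoto's theorem via the Cartan dictionary (a maximally $\R$-split torus of $M_\R$ complexifies to a maximal $\theta$-split torus of $M$, and its $2$-torsion already surjects onto $\pi_0(M_\R(\R))$). If you want a purely algebraic reference avoiding real forms, Richardson's 1982 \emph{Inventiones} paper on involutions of reductive groups is the right place, though the statement you need is not quite Proposition~2.3 there and takes a bit of assembly. Your candid remark that the isogeny argument alone only gives ``$2$-group'' and not ``elementary abelian'' is accurate; one really does need either the torus-hits-all-components input or the passage through real groups.
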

\begin{proof} From the structure of $W_\R = \C^\times \cdot \langle j \rangle$, with $j^2=-1$, 
the group $Z_\phi$ can be considered as the fixed points of the involution which is $\phi(j)$,
 on the centralizer of 
$\phi(\C^\times)$ in $\widehat{G}$. The group $\phi(\C^\times)$ being connected, 
abelian, and consisting of semisimple elements, its centralizer in $\widehat{G}$ is connected too. It 
suffices then to prove the following lemma. \end{proof}
\begin{lemma} Let $H$ be a connected reductive group over $\C$, and $j$ an involution on $H$ with $H^j$ its fixed points. 
Then the group of 
connected components  $\pi_0(H^j)$ is an elementary abelian 2-group.
\end{lemma}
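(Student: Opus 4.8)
The plan is to reduce to the semisimple case and then exhibit an explicit order-two basis. First I would observe that $H^j$ contains the connected central torus, more precisely that $\pi_0(H^j) = \pi_0((H^j)^{\der})$-style reductions are available; but cleanly, write $H = Z(H)^\circ \cdot H^{\der}$ and note that the identity component $Z(H)^{\circ,j}$ is already connected (a torus with an involution has connected fixed points up to the $2$-torsion contribution, which I must handle), so the only source of components is the action of $j$ on the semisimple part together with the finite group $Z(H)^\circ \cap H^{\der}$. Thus it suffices to treat $H$ semisimple, and then even simply connected: if $\tilde H \to H^{\der}$ is the simply connected cover, $j$ lifts (uniquely) to $\tilde H$, and one gets $\pi_0(H^j)$ as a subquotient of $\pi_0(\tilde H^j) \times (\text{a subgroup of } \pi_1(H^{\der}))$, the fundamental group being a finite abelian group on which the component-group structure is already elementary $2$-torsion because $j$ acts on it and $\pi_0$ of fixed points of a finite abelian group under an involution is an elementary abelian $2$-group (Herbrand quotient / cocycle computation).

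For $H$ simply connected the strategy is structural. Up to conjugating $j$ by an inner automorphism I may assume $j$ preserves a pinning $(B,T,\{X_\alpha\})$ of $H$; write $j = \theta \cdot \mathrm{Ad}(t)$ where $\theta$ is the pinned (diagram) part and $t \in T$ with $\theta(t) t = 1$ forced by $j^2 = 1$. Since $H$ is simply connected, $H^j$ is connected when $j$ is inner (a theorem of Steinberg), so the only components come from the diagram part $\theta$, which has order $1$ or $2$; more precisely $\pi_0(H^j)$ injects into $\langle \theta \rangle / (\text{image of inner part})$, an elementary abelian $2$-group. I would make this precise by using the Steinberg connectedness theorem for the identity component of the centralizer of a semisimple automorphism together with the fact that $H^{j}/(H^{j})^\circ$ is generated by the classes coming from $\mathrm{Ad}(t)$-torsion in $T$ modulo the Weyl action, all of which is killed by $2$.

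Alternatively — and this is probably the cleaner write-up — I would argue cohomologically: $\pi_0(H^j)$ maps to $H^1(\mathbb{Z}/2, H^\circ(\C)) = H^1(\mathbb{Z}/2, H(\C))$ compatibly with Adams' theorem quoted in the excerpt identifying this with a real Galois cohomology group $H^1(\Gal(\C/\R), G(\C))$ for the real form $G$ attached to $j$; and real Galois cohomology $H^1(\Gal(\C/\R), G(\C))$ of a connected reductive group is killed by $2$ (it is a subquotient of $H^1(\Gal(\C/\R), T(\C))$ for a maximal torus, which for $\Gal(\C/\R) = \Z/2$ acting on a torus is an elementary abelian $2$-group). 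One then checks $\pi_0(H^j) \hookrightarrow H^1(\Z/2, H^\circ)$ directly from the long exact sequence of $1 \to H^\circ \to H^j \to \pi_0(H^j) \to 1$ in $\Z/2$-cohomology, using that $H^\circ$ is connected so $\pi_0(H^j)$ embeds into $H^1(\Z/2, H^\circ)$, or equivalently that two points of $H^j$ in the same component of $H$ differ by a $j$-fixed element of $H^\circ$ up to an element of order dividing $2$.

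The main obstacle I anticipate is the torus/fundamental-group bookkeeping: ensuring that the central torus $Z(H)^\circ$ and the isogeny $\tilde H \times Z(H)^\circ \to H$ do not introduce components of order $> 2$. This is handled by the elementary fact that for any diagonalizable group $A$ over $\C$ with an involution $j$, $\pi_0(A^j) = A^j / (A^j)^\circ$ is annihilated by $2$ (the connected component $(A^j)^\circ$ is the $+1$-eigentorus, and the quotient is $2$-torsion in $A[2]$), together with functoriality of $\pi_0(-^j)$ under the relevant isogenies; since everything in sight is a product/quotient of tori and a simply connected group with connected fixed locus up to the diagram $\Z/2$, the conclusion that $\pi_0(H^j)$ is elementary abelian $2$-torsion follows.
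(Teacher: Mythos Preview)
Your approaches are both genuinely different from the paper's, and both have real gaps.

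The paper's argument is three lines: by Cartan, the holomorphic involution $j$ on $H$ corresponds to a real form $H_\R$ for which $j$ is a Cartan involution; then $H^j$ is the complexification of the maximal compact $K = H_\R(\R)^j$, so $\pi_0(H^j) = \pi_0(K) = \pi_0(H_\R(\R))$; finally one cites the theorem of Matsumoto (Borel--Tits, corollaire 14.5) that the component group of a connected real reductive group is an elementary abelian $2$-group.

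Your first approach (reduce to simply connected via an isogeny $Z^\circ \times \tilde H^{\der} \to H$ with finite central kernel $K$, then invoke Steinberg) gets you only that $\pi_0(H^j)$ sits in an extension
\[
1 \longrightarrow A \longrightarrow \pi_0(H^j) \longrightarrow B \longrightarrow 1
\]
with $A$ a quotient of $\pi_0\bigl((Z^\circ \times \tilde H^{\der})^{\tilde j}\bigr)$ and $B$ a subgroup of $H^1(\Z/2, K)$. Both $A$ and $B$ are elementary abelian $2$-groups, but an extension of such need not be: $\Z/4$ is an extension of $\Z/2$ by $\Z/2$. So you obtain only exponent dividing $4$, not $2$. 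This is not mere bookkeeping; closing the gap needs a genuinely new ingredient. Separately, your parenthetical that ``$\pi_0$ of fixed points of a finite abelian group under an involution is an elementary abelian $2$-group'' is false (take $A = \Z/4$ with $j$ trivial, so $A^j = \Z/4$); the correct and relevant statement is that $H^1(\Z/2, A)$ is killed by $2$. The same slip recurs for ``diagonalizable $A$'' in your last paragraph: the claim holds for tori but fails for $\mu_4$ with trivial $j$.

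Your second, cohomological approach is more seriously broken: there is no natural map $\pi_0(H^j) \to H^1(\Z/2, H)$, injective or otherwise. The exact sequence you write, ``$1 \to H^\circ \to H^j \to \pi_0(H^j) \to 1$'', does not exist (since $H$ is connected, $H^\circ = H$, and $H^j \subset H$, not the reverse); if you meant $(H^j)^\circ$ in place of $H^\circ$, then all three terms already carry trivial $\langle j\rangle$-action and the long exact sequence yields nothing. Adams' theorem in the paper identifies $H^1(\langle j\rangle, H)$ with a Galois $H^1$, but that is a statement about $H^1$, not about $\pi_0$ of $H^0$; there is no boundary map linking them here.
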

\begin{proof}According to Elie Cartan, the involution $j$ gives rise to a real structure on $H$, i.e., a connected 
real reductive group $H_\R$ on which $j$ operates, and operates as a Cartan involution, i.e., $H_{\R}(\R)^j$ 
is a maximal compact subgroup of $H_\R(\R)$, and $H^j$ is the complexification of $H_{\R}(\R)^j$. 
Thus $\pi_0(H_\R(\R))= \pi_0(H_\R(\R)^j)= \pi_0(H^j).$ Finally, it suffices to note that the 
group of connected components of a connected real reductive group is an elementary abelian 2-group, cf. \cite{BT} corollaire 14.5, who attribute the result to Matsumoto. 
\end{proof}

It now suffices to note the following two propositions for ensuring that the conditions of Adams at the beginning of the section are the same that we require for all representations of $G(\R)$ to be selfdual.

\begin{proposition}\label{prop1}
Let $G$ be a connected real reductive group which is quasi-split with $-1$ belonging to the Weyl group of $G(\C)$.
Fix a Borel subgroup $B$ of $G$ 
defined over $\R$, as well as a maximal torus $T$ inside $B$. Let $H_f(\R)$ be a fundamental
torus in $G(\R)$. Then there is an element $t_0 \in T(\R)$ which operates on all simple roots of $B$ by $-1$ 
if and only if there is an element $t_{-1}$ in $G(\R)$ which normalizes $H_f(\R)$ and acts by $h \rightarrow h^{-1}$.
The element $t_0$, equivalently $t_{-1}$, exists in $G(\R)$ if and only if 
the natural map $\iota_\star:H^1(\Gal(\C/\R),\Z/2) \rightarrow H^1(\Gal(\C/\R), Z(\C))$ is trivial where $Z$ is the center of $G$, and 
$\iota: \Z/2\rightarrow Z(\C)$ is the restriction to the center of $\SL_2(\R)$ of 
a homomorphism $\phi: \SL_2(\R) \rightarrow G(\R)$ defined over $\R$ 
associated by the Jacobson-Morozov theorem to a regular unipotent element in $G$. Further, $t_0^2$ and $t_{-1}^2$ are independent of all choices, and 
$t_0^2=t_{-1}^2=\iota(-1).$
\end{proposition}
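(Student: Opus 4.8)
The plan is to reduce everything to a computation inside $\SL_2$ via the Jacobson--Morozov homomorphism $\phi:\SL_2(\R)\to G(\R)$ attached to a regular unipotent element, exactly as announced. First I would recall that a regular unipotent element lies in a unique Borel, which we may take to be $B$, so that the torus $\phi\bigl(\left(\begin{smallmatrix} t & 0 \\ 0 & t^{-1}\end{smallmatrix}\right)\bigr)$ is a one-parameter subgroup of $T$, and the associated coweight is $\rho^\vee$ (the sum of the fundamental coweights, equivalently the half-sum of positive coroots), because pairing with each simple root gives $2$ — this is the defining property of the semisimple element of a principal $\mathfrak{sl}_2$. Consequently the image of $-I\in\SL_2$, namely $j:=\phi(-I)$, lies in $T(\R)$ and satisfies $\alpha(j)=(-1)^{\langle\alpha,\rho^\vee\rangle}=(-1)^2=1$ for every \emph{simple} root $\alpha$; but what we want is an element acting by $-1$ on all simple roots. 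So the relevant element is not $j$ itself but rather: $t_0$ is the element of $T^{\ad}(\R)$ dual to the cocharacter $\rho^\vee$ evaluated at $-1$, i.e. $t_0=\rho^\vee(-1)$ viewed in $T^{\ad}$; the point is that $\rho^\vee(-1)$ \emph{does} act by $-1$ on each simple root space, and the obstruction to lifting it to $T(\R)$ — or even to $G(\R)$ — is precisely measured by the center.

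Concretely, the second part of the statement ($t_0^2=t_{-1}^2=\iota(-1)$, independent of choices) I would prove as follows. Since $\alpha(t_0^2)=1$ for all simple roots $\alpha$, the element $t_0^2$ is central in $G$; and $t_0^2=\rho^\vee(-1)^2=\rho^\vee(1)$ — wait, more carefully, $t_0^2 = (2\rho^\vee)(-1)$, and $2\rho^\vee$ is the sum of the positive coroots, which is a genuine cocharacter of the \emph{adjoint} torus but its value at $-1$ is the image of $-I$ under the principal $\SL_2$, i.e. $t_0^2=\phi(-I)=\iota(-1)$, where I am writing $\iota:\Z/2=Z(\SL_2(\R))\to Z(G)(\C)$ for the restriction of $\phi$ to the center (note $\phi(-I)$ is automatically central in $G$ since $-I$ is central in $\SL_2$). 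This already shows $t_0^2$ is independent of all choices: the $\SL_2$-triple is unique up to $G(\bar F)$-conjugacy, hence $\phi(-I)$ is well-defined, and it is central so conjugation does not move it; it even lies in $G(\R)$ since $\phi$ is defined over $\R$. The same argument applied on the dual side to $t_{-1}$: an element $t_{-1}\in G(\R)$ normalizing $H_f(\R)$ and inverting it represents the Weyl element $-1\in W$, its square centralizes $H_f$ hence is central, and running the principal $\SL_2$ inside a maximal torus through $H_f$ (using the Cayley transform / Jacobson--Morozov as in section 5 of the paper) identifies $t_{-1}^2$ with the same $\iota(-1)$.

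For the equivalence of existence: $t_0\in T^{\ad}(\R)$ always exists (it is $\rho^\vee(-1)$, defined over $\R$ since $-1\in\R$); the question is whether it lifts to $T(\R)$. From the exact sequence $1\to Z\to T\to T^{\ad}\to 1$ and the fact that we already have a preimage $j=\phi(-I)\cdot(\text{something})$... rather: the preimages of $t_0$ in $T$ form a torsor under $Z$, defined over $\R$, and it has an $\R$-point iff its class in $H^1(\Gal(\C/\R),Z(\C))$ vanishes. I claim that class is exactly $\iota_\star(\text{nontrivial class in }H^1(\Gal(\C/\R),\Z/2))$: the torsor of preimages of $t_0=\rho^\vee(-1)$ in $T$ is the pushout along $\iota:\Z/2\to Z$ of the torsor of preimages of $-I$ under $\SL_2\twoheadrightarrow\PGL_2$, which is the nontrivial $\Z/2$-torsor, so its class is $\iota_\star$ of the generator. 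Finally, existence in $G(\R)$ versus in $T(\R)$: any element of $G(\R)$ inducing $-w_0$... hmm, here one must argue that if such an element exists in $G(\R)$ at all it can be conjugated into $T(\R)$ — this follows because two such elements differ by something centralizing the relevant data, using quasi-splitness so that $B$ is defined over $\R$ and $N(T)(F)\to W$ is surjective on $F$-points (as recalled in section 3). I expect \textbf{this last normalization step} — passing from "an element of $G(\R)$ with the right Weyl-theoretic effect exists" to "an element of $T(\R)$ with that effect exists," and the matching back-and-forth between the $t_0$ picture and the fundamental-Cartan $t_{-1}$ picture — to be the main obstacle, since it requires care with the real forms and is exactly where section 5's identification of the most-split with the most-compact torus via the Cayley transform does the work; the $\SL_2$ computations themselves ($\rho^\vee$, the value at $-1$, centrality of $t_0^2$) are routine.
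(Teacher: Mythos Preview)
Your approach via the principal cocharacter and the torsor of lifts is sound for the $t_0$ side and is essentially a cohomological rephrasing of what the paper does. One slip: the cocharacter of the diagonal torus under $\phi$ is $2\rho^\vee$, not $\rho^\vee$ (it is $2\rho^\vee$ that pairs to $2$ with each simple root), but your conclusions survive this. Your identification of the obstruction class as $\iota_\star$ of the nontrivial element of $H^1(\R,\Z/2)$ is correct.

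There are two genuine gaps on the $t_{-1}$ side. First, ``$t_{-1}^2$ centralizes $H_f$ hence is central'' does not follow: centralizing a maximal torus only places $t_{-1}^2$ in $H_f$, not in $Z(G)$. What is true is that for any $h\in H_f$ one has $(t_{-1}h)^2=t_{-1}^2$ (using $t_{-1}h t_{-1}^{-1}=h^{-1}$), so $t_{-1}^2$ is independent of the representative in $N(H_f)$; one still needs a specific representative whose square is visibly $\iota(-1)$. Second --- and this is the content you are deferring --- the paper does not treat the two sides separately and then match them. It uses the single element $\phi(j)$ with $j=\left(\begin{smallmatrix}i&0\\0&-i\end{smallmatrix}\right)\in\SL_2(\C)$: this lies in $T(\C)$ with $\alpha(\phi(j))=-1$ for every simple root (so it is the candidate $t_0$, up to center); and since $j$ normalizes and inverts the compact torus $\Si^1\subset\SL_2(\R)$, the element $\phi(j)$ normalizes $H_f:=Z_{G(\R)}(\phi(\Si^1))$, which the paper checks is a fundamental torus, and (using that $-1\in W$ is represented over $\C$) one sees $\phi(j)$ acts by $-1$ on all of $H_f$ --- so $\phi(j)$ is simultaneously the candidate $t_{-1}$, and $\phi(j)^2=\phi(-I)=\iota(-1)$ gives the squared identity for both at once. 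Since $\overline{\phi(j)}=\phi(j)^{-1}$, the element $\phi(j)$ is real iff $\phi(-I)=1$; otherwise one corrects by an element of $Z(\C)$ on the $t_0$ side and by an element of $H_f(\C)$ on the $t_{-1}$ side, giving obstructions in $H^1(\R,Z)$ and $H^1(\R,H_f)$ respectively. The paper closes the loop with a separate lemma that $H^1(\R,Z)\to H^1(\R,H_f)$ is injective (because $H_f/Z$, a fundamental torus in the adjoint group, is connected), so the two obstructions vanish together. This injectivity is exactly the missing ingredient behind what you call ``the main obstacle.''
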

\begin{proof}  Using Jacobson-Morozov, we will fix a homomorphism $\phi: \SL_2(\R) \rightarrow G(\R)$ defined over $\R$ 
taking the upper triangular unipotent subgroup $U$ of $\SL_2(\R)$ to the Borel subgroup $B$ of $G$, 
and capturing a regular unipotent element of $B$ in the image.
Observe that a regular unipotent element in 
$B(\R)$ has a nonzero components in each of the simple root spaces. The element $t_0\in T(\R)$ which operates on all simple 
roots of $B$ by $-1$ is unique up to $Z(\C)$. The element $t_0$ must therefore be the image of 
$j = \left ( \begin{array}{cc} i & 0 \\ 0 & -i\end{array}\right ) \in \SL_2(\C)$ 
up to central elements. Clearly $\phi(j)$ is real if and only if $\phi(-1)=1$. If $\phi(-1) =1$, then we have an element  $\phi(j) \in T(\R)$ 
which operates on all simple roots of $B$ by $-1$. If $\phi(-1)=-1$, an element of $Z(\R)$, then if there is an element in
 $T(\R)$ which can act by $-1$ on all simple roots of $B(\R)$,  the only 
option for such an element would be $z\cdot \phi(j)$ for $z\in Z(\C)$. But for $z\cdot \phi(j)$  to be real, clearly $\bar{z}=-z$, i.e.,
the natural map $H^1(\Gal(\C/\R),\Z/2) \rightarrow H^1(\Gal(\C/\R), Z(\C))$ must be trivial.  

On the other hand, fix $\Si^1 = \left ( \begin{array}{cc} a & b \\ -b & a\end{array}\right )$  
inside $\SL_2(\R)$ (with $a^2+b^2=1$). We claim that the centralizer of $\phi(\Si^1)$ inside $G(\R)$ is a fundamental torus.
For this we first prove that the centralizer of $\phi(\Si^1)$ in $G$ is a torus. To prove this, it suffices to prove a similar statement for the 
centralizer in $G$ of the image of the diagonal torus in $\SL_2(\R)$ which is a standard and a simple result. Next, we prove that the centralizer of $\phi(\Si^1)$ inside $G(\R)$ 
is a fundamental torus. For this, we can assume that $\phi(\Si^1)$ lands inside a maximal compact subgroup $K$ of $G(\R)$. Since the 
centralizer of $\phi(\Si^1)$ inside $G(\R)$ is a torus, this is also the case about the centralizer of $\phi(\Si^1)$ inside $K$.
Thus the centralizer of $\phi(\Si^1)$ inside $K$ contains a maximal torus of $K$, but any maximal torus of $G(\R)$ containing a maximal
torus of $K$ is a fundamental torus of $G(\R)$. Denote the centralizer of $\phi(\Si^1)$ inside $G(\R)$ as $H_f(\R)$. 

An element $t_{-1}$ in $G(\C)$ which acts by $-1$ on $H_f(\R)$ must preserve $\phi(\Si^1)$. 
Note that $j = \left ( \begin{array}{cc} i & 0 \\ 0 & -i\end{array}\right ) \in \SL_2(\C)$ 
preserves $\Si^1$ and acts by $-1$ on it.  
Therefore, $\phi(j)$ preserves $\phi(\Si^1)$, and hence preserves its centralizer, i.e., $H_f$ too. Thus both $t_{-1}$ as well as
$\phi(j)$ preserve $H_f(\R)$, and act as $-1$ on $\phi(\Si^1)$. Thus, 
$t^{-1}_{-1}\phi(j)$ preserves $H_f$ and acts by identity on $\phi(\Si^1)$.
Thus  $t^{-1}_{-1}\phi(j)$ belongs $H_f$, therefore  $\phi(j)$ preserves $H_f$ and acts by $-1$ on it. If $\phi(-1) =1$, we once again have
an element $\phi(j) \in G(\R)$ acting on $H_f(\R)$ by $-1$. Assume then that $\phi(-1)=-1$. Then for $\phi(j)\cdot t_0$ to be real for 
$t_0 \in H_f(\C)$, clearly
we must have $\bar{t}_0=-t_0$, i.e.,  the natural map $H^1(\Gal(\C/\R),\Z/2) \rightarrow H^1(\Gal(\C/\R), H_f(\C))$ must be trivial.

To summarize the conclusion so far, if $\phi(-1)=1$, we can take $t_0 = t_{-1}=\phi(j)$. If $\phi(-1)$ is nontrivial, 
then a  multiple
of $\phi(j)$ by $Z(\C)$ can be taken as $t_0$ if and only if the 
natural map $H^1(\Gal(\C/\R),\Z/2) \rightarrow H^1(\Gal(\C/\R), Z(\C))$ is trivial.
A  multiple
of $\phi(j)$ by $H_f(\C)$ can be taken as $t_1$ if and only if the 
natural map $H^1(\Gal(\C/\R),\Z/2) \rightarrow H^1(\Gal(\C/\R), H_f(\C))$ is trivial.

The following lemma now completes the proof of the proposition.
\end{proof}

\begin{lemma}\label{lemma2}Let $G$ be a connected reductive group over $\R$ with $Z$ the center of $G$, and $H_f$ a fundamental maximal torus in $G$. Then the
natural map $H^1(\Gal(\C/\R), Z(\C)) \rightarrow H^1(\Gal(\C/\R), H_f(\C))$ is injective.
\end{lemma}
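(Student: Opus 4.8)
\emph{Plan.} I would pass to the dual side by Tate--Nakayama duality, reduce by a long exact sequence to a statement about the root lattice, and then invoke the characterisation of fundamental maximal tori. Write $\Gamma=\Gal(\C/\R)=\langle\sigma\rangle$. The map in question is induced by $Z\hookrightarrow H_f$, which is Cartier dual to the restriction surjection $X^\star(H_f)\twoheadrightarrow X^\star(Z)$ of $\Gamma$-modules. The Tate--Nakayama pairings $H^1(\Gamma,A(\C))\times H^1(\Gamma,X^\star(A))\to\Z/2$ (for $A=Z$ and $A=H_f$) are perfect and functorial, so $H^1(\Gamma,Z(\C))\to H^1(\Gamma,H_f(\C))$ is injective if and only if the induced map $H^1(\Gamma,X^\star(H_f))\to H^1(\Gamma,X^\star(Z))$ is surjective. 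Applying the long exact sequence to $0\to X^\star(H_f/Z)\to X^\star(H_f)\to X^\star(Z)\to 0$, this surjectivity is equivalent to injectivity of $H^2(\Gamma,X^\star(H_f/Z))\to H^2(\Gamma,X^\star(H_f))$; since both lattices are finitely generated and free over $\Z$ and $\Gamma\cong\Z/2$, these $H^2$'s are the Tate groups $\widehat H^0(\Gamma,-)=(-)^\sigma/(1+\sigma)(-)$. Finally $H_f/Z=H_f/Z(G)$ is a maximal torus of the adjoint group, so $X^\star(H_f/Z)$ is the root lattice $Q(R)$ of $R=R(G_\C,H_{f,\C})$.

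It therefore remains to show that $Q(R)^\sigma/(1+\sigma)Q(R)\to X^\star(H_f)^\sigma/(1+\sigma)X^\star(H_f)$ is injective, and this is where fundamentality enters: a maximal torus of a real reductive group is fundamental (maximally compact) exactly when $R$ has no real roots, i.e.\ $\sigma$, acting on $X^\star(H_f)$ and permuting $R$, fixes no root. Hence $\sigma$ breaks $R$ into two-element orbits $\{\alpha,\sigma\alpha\}$, and I claim this forces $Q(R)^\sigma=(1+\sigma)Q(R)$, so that $\widehat H^0(\Gamma,Q(R))=0$ and the desired injectivity is automatic. (Sanity check: for the compact Cartan of $\SL_2$ one gets $\widehat H^0(\Gamma,Q(A_1))=0$, whereas for the split Cartan, which has a real root, $\widehat H^0(\Gamma,Q(A_1))=\Z/2$ and the statement indeed fails there.)

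The hard part is precisely the identity $Q(R)^\sigma=(1+\sigma)Q(R)$: averaging an expression $\chi=\sum n_\alpha\alpha$ only yields $2\chi=\sum n_\alpha(\alpha+\sigma\alpha)\in(1+\sigma)Q(R)$, so one must recover a factor of $2$, and this is the point at which the absence of real roots has to be used genuinely rather than formally. I would first reduce to $R$ irreducible (a $\sigma$-swapped pair of irreducible components contributes an induced $\Z[\Gamma]$-module, whose Tate cohomology vanishes), and then identify $(1+\sigma)Q(R)$ with the root lattice of the restricted root system $\{\operatorname{pr}_{V^\sigma}\alpha:\alpha\in R\}\setminus\{0\}$ attached to the involution $\sigma$, which coincides with $Q(R)^\sigma$ exactly when there are no real roots. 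Should pinning down this lattice be awkward in general, it suffices instead to retain the weaker hypothesis that the relevant class $\chi$ already lies in $(1+\sigma)X^\star(H_f)$ and to deduce that it then dies in $\widehat H^0(\Gamma,X^\star(H_f))$, which is all that is needed.
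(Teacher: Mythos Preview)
Your reduction via Tate--Nakayama duality and the long exact sequence is correct, and it lands on the claim $\widehat H^{0}(\Gamma, Q(R)) = 0$ for the fundamental torus. But you have not proved this claim: you correctly flag it as ``the hard part'', sketch a strategy via restricted root lattices without carrying it out, and then offer a fallback that is essentially circular (assuming $\chi \in (1+\sigma)X^\star(H_f)$ and concluding it dies in $\widehat H^{0}(\Gamma, X^\star(H_f))$ is a tautology; what you would need is that $\chi \in Q^\sigma \cap (1+\sigma)X^\star(H_f)$ forces $\chi \in (1+\sigma)Q$, and you give no argument for that). So there is a genuine gap at the decisive step.

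The paper's proof is much more direct and avoids Tate--Nakayama entirely. From $1 \to Z \to H_f \to H_f/Z \to 1$ one reads off
\[
H_f(\R) \longrightarrow (H_f/Z)(\R) \longrightarrow H^1(\Gamma, Z(\C)) \longrightarrow H^1(\Gamma, H_f(\C)),
\]
so the desired injectivity is equivalent to surjectivity of $H_f(\R) \to (H_f/Z)(\R)$. Since the map on Lie algebras is onto, the image contains the identity component; hence it suffices that $(H_f/Z)(\R)$ be connected. The paper then proves, as a separate lemma, that a fundamental torus in a connected \emph{adjoint} real group has connected real points, by relating it to the maximally split torus via a twist by $w_0$ and exhibiting it explicitly as a product of copies of $\Si$ and $\C^\times$.

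It is worth observing that your missing claim and the paper's connectedness lemma are \emph{equivalent}: for any real torus $T$ one has $\pi_0(T(\R)) \cong \widehat H^{0}(\Gamma, X_\star(T))$, and since $X_\star(H_f/Z)$ and $X^\star(H_f/Z) = Q(R)$ are dual lattices their $\widehat H^{0}$'s are dual and vanish together. So your route and the paper's converge on the same fact, but the paper reaches it without the dual-side detour and actually proves it. If you want to salvage your approach, either prove $Q(R)^\sigma = (1+\sigma)Q(R)$ directly from the absence of real roots, or simply invoke the connectedness lemma---at which point the Tate--Nakayama machinery becomes superfluous.
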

\begin{proof}It suffices to prove that the mapping $H_f(\R) \rightarrow (H_f/Z)(\R)$ is surjective. 
But as the next well-known lemma proves, a 
fundamental Cartan subgroup in an adjoint group is connected, so naturally the mapping $H_f(\R) \rightarrow (H_f/Z)(\R)$ is surjective. 
\end{proof}

\begin{lemma}\label{lemma3}For $G$ a connected adjoint group over $\R$, a fundamental torus in $G(\R)$ is connected.
\end{lemma}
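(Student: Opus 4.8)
The plan is to reduce to the simplest possible group and then read off the component group of the torus cohomologically. First I would use that forming a fundamental torus is compatible with direct products and with restriction of scalars: a connected adjoint group over $\R$ is a product of groups $R_{E/\R}G'$ with $G'$ absolutely simple adjoint and $E=\R$ or $E=\C$, and the fundamental torus of a product is the product of the fundamental tori. When $E=\C$ we have $G(\R)=G'(\C)$, a connected complex group all of whose maximal tori are products of copies of $\C^\times$, hence connected. So I may assume $G$ is absolutely simple adjoint over $\R$.

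Next I would invoke the standard identification of the component group of a real torus: for a torus $T$ over $\R$, $\pi_0(T(\R)) \cong \widehat{H}^0(\Gal(\C/\R), X_\star(T)) = X_\star(T)^{\sigma}/(1+\sigma)X_\star(T)$, with $\sigma$ complex conjugation acting on the cocharacter lattice (this is the same kind of bookkeeping already used in Section 3 for the compact and split tori of $\Sp_{2n}(\R)$ and $\U(p,q)$). So the lemma becomes the vanishing of this Tate cohomology group for $T=H_f$. Because $G$ is adjoint, $X_\star(H_f)$ is the coweight lattice $P^\vee$ of the root system of $G$, and everything comes down to the action of $\sigma$ on $P^\vee$. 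To identify this action I would use the Cayley-transform/principal-$\SL_2$ picture already invoked in the proof of Proposition \ref{prop1}: passing from a maximally split maximal torus to $H_f$ by Cayley transforms through a maximal set $\{\beta_1,\dots,\beta_k\}$ of strongly orthogonal roots composes the Galois action with the reflections $s_{\beta_j}$, so that on $X_\star(H_f)=P^\vee$ the action of $\sigma$ becomes a certain involution $w$ lying in the group generated by the Weyl group and the diagram automorphisms. Thus the group to be killed is $(P^\vee)^{w}/(1+w)P^\vee$.

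Finally I would show this quotient is trivial, i.e.\ that the norm map $1+w$ surjects onto the fixed lattice $(P^\vee)^{w}$ (the reverse inclusion being automatic). The point is that maximality of the strongly orthogonal set $\{\beta_j\}$ identifies $(P^\vee)^{w}$ precisely — geometrically, $P^\vee$ meets the $(-1)$-eigenspaces of the $s_{\beta_j}$ in nothing beyond what $w$ already fixes — while adjointness makes $P^\vee$ large enough that pairing with each coroot $\beta_j^\vee$ is surjective onto $\Z$, which is what forces $1+w$ onto $(P^\vee)^{w}$; this is exactly what fails, say, for a maximally split torus of a non-adjoint group. I expect this last step to be the real obstacle: pinning down $(P^\vee)^{w}$ and checking the surjectivity of $1+w$ uniformly takes some care with non-simply-laced types and with the non-quasi-split real forms, and it may in the end be cleanest to fall back on the classification of real forms of adjoint simple groups, in the same spirit as the companion fact used earlier that $\pi_0$ of a connected real reductive group is an elementary abelian $2$-group.
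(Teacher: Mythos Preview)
Your framework is sound---the cohomological identification $\pi_0(T(\R))\cong X_\star(T)^\sigma/(1+\sigma)X_\star(T)$ is correct, and for an adjoint group $X_\star(H_f)=P^\vee$ is the coweight lattice. But you do not actually prove the vanishing: the argument that ``adjointness makes $P^\vee$ large enough that pairing with each $\beta_j^\vee$ is surjective onto $\Z$, which is what forces $1+w$ onto $(P^\vee)^w$'' is a slogan, not a proof, and you yourself say the last step may have to be done type by type. As written, then, the proposal is a plan that terminates in a case check you have not carried out.

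The paper avoids this entirely by a reduction you do not make: fundamental tori are shared among inner forms, so one may assume $G$ is \emph{quasi-split}. For a quasi-split adjoint group the maximally split torus has the explicit shape $T_s\cong(\R^\times)^a\times(\C^\times)^b$, with $a$ the number of orbits of the diagram automorphism. Then a \emph{single} Cayley transform---coming from the principal $\SL_2$ attached to a regular unipotent, exactly as in the proof of Proposition~\ref{prop1}---shows that $T_f$ is the twist of $T_s$ by $w_0$; equivalently, $T_f$ is the twist of the split torus by $-(-w_0)w^0$ with $(-w_0)w^0$ a diagram automorphism. Twisting $(\R^\times)^{a'}\times(\C^\times)^{b'}$ by $t\mapsto t^{-1}$ gives $(\Si)^{a'}\times(\C^\times)^{b'}$, which is visibly connected. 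No lattice computation and no classification is needed.

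So the gap is not in your cohomological setup but in the missing reduction. Your route forces you to control $(P^\vee)^w/(1+w)P^\vee$ for the many involutions $w$ that arise from maximal strongly orthogonal sets across all real forms; the paper's route replaces this by one explicit involution on one explicit torus after passing to the quasi-split inner form.
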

\begin{proof}Since fundamental tori are shared among innerforms, it suffices to assume that $G$ is a quasi-split group over $\R$, so that we can use 
$\phi: \SL_2(\R) \rightarrow G(\R)$ defined over $\R$ 
associated by Jacobson-Morozov theorem to a regular unipotent element in $G$. A maximally split torus $T_s$ of $G$ is obtained as 
the centralizer of the image of the diagonal torus in $\SL_2(\R)$, whereas a fundamental torus $T_f$ in $G$ is obtained as the centralizer
of the image under $\phi$ of the standard $\Si^1$ in $\SL_2(\R)$.  The diagonal and the compact torus $\Si^1$ 
in $\SL_2(\R)$ are Galois twists of each other,
which implies that their centralizers $T_s$ and $T_f$ are also Galois twists of each other in an explicit way which allows 
one to conclude the lemma. We elaborate on this. 

Let $j = \frac{1}{\sqrt{2}}\left ( \begin{array}{cc} 1 & -i \\ -i & 1\end{array}\right ) 
\in \SL_2(\C)$. Then it can be seen that
$j^{-1} \bar{j} = k = \left ( \begin{array}{cc} 0 & i \\ i & 0\end{array}\right ).$  

The element $j^{-1}\bar{j}$ defines a 1-cocycle on $\Gal(\C/\R)$ with values in $N(T) \subset \SL_2(\C)$ which can be used to twist $T$ 
to get the compact form of $T$; this is the classical notion of Cayley transform. 
More precisely, the torus $jTj^{-1}$ is defined over $\R$, and is the torus $\Si^1 \subset \SL_2(\R)$ which is the twisted form $T_w$ of $T$ defined by $T_w(\R)=\{t \in T(\C)| \bar{t} = k^{-1}tk\}$. 
Analogously, $\phi(j) T_s \phi(j)^{-1}$ is defined over $\R$, and is the torus $T_f$ in $G(\R)$. At this point we note that since 
$G$ is quasi-split adjoint group, and $T_s$ is a maximally split torus, $T_s$ is a product $(\R^\times)^a \times (\C^\times)^b$ with $a$ being the
number of fixed points of the diagram automorphism.

Since conjugation by 
 $k=\left ( \begin{array}{cc} 0 & i \\ i & 0\end{array}\right )$  takes upper triangular
unipotent matrices of $\SL_2(\R)$,     to lower triangular matrices, $\phi(k)$ takes $B$ to $B^-$, the opposite Borel, i.e., $\phi(k)=w_0$ is the longest element
in the Weyl group of $G$, and $T_f$ is the twisted torus  $T_s^{w_0}$, i.e., $T_f(\R) = \{ t \in T_s(\C) | w_0(t) = \bar{t}\}$. Twisting 
by $w_0$ is the same as twisting by $-(-w_0)$. But $-w_0$ is a diagram automorphism. The torus $T_s$ itself is a twist by a diagram automorphism, say $w^0$
of a split torus $T$. The upshot is that the torus $T_f$ is the twists of $T$ by the automorphism $-(-w_0) w^0$ with 
$(-w_0)w^0$ a diagram automorphism.  Once again, $T^{-w_0w^0}$ is a product $(\R^\times)^a \times (\C^\times)^b$ with $a$ being the
number of fixed points of the diagram automorphism $-w_0w^0$. Twisting this torus by the further automorphism $t\rightarrow t^{-1}$,
changes it to  $(\Si)^a \times (\C^\times)^b$ which is connected, proving the lemma.
\end{proof}

\begin{remark}The first proof of an early version of the previous proposition was conveyed by J. Adams using Proposition 6.24 of [AV1]; the proof 
given here is different, and carries more precise information on the existence of $t_0$ and $t_{-1}$. 
\end{remark}

\begin{lemma} \label{lamma4}Let $S$ be a compact connected Lie group, with complexification $S(\C)$. 
Let ${\rm Aut}(S)$ be the automorphism group of $S$ with ${\rm Aut}_0(S)$, 
the connected component of identity which is a compact group. 
Then the real forms of 
$S$ are in bijective correspondence with conjugacy classes of elements of order 1 or 2 in ${\rm Aut}(S)$, i.e.,
with $H^1(\Z/2, {\rm Aut}(S))$ 
where $\Z/2$ acts trivially on ${\rm Aut}(S)$. Further, two real forms
of $S$ are in the same inner class if and only if the corresponding elements in 
    $H^1(\Z/2, {\rm Aut}(S))$ have the same image in $H^1(\Z/2, {\rm Aut}(S)/{\rm Aut}_0(S))$. 
The set of element of  
    $H^1(\Z/2, {\rm Aut}(S))$ with image $\phi$  in $H^1(\Z/2, {\rm Aut}(S)/{\rm Aut}_0(S))$ is in bijective
correspondence with $H^1(\Z/2, T_o/Z)/W^\phi$ where $T_o$ is a maximal torus of $S$, invariant under $\phi$ with $T_o^\phi$ of largest possible dimension;  $Z (\subset T_o \subset S)$ is the center of $S$, and 
where $\Z/2$ operates on $T_o$ by $\phi$, and $W^\phi$ is the fixed points
of $\phi$ on the Weyl group of $T_o$. 
\end{lemma}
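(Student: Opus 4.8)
The plan is to read off the first two assertions from \'Elie Cartan's classification of real forms (already invoked above), and to obtain the last assertion by feeding a fixed inner class into the twisting formalism for nonabelian $\Z/2$-cohomology and then computing $H^1(\Z/2,-)$ of the resulting adjoint group through a fundamental torus, in the style of the theorems of Adams and Borovoi used in Section 12.

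First the ambient facts: a continuous automorphism of the compact group $S$ extends uniquely to a holomorphic automorphism of the complexification $S(\C)$ and conversely, so $\Aut(S)=\Aut(S(\C))$; the identity component $\Aut_0(S)$ equals ${\rm Inn}(S)=S/Z$, which is compact, and $\Aut(S)/\Aut_0(S)$ is the finite group of diagram automorphisms. By Cartan's theorem, isomorphism classes of real forms of $S$ are in bijection with $\Aut(S)$-conjugacy classes of elements $\theta\in\Aut(S)$ with $\theta^2=1$, where $\theta$ corresponds to the antiholomorphic involution $\sigma_0\circ\theta$ of $S(\C)$ --- with $\sigma_0$ conjugation with respect to $S$ --- and the real form is its fixed-point group. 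Since, for the trivial action, $H^1(\Z/2,A)$ is by its very definition the set of conjugacy classes of elements $a\in A$ with $a^2=1$, this is the bijection with $H^1(\Z/2,\Aut(S))$ claimed. For the inner-class statement I use that, by definition, the inner class of the real form attached to $\theta$ is the image of $\theta$ in $\Aut(S)/\Aut_0(S)$, i.e. its class in $H^1(\Z/2,\Aut(S)/\Aut_0(S))$; the assertion is then the functoriality of $H^1(\Z/2,-)$ for $1\to\Aut_0(S)\to\Aut(S)\to\Aut(S)/\Aut_0(S)\to1$.

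For the fibre over a fixed class $\phi$, fix the distinguished representative $\tilde\phi\in\Aut(S)$ preserving a pinning of $S$; it has order dividing $2$ since $\phi$ does. Twisting the above exact sequence by a cocycle representing $\phi$ and using the standard description of fibres in nonabelian $\Z/2$-cohomology identifies the fibre over $\phi$ with a quotient of $H^1(\Z/2,S/Z)$ --- where $\Z/2$ now acts through $\tilde\phi$ --- by the residual action of the centralizer of $\phi$ in $\Aut(S)/\Aut_0(S)$. By Cartan--Adams, $H^1(\Z/2,S/Z)$ for the $\tilde\phi$-action is $H^1(\Gal(\C/\R),(S/Z)_\R)$ for the real form of $S/Z$ cut out by $\tilde\phi$. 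I then compute this group via a maximal torus: with $T_o\subset S$ chosen $\tilde\phi$-invariant with $T_o^{\tilde\phi}$ of largest possible dimension --- so that $T_o/Z$ is a \emph{fundamental} maximal torus of that real form --- the map $H^1(\Z/2,T_o/Z)\to H^1(\Z/2,S/Z)$ is surjective and its fibres are the $W^\phi$-orbits, where $W^\phi=W(S,T_o)^{\tilde\phi}$; combining, the fibre over $\phi$ is $H^1(\Z/2,T_o/Z)/W^\phi$.

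I expect the main work to be exactly this last computation: showing that a fundamental torus $T_o/Z$ already carries every class of $H^1(\Z/2,S/Z)$, and identifying when two of its classes become cohomologous in $S/Z$. This is the $\Z/2$-cohomology analogue of the discussion in Section 3 of the map $H^1(F,T)\to H^1(F,N(T))$ with its $W(F)$-orbits, and I would run it in two moves: a conjugacy theorem placing every $\tilde\phi$-twisted involution of $S/Z$ inside $T_o/Z$ (this is where the fundamental choice of $T_o$, read through the real structure, is used), followed by an analysis of $1\to T_o/Z\to N(T_o)/Z\to W\to1$ with its $\tilde\phi$-action to pin down the $W^\phi$-orbits --- which is precisely where the Adams--Borovoi computation of Galois cohomology of real groups through fundamental tori enters. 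The one genuinely delicate bookkeeping point is to verify that, after restriction to $T_o/Z$, the residual action of the centralizer of $\phi$ in $\Aut(S)/\Aut_0(S)$ contributes nothing beyond $W^\phi$; I expect this to fall out of the same conjugacy analysis of $\tilde\phi$-stable maximal tori, but it is the part I would scrutinize most carefully.
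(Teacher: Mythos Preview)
Your sketch is correct and reaches the conclusion, but it takes a more cohomological route than the paper. The paper argues directly with representatives: having fixed $\phi$ as the pinned lift, it writes an involution $s$ in the fibre as $s = t\phi$ with $t \in T_o/Z$, reads off the cocycle condition $t\phi(t)\in Z$ and the coboundary relation $t \sim t_1 t \phi(t_1)^{-1}$ by hand, and then invokes the normalizer of $T_o$ for the $W^\phi$-quotient in one line. You instead package the fibre as a twisted $H^1(\Z/2, S/Z)$ and compute it via the Adams--Borovoi description through a fundamental torus.

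The underlying content is the same, and both versions are sketches with the same soft spot. The paper asserts without justification that $s\phi^{-1}$ is (after conjugation) inner by an element of $T_o/Z$ rather than merely $S/Z$; this is exactly the surjectivity of $H^1(\phi, T_o/Z) \to H^1(\phi, S/Z)$ that you extract from Adams--Borovoi. Your route makes this dependence explicit at the cost of citing a heavier external result (stated later in the paper as Theorem~3 in Section~12, though logically independent of this lemma); the paper's route is more elementary in intent but leaves that step to the reader. The residual action of the centralizer of $\phi$ in the outer automorphism group, which you correctly flag as the delicate bookkeeping point, is likewise not addressed in the paper's one-line ``normalizer of $T_o$'' closing.
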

\begin{proof} The first part of the Lemma is of course the well-known Cartan classification for real groups, see 
[Se], III \S4, Theorem 6 
for a modern account. We leave the proof of the second assertion to the reader; see Definition 6.10 of [AV1].

Suppose $s$ is any involution in $\Aut(S)$ with image $\phi$  in $\Aut(S)/\Aut_0(S)$ which we assume 
 fixes a pinning $(B_o,T_o,\{X_\alpha\})$. 
The torus $T_o$ is then a maximal torus with $T_o^\phi$ of maximal possible dimension.

Thus, the automorphism $s\cdot \phi^{-1}$ must be conjugation by an element of $T_o/Z$, i.e., $s = t \phi$ for $t \in T_o$.     It can be seen that the element $t \in T_o/Z$ must have $t \phi(t)=1$ (in Aut$(S)$, i.e., $t\phi(t)$ should be central) for $s = t\phi$ to be an involution. Conjugating $s$ by $t_1 \in T_o/Z$ 
has the effect of changing $t$ to $t_1 t\phi(t_1)^{-1}$. Thus in expressing the involution $s = t \phi$, the conjugacy class of $s$ depends only on the element $t \in H^1(\phi, T_o/Z)$. Normalizer of $T_o$ is then used to get the final assertion in the lemma.   \end{proof}

\begin{proposition}Let $G$ be a connected real reductive group.
 Then there exists an element in $G(\R)$ 
which acts on a fundamental Cartan subgroup of $G$ by $-1$, 
if and only if  there exists an element in $G'(\R)$ which acts on 
a fundamental Cartan subgroup of $G'(\R)$ by $-1$ where $G'(\R)$ is any innerform of $G(\R)$.
\end{proposition}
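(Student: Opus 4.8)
The plan is to reduce the statement to a cohomological criterion that is manifestly invariant under passing to inner forms. The key observation, already extracted in Proposition \ref{prop1} (in the quasi-split case) and in Lemma \ref{lamma4}, is that the existence of an element of $G(\R)$ acting by $h\mapsto h^{-1}$ on a fundamental Cartan subgroup $H_f$ can be rephrased entirely in terms of data attached to the complexification $G(\C)$ together with its real structure, and more precisely in terms of the Chevalley involution $C$ on $G(\C)$ and of Galois cohomology of the center. Concretely, I would first recall that by Adams's theorem the existence of such an element is equivalent to the assertion that the automorphism $C$ (the algebraic involution of $G(\C)$ inverting a fundamental Cartan, which always exists as an element of $\Aut(G)(\C)$ when $-1$ lies in the Weyl group of $G(\C)$, and otherwise the statement is vacuous on both sides) can be realized, after multiplication by an inner automorphism, as an automorphism commuting with the given real structure $\sigma$ and carrying a representative into $G(\R)$.

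Next I would invoke the uniformization of inner forms: an inner form $G'$ of $G$ corresponds to a class in $H^1(\Gal(\C/\R), G^{\ad}(\C))$, i.e. to a $1$-cocycle $a_\tau \in G^{\ad}(\C)$, and the real structure $\sigma'$ on $G'(\C)=G(\C)$ is $\sigma' = \Ad(a_\tau)\circ \sigma$. Because fundamental Cartan subgroups are shared among inner forms (as used in Lemma \ref{lemma3}), one may choose $H_f$ compatibly, and after conjugating may assume $a_\tau$ normalizes $H_f(\C)$. The point is then to track the obstruction. Following the argument in Proposition \ref{prop1}: the element $\phi(j)$ with $j=\left(\begin{array}{cc} i & 0 \\ 0 & -i\end{array}\right)$, coming from a Jacobson--Morozov $\SL_2$, already preserves $H_f$ and acts by $-1$ on it for $G(\C)$, and the only question is whether it, or a translate of it by $Z(\C)$, can be taken real. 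For $G$ this is governed by the triviality of $\iota_\star\colon H^1(\Gal(\C/\R),\Z/2)\to H^1(\Gal(\C/\R),Z(\C))$, where $\iota$ is the restriction to the center of the Jacobson--Morozov $\SL_2$. The crucial fact is that this map depends only on $Z=Z(G)$ with its Galois action, hence only on the inner class of $G$ over $\R$ — it is literally unchanged when $G$ is replaced by an inner form, since inner forms have the same center with the same Galois action, and the regular unipotent orbit (hence the $\SL_2$ up to conjugacy, hence $\iota$ up to the relevant equivalence) is likewise shared.

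So the proof is: both sides of the biconditional are equivalent, by Proposition \ref{prop1} applied to the common quasi-split inner form $G_0$ of $G$ and $G'$, to the single condition that $\iota_\star\colon H^1(\Gal(\C/\R),\Z/2)\to H^1(\Gal(\C/\R),Z(\C))$ is trivial; since this condition refers only to $Z(G)=Z(G')=Z(G_0)$ as a Galois module and to $\iota$, which is determined by the regular unipotent class in $G_0$ and is independent of the inner twist, the two conditions coincide. The main obstacle I anticipate is the passage between a general inner form $G'$ and the quasi-split form $G_0$: Proposition \ref{prop1} as stated is for quasi-split $G$, so I need to argue that the existence of an element acting by $-1$ on a fundamental Cartan of $G'(\R)$ is equivalent to the existence of one for $G_0(\R)$. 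This is where Lemma \ref{lamma4}, the Cartan classification, and the fact that the Chevalley involution $C$ on $G(\C)$ is intrinsic (independent of the real form within an inner class, being built from a pinning and $-w_0$) are used — one verifies that the obstruction to descending $C$ to a real automorphism landing in the group depends only on the image of the defining cocycle in $H^1(\Z/2, \Aut(S)/\Aut_0(S))$ together with the center, both of which are inner-class invariants — and I expect the careful bookkeeping of which translate of $\phi(j)$ by $Z(\C)$ versus by $H_f(\C)$ is needed (the distinction highlighted at the end of Proposition \ref{prop1}'s proof) to be the fiddly point that must be handled with care, using Lemma \ref{lemma2} to see that the $Z(\C)$-obstruction and the $H_f(\C)$-obstruction agree.
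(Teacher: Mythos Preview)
Your approach is genuinely different from the paper's, and it has a real gap that you yourself flag but do not close.

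You want to reduce everything to the cohomological criterion of Proposition~\ref{prop1}: the map $\iota_\star\colon H^1(\Gal(\C/\R),\Z/2)\to H^1(\Gal(\C/\R),Z(\C))$ is trivial. You correctly observe that this criterion depends only on $Z(G)$ as a Galois module, hence is constant on an inner class. The trouble is that Proposition~\ref{prop1} establishes the equivalence \emph{existence of $t_{-1}$} $\Longleftrightarrow$ \emph{triviality of $\iota_\star$} only for quasi-split $G$: its proof uses a Borel subgroup defined over $\R$ and the Jacobson--Morozov map attached to a regular unipotent in that Borel, neither of which is available for a general inner form. So to run your argument you would need, for each non-quasi-split inner form $G'$, the equivalence \emph{existence for $G'(\R)$} $\Longleftrightarrow$ \emph{existence for $G_0(\R)$}. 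But that is exactly the statement of the proposition (specialized to one of the groups being $G_0$). Your final paragraph gestures at filling this by saying ``one verifies that the obstruction to descending $C$ \ldots\ depends only on'' inner-class invariants, but this verification is precisely the content to be supplied, and invoking Lemma~\ref{lamma4} and ``the Chevalley involution is intrinsic'' does not carry it out.

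By contrast, the paper's proof is a short explicit construction and avoids cohomology entirely. It parametrizes all real forms via Cartan involutions $\phi,\psi$ on the compact form $S$ (with a common maximal torus $T\subset S$), notes that inner forms differ by $\psi(g)=\phi(sgs^{-1})$ for some $s\in T$ with $s\cdot\phi(s)=1$, shows the fundamental tori $T_{f,\phi}$ and $T_{f,\psi}$ coincide as subsets of $T(\C)$, and then checks by a two-line computation that if $n_\phi\in G_\phi(\R)$ inverts $T_{f,\phi}$ then $n_\psi:=s^{-1}n_\phi$ lies in $G_\psi(\R)$ and inverts $T_{f,\psi}$. No appeal to Proposition~\ref{prop1} or to any cohomological obstruction is needed; the element is written down directly. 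If you want to salvage your approach, you would have to redo the obstruction analysis of Proposition~\ref{prop1} for an arbitrary real form (without a Borel over $\R$), tracking how the cocycle $t=n^{-1}\bar n\in H_f(\C)$ changes when the complex conjugation is twisted by an inner cocycle---and when you unwind that, you will find yourself doing essentially the paper's direct computation.
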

 
\begin{proof}
We will use the compact real form $S$ of $G$ to parametrize all real forms of $G$. Fix 
a maximal torus $T$ in $S$. By Cartan, all the real forms of $S$ are parametrized by involutive automorphisms
of $S$ up to conjugacy, which we assume fix the maximal torus $T$. Let $S(\C)$ be the group of complex points of $S$ together with the Galois involution $g \rightarrow \bar{g}$ with fixed points $S$. 
Given an involutive automorphism $\phi$ of $S$, define the Galois involution of the corresponding real form of $G$, denoted $G_\phi$,  by $\phi_\sigma: g\rightarrow \phi(\bar{g})$.   For the real group $G_\phi(\R)$ which is the fixed points of 
$\phi_\sigma$, $S^\phi$ is a maximal compact subgroup, and $T^\phi$ a maximal torus in $S^\phi$. 

Let the centralizer of $T^\phi$ in $G_\phi(\R)$ which is nothing but $G_\phi(\R) \cap T(\C)$   
be denoted by $T_{f, \phi}$, 
a fundamental torus in $G_\phi(\R)$, and which is defined to be 
$$T_{f, \phi} = \{ t \in T(\C) | \phi(\bar{t}) = t \}.$$ 

Let $n_\phi$ be an element of
$G_\phi(\R)$ which acts by $-1$ on  $T_{f, \phi}$, 
and hence also on $T(\C)$.

Let $\psi$ be another involution of $S$,
preserving $T$, defining an innerform $G_\psi$ of $G_\phi$, hence we assume that $\psi(g) = \phi(sgs^{-1})$ for some
$s \in T$ with $s \cdot \phi(s) = 1$. Note that,
$$T_{f, \psi} = \{ t \in T(\C) | \psi(\bar{t}) = t \}= \{ t \in T(\C) | \phi(s\bar{t}s^{-1}) = t \}= 
\{ t \in T(\C) | \phi(\bar{t}) = t \} = T_{f,\phi} .$$  

It suffices to check that the element $n_\psi= s^{-1}n_\phi$  belongs to  $G_\psi(\R)$ 
since it clearly acts by $-1$ on  $T_{f, \psi} = T_{f,\phi}$.

To say that $n_\psi \in G_{\psi}(\R)$ 
means, $\psi(\bar{n}_\psi) = n_\psi. $ Since, $\psi(g) = \phi(sgs^{-1})$, 
$$\psi(\bar{n}_\psi) 
= \phi(s\bar{n}_\psi s^{-1}) = \phi(s \bar{s}^{-1} \bar{n}_\phi s^{-1}).$$ But $s = \bar{s}$ being in $T \subset 
S$.  Thus, $$\psi(\bar{n}_\psi) = \phi(\bar{n}_\phi s^{-1})= n_\phi \phi(s^{-1}) = n_\phi s = s^{-1} n_\phi = n_\psi.$$
\end{proof}

The proofs here allow to prove the theorem of Adams on the existence of Chevalley involution. Thus we give another 
proof of the following result of Adams in [Ad].

\begin{proposition}
Let $H_f(\R)$ be a fundamental maximal torus in a real reductive group $G(\R)$. Then there exists an automorphism
$C$ of $G(\R)$ which acts as $t\rightarrow t^{-1}$ on $H_f$. Further, if $G(\R)$ is quasi-split with 
a Borel subgroup $B$ of $G(\R)$, and $T$ as a maximal torus in $B$, then we can choose $H_f$ so that $C$ is the automorphism
$t_{-}\cdot \iota$ where $t_{-}$ is an element of $(T/Z)(\R)$ acting as $-1$ on all simple roots in $B$, and 
$\iota$ is a diagram automorphism of $G(\R)$ corresponding to $-w_0$. 
\end{proposition}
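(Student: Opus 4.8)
The plan is to build $C$ from the $\SL_2$- and Cayley-transform analysis already present in the proof of Proposition~\ref{prop1} and in Lemma~\ref{lemma3}: first construct $C$ in the quasi-split case, which simultaneously yields the explicit shape $C = t_-\cdot\iota$, and then descend to a general inner form by the method of the Proposition just proved. So assume first that $G$ is quasi-split over $\R$, fix a Borel $B = TN$ with $T$ maximally split and an $\R$-rational pinning, and (Jacobson--Morozov) a homomorphism $\phi:\SL_2(\R)\to G(\R)$ over $\R$ carrying the upper triangular unipotents into $N$ and a regular unipotent into its image. As in Lemma~\ref{lemma3}, the centralizer $T_s$ of $\phi$ of the diagonal torus is maximally split, the centralizer $T_f$ of $\phi(\Si)$ is a fundamental torus, and $T_f = \phi(c)\,T_s\,\phi(c)^{-1}$ for the Cayley element $c = \tfrac{1}{\sqrt{2}}\left(\begin{smallmatrix}1&-i\\-i&1\end{smallmatrix}\right)$, with $\phi(c^{-1}\bar c)$ a representative of $w_0$. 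Let $\iota$ be the $\R$-rational automorphism of $G$ fixing the pinning and inducing $-w_0$ on $T$, and let $t_-\in(T/Z)(\R)$ be the unique element acting by $-1$ on every simple root space (it is $\R$-rational since $\Gal(\C/\R)$ only permutes the simple roots and fixes $-1$); set $C := t_-\cdot\iota$, which is defined over $\R$. On $\phi(\SL_2)$ both $\mathrm{Int}(t_-)$ and $\iota$ restrict to conjugation by $\phi$ of fixed matrices of $\SL_2(\C)$ --- for $\mathrm{Int}(t_-)$ this matrix is $\left(\begin{smallmatrix}i&0\\0&-i\end{smallmatrix}\right)$, precisely the element $\phi(j)$ used in the proof of Proposition~\ref{prop1} --- so $C$ restricted to $\phi(\SL_2)$ is conjugation by $\phi$ of a fixed $\SL_2(\C)$-matrix. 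Conjugating the identity $T_f = \phi(c)T_s\phi(c)^{-1}$ through $C$ and multiplying out the resulting $2\times2$ matrices (this is the bookkeeping of the proof of Proposition~\ref{prop1} when $-1\in W$, and its Cayley-twisted analogue when $-1\notin W$) shows $C(t) = t^{-1}$ for $t\in T_f$; so $H_f := T_f$ works, and $C$ has the asserted form.

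For a general connected real reductive $G$, note that $G$ is an inner form of a quasi-split $G_0$ and that fundamental maximal tori are shared within an inner class. By the Proposition just proved, if $G_0(\R)$ contains an element inverting its fundamental torus then so does $G(\R)$, and conjugation by such an element is the desired (inner) $C$; this disposes of the cases where the obstruction class of Proposition~\ref{prop1} vanishes. In the remaining case I would produce $C$ as a genuinely outer automorphism by the compact-form device used in that last proof: write $G = G_\phi$ from the compact form $S$ with maximal torus $T$ via an involution $\phi$ of $S$ fixing a pinning; the automorphism $\tau$ of $S$ inducing $-1$ on $X^*(T)$ exists because $-1$ is an automorphism of the (unbased) root datum, it commutes with complex conjugation, and after adjusting by an inner automorphism of $S$ by $T(\C)$ it also commutes with $\phi$, hence descends to an automorphism of $G_\phi$; since $T_{f,\phi}(\C) = T(\C)$ and $\tau$ inverts $T(\C)$, the descended automorphism inverts the fundamental torus of $G(\R)$. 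In all cases $C$ has order $1$ or $2$: $\iota^2 = 1$ because $-w_0$ is a diagram involution, and $C^2 = \mathrm{Int}\big(t_-\cdot\iota(t_-)\big)$ with $t_-\cdot\iota(t_-)$ acting trivially on every simple root space (since $-w_0$ permutes the simple roots), hence central, so $C^2 = 1$.

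The delicate point I expect to be the main obstacle is the quasi-split computation: $C$ acts on the maximally split torus $T_s$ by $-w_0$ but must act by honest inversion on the fundamental torus $T_f$, and one must verify that the Cayley twist by $\phi(c)$ turns the former into the latter --- equivalently, that $C = t_-\iota$ itself, and not some variant obtained by further conjugation by a representative of $w_0$, is the automorphism that does this. Much of this is already contained in the proofs of Proposition~\ref{prop1} and Lemma~\ref{lemma3}; the one genuinely new ingredient I anticipate needing is the ``adjustment by $T(\C)$'' step in the outer branch of the general reduction, which is where the input on real Galois cohomology used elsewhere in the paper would come in.
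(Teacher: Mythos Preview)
Your quasi-split argument is essentially correct and close to the paper's, though you leave the crucial computation as ``bookkeeping.'' For the record, it does go through: with $j=\mathrm{diag}(i,-i)$ and your Cayley element $c$, one checks directly in $\SL_2(\C)$ that $jcj^{-1}=c^{-1}$, so $C(\phi(c))=\phi(c)^{-1}$; combined with $C|_T=-w_0$ and the fact that conjugation by $\phi(c)^2$ on $T$ is $w_0$ (since $c^2=-k$ with $k=c^{-1}\bar c$ representing $w_0$), one gets $C(t)=t^{-1}$ on $T_f=\phi(c)T\phi(c)^{-1}$. The paper avoids this explicit computation by a comparison argument: it first observes abstractly that \emph{some} $t_0\cdot\iota\in G(\C)\rtimes\langle\iota\rangle$ inverts $H_f$ (by conjugating from $T$, where the statement is tautological), then shows $t_0$ and $\phi(j)$ differ by an element of $H_f(\C)$ since both invert $\phi(\Si^1)$, whence $\phi(j)\cdot\iota$ also inverts $H_f$. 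Your direct route and the paper's indirect route are equivalent in content.

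For the general case, however, your approach diverges from the paper's and runs into trouble. The paper does not bifurcate on the obstruction of Proposition~\ref{prop1}; instead it invokes Borovoi's theorem that $H^1(\Gal(\C/\R),H_f)\to H^1(\Gal(\C/\R),G)$ is surjective, so any inner form $G_c$ arises by twisting the quasi-split $G_0$ by a cocycle $c\in H_f(\C)$ with $c\bar c=1$. Then $cCc^{-1}$ is defined over the twisted real structure (a two-line check using $C(c)=c^{-1}$ and $c\bar c=1$) and still inverts $H_f$, which remains fundamental in $G_c$. This is a one-stroke reduction. Your case~(b), by contrast, requires producing $\tau\in\Aut(S)$ inverting $T$ and then ``adjusting by $T(\C)$'' to make it commute with the Cartan involution $\phi$; but such an adjustment $\tau\mapsto\mathrm{Int}(s)\circ\tau$ for $s\in T(\C)$ will in general destroy the commutation with complex conjugation unless $s$ satisfies a reality condition, and the simultaneous solvability of the two constraints is exactly the cohomological input you flag but do not supply. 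The Borovoi route bypasses this entirely.
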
 
\begin{proof}First we prove the result assuming $G(\R)$ is quasi-split for which we use the Jacobson-Morozov
theorem to construct $\phi: \SL_2(\R) \rightarrow G(\R)$ associated to a regular unipotent element 
inside a Borel subgroup $B$ of $G(\R)$ with $T$ as its maximal torus. 
We can assume that $\phi$ commutes with diagram automorphisms of $G(\R)$,
in particular with the diagram automorphism corresponding to $-w_0$; call this automorphism $\iota$ of $G(\R)$ 
commuting with $\phi(\SL_2(\R))$, and consider the semi-direct product $G(\R)\rtimes \langle \iota \rangle$. 
As we saw in the proof of Proposition \ref{prop1}, for $\Si^1$, the standard maximal
torus in $\SL_2(\R)$, the centralizer of $\phi(\Si^1)$ is a fundamental torus $H_f$  in $G(\R)$, normalized by $\phi(j)$.  
Since $\phi(\Si^1)$ commutes with $\iota$, $\iota$ leave $H_f$ invariant. Since $\iota$ operates on $T$ as $-w_0$,
and there is an element in $G(\R)$ which operates as $w_0$, thus there is an element in 
$G(\R)\rtimes \langle \iota \rangle$ 
which acts by $-1$ on $T$. Since all tori are conjugate in $G(\C)$, there is an element in
$G(\C)\rtimes \langle \iota \rangle$ which acts by $-1$ on $H_f$. Call this element $t_0\cdot \iota$. 
Since 
$\iota$ commutes with $\phi(\Si^1)$, and $t_0\cdot \iota$ acts by $-1$ on $H_f$, it follows that $t_0$ acts by $-1$
on $\phi(\Si^1)$; same for $\phi(j)$. Therefore, $\phi(j)t_0^{-1}$ commutes with $\phi(\Si^1)$, 
so belongs to $H_f(\C)$. Write $\phi(j) = s^{-1}t_0$ with $s \in H_f(\C)$. Then, $t_0\cdot \iota = 
s \phi(j) \cdot \iota$ 
acts by $-1$ on $H_f(\C)$, therefore since $s \in H_f(\C)$, so does    
$\phi(j) \cdot \iota$ which is an automorphism of $G(\R)$ since the inner conjugation action of 
$\phi(j)$ is real, proving existence of the Chevalley involution for quasi-split groups.

For general $G(\R)$, we appeal to the result of Borovoi that $H^1(\Gal(\C/\R), H_f) \rightarrow 
H^1(\Gal(\C/\R), G)$ is surjective, and as a result, any innerform of $G(\R)$ is obtained by twisting by a cocycle
with values in $H_f(\C)$. It can be easily seen that if $G_c$ is obtained by twisting $G(\R)$ by an element
$c \in H_f(\C)$ with $ c\bar{c}=1$, then if $C$ is a Chevalley involution on $G(\R)$, 
$cCc^{-1}$ is defined over $\R$, and is 
a Chevalley involution on $G_c(\R)$ acting as $-1$ on $H_f$ which continues to be a fundamental torus.
\end{proof}

\section{Finite fields}

Before we start with the study of distinction property in the case of local fields, it may be good to recall what can be proved for finite fields, something which I did in \cite {Pr01}. For finite fields, the basic theorem is proved in the generality of connected groups without reductivity hypothesis. Since the result 
was not stated in its ideal form, I do it here. 

Recall that for a connected algebraic group $G$ over a finite field $\F_q$, by Lang's theorem, every element $x$ of $G(\F_q)$ can be written as $x = y^{-1}y^{[q]}$ 
for $y \in G(\bar{\F}_q)$ where $y\rightarrow y^{[q]}$ is the Frobenius map on
$G(\bar{\F}_q)$; the choice of $y$ in expressing $x = y^{-1}y^{[q]}$ is unique up to left translation by $G(\F_q)$. 

Define the Shintani transform ${\rm Sh}: G(\F_q) \rightarrow G(\F_q)$, by
$$\begin{array}{cccc}
{\rm Sh}: & G(\F_q) &\longrightarrow & G(\F_q)\\
 & y^{-1}y^{[q]} & \longrightarrow &y^{[q]}y^{-1}.
\end{array}$$

The Shintani transform $x \rightarrow {\rm Sh}(x)$ defines a well-defined map on 
the set of $G(\F_q)$ conjugacy classes in $G(\F_q)$ to the set of $G(\F_q)$ conjugacy classes in $G(\F_q)$. Since any semisimple element of $G(\F_q)$ belongs to a torus, the Shintani transform is the identity map on semisimple elements. 

\begin{thm}
Let $G$ be a connected algebraic group over a finite field $\F_q$, and $\pi$ an irreducible $\C$-representation of $G(\F_{q^2})$. Assume
that the character of $\pi$ takes the same value at $x$ as at ${\rm Sh}(x)$ for $x\in G(\F_{q^2})$. Then the representation $\pi$ has a 
$G(\F_q)$ fixed vector if and only if $\pi^\sigma \cong \pi^\vee$. If $\pi^\sigma \cong \pi^\vee$, then $\pi$ has a one dimensional space of fixed vectors
under $G(\F_q)$, and the representation $\pi \otimes \pi^\sigma$ which is canonically a representation of $G(\F_{q^2}) \rtimes \Z/2$ 
has a $G(\F_{q}) \rtimes \Z/2$ fixed vector.
\end{thm}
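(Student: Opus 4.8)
The plan is to deduce all three assertions from a single multiplicity computation on the larger group $\tilde{G} := G(\F_{q^2})\rtimes\langle\sigma\rangle$, where $\sigma$ generates $\Gal(\F_{q^2}/\F_q)$ and acts on $G(\F_{q^2})$ as the Frobenius $x\mapsto x^{[q]}$; the hypothesis on the character $\chi_\pi$ is used exactly once, at the crucial step. First I would pin down the representation named in the statement: the flip $\tau\colon v\otimes w\mapsto w\otimes v$ is the canonical isomorphism between $(\pi\otimes\pi^\sigma)^\sigma$ and $\pi\otimes\pi^\sigma$, so setting $\tilde{\Pi}(g):=\pi(g)\otimes\pi^\sigma(g)$ and $\tilde{\Pi}(\sigma):=\tau$ defines the canonical extension of $\pi\otimes\pi^\sigma$ to $\tilde{G}$. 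Using the elementary identity $\operatorname{tr}((A\otimes B)\tau)=\operatorname{tr}(AB)$, one gets $\operatorname{tr}\tilde{\Pi}(g)=\chi_\pi(g)\chi_{\pi^\sigma}(g)$ and $\operatorname{tr}\tilde{\Pi}(g\sigma)=\chi_\pi\big(g\,g^{[q]}\big)=\chi_\pi(N(g))$, where $N(g):=g\,g^{[q]}$ is the norm.

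The next step is to evaluate $\langle\mathbf{1},\tilde{\Pi}\rangle_{\tilde{G}}=\frac{1}{2|G(\F_{q^2})|}\big(\sum_{g}\operatorname{tr}\tilde{\Pi}(g)+\sum_{g}\operatorname{tr}\tilde{\Pi}(g\sigma)\big)$, both sums over $g\in G(\F_{q^2})$. By orthogonality of characters the first sum equals $|G(\F_{q^2})|\cdot\dim\Hom_{G(\F_{q^2})}[\pi^\vee,\pi^\sigma]$, which is $1$ if $\pi^\sigma\cong\pi^\vee$ and $0$ otherwise. For the second sum I would use Shintani descent: for $g\in G(\F_{q^2})$ pick, by Lang's theorem, $b$ with $b^{-1}b^{[q]}=g$; then $b^{-1}b^{[q^2]}=N(g)$, so ${\rm Sh}(N(g))=b\,N(g)\,b^{-1}$, and a direct check (using $N(g)^{[q]}=g^{-1}N(g)g$ and $b^{[q]}=bg$) shows this element lies in $G(\F_q)$, well defined there up to $G(\F_q)$-conjugacy. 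The Shintani norm map $g\mapsto{\rm Sh}(N(g))$ induces a bijection between the twisted-conjugacy classes of $G(\F_{q^2})$ (equivalently, the $G(\F_{q^2})$-conjugacy classes in the coset $G(\F_{q^2})\sigma$) and the conjugacy classes of $G(\F_q)$, preserving centralizer orders, so that $\frac{1}{|G(\F_{q^2})|}\sum_g f({\rm Sh}(N(g)))=\frac{1}{|G(\F_q)|}\sum_{h\in G(\F_q)}f(h)$ for every class function $f$ on $G(\F_q)$. This is the one place the hypothesis enters: it says $\chi_\pi(N(g))=\chi_\pi({\rm Sh}(N(g)))$, so $\operatorname{tr}\tilde{\Pi}(g\sigma)$ is the pull-back of the class function $\chi_\pi|_{G(\F_q)}$ along the Shintani map, whence $\frac{1}{|G(\F_{q^2})|}\sum_g\operatorname{tr}\tilde{\Pi}(g\sigma)=\frac{1}{|G(\F_q)|}\sum_h\chi_\pi(h)=\dim\Hom_{G(\F_q)}[\pi,\C]$. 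Combining the two sums,
\[
\langle\mathbf{1},\tilde{\Pi}\rangle_{\tilde{G}}=\tfrac12\Big(\dim\Hom_{G(\F_{q^2})}[\pi^\vee,\pi^\sigma]+\dim\Hom_{G(\F_q)}[\pi,\C]\Big).
\]

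Finally I would close the argument by positivity. Since $\tilde{\Pi}|_{G(\F_{q^2})}=\pi\otimes\pi^\sigma$ with $\pi,\pi^\sigma$ irreducible, $\dim\tilde{\Pi}^{G(\F_{q^2})}=\dim\Hom_{G(\F_{q^2})}[\pi^\vee,\pi^\sigma]\le1$, so $\langle\mathbf{1},\tilde{\Pi}\rangle_{\tilde{G}}\in\{0,1\}$ and equals $0$ whenever $\pi^\sigma\not\cong\pi^\vee$. If $\pi^\sigma\not\cong\pi^\vee$, the displayed identity forces $\dim\Hom_{G(\F_q)}[\pi,\C]=0$, so $\pi$ is not distinguished by $G(\F_q)$. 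If $\pi^\sigma\cong\pi^\vee$, the identity reads $\langle\mathbf{1},\tilde{\Pi}\rangle_{\tilde{G}}=\tfrac12\big(1+\dim\Hom_{G(\F_q)}[\pi,\C]\big)$; since the left side is a non-negative integer $\le1$ and $\dim\Hom_{G(\F_q)}[\pi,\C]\ge0$, this can only hold with $\langle\mathbf{1},\tilde{\Pi}\rangle_{\tilde{G}}=1$ and $\dim\Hom_{G(\F_q)}[\pi,\C]=1$; and $\langle\mathbf{1},\tilde{\Pi}\rangle_{\tilde{G}}=1$ means precisely that $\tilde{\Pi}$, hence a fortiori its restriction to $G(\F_q)\rtimes\langle\sigma\rangle$, has a non-zero invariant vector. (These inequalities also show $\dim\Hom_{G(\F_q)}[\pi,\C]\le1$ unconditionally.) I expect the main obstacle to be making the Shintani-descent step precise — the class bijection with matching centralizer orders — which is exactly where Lang's theorem for the connected group $G$ does the work; the remainder is formal character theory.
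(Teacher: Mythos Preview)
Your argument is correct. The paper does not actually supply a proof of this theorem here; it is stated in Section~6 with a reference to \cite{Pr01}, where the author proved it originally. Your approach---computing $\langle\mathbf{1},\tilde\Pi\rangle$ on the semidirect product $G(\F_{q^2})\rtimes\langle\sigma\rangle$, identifying the contribution from the identity coset with $\dim\Hom_{G(\F_{q^2})}[\pi^\vee,\pi^\sigma]$ and the contribution from the nonidentity coset with $\dim\Hom_{G(\F_q)}[\pi,\C]$ via the Shintani bijection, then closing by integrality and the bound $\dim\tilde\Pi^{G(\F_{q^2})}\le 1$---is exactly the method of \cite{Pr01}. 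Your identification of the one genuinely nontrivial step (the Shintani bijection between $\sigma$-twisted conjugacy classes in $G(\F_{q^2})$ and ordinary conjugacy classes in $G(\F_q)$, with matching centralizer orders) is accurate, and your verification that $bN(g)b^{-1}\in G(\F_q)$ is the right computation; the centralizer matching follows by conjugating the twisted centralizer of $g$ by $b$ as well.
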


\begin{corollary}
Let $G$ be a connected reductive group over a finite field $\F_q$, and $\pi$ an irreducible Deligne-Lusztig 
representation of $G(\F_{q^2})$ induced from a torus.  Then the representation $\pi$ has a 
$G(\F_q)$ fixed vector if and only if $\pi^\sigma \cong \pi^\vee$. If $\pi^\sigma \cong \pi^\vee$, then $\pi$ has a one dimensional space of fixed vectors
under $G(\F_q)$, and the representation $\pi \otimes \pi^\sigma$ which is canonically a representation of $G(\F_{q^2}) \rtimes \Z/2$ (where $\Z/2$ acts by permuting the co-ordinates: $v_1 \otimes v_2 \rightarrow v_2 \otimes v_1$)
has a $G(\F_{q}) \rtimes \Z/2$ fixed vector.
\end{corollary}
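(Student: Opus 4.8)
The plan is to obtain the Corollary as a direct application of the preceding Theorem. An irreducible Deligne--Lusztig representation of $G(\F_{q^2})$ induced from a torus is $\pi=\pm R_T^\theta$ for some $F$-stable maximal torus $T$ of $G$ (write $F$ for the $q^2$-power Frobenius, so $G^F=G(\F_{q^2})$) and some character $\theta$ of $T(\F_{q^2})$ in general position; hence its character is a constant sign times the virtual character $R_T^\theta$, and it is enough to check that $R_T^\theta$ takes the same value at $x$ and at ${\rm Sh}(x)$ for every $x\in G(\F_{q^2})$. Given this, the Theorem delivers each assertion of the Corollary verbatim; the only point to add is that the canonical $\Z/2$-equivariant structure on $\pi\otimes\pi^\sigma$ produced by the Theorem is the coordinate interchange $v_1\otimes v_2\mapsto v_2\otimes v_1$, which indeed intertwines the ($\sigma$-twisted, diagonal) $G(\F_{q^2})$-action with the generator of $\Z/2$ exactly because $\sigma^2=1$.

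To verify the ${\rm Sh}$-invariance of $R_T^\theta$, I would use the Deligne--Lusztig character formula. Write $x=su$ for the Jordan decomposition; since ${\rm Sh}(x)=yxy^{-1}$ with $y^{-1}y^{[q^2]}=x$, the elements $x$ and ${\rm Sh}(x)$ are conjugate over $\bar\F_q$, and the semisimple part of ${\rm Sh}(x)$ is $ysy^{-1}$. First I would normalize the semisimple part. Put $H:=C_G^\circ(s)$, a connected reductive group; then $s\in H$ (it lies in a maximal torus of $G$ contained in $C_G(s)$) and $u\in H$ (a unipotent element of $C_G(s)$ lies in its identity component), so $x^{-1}\in H$, and Lang's theorem inside $H$ produces $\gamma\in H$ with $\gamma^{[q^2]}\gamma^{-1}=x^{-1}$; then $g:=y\gamma\in G(\F_{q^2})$ and $gsg^{-1}=ysy^{-1}$. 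Replacing ${\rm Sh}(x)$ by its conjugate $g^{-1}{\rm Sh}(x)g$, we may assume ${\rm Sh}(x)=su''$ with $u''=\gamma^{-1}u\gamma\in H$. Now the character formula writes both $R_T^\theta(su)$ and $R_T^\theta(su'')$ in the shape
$$
R_T^\theta(sv)=\frac{1}{|C_G^\circ(s)(\F_{q^2})|}\sum_{\{h\in G(\F_{q^2})\,:\,h^{-1}sh\in T\}}\theta(h^{-1}sh)\,Q^{H}_{hTh^{-1}}(v),
$$
with $v=u$, respectively $v=u''$, and $Q^H$ the Green functions of $H$; the index set and the $\theta$-weights agreeing, the whole matter reduces to the identity $Q^H_{hTh^{-1}}(u)=Q^H_{hTh^{-1}}(u'')$ for each relevant $h$.

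For this, observe that $u$ and $u''$ are $H$-conjugate unipotent elements of $H(\F_{q^2})$, and the element $\gamma^{[q^2]}\gamma^{-1}=x^{-1}=u^{-1}s^{-1}$, which commutes with $u$, is the one whose class in $\pi_0(C_H(u))$ decides whether $u$ and $u''$ are $H(\F_{q^2})$-conjugate; since $u^{-1}$ lies in the identity component of $C_H(u)$, that class is the image of $s\in Z(H)$. Under the central isogeny $H\to H^{\mathrm{ad}}$ this image vanishes, so the images $\bar u$ and $\bar u''$ in $H^{\mathrm{ad}}(\F_{q^2})$ are conjugate there; and since Deligne--Lusztig induction is compatible with central quotients, the Green functions of $H$, restricted to unipotent elements, are pulled back from $H^{\mathrm{ad}}$. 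Hence $Q^H_{\mathbf S}(u)=Q^{H^{\mathrm{ad}}}_{\overline{\mathbf S}}(\bar u)=Q^{H^{\mathrm{ad}}}_{\overline{\mathbf S}}(\bar u'')=Q^H_{\mathbf S}(u'')$ for every maximal torus $\mathbf S$ of $H$, which finishes the verification. (In the generic sub-case, where $s$ already lies in $C_G(x)^\circ$ so that the above class is trivial, there is an even simpler argument: Lang's theorem inside $C_G(x)$ puts the conjugating element into $C_G(s)(\F_{q^2})$, and reindexing the displayed sum by $h\mapsto c_0h$ --- legitimate because $c_0$ centralizes $s$ and so preserves both the index set and the $\theta$-weights --- matches the two sums term by term.)

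I expect the last paragraph to carry the real difficulty: correctly locating the obstruction class in $\pi_0(C_H(u))$, and the structural input that Green functions are blind to $Z(H)$-twists of a unipotent class. This is precisely the phenomenon already present for $x=(-{\rm Id})\cdot(\text{regular unipotent})$ in $\SL_2$, where ${\rm Sh}$ genuinely interchanges the two rational forms of the regular unipotent class while the Green functions involved (here the values of the permutation character on $\mathbb P^1$) do not tell them apart. Everything else --- the Jordan-decomposition reduction, the two uses of Lang's theorem, the manipulation of the character formula --- is routine once that input is available. An alternative and perhaps cleaner route to the ${\rm Sh}$-invariance of $R_T^\theta$ is to appeal to the known compatibility of Shintani descent with Deligne--Lusztig induction (Kawanaka; Digne--Michel), which should yield the identity more directly.
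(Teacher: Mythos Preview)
Your proposal is correct and follows the same route as the paper: deduce the Corollary from the preceding Theorem by verifying that the Deligne--Lusztig character $R_T^\theta$ takes the same value at $x$ and at ${\rm Sh}(x)$. The paper's own proof is a single sentence invoking a theorem of Digne--Michel for exactly this ${\rm Sh}$-invariance, which is precisely the ``alternative and perhaps cleaner route'' you flag at the end of your write-up.

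The difference is only in how much you try to do by hand. The paper cites Digne--Michel and stops; you additionally sketch a direct argument via the character formula, reducing to the statement that Green functions of $H=C_G^\circ(s)$ do not distinguish unipotent classes that differ by a central twist, and proposing to see this by passing to $H^{\mathrm{ad}}$. That sketch is morally right, and your $\SL_2$ illustration is apt, but the step ``Green functions are pulled back from $H^{\mathrm{ad}}$'' is itself a nontrivial compatibility (one must handle both the connected and the finite part of $Z(H)$, and in small characteristic the bookkeeping needs care). So your direct argument is a reasonable outline rather than a complete proof; the Digne--Michel citation, which you already identify, is what the paper actually uses and is the cleanest way to close the argument.
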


\begin{proof}The proof of the corollary follows from a theorem of Digne-Michel that 
the character of a Deligne-Lusztig representation of $G(\F_{q^2})$ takes the same value at $x$ as at ${\rm Sh}(x)$ for $x\in G(\F_{q^2})$. 
\end{proof}

\section{Construction of the group $G^{\out}$}
Let $E/F$ be quadratic extension of local fields, and $G$ a quasi-split reductive algebraic 
group over $F$. 
In this section we construct another quasi-split group $G^{\out}$ 
such that the groups $G$ and $G^{\out}$ become isomorphic over $E$. This group $G^{\out}$ plays 
an important role in the study of representations of $G(E)$  distinguished by $G(F)$ (as these
representations arise as base change of representations of $G^{\out}(F)$).

Recall that given a reductive algebraic group $G$ over the algebraic closure $\bar{F}$ of $F$, 
quasi-split forms of $G$  over $F$ are described by homomorphisms of Gal$(\bar{F}/F)$ 
to  Out$(G(\bar{F})) 
= {\rm Aut}(G(\bar{F})) /  
{\rm Inn}(G(\bar{F}))$   where ${\rm Inn}(G(\bar{F}))$   denotes the group of inner automorphisms.

As recalled earlier, every reductive group $G$ over $\bar{F}$ has an automorphism $\iota$ of order $\leq 2$ 
which takes every irreducible algebraic representation
of $G$ to its contragredient. The automorphism $\iota$ defines a well-defined element of 
 Out$(G(\bar{F}))$, and belongs to the center of this group. This automorphism $\iota$ is called the Chevalley involution, or also the
{\it opposition involution}.

Let $G^{\out}$ be the quasi-split group over $F$ obtained by twisting 
$G$ by the Chevalley involution, i.e., the 
homomorphisms from Gal$(\bar{F}/F)$ 
 to Out$(G(\bar{F})) $ associated to  $G^{\out}$ is the homomorphisms from Gal$(\bar{F}/F)$  
to Out$(G(\bar{F})) $
associated to $G$ times the 
homomorphisms of Gal$(\bar{F}/F)$ to Out$(G(\bar{F})) $ which sends the nontrivial element
in Gal($E/F$) to the 
Chevalley involution $\iota$. 

\vspace{5mm}

\noindent{\bf Example :} (a) For $G = \GL_n$, $G^{\out} = \U_n$, and for $G = \U_n$, $G^{\out} = \GL_n$.

(b)  For a torus $T$ over $F$, it can be seen that  $T^{\out} = (R_{E/F}T)/T$ where $R_{E/F}$ is the Weil restriction of scalars.

\begin{remark} It can be seen that for $\C/\R$, if the group $G$ is an innerform of a split group, 
then $G^{\out}$ is the innerform of the compact group, and vice versa;  in particular, if the split 
and compact form are innerforms of each other (i.e., if $-1$ belongs to the Weyl group of $G$), 
$G^{\out}$ is the quasi-split innerform of $G$.
\end{remark}

\begin{remark} The (quasi-split) group $G^{\out}$ associated to a quadratic extension $E/F$, and a 
reductive group $G$ over $F$, has been around for a long time. See for example, the work \cite{Na} of Nadya Gurevich. 
This group has also been noticed in the case of real groups, see \cite{H-P}. 
\end{remark}

\section{A character of $G(F)$ of order 2} 
We now return to the context of $G(F) \subset G(E)$. 
In an earlier paper of the author \cite{Pr02}, 
there is the
construction of a character $\omega_G: G(F) \rightarrow \Z/2$ 
associated to any quadratic extension
$E$ of $F$ which plays an important role in questions about distinction. 
This character is in a certain sense `dual' to the canonical element in the center of a  
reductive group of order one or two which determines if a finite dimensional selfdual algebraic representation of 
the group is orthogonal or symplectic. It is dual also in the sense of `arrows', this one being  
$\omega_G: G(F) \rightarrow \Z/2$, the other one being, $\Z/2 \rightarrow \widehat{G}$. 

The character $\omega_G$ 
is functorial under maps of reductive algebraic groups with finite kernel and cokernel, and $\omega_{G_1 \times G_2} = \omega_{G_1} + \omega_{G_2}$. 
We will review the construction
of $\omega_G$ here.  Let $G^{\rm ad}$ denote  the adjoint group of $G$, i.e., $G$ modulo center, 
and $G^{\rm sc}$ the simply connected
cover of $G^{\rm ad}$. Let $Z_{\sc}$ be the center of $G^{\rm sc}$. Then we have an exact sequence of groups,
$$1 \rightarrow Z_{\sc}(F) \rightarrow G^{\rm sc}(F) \rightarrow G^{\rm ad}(F) \rightarrow 
H^1( {\rm Gal}(\bar{F}/F), Z_{\scon})
\rightarrow \cdots$$
The character $\omega_G$ factors through a character of $G^{\rm ad}(F)$ via the natural map,
 $G(F) \rightarrow G^{\rm ad}(F)$, so we need to construct one for $G^{\rm ad}(F)$, which arises from 
this exact sequence from a character of $H^1( {\rm Gal}(\bar{F}/F), Z_{\scon})$, which by
Tate-Nakayama duality amounts to constructing an element of $H^1({\rm Gal}(\bar{F}/F), {Z}^\vee_{\scon})$,
where ${Z}^\vee_{\scon}$ is the Cartier dual of $Z_{\scon}$. 

Let $\widehat{G}'$ be the dual group of $G'= G^{\rm ad}$. It is clear that one can choose
a regular  unipotent in $\widehat{G}'$ such that the corresponding Jacobson-Morozov homomorphism
of $\SL_2(\C)$ into $\widehat{G}'$ is invariant under ({\it pinned}) outer automorphisms of $\widehat{G}'$. The center
of $\SL_2(\C)$ under this Jacobson-Morozov homomorphism goes  to the center of $\widehat{G}'$ which is nothing but 
${Z}^\vee_{\scon}$, inducing a map $H^1({\rm Gal}(\bar{F}/F), \Z/2) \rightarrow 
H^1({\rm Gal}(\bar{F}/F), {Z}^\vee_{\scon})$. 
Since $H^1({\rm Gal}(\bar{F}/F), \Z/2)$ parametrizes
quadratic etale algebras over $F$, thus associated to the quadratic extension $E/F$, we get 
an element of $H^1({\rm Gal}(\bar{F}/F), {Z}^\vee_{\scon})$, 
therefore a character of $ H^1({\rm Gal}(\bar{F}/F), Z_{\scon})$, 
and finally a character
$\omega_G: G(F) \rightarrow \Z/2$ associated to any quadratic extension $E$ of $F$. We remind the reader that
although $\omega_G$ very much depends on the quadratic extension $E/F$, to lighten the notation, we have
not explicitly used it in $\omega_G$.

\vspace{4mm}

\noindent{\bf Example :} (a) For $G = \GL_n$, $\omega_G = \omega_{E/F} \circ \det$ for $n$ even and trivial for $n$ odd.

(b) For $G = \U_n$, defined using a hermitian form over $E$, $\omega_G $ is trivial for all $n$.  

\begin{remark}Since the centre of a reductive group is the same for all innerforms, the character 
$\omega_G:G(F) \rightarrow \Z/2$ is `independent' of the inner class of $G$.
\end{remark}
\vspace{4mm}

The following proposition points to  the close relationship between the character $\omega_G$ and 
{\it half the sum of positive roots} being not a weight, a well-known source of difficulty in character theory 
of both real and $p$-adic groups. We omit the proof of the proposition.

\begin{proposition}\label{prop2}
Let $G$ be a quasi-split adjoint group over a local field $F$ together with a maximal torus 
$T$ contained in a Borel subgroup $B$ of $G$ over $F$. Let $G^{\scon}$ be the simply connected cover of $G$ with $Z^{\scon}$ 
the kernel of the map $G^{\scon} \rightarrow G$, and $T^{\scon}$ the maximal torus in $G^{\scon}$ lying over $T$. 
Let $\delta_G$  be {\it half the sum of positive roots} of $T^{\scon}$ 
inside $B$. It is known that $\delta_G \in X^\star(T^{\scon}) \cap \frac{1}{2}X^\star(T)$. 
Therefore $ \delta_G|_{Z^{\scon}}$ gives a homomorphism $\delta_G: Z^{\scon} \rightarrow \Z/2,$ 
i.e., an element of 
order $\leq 2$ of $\Hom[Z^{\scon}, \Q/\Z]$. A quadratic extension $E$ of $F$ then 
gives a homomorphism from ${\rm Gal}(\bar{F}/F)$ to 
$ \Hom[Z^{\scon}, \Q/\Z]$, thus we get an element of    
$H^1({\rm Gal}(\bar{F}/F), \Hom[Z^{sc},\Q/\Z])$.  Tate duality
then gives  a character of $ H^1({\rm Gal}(\bar{F}/F), Z_{sc})$, hence 
a character of $G(F)$. The character of $G(F)$ so defined is nothing but the character $\omega_G: G(F) \rightarrow \Z/2$ defined above associated to any quadratic extension $E$ of $F$.
\end{proposition}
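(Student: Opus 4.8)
The plan is to unwind both constructions of $\omega_G$—the one given in section 8 via a regular unipotent in $\widehat{G}'$, and the one in Proposition \ref{prop2} via half the sum of positive roots—and to verify that the two elements of $H^1(\mathrm{Gal}(\bar F/F), Z^\vee_{\scon})$ (equivalently, of $H^1(\mathrm{Gal}(\bar F/F), \Hom[Z^{\scon},\Q/\Z])$) that they produce coincide. Both constructions proceed by the same formal template: a homomorphism $\Z/2 \to Z^\vee_{\scon}$ of $\mathrm{Gal}(\bar F/F)$-modules, followed by functoriality in cohomology applied to the class in $H^1(\mathrm{Gal}(\bar F/F),\Z/2)$ attached to $E/F$, followed by Tate–Nakayama duality. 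Since the class from $E/F$ and the duality step are identical in both cases, it suffices to check that the two homomorphisms $\Z/2 \to Z^\vee_{\scon}$ agree. Because $Z^\vee_{\scon} = \Hom[Z^{\scon},\Q/\Z]$, a homomorphism $\Z/2 \to Z^\vee_{\scon}$ is the same datum as an element of order $\le 2$ in $\Hom[Z^{\scon},\Q/\Z]$, i.e.\ a homomorphism $Z^{\scon}\to \tfrac12\Z/\Z \cong \Z/2$; so concretely I must identify two homomorphisms $Z^{\scon}\to\Z/2$.

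First I would recall the standard fact that under the Jacobson–Morozov $\mathrm{SL}_2$ attached to a regular unipotent in a semisimple group $H$, the image of $-1\in \mathrm{SL}_2(\C)$ is the element $(2\rho^\vee_H)(-1)\in H$, where $2\rho^\vee_H$ is the sum of the positive coroots, equivalently the cocharacter through which the semisimple element of the principal $\mathrm{SL}_2$ acts; this is the content of Kostant's description of the principal $\mathrm{SL}_2$. Applying this with $H = \widehat{G}'= \widehat{G^{\ad}}$, the image of $-1$ in the center $Z^\vee_{\scon}$ of $\widehat{G}'$ is $(2\rho^\vee_{\widehat{G}'})(-1)$. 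Now $\rho^\vee$ of the dual group $\widehat{G}'$ is exactly $\rho$ of $G'= G^{\ad}$ under the interchange of roots and coroots, so $(2\rho^\vee_{\widehat{G}'})(-1)$, viewed as the generator's image in $\Hom[Z^{\scon},\Q/\Z]$, is precisely the character of $Z^{\scon}$ given by restricting $2\delta_G = $ (sum of positive roots of $T^{\scon}$) to $Z^{\scon}$; but $2\delta_G$ is trivial on $Z^{\scon}$, and the relevant order-$\le 2$ datum is $\delta_G|_{Z^{\scon}}$, which is exactly half of this and is well-defined because $\delta_G \in X^\star(T^{\scon})$. Making this matching precise — that "image of $-1$ under principal $\mathrm{SL}_2$ in $\widehat G'$" and "$\delta_G|_{Z^{\scon}}$" name the same element of the $2$-torsion of $\Hom[Z^{\scon},\Q/\Z]$ — is the heart of the argument, and I would do it by a direct computation on cocharacter/character lattices, using the duality $X_\star(\widehat{T}') = X^\star(T')$ and the fact that $2\rho^\vee$ for $\widehat{G}'$ pairs with a weight $\lambda$ of $\widehat{T}'$ to give $\langle \lambda, 2\rho^\vee\rangle$, which on the sublattice $Z^{\scon}$-side reproduces $\langle 2\delta_G, -\rangle$.

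Once the two homomorphisms $\Z/2\to Z^\vee_{\scon}$ are identified, the rest is automatic: applying $H^1(\mathrm{Gal}(\bar F/F), -)$ to the (now common) map and evaluating on the class of $E/F$ in $H^1(\mathrm{Gal}(\bar F/F),\Z/2)$ gives the same element of $H^1(\mathrm{Gal}(\bar F/F), Z^\vee_{\scon})$ in both constructions, hence the same character of $H^1(\mathrm{Gal}(\bar F/F), Z_{\scon})$ by Tate–Nakayama, hence the same character $\omega_G$ of $G^{\ad}(F)$ and therefore of $G(F)$. The main obstacle I anticipate is purely bookkeeping but genuinely delicate: keeping straight the several dualities in play (Cartier duality $Z^{\scon}\leftrightarrow Z^\vee_{\scon}$, Langlands duality $G'\leftrightarrow\widehat{G}'$ which swaps $\rho\leftrightarrow\rho^\vee$, and Tate–Nakayama), and making sure the "pinned outer automorphism invariance" of the chosen regular unipotent is used correctly so that the construction is forced to land on $\delta_G$ rather than some Weyl-translate of it. A secondary point to handle carefully is the factor of $2$: the principal $\mathrm{SL}_2$ naturally produces $2\rho^\vee$, and one must check that the induced order-$\le 2$ element is $\rho^\vee|_{Z} = \delta_G|_{Z^{\scon}}$ and not, say, trivial, which comes down to the integrality statement $\delta_G\in X^\star(T^{\scon})\cap\tfrac12 X^\star(T)$ quoted in the proposition.
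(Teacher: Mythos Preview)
The paper omits the proof of this proposition entirely (``We omit the proof of the proposition''), so there is nothing to compare against; your outline is the natural argument and would suffice. Your reduction is exactly right: both constructions pass through the same cohomological and Tate--Nakayama steps, so everything comes down to matching the two elements of order $\le 2$ in $Z^\vee_{\scon}\cong P/Q$ (weight lattice of $G$ modulo root lattice).

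The one place where your write-up is genuinely muddled is the factor of $2$, and since you yourself flag it as the delicate point, let me spell it out so you can see there is no ambiguity. The principal cocharacter into $\widehat{G}'$ is $2\rho^\vee_{\widehat{G}'}$, and under Langlands duality the coroots of $\widehat{G}'$ are the roots of $G$, so $2\rho^\vee_{\widehat{G}'}=2\delta_G\in Q=X_\star(\widehat{T}')$. Thus $\phi(-1)=(2\delta_G)(-1)\in\widehat{T}'$. To read this off as an element of $Z(\widehat{G}')\cong P/Q$, pair with an arbitrary character $\mu^\vee\in P^\vee=X^\star(\widehat{T}')$:
\[
\mu^\vee\big((2\delta_G)(-1)\big)=(-1)^{\langle 2\delta_G,\mu^\vee\rangle}=e^{2\pi i\langle \delta_G,\mu^\vee\rangle}.
\]
Under the perfect pairing $P/Q\times P^\vee/Q^\vee\to\Q/\Z$, this says precisely that $\phi(-1)$ corresponds to the class $\delta_G+Q\in P/Q$. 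On the other side, $\delta_G|_{Z^{\scon}}$ is by definition the image of $\delta_G\in P=X^\star(T^{\scon})$ in $P/Q=X^\star(Z^{\scon})$, i.e.\ the same class. So the ``$2\delta_G$ versus $\delta_G$'' issue resolves cleanly: the $2$ in $2\rho^\vee$ and the $-1$ in $\phi(-1)$ combine to give exactly $\delta_G$ modulo $Q$, not $2\delta_G$ (which is indeed trivial there). Your sentence ``$(2\rho^\vee_{\widehat{G}'})(-1)$ \dots\ is precisely the character of $Z^{\scon}$ given by restricting $2\delta_G$'' is therefore incorrect as stated; replace it with the computation above and the argument is complete.
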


We now come to an another interpretation  of the character $\omega_G$.
For this, we recall some remarks of Langlands from Example 10.5 of [Bo], assuming that $G$ is a semisimple adjoint 
group with discrete series. Let $\widehat{T}(\C)$ be a `standard' maximal torus in $\widehat{G}(\C)$ which is used to define $\widehat{G}(\C)$.
A continuous homomorphism $\varphi: \C^\times \rightarrow \widehat{T}(\C)$ 
is of the form $z\rightarrow z^\mu \cdot \bar{z}^\nu$, for a pair $(\mu,\nu) $ in 
$X_\star(\widehat{T}) 
\otimes \C$ with $\mu -\nu \in   X_\star(\widehat{T})$. The homomorphism 
$\varphi = z^{\mu}\cdot \bar{z}^\nu: \C^\times \rightarrow \widehat{T}(\C)$ does not lie in a proper parabolic of $\widehat{G}(\C)$ if and only if 
$\mu =-\nu$, and it extends to
a parameter $\varphi': W_\R\rightarrow {}^LG$ if and only if 
$$\mu \in \delta_G + X_\star(\widehat{T}),$$
where $\delta_G$ is half the sum of positive roots of $G$, thus an element of 
$X^\star(T)\otimes \Q = X_\star(\wT) \otimes \Q$.  Let $\wG^{\ad}$ be the adjoint group associated to  $\wG$ with $\wT^{\ad}$
the image of $\wT$  making     a commutative diagram: 
$$\xymatrixrowsep{1in}
\xymatrixcolsep{1in}
\xymatrix{\widehat{T}\ar@{^{(}->} 
[r] \ar@{->}[d] & \ar[d]\widehat{G} \\
 \widehat{T}^{\ad}\ar@{^{(}->} [r]  & \widehat{G}^{\ad}.
}
$$
Then $X_\star(\wT) \subset X_\star(\wT^{\ad})$, and the coset space  $X_\star(\wT^{\ad})/X_\star(\wT) = X^\star(T^{\scon})
/X^\star(T)$ can be identified to 
the character group  $\Hom[Z^{\scon}, \Q/\Z]$ by restricting characters of $T^{\scon}$ to $Z^{\scon}$.
Now the condition on $\mu$, i.e., $\mu \in \delta_G + X_\star(\widehat{T}),$ can be reinterpreted to say that
$\mu = \delta_G: Z^{\scon} \rightarrow \Z/2.$

\vspace{4mm}
We conclude our discussion in this section with the following proposition which will be very useful when we 
come to real groups. 

\begin{proposition} \label{prop3}Suppose $G^{\out}$ is a quasi-split group over $\R$ with compact maximal torus $T_c(\R)$, 
with $G(\R)$ the corresponding split group with maximal split torus $T(\R)$  which sit together 
in the exact sequence:
$$1 \rightarrow T(\R) \rightarrow T(\C) \rightarrow T_c(\R)\rightarrow 1.$$
Suppose  $\mu$ is a character of the finite cover $T^s_c(\R)\rightarrow T_c(\R)$ with kernel 
$Z^{\scon}$  with $\mu|_{Z^{\scon}} = \delta_G$. Then the restriction of $\mu$ to $T(\C)$ using the triangle,
$$
\xymatrix{
  & \ar@{->}[d]T^s_c(\R) \\
T(\C) \ar@{-->}[ur]\ar[r] & T_c(\R),  
}
$$
when restricted to $T(\R)$ (a split torus) gives rise to the restriction of the character
$\omega_G$  of $G(\R)$ to its maximal split torus $T$.
\end{proposition}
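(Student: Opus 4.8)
The plan is to compare the two characters of $T(\R)$ after reducing to the finite group $\pi_0(T(\R))$, by re-expressing the lifted character $\mu\circ j$ through the connecting map of the isogeny $T^{\scon}\to T$ and then matching it with the Tate--Nakayama recipe defining $\omega_G$. Since $\omega_G$, the torus $T^s_c$, the element $\delta_G$, and hence the whole triangle are pulled back along $G\to G^{\mathrm{ad}}$, I would first assume $G=G^{\mathrm{ad}}$, so that $T=T^{\mathrm{ad}}$ and $T^{\scon}$ is the maximal torus of $G^{\scon}$ lying over $T$. Both characters take values in $\{\pm1\}$ and kill $T(\R)^\circ$: for $\omega_G$ this is immediate, and for $\mu\circ j$ it holds because $j$ lifts the projection $p\colon T(\C)\to T_c(\R)$, which is trivial on $T(\R)$, so $j(T(\R))$ lies in the finite kernel of $T^s_c(\R)\to T_c(\R)$ --- a copy of $Z^{\scon}$ --- on which $\mu$ agrees with the quadratic character $\delta_G$. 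Thus both factor through $\pi_0(T(\R))=X_\star(T)/2X_\star(T)$, and it is enough to evaluate them on an element $x=\lambda(-1)$ with $\lambda\in X_\star(T)$.

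Next I would make $j$ explicit. As $T^{\scon}(\C)\to T(\C)$ is surjective, for $z\in T(\C)$ one chooses $w\in T^{\scon}(\C)$ above $z$ and sets $j(z)=w\bar w^{-1}$; this lies in $T^s_c(\R)=\{u\in T^{\scon}(\C):u\bar u=1\}$ and maps to $p(z)$. Replacing $w$ by $\zeta w$ with $\zeta\in Z^{\scon}(\C)$ multiplies $j(z)$ by $\zeta\bar\zeta^{-1}=\zeta^{2}$ (using $\bar\zeta=\zeta^{-1}$ on $Z^{\scon}(\C)$, as $G$ is split), and $\mu$ kills the squares in $Z^{\scon}(\C)$ because $\delta_G$ has order $\le2$; so $\mu\circ j$ is well defined even when a literal lift $j$ does not exist. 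For $x=\lambda(-1)$ and $w\in T^{\scon}(\C)$ over $x$, the element $c:=w^{-1}\bar w$ lies in $Z^{\scon}(\C)$ and represents the class $\partial(x)\in H^1(\Gal(\C/\R),Z^{\scon})$ coming from the connecting map of $1\to Z^{\scon}\to T^{\scon}\to T\to1$, while $j(x)=c^{-1}$. Hence $\mu(j(x))=\delta_G(c^{-1})=\delta_G(c)$.

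On the other hand, unwinding the definition of $\omega_G$ in Section~8 gives $\omega_G|_{T(\R)}(x)=\langle\partial(x),[\delta_G]\rangle$, where $\langle\,,\,\rangle$ is the Tate--Nakayama pairing $H^1(\R,Z^{\scon})\times H^1(\R,Z^\vee_{\scon})\to H^2(\Gal(\C/\R),\Gm)=\tfrac12\Z/\Z$ and $[\delta_G]$ is the image of the class of $\C/\R$ in $H^1(\R,\Z/2)$ under the map $\Z/2\to Z^\vee_{\scon}$ sending the generator to $\delta_G$, i.e.\ the cocycle $\sigma\mapsto\delta_G$. The heart of the argument --- and the step I expect to be the main obstacle --- is to check that this pairing, evaluated on $\partial(x)=[c]$ and on $[\delta_G]$, equals $\delta_G(c)\in\{\pm1\}$ under the identification $\tfrac12\Z/\Z\cong\{\pm1\}$. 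Concretely, for the cyclic group $\Gal(\C/\R)$ the cup product of these two degree-one classes is represented by the $2$-cocycle $(\sigma,\sigma)\mapsto c\otimes\sigma(\delta_G)=c\otimes\delta_G$, the evaluation $Z^{\scon}\otimes Z^\vee_{\scon}\to\Gm$ carries it to the constant $\delta_G(c)$, and the class of that constant in $H^2(\Gal(\C/\R),\C^\times)=\R^\times/N_{\C/\R}(\C^\times)=\{\pm1\}$ is $\delta_G(c)$. Carrying this out amounts to pinning down the normalization of local Tate duality and the (here trivial, since $G$ is split) Galois action on $Z^\vee_{\scon}$, together with the observation that the ambiguity from $Z^{\scon}$ not being $2$-torsion is washed out by the quadratic $\delta_G$.

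Combining the two computations, $\mu\circ j|_{T(\R)}(x)=\delta_G(c)=\omega_G|_{T(\R)}(x)$ for every $x=\lambda(-1)$, so the two characters of $T(\R)$ coincide, which is the assertion. As a consistency check, for $G=\PGL_2$ the character $\delta_G$ is the nontrivial character of $Z^{\scon}=\mu_2$ and both sides restrict to the sign character of $\R^\times$ on the diagonal torus, matching $\omega_{\GL_2}=\omega_{\C/\R}\circ\det$.
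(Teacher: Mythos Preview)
The paper does not actually supply a proof of this proposition; it is stated at the end of Section~8 and immediately followed by a remark, with no \texttt{proof} environment. So there is nothing in the paper to compare your argument against, and your proposal has to be judged on its own merits.

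Your argument is correct and essentially complete. The reduction to the adjoint case is the right first move, and your explicit description of the (virtual) lift $j(z)=w\bar w^{-1}$ is exactly what is needed to make sense of the dashed arrow: as you observe, a genuine homomorphism $T(\C)\to T^s_c(\R)$ need not exist when $Z^{\scon}$ has elements of order $>2$, but $\mu\circ j$ is still a well-defined character because $\delta_G$ kills squares in $Z^{\scon}$. The identification $\mu(j(x))=\delta_G(c)$ with $c$ representing the boundary class $\partial(x)\in H^1(\Gal(\C/\R),Z^{\scon})$ is clean, and your explicit cup-product computation for $\Gal(\C/\R)$ correctly identifies the Tate pairing $\langle\partial(x),[\delta_G]\rangle$ with the sign $\delta_G(c)\in\{\pm1\}$ via $H^2(\Gal(\C/\R),\C^\times)\cong\R^\times/\R^{>0}$.

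One small point of presentation: when you write ``unwinding the definition of $\omega_G$ in Section~8,'' you are in fact invoking Proposition~\ref{prop2} (also stated without proof in the paper), which identifies the Jacobson--Morozov construction of $\omega_G$ with the $\delta_G$-description. It would be cleaner to cite that proposition explicitly rather than call it the definition. With that adjustment, your proof stands.
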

\begin{remark}
The quadratic character $\omega_G: G(F) \rightarrow \Z/2$ 
was introduced in the paper \cite {Pr02}, where I   formulated the conjecture that for $E$ a quadratic extension of $F$, 
a $p$-adic field, the Steinberg
 representation of $G(E)$ is $\omega_G$-distinguished, and not $\omega$-distinguished for any other character 
$\omega \not = \omega_G$ of $G(F)$.
The Steinberg representation was  singled out partly because its character is stable, and therefore the theorem in 
my paper \cite {Pr01} for finite fields (recalled as theorem 1 in this paper), was most 
likely to hold for such representations, except for this twist to the story of having to use the character $\omega_G$.
 
The conjecture on the distinction property of the Steinberg are now proven  
in several cases by Broussous and Courtes, see for instance \cite{brf}, \cite{cou}, 
and other forthcoming papers of these authors.

\end{remark}
\section{Cyclic basechange}
Let $ E/F$ be a cyclic extension of local fields of degree $d$, giving rise to an exact sequence:
$$1 \rightarrow W_E \rightarrow W_F \rightarrow {\rm Gal}(E/F) 
\rightarrow 1.$$

Given a Langlands parameter $\phi: W_E\longrightarrow {}^LG$, 
its extensions to $W_F$ in the following diagram
$$
\xymatrix{
W_E \ar[r]  \ar@{_(->}[d]  & {}^LG \\
W_F \ar@{-->}[ur] & {}  
}
$$
is what is of critical interest to us in this paper for $E/F$ quadratic.

Recall that a Langlands parameter $\phi: W_E\longrightarrow {}^LG$, 
can equivalently be considered as an
element of $H^1(E,\widehat{G})$ 
where we give $\widehat{G}$ its natural structure as a $W_E$-group 
(through which ${}^LG = \wG \rtimes W_E$ is constructed).  Further, a given Langlands parameter 
$\phi_0: W_E\longrightarrow {}^LG$, gives $\widehat{G}$ another structure as a $W_E$-group (through the action of $w \in W_E$ 
on $g \in \widehat{G}$ by 
$\phi_0(w)g^w\phi_0(w)^{-1}$ where $g^w$ is the original action of $w$ on $g \in \widehat{G}$).  For this
action of $W_E$ on $\widehat{G}$, denote
$H^1(E,\widehat{G})$ 
by $H^1_{\phi_0}(E,\widehat{G})$, 
to differentiate it from the original $H^1(E,\widehat{G})$ where  
$\widehat{G}$ is given its natural structure as a $W_E$-group.

Then there is a natural identification between  $H^1_{\phi_0}(E,\widehat{G})$ and $H^1(E,\widehat{G})$ 
given at the level of cocycles by $$ \phi \in H^1_{\phi_0}(E,\widehat{G}) \longrightarrow \Phi = \phi \cdot \phi_0 
\in H^1(E,\widehat{G}),$$ as can be easily checked.
 
Thus $H^1_{\phi_0}(E,\widehat{G})$  still represents equivalence classes 
of Langlands parameters, but with the advantage that the   Langlands parameter $\phi_0: W_E\longrightarrow {}^LG$
now corresponds to the identity element of $H^1_{\phi_0}(E,\widehat{G})$. Observe also that for this
action of $W_E$ on $\widehat{G}$,   $\widehat{G}^{W_E}$ is nothing but centralizer $Z(\phi_0)$ of the parameter 
$\phi_0: W_E\longrightarrow {}^LG$.

Thus the following standard Lemma, cf. Serre \cite{Se} I \S5.8(a),  answers exactly the question we are 
considering in this paper, viz. possible lifts of a Langlands parameter from $W_E$ to $W_F$.

\begin{lemma}\label{fibers}The following is an exact sequence of pointed sets (with $j$ an injective map of sets):
$$1 \rightarrow H^1({\rm Gal}(E/F), \widehat{G}^{W_E}
) \stackrel{j}\rightarrow H^1_{\phi_0}(F, \widehat{G}) 
\rightarrow H^1_{\phi_0}(E,\widehat{G}) ^{{\rm Gal}(E/F) }.$$
\end{lemma}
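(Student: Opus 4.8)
The plan is to invoke the standard exact sequence in nonabelian Galois cohomology attached to a short exact sequence of groups with operators, applied to the inflation-restriction situation. Concretely, I would start from the exact sequence of $W_F$-groups $1 \to \widehat{G}^{W_E} \to \widehat{G} \to \widehat{G} \to 1$ — no, more precisely, I would use the fact that the Hochschild-Serre / inflation-restriction exact sequence
\[
1 \to H^1(\mathrm{Gal}(E/F), A^{W_E}) \to H^1(W_F, A) \to H^1(W_E, A)
\]
holds for any (possibly nonabelian) $W_F$-group $A$, where the last map is restriction and the sequence is exact as pointed sets, with the first map injective; this is exactly Serre, \emph{Galois Cohomology}, I \S5.8(a), as cited. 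I would then simply specialize $A = \widehat{G}$ equipped with the twisted $W_F$-action coming from $\phi_0$ (which restricts on $W_E$ to the twisted action defining $H^1_{\phi_0}(E,\widehat{G})$), and note that for this action $\widehat{G}^{W_E} = Z(\phi_0)$, which was observed just above the statement.

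The one point that needs a remark rather than pure citation is that the image of the restriction map lands in, and we only assert exactness into, the $\mathrm{Gal}(E/F)$-invariants $H^1_{\phi_0}(E,\widehat{G})^{\mathrm{Gal}(E/F)}$: a class inflated or, here, restricted from $W_F$ is fixed by the residual $\mathrm{Gal}(E/F)$-action on $H^1(W_E,\cdot)$ because that action is induced by conjugation by lifts of $\mathrm{Gal}(E/F)$ to $W_F$, and a cocycle extending to $W_F$ is visibly invariant under such conjugation. So the steps, in order, are: (1) recall the twisted $W_F$-action on $\widehat{G}$ determined by $\phi_0$ and that it restricts to the twisted $W_E$-action; (2) identify $\widehat{G}^{W_E} = Z(\phi_0)$ for this action; (3) quote Serre I \S5.8(a) for the inflation-restriction sequence of pointed sets with injective first map; (4) observe that the image of restriction is contained in the $\mathrm{Gal}(E/F)$-fixed points, via the conjugation description of that action, so that the sequence may be written terminating in $H^1_{\phi_0}(E,\widehat{G})^{\mathrm{Gal}(E/F)}$.

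I do not anticipate a genuine obstacle here: the lemma is, as the text says, a "standard Lemma," and the content is bookkeeping to make sure the twisted action is the right one and that exactness is understood in the category of pointed sets (so "exact" at $H^1_{\phi_0}(F,\widehat{G})$ means the fiber over the base point of the restriction map is precisely the image of $j$, not that all fibers are cosets). The mildest subtlety, worth a sentence, is that exactness of the pointed-set sequence only describes the fiber over the trivial class; the description of the other fibers of $H^1_{\phi_0}(F,\widehat{G}) \to H^1_{\phi_0}(E,\widehat{G})$ (as orbits under a twisted action of $\widehat{G}^{W_E}$, or equivalently via twisting the coefficient group by each cocycle) is the usual "torsor" refinement and is not part of this statement, though it is implicitly what will be used later in the paper when counting lifts with multiplicity.
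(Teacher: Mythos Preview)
Your proposal is correct and follows exactly the paper's approach: the paper gives no proof beyond citing Serre, \emph{Galois Cohomology}, I \S5.8(a), which is precisely the inflation--restriction sequence you invoke after twisting the $W_F$-action by $\phi_0$. Your additional remarks on why the image lies in the $\mathrm{Gal}(E/F)$-invariants and on the pointed-set meaning of exactness are helpful elaborations but not needed to match the paper's treatment.
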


\begin{corollary}
If $\phi$ is a `stable discrete' parameter for $G(E)$, 
i.e., $Z_{\wG}(\phi) = Z(\wG)^{W_E},$ then it follows that the 
possible lifts of $\phi$ to a parameter of $G(F)$ (if non-empty) is parametrized by 
  $H^1({\rm Gal}(E/F), Z(\wG)^{W_E})$, 
and if $\phi_0$ is one such lift as an element of 
$H^1(W_F, \widehat{G})$, 
then all the other lifts are 
$\varphi \cdot \phi_0$ for $\varphi \in H^1({\rm Gal}(E/F), Z(\wG)^{W_E}) \subset  H^1(W_F, Z(\wG))$, 
for the natural pairing: $H^1(W_F, Z(\wG)) \times H^1(W_F, \widehat{G}) \rightarrow H^1(W_F, \widehat{G})$, 
and these are distinct. 
Our conjecture \ref{conj3} will then say that a representation of $G(E)$ with a stable discrete parameter, 
if distinguished by $G(F)$, is distinguished
(with multiplicity 1) by {\rm all} pure innerforms of $G$ over $F$ which trivialize over $E$.   
\end{corollary}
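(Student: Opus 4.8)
The plan is to read the statement off Lemma~\ref{fibers}, applied to the quadratic extension $E/F$ with the base point taken to be an honest lift. First I would assume the set of lifts of $\phi$ is non-empty and fix one lift $\phi_0\colon W_F\to {}^LG$, so that $\phi_0|_{W_E}=\phi$. Using $\phi_0$ to twist the action of $W_F$ on $\wG$ as in the paragraph preceding Lemma~\ref{fibers}, the parameter $\phi_0$ becomes the distinguished element of $H^1_{\phi_0}(F,\wG)\cong H^1(W_F,\wG)$, and the set of equivalence classes of lifts of $\phi$ is exactly the fibre over the trivial class of the restriction map $H^1_{\phi_0}(F,\wG)\to H^1_{\phi_0}(E,\wG)^{\Gal(E/F)}$. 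By exactness in Lemma~\ref{fibers} this fibre is the image of the injection $j$, hence is in bijection with $H^1(\Gal(E/F),\wG^{W_E})$, where $\wG$ carries the $\phi$-twisted $W_E$-action.

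The second step is to identify $\wG^{W_E}$ for this twisted action. An element $g\in\wG$ is fixed by all $w\in W_E$ precisely when $\phi(w)\,{}^wg\,\phi(w)^{-1}=g$, i.e.\ precisely when $g$ centralizes the image of $\phi$ in ${}^LG$; thus $\wG^{W_E}=Z_{\wG}(\phi)$. The stable-discrete hypothesis gives $Z_{\wG}(\phi)=Z(\wG)^{W_E}$, so the lifts of $\phi$ are parametrized by $H^1(\Gal(E/F),Z(\wG)^{W_E})$, which sits inside $H^1(W_F,Z(\wG))$ by inflation--restriction. Because $Z(\wG)$ is central, the $\phi$-twisted $\Gal(E/F)$-action on $Z(\wG)^{W_E}$ is the ordinary one, and the map $j$ --- which on cocycles sends $c$ to the parameter $w\mapsto\tilde c(w)\phi_0(w)$, with $\tilde c$ the inflation of $c$ to $W_F$ --- is precisely the natural pairing $H^1(W_F,Z(\wG))\times H^1(W_F,\wG)\to H^1(W_F,\wG)$ evaluated at $(\varphi,\phi_0)$. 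Hence every lift is of the form $\varphi\cdot\phi_0$ with $\varphi\in H^1(\Gal(E/F),Z(\wG)^{W_E})$, and the injectivity of $j$ shows these classes are pairwise distinct.

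The last sentence of the corollary is not an independent assertion but a preview of Conjecture~\ref{conj3}: for $\pi$ with a stable-discrete parameter the component group is $Z(\wG)^{W_E}$, the relevant fibre of the base-change map on parameter spaces is reduced over $\phi$, and the condition on characters of component groups imposed by that conjecture is vacuous; the conjectural multiplicity then reduces to counting the lifts described above, each contributing multiplicity one, and the pure innerforms attached to these lifts in Sections~11 and~12 turn out to exhaust all pure innerforms of $G$ that trivialize over $E$. I would simply record this consequence, since its proof presupposes the conjecture.

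The only delicate points are the two identifications in the second step: that the twisted $W_E$-invariants are genuinely the centralizer $Z_{\wG}(\phi)$ (this is exactly where the stable-discrete hypothesis enters, and it is what keeps the fibre from being larger), and that centrality of $Z(\wG)$ collapses the twisted cohomology of $Z(\wG)^{W_E}$ to its ordinary cohomology, so that ``another lift'' literally means ``multiply $\phi_0$ by a class inflated from $H^1(\Gal(E/F),Z(\wG)^{W_E})$''. Neither is hard, but both must be spelled out for the formula $\varphi\cdot\phi_0$ and the distinctness to hold on the nose; and the non-emptiness hypothesis is precisely what guarantees a base point inside the fibre, so that $\mathrm{im}(j)$ is the whole set of lifts rather than a possibly empty set.
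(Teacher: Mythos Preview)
Your proposal is correct and follows exactly the route the paper intends: the corollary is stated without proof immediately after Lemma~\ref{fibers}, and the paper has already recorded (just before that lemma) that $\wG^{W_E}$ for the $\phi_0$-twisted action is the centralizer $Z_{\wG}(\phi)$, so under the stable-discrete hypothesis the inflation--restriction sequence collapses precisely as you describe. Your observation that centrality of $Z(\wG)$ makes the twisted $\Gal(E/F)$-action on $Z(\wG)^{W_E}$ coincide with the ordinary one, and hence that the injection $j$ is literally $\varphi\mapsto\varphi\cdot\phi_0$, is the one detail the paper leaves to the reader; you have supplied it correctly.
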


\noindent{\bf Example 1:}
If $G$ is $\GL_n$ over $F$, and $\sigma$ is a parameter of a representation of $\GL_n(F)$, write
$$\sigma|_{W_E}= \sum_i n_i \sigma_i,$$
where $\sigma_i$ are irreducible representations of $W_E$.

Let $Z_{\M(n,\C)}(\sigma|_{W_E})$ be the centralizer of $\sigma|_{W_E}$ in the ambient algebra $\M(n,\C)$.
Clearly $Z_{\M(n,\C)}(\sigma|_{W_E}) = \sum_i \M(n_i,\C).$ 
The action of $\Gal(E/F) = \Z/2$ 
preserves the algebra  $Z_{\M(n,\C)}(\sigma|_{W_E}) = \sum_i \M(n_i,\C),$ and acts as an automorphism on it. Thus it either preserves 
a particular $\M(n_i,\C)$ corresponding to those representations $\sigma_i$ of $W_E$ which extend to $W_F$,
or permutes two $\M(n_i,\C)$ and $\M(n_j,\C)$   corresponding to those {\it distinct} 
representations $\sigma_i, \sigma_j$ of 
$W_E$ which are Galois conjugate. 

If we now let $Z(\sigma|_{W_E})$ be the centralizer of $\sigma|_{W_E}$ in the ambient $\GL(n,\C)$, we get the same conclusion,
i.e., $\Gal(E/F) = \Z/2$ either preserves a component, or permutes two of them. (We took this extra care
to first calculate the centralizer in $\M(n,\C)$ because it is the automorphism group of a semisimple algebra which has
the obvious structure, and not that of the group.)

In the calculation of $H^1({\rm Gal}(E/F), Z(\sigma|_{W_E})) 
= H^1(\Z/2, \prod_i \GL(n_i,\C))$, the factors
which are permuted by $\Gal(E/F)=\Z/2$ contribute nothing to the cohomology by Shapiro's lemma. The following lemma
summarizes the calculation of relevant cohomologies when the involution fixes a factor.

\begin{lemma} \label{inv}The involutive automorphisms of $\GL(n,\C)$ (up to conjugacy by $\GL(n,\C)$) are 
\begin{enumerate} 
\item inner-conjugation action by $I_{(p,q)}$ with $p+q=n$,
where $I_{(p,q)}$ represents the diagonal matrix with $p$ many 1, and $q$ many $-1$.  
\item $g\longrightarrow A {}^tg^{-1} A^{-1}$ where $A$ is a symmetric nonsingular matrix.
\item $g\longrightarrow A {}^tg^{-1} A^{-1}$ where $A$ is a skew-symmetric nonsingular matrix.
\end{enumerate}
In these respective cases,
\begin{enumerate} 
\item $H^1(\Z/2, \GL_n(\C))$ is a set with $(n+1)$ elements.
\item $H^1(\Z/2, \GL_n(\C)) = \langle 1 \rangle.$ 
\item $H^1(\Z/2, \GL_n(\C)) = \langle 1 \rangle.$ 
\end{enumerate}
\end{lemma}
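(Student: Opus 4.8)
The plan is to recall the classification of involutive automorphisms of $\GL_n(\C)$ and then compute $H^1(\Z/2,\GL_n(\C))$ in each case by interpreting cocycles as twisted forms of familiar linear-algebraic structures. For the classification itself, one uses that $\Aut(\GL_n(\C))/\Inn(\GL_n(\C))$ has order $2$ (the outer automorphism being $g\mapsto {}^tg^{-1}$), so every involution is either inner or of the form $g\mapsto A\,{}^tg^{-1}A^{-1}$ up to inner conjugation; then one normalizes $A$. The condition that such a map squares to the identity forces $A^{-1}\,{}^tA$ to be scalar, hence $A$ is symmetric or skew-symmetric, and by change of basis (Gram--Schmidt / symplectic normal form) one may take $A$ to be the identity in the symmetric case and the standard symplectic form in the skew case; the inner involutions are conjugation by $I_{(p,q)}$ since any involution in $\PGL_n(\C)$ lifts to a diagonalizable matrix with eigenvalues $\pm 1$ up to scalar.

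For the cohomology computations I would proceed case by case. In case (1), a $1$-cocycle for the $\Z/2$-action by conjugation by $I_{(p,q)}$ is an element $c\in\GL_n(\C)$ with $c\cdot I_{(p,q)}\bar{c}I_{(p,q)}^{-1}=1$ in the relevant sense; twisting $\GL_n$ by such a cocycle, together with the ambient $\C^n$, produces a $\C$-vector space with a semilinear involution whose fixed points form an $n$-dimensional $\R$-vector space equipped with a hermitian-type form, and the invariants classify these up to equivalence — but more directly, $H^1(\Z/2,\GL_n(\C))$ with this action is $H^1$ of $\Gal(\C/\R)$ acting on $\GL_n(\C)$ twisted so that the real form is $\U(p,q)$, and by the bijection between such cocycles and isomorphism classes of nondegenerate hermitian forms of rank $n$ one gets exactly the $n+1$ possibilities indexed by signature. (Alternatively one can cite Serre, Galois Cohomology, III.) In cases (2) and (3), the same principle applies: a cocycle twists $\GL_n(\C)$ into a group whose real form is $\OO_n(\C)$ (resp. $\Sp_{2n}(\C)$ as a real group), and the set $H^1$ classifies nondegenerate symmetric (resp. alternating) bilinear forms of the given rank over $\C$ up to equivalence. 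Since over $\C$ every nondegenerate symmetric form of rank $n$ is equivalent to the standard one, and every nondegenerate alternating form of rank $2n$ is equivalent to the standard symplectic one, the cohomology is trivial in both cases.

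The cleanest way to phrase all three computations uniformly is to note that for an involution $\theta$ of $\GL_n(\C)$, the pointed set $H^1(\Z/2,\GL_n(\C))$ (with $\Z/2$ acting through $\theta$) is in bijection with the set of $\GL_n(\C)$-conjugacy classes of involutions $\theta'$ that are ``inner twists'' of $\theta$, equivalently — via $\GL_n(\C)\subset\mathrm{GL}$ acting on $\C^n$ — with isomorphism classes of the corresponding bilinear or sesquilinear forms. This reduces everything to the classical facts: over $\C$, symmetric and alternating nondegenerate forms of fixed rank are unique, while sesquilinear (hermitian) forms are classified by signature, giving $n+1$ classes.

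The main obstacle I expect is purely bookkeeping: matching the three abstract involutions to the three flavors of form with the correct semilinear-versus-linear distinction, and being careful that in the inner case the $\Z/2$-action on $\GL_n(\C)$ is by an honest automorphism (conjugation by $I_{(p,q)}$) so the cocycle condition $c\,\theta(c)=1$ really does reproduce the hermitian-form classification rather than something degenerate. Once the dictionary ``cocycle $\leftrightarrow$ twisted form $\leftrightarrow$ bilinear/sesquilinear form'' is set up correctly, each of the three counts is an immediate appeal to a standard classification theorem; no delicate estimate or construction is needed.
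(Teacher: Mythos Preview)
The paper states this lemma without proof, treating it as a standard fact, so there is no argument in the paper to compare against. Your outline is essentially correct and would serve as a proof.

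One comment on case~(1): your detour through the real form $\U(p,q)$ and hermitian forms is valid but implicitly invokes the identification $H^1(\langle\theta\rangle,G(\C))\cong H^1(\Gal(\C/\R),G(\C))$ (the Adams--Borovoi result the paper records later as a theorem). Since the involution here is \emph{algebraic}, not antiholomorphic, speaking of ``sesquilinear'' or ``hermitian'' forms is a step removed from the actual cocycle condition. A more direct computation is available: with $\theta(g)=I_{(p,q)}gI_{(p,q)}^{-1}$, the cocycle condition $c\,\theta(c)=1$ is equivalent to $(cI_{(p,q)})^2=1$, and two cocycles $c,c'$ are cohomologous iff $cI_{(p,q)}$ and $c'I_{(p,q)}$ are $\GL_n(\C)$-conjugate. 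Thus $H^1$ is in bijection with conjugacy classes of elements of order $\le 2$ in $\GL_n(\C)$, i.e.\ with the $n+1$ matrices $I_{(r,s)}$, $r+s=n$. This matches your ``uniform'' phrasing at the end and avoids any appeal to real forms. Your arguments for cases~(2) and~(3) --- reducing to congruence classes of nondegenerate symmetric (resp.\ alternating) forms over $\C$ --- are exactly right and need no change.
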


In  Example $(1)$, only the first case of Lemma \ref{inv} arises as we will presently show, 
giving us a set of $(n_i+1)$ elements corresponding
to each $W_E$ isotypic component $n_i\sigma_i$ corresponding to those irreducible 
representations $\sigma_i$ of $W_E$ which extend to $W_F$.

To understand the action of $\Gal(E/F) = \Z/2$ on   $Z(\sigma|_{W_E}) = \prod_i \GL(n_i,\C),$ we try to understand
the effect of this action on the center of the centralizer, i.e., on   $Z(Z(\sigma|_{W_E})) = 
Z(\prod_i \GL(n_i,\C)) = \prod_i\C^\times.$ 
For this, note that the center of the ambient $\GL_n(\C)$, i.e.,
$\C^\times$,  has a natural embedding into $Z(Z(\sigma|_{W_E})) = Z(\prod_i \GL(n_i,\C)) = \prod_i\C^\times,$ 
which when projected to individual $\C^\times$ in  $\prod_i\C^\times,$ is just the identity map 
of $\C^\times$ to $\C^\times$. 

Since the action of $\Gal(E/F)$ on the center of the ambient $\GL_n(\C)$ is trivial, it follows that the action
of $\Gal(E/F)$ on 
$Z(Z(\sigma|_{W_E})) = Z(\prod_i \GL(n_i,\C)) = \prod_i\C^\times$ is also trivial. Looking at the possible 
involutive automorphisms in  lemma \ref{inv}, we thus see that the only possible 
involutions on a $\GL(n_i,\C)$ are conjugation by an element of order $\leq 2$.

\noindent{\bf Example 2:}
If $G$ is $\U_n$ over $F$, and $\sigma$ is a parameter of a representation of $\U_n(F)$, write
$$\sigma|_{W_E}= \sum_i n_i \sigma_i,$$
where $\sigma_i$ are irreducible representations of $W_E$.

Clearly $Z(\sigma|_{W_E}) = \prod_i \GL(n_i,\C).$ 
The action of $\Gal(E/F) = \Z/2$ on   $Z(\sigma|_{W_E}) = \prod_i \GL(n_i,\C),$ 
either preserves a particular $\GL(n_i,\C)$ corresponding to those representations $\sigma_i$ of $W_E$ for which 
$\sigma_i^\vee \cong \sigma_i^\sigma$,
or permutes two $\GL(n_i,\C)$ and $\GL(n_j,\C)$   corresponding to those {\it distinct} 
representations $\sigma_i, \sigma_j$ with
$\sigma_j^\vee \cong \sigma_i^\sigma$.

In the calculation of $H^1({\rm Gal}(E/F), Z(\sigma|_{W_E})) = H^1(\Z/2, \prod_i \GL(n_i,\C))$, the factors
which are permuted by $\Gal(E/F)=\Z/2$ contribute nothing to the cohomology by Shapiro's lemma as in Example $(1)$.
But in this case, as we will presently show, the $\GL(n_i,\C)$ which are left invariant by $\Gal(E/F)$ carry involutions 
which are as in case 2 or 3 of Lemma \ref{inv}, so we have,
$$H^1({\rm Gal}(E/F), Z(\sigma|_{W_E})) = \langle 1 \rangle.
$$
 
To understand the action of $\Gal(E/F) = \Z/2$ on   $Z(\sigma|_{W_E}) = \prod_i \GL(n_i,\C),$ we cannot 
proceed exactly as
in the previous example because the action of $\Gal(E/F)$ involves transpose-inverse. 
Note that the action of $\Gal(E/F)$ on the center of the ambient $\GL_n(\C)$ is this time 
$z\rightarrow z^{-1}$. 
It follows that the action
of $\Gal(E/F)$ on 
$Z(Z(\sigma|_{W_E})) = Z(\prod_i \GL(n_i,\C)) = \prod_i\C^\times$ is $(z_i)\rightarrow (z_i^{-1})$. 
Now, we argue --- without giving details --- that an involution of $\G= \prod_i \GL(n_i,\C),$ which is $x\rightarrow x^{-1}$ on the center 
of $\G$ must be a product of involutions on individual $\GL(n_i, \C)$, thus 
 involving only  cases 2 and 3 of lemma \ref{inv}.

The following well-known, but perhaps never quite proved, proposition follows as a corollary to Lemma \ref{inv}.

\begin{proposition} \label{prop5}
A Langlands parameter $\phi: W_F\longrightarrow \GL_n(\C) \rtimes \Gal(E/F)$, associated to $G=\U_n(F)$ is determined by its restriction 
to $W_E$, i.e., by $\phi|_{W_E} \longrightarrow \GL(n,\C)$.
\end{proposition}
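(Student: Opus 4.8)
The plan is to deduce the proposition from the cohomological description of the fibers of restriction of parameters from $W_F$ to $W_E$, namely Lemma~\ref{fibers}, combined with the vanishing of the relevant $H^1$ that is worked out in the discussion of Example~2 above. Recall that for $G = \U_n$ one has $\LG = \GL_n(\C) \rtimes \Gal(E/F)$, with $\Gal(E/F)$ acting through an outer involution of the form $g \mapsto A\,{}^tg^{-1}A^{-1}$ up to inner automorphisms. Let $\sigma : W_E \to \GL_n(\C)$ be a parameter that admits an extension $\phi_0 : W_F \to \LG$; what has to be shown is that any other parameter $\phi_1 : W_F \to \LG$ with $\phi_1|_{W_E} \cong \sigma$ is $\GL_n(\C)$-conjugate to $\phi_0$.

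First I would apply Lemma~\ref{fibers} with the twisted $W_E$-action on $\wG = \GL_n(\C)$ determined by $\sigma$, for which $\wG^{W_E}$ is exactly the centralizer $Z_{\GL_n(\C)}(\sigma)$. Since $j$ in that lemma is injective, the exact sequence shows that the set of equivalence classes of extensions of $\sigma$ to $W_F$ --- i.e.\ the fiber of the restriction map over $[\sigma]$ --- is in bijection with $H^1(\Gal(E/F), Z_{\GL_n(\C)}(\sigma))$, the $\Gal(E/F)$-action being conjugation by $\phi_0(w)$ for a lift $w$ of the nontrivial element of $\Gal(E/F)$, composed with the outer twist. It therefore suffices to prove this cohomology set is trivial, which is precisely the assertion argued in Example~2.

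The core of the argument is thus the computation of that $H^1$. Writing $\sigma = \bigoplus_i n_i \sigma_i$ with the $\sigma_i$ pairwise non-isomorphic irreducibles of $W_E$, one has $Z_{\GL_n(\C)}(\sigma) = \prod_i \GL_{n_i}(\C)$, and the $\Gal(E/F)$-involution either swaps a pair of factors $\GL_{n_i}(\C) \leftrightarrow \GL_{n_j}(\C)$ (when $\sigma_j^\vee \cong \sigma_i^\sigma$, $i \neq j$), which contributes trivially to $H^1$ by Shapiro's lemma, or it stabilizes a single factor $\GL_{n_i}(\C)$ (when $\sigma_i^\vee \cong \sigma_i^\sigma$). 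On a stabilized factor the induced involution restricts on the centre $\C^\times$ to $z \mapsto z^{-1}$: this is forced since the outer involution of $\GL_n(\C)$ acts by inversion on its centre, which embeds diagonally (componentwise identity) into $\prod_i \C^\times$, while $\phi_0(w)$ acts trivially there. An involution of $\GL_{n_i}(\C)$ acting by inversion on the centre cannot be inner-conjugation, so by Lemma~\ref{inv} it is of type (2) or (3), and in both of those cases $H^1(\Z/2, \GL_{n_i}(\C)) = \langle 1\rangle$. Hence $H^1(\Gal(E/F), Z_{\GL_n(\C)}(\sigma)) = \langle 1\rangle$, the fiber is a single point, and $\phi_1 \cong \phi_0$.

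The only delicate point --- the step I expect to be the main obstacle --- is showing that the involution induced on each stabilized factor $\GL_{n_i}(\C)$ is of ``transpose-inverse'' type rather than inner type, i.e.\ excluding case (1) of Lemma~\ref{inv}. I would settle this by the centre-tracking argument above, using that conjugate-self-duality of $\sigma_i$ forces the central character of $\sigma_i^\sigma$ to be the inverse of that of $\sigma_i$, so that $\Gal(E/F)$ acts on $Z(Z_{\GL_n(\C)}(\sigma)) = \prod_i \C^\times$ by $(z_i) \mapsto (z_i^{-1})$; the rest is routine. I would also remark that the argument is unchanged if one works with the Weil--Deligne group $W'_F$ in place of $W_F$, since neither the decomposition of $\sigma$ into irreducibles nor the structure of its centralizer is affected.
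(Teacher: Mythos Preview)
Your proposal is correct and follows essentially the same route as the paper: the proposition is stated as a corollary to Lemma~\ref{inv} via the computation in Example~2, which is exactly what you do, applying Lemma~\ref{fibers} to reduce to the vanishing of $H^1(\Gal(E/F), Z_{\GL_n(\C)}(\sigma))$ and then using the centre-tracking argument to rule out case~(1) of Lemma~\ref{inv} on stabilized factors. Your write-up is in fact slightly more detailed than the paper's, which leaves the step ``an involution of $\prod_i \GL(n_i,\C)$ which is $x\rightarrow x^{-1}$ on the centre must be a product of involutions of type~(2) or~(3)'' without justification.
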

\begin{remark}
Since the centralizer of the parameter $\phi: W_F \longrightarrow {}^LG$ in $\widehat{G}$, is the
 $\Gal(E/F)$ fixed points of $\widehat{G}^{W_E}$, it follows that in the example 2 of unitary groups, the non-trivial component
group is contributed only by  case 2 of Lemma \ref{inv}, which contributes a $\Z/2$ because of $\OO(n,\C)$, thus recovering 
a well-known result about component groups of Langlands parameters for unitary groups.
\end{remark}

\noindent{\bf Example 3:} We work out the possibilities for $\widehat{G}^{W_E}$ 
and $H^1({\rm Gal}(E/F), \widehat{G}^{W_E})$ 
when $G = \SL_2$, $\widehat{G}= \PGL_2(\C)$, assuming that the representation  $\pi$  of 
$\SL_2(E)$ arises as basechange of a representation from $\SL_2(F)$, so $\widehat{G}^{W_E}$ 
comes equipped with an action of ${\rm Gal}(E/F)=\Z/2$.

For a principal series representation $\pi$ of $\SL_2(E)$, 
$\widehat{G}^{W_E}$ 
is either $\C^\times$, or is $\OO_2(\C)$. 
On $\C^\times$, the action can only be trivial or  
$z\rightarrow z^{-1}$, and  $H^1({\rm Gal}(E/F), \C^\times)$ is $\Z/2$ or trivial in these 
two cases.

If $\widehat{G}^{W_E}$ is $\OO_2(\C)$, then the action of $\Z/2$  on $\OO_2(\C)$ can either be 
\begin{enumerate}
\item trivial, or

\item inner conjugation of an element of order 4 in $\SO_2(\C) = \C^\times$, or 

\item inner conjugation of a reflection, i.e., any element of $\OO_2(\C) \backslash \SO_2(\C)$.
\end{enumerate}

When the action of $\Z/2$ on $\OO_2(\C)$ is trivial, 
$\widehat{G}^{W_F} = \OO_2(\C)$, 
hence 
$\pi_0(\widehat{G}^{W_F}) = \Z/2$,
 and  $H^1(\Z/2,\OO_2(\C))$ 
is the set of conjugacy classes of involutions in $\OO_2(\C)$, 
i.e., a set consisting of 3 elements: the trivial element, the element of order 2 in $\SO_2(\C)$, and any element of $\OO_2(\C) \backslash \SO_2(\C)$ which are all of order 2, and define a single conjugacy class.

When the action of $\Z/2$ 
on $\OO_2(\C)$ 
is the inner conjugation of an element of order 4 in $\SO_2(\C) = \C^\times$, 
 then $\widehat{G}^{W_F} = \C^\times$, 
hence  $\pi_0(\widehat{G}^{W_F}) = \langle 1 \rangle$, and 
$H^1(\Z/2,\OO_2(\C)) = \langle 1 \rangle$ as can be easily seen.  

When the action of $\Z/2$ 
on $\OO_2(\C)$ is the inner conjugation of a reflection, 
$\widehat{G}^{W_F} = \Z/2 + \Z/2$, hence 
$\pi_0(\widehat{G}^{W_F}) = \Z/2 + \Z/2$. 
We now calculate 
 $H^1(\Z/2,\OO_2(\C))$.  
Note that $\R^+ \cdot \Si^1 = \C^\times= \SO_2(\C)$. Since
$\R^+$ is a uniquely divisible group,  $H^1(\Z/2,\OO_2(\C)) = H^1(\Z/2,\Si^1\rtimes\Z/2)=
H^1({\rm Gal}(\C/\R),\OO_2(\C))$, 
the last cohomology being Galois cohomology of the real 
group $\OO_2$ defined by the quadratic form $x^2 + y^2 = 1$ where Galois group operates by
$z\rightarrow \bar{z}$ on $\C^\times \subset \OO_2(\C)$ which is the same action as 
$z\rightarrow z^{-1}$ on $\Si^1$. Now $H^1({\rm Gal}(\C/\R),\OO_2(\C))$ is the set of isomorphism classes of non-degenerate quadratic forms in 2 variables over $\R$, so it is a set 
with 3 elements, corresponding to quadratic forms of signatures $(2,0), (1,1), (0,2)$.

For a discrete series representation $\pi$ of $\SL_2(E)$, 
$\widehat{G}^{W_E}$ is either trivial, or $\Z/2, \Z/2\oplus \Z/2$. 
On $\Z/2$, the action of $\Z/2$ can only be trivial, whereas 
on $\Z/2\oplus \Z/2$ it can be either trivial, or through a unipotent in $\GL_2(\Z/2)$.  
Clearly, $H^1({\rm Gal}(E/F), \Z/2) = \Z/2$, 
whereas for the action of $\Z/2$ through a unipotent
element, it can be seen that $H^1({\rm Gal}(E/F), \Z/2 + \Z/2) = 0$. 

\begin{remark}It may be noted that the action of ${\rm Gal}(E/F)=\Z/2$ on  
$\widehat{G}^{W_E}$ depends on a particular extension of a parameter of $W_E$ to $W_F$,
but the cohomology $H^1({\rm Gal}(E/F), \widehat{G}^{W_E})$ 
does not. We see this happening 
for principal series representations of $\SL_2(E)$ with non-trivial component group (so 
$\widehat{G}^{W_E} = \OO_2(\C)$) 
which can arise from
basechange of both principal series (with non-trivial component group so 
$\widehat{G}^{W_F} = \OO_2(\C)$
corresponding to 
the trivial action of $\Z/2$ on $\widehat{G}^{W_E} = \OO_2(\C)$) 
and discrete series corresponding to the inner conjugation of a reflection in $\OO_2(\C)$ 
(so with $\widehat{G}^{W_F} = \Z/2+\Z/2$), and for both possibilities,
 $H^1({\rm Gal}(E/F), \widehat{G}^{W_E})$ 
is a set with 3 elements.
\end{remark}

\begin{remark} Later we will explicitly discuss the fibers of basechange for 
principal series representations of $\SL_2(E)$, and curiously, we will find that {\it all} 
cases which have been discussed from an abstract point of view in example 3 above do arise. (Of course, 
the only thing not obvious from the discussion in example 3 is that all actions of $\Z/2$ on
$\widehat{G}^{W_E}$ arise from representation theory of $\SL_2(E)$.)  
\end{remark}
\section{Parameter spaces for Langlands parameters} \label{parameter}
In this section we discuss parameter spaces associated to Langlands parameters. Till the very last remark of this section,
we will deal exclusively with non-Archimedean local fields. As usual, let $W'_F= W_F \times \SL_2(\C)$ be the Weil-Deligne group of a non-Archimedean local field $F$, where $W_F$ is the Weil group which is a locally compact group, with unique
maximal compact subgroup $I_F$, the inertia group, which is normal in $W_F$ with $W_F/I_F \cong \Z$. One chooses an element $\Fr \in W_F$ to write $W_F$ as $I_F \rtimes \langle \Fr \rangle $ with image of $\Fr $ in $W_F/I_F$  a generator of the cyclic group $\Z$. 

Let $\widetilde{\Sigma}_F(G)$ 
denote the set of homomorphisms $\phi: W'_F\rightarrow \LG$ which has all the requirements for an admissible
homomorphism except that we will not demand that a Frobenius element of $W_F$ goes to a semisimple element of $\LG$. 
(This has been done because the set of semisimple elements of a complex reductive group does not have such a nice algebraic structure.) We will see that 
 $\widetilde{\Sigma}_F(G)$ is an affine algebraic variety with a 
 natural action of the connected reductive group $\widehat{G}$ (acting by conjugation on parameters). 
Let $\Sigma_F( G) = \widetilde{\Sigma}_F(G)/\!/\widehat{G}$ in the sense of invariant theory, i.e., $\Sigma_F(G)$ is the 
affine algebraic variety over $\C$ whose ring of functions is the ring of $\widehat{G}$-invariant functions on 
$\widetilde{\Sigma}_F(G)$. We call the variety $\Sigma_F(G)$ as the parameter space of Langlands parameters for the group $G$ over $F$.

It is easy to see that for a finite group $D$, and for any complex reductive group $H(\C)$ with finitely many connected components, 
the set of homomorphisms $\phi: D \rightarrow H(\C)$ 
is a finite disjoint union 
of homogeneous spaces for $H(\C)$ (for the action of $H(\C)$ on the set of homomorphisms 
$\phi: D \rightarrow H(\C)$ by inner conjugation), or up to $H(\C)$-conjugacy,  a finite set of points. These homogeneous 
spaces have the form $Z(\phi)\backslash H(\C)$ where $Z(\phi)$ is the centralizer 
of $\phi: D \rightarrow H(\C)$ inside $H(\C)$. Since these centralizers are reductive groups, 
it follows that the set of homomorphisms $\phi: D \rightarrow H(\C)$ 
has a natural structure of  a smooth affine variety with finitely many connected components.

Since homomorphisms from $W_F$ to ${}^LG(\C)$ 
have finite image when restricted to $I_F$, 
it follows that the set of homomorphisms from $W_F$ to  ${}^LG(\C)$ 
is a  disjoint union of varieties parametrized by their restrictions 
to $I_F$. 
Given a homomorphism
$\phi_0: I_F \rightarrow {}^LG(\C)$, 
extensions of $\phi_0$ to $W_F$
 is determined by where $\Fr$ goes in ${}^LG(\C)$
which must be an element of ${}^LG(\C)$ which normalizes $\phi_0(I_F)$, and in fact must operate on   
$\phi_0: I_F \rightarrow {}^LG(\C)$, through the action of $\Fr$ on $I_F$. If $Z(\phi_0)$ denotes the centralizer
of $\phi_0$ in $\widehat{G}$ and $(t_0,\Fr)$ is the image of $\Fr \in I_F \rtimes \Fr = W_F$ in $\widehat{G} \rtimes W_F$ 
for a particular extension of $\phi_0: I_F \rightarrow {}^LG(\C)$ to $W_F$, 
then clearly the various choices for the image of $\Fr$ in possible extensions 
of $\phi_0$ from $I_F$ to $W_F$ are of the form $(st_0,\Fr)$ for $s \in Z_{\widehat{G}}(\phi_0)$. 
Thus all the possible extensions of $\phi_0$ to $W_F$ form a coset space $Z_{\widehat{G} }(\phi_0)\cdot t_0$ 
of $Z_{\widehat{G} }(\phi_0)$ in $\widehat{G}$; the inner conjugation action of the element $(t_0, \Fr) \in \widehat{G} \rtimes \Fr$ preserves
$Z_{\widehat{G} }(\phi_0)$.
One knows
that $Z_{\widehat{G} }(\phi_0)$ is a reductive group, and modifying $t_0$ by an element of $Z_{\widehat{G} }(\phi_0)$, we can assume that $(t_0,\Fr)$ acts on
$Z_{\widehat{G} }
(\phi_0)$ by an automorphism of 
finite order.

Using the restriction map $\Hom[W_F, {}^LG(\C)] \rightarrow \Hom[I_F, {}^LG(\C)]$, we find that the set 
$\Hom[W_F, {}^LG(\C)]$
is an affine variety which is fibered over the affine variety $\Hom[I_F, {}^LG(\C)]$ by affine varieties.

Note that the set of $Z_{\widehat{G} }(\phi_0)$ conjugacy classes in 
the coset space $Z_{\widehat{G} }(\phi_0)\cdot t_0$ is nothing but the $t_0$-{\it twisted} conjugacy classes in 
$Z_{\widehat{G} }(\phi_0)$: two elements $z_1$ and $z_2$ in $Z_{\widehat{G} }(\phi_0)$ are said to be $t_0$-twisted conjugate if $z_1 = zz_2({}^{t_0} z)^{-1}$ where 
for $z \in Z_{\widehat{G} }(\phi_0)$, ${}^{t_0}z = t_0zt_0^{-1}$. Denote this equivalence relation on $Z_{\widehat{G} }(\phi_0)$ by $\sim_{t_0}$, and the quotient of
$Z_{\widehat{G} }(\phi_0)$  by this equivalence relation by $Z_{\widehat{G} }(\phi_0)/\!/\sim_{t_0}$.  
Just like the space of conjugacy classes in a reductive group, the space $Z_{\widehat{G} }(\phi_0)/\!/\sim_{t_0}$ 
is a smooth affine variety. (Assuming that the automorphism fixes a pinning $(B,T, \{X_\alpha \})$ in 
$Z_{\wG}(\phi_0)$), 
as we can by Steinberg after conjugating the automorphism by an inner 
automorphism---which does not change the variety of twisted conjugacy classes, 
the twisted conjugacy classes can be identified to the conjugacy classes in the fixed points of the automorphism.)

It follows that given 
$\phi_0: I_F \rightarrow {}^LG(\C)$, the set of 
conjugacy classes of extensions of $\phi_0$ to $\phi: W_F \rightarrow {}^LG(\C)$  
 is parametrized by 
$Z_{\widehat{G} }(\phi_0)/\!/\sim_{t_0}$, 
and the set of homomorphisms $\phi: W_F \rightarrow {}^LG(\C)$ whose restriction
to $I_F$ is a conjugate of $\phi_0: I_F \rightarrow {}^LG(\C)$, 
 is parametrized by  $(Z_{\widehat{G} }(\phi_0)\backslash \widehat{G}) \times (Z_{\widehat{G} }(\phi_0)/\!/\sim_{t_0})$.

We summarize our  discussion in the following proposition.

\begin{proposition} \label{prop44}The variety $\Sigma_F(G)$ 
is a countable disjoint union of smooth irreducible 
affine varieties. Each connected component of $\Sigma_F(G)$ is a connected
component of  $Z_{\widehat{G} }(\phi_0)/\!/\sim_{t_0}$, where $Z_{\widehat{G} }(\phi_0)$ is a reductive group which is the centralizer 
of $\phi_0: I_F \rightarrow {}^LG(\C)$ in $\widehat{G}$, and $\sim_{t_0}$ is twisted conjugacy 
for an automorphism  $t_0$ of $Z_{\widehat{G} }(\phi_0)$ of finite order.
\end{proposition}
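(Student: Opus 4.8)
The plan is to assemble the proposition from the fibration of parameter spaces over restrictions to inertia that was analysed just above, and then to supply the two structural inputs used there without proof: that the centralizers occurring are reductive, and that a twisted-conjugation quotient of a reductive group is a smooth affine variety.

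First I would fix the global shape of $\Sigma_F(G)$. Any $\phi\in\widetilde\Sigma_F(G)$ restricts to a homomorphism $\phi_0:=\phi|_{I_F}$ with finite image (a profinite group has no small subgroups in the complex Lie group ${}^LG(\C)$) and to an algebraic homomorphism $\nu:=\phi|_{\SL_2(\C)}$ into $\widehat G$. As recalled in the discussion above, $\widetilde\Sigma_F(G)$ is correspondingly a disjoint union of $\widehat G$-stable (open and closed) pieces indexed by the pairs $(\phi_0,\nu)$ up to $\widehat G$-conjugacy; only countably many such $\phi_0$ occur (only countably many finite quotients of $I_F$ being relevant) and only finitely many $\nu$ (Jacobson--Morozov together with the finiteness of nilpotent orbits over $\C$). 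Since the categorical quotient by $\widehat G$ distributes over a disjoint union of invariant pieces, $\Sigma_F(G)=\bigsqcup_{(\phi_0,\nu)}$ (quotient of the corresponding piece), a countable disjoint union; it remains to identify one such quotient and to check that it is a finite union of smooth irreducible affine varieties.

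Next I would reduce to the case $\nu$ trivial. Since $\SL_2(\C)$ commutes with $W_F$ in $W'_F$, the image of $\nu$ centralizes $\phi_0(I_F)$, so $\nu$ lands in $H:=Z_{\widehat G}(\phi_0)$, which is reductive because it is the centralizer in $\widehat G$ of a finite subgroup; likewise $\phi(\Fr)$ centralizes $\nu(\SL_2(\C))$. Absorbing $\nu$ into the fixed restriction data and replacing $\widehat G$ by the reductive group $Z_H(\nu(\SL_2(\C)))$, one is reduced exactly to the situation worked out above for homomorphisms $W_F\to{}^LG(\C)$: after modifying the Frobenius image $(t_0,\Fr)$ within its coset so that it acts on the relevant centralizer by an automorphism of finite order (possible as explained above, using reductivity and a theorem of Steinberg), the piece of $\widetilde\Sigma_F(G)$ has the form $\bigl(Z_{\widehat G}(\phi_0)\backslash\widehat G\bigr)\times\bigl(Z_{\widehat G}(\phi_0)\cdot t_0\bigr)$, and $/\!/\widehat G$ collapses the homogeneous first factor and turns the coset into the variety $Z_{\widehat G}(\phi_0)/\!/\sim_{t_0}$ of $t_0$-twisted conjugacy classes. (Here ``$\phi_0$'' in the statement is read as abbreviating the full restriction to $I_F\times\SL_2(\C)$, and with that reading this is exactly the claimed description.)

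The core of the argument is then to show that $Z_{\widehat G}(\phi_0)/\!/\sim_{t_0}$ is a finite disjoint union of smooth irreducible affine varieties. Conjugating the finite-order automorphism $\mathrm{Ad}(t_0)$ so that it preserves a pinning $(B,T,\{X_\alpha\})$ of the identity component $Z_{\widehat G}(\phi_0)^0$ (Steinberg; this affects neither the quotient variety nor its components), one invokes: (i) the fixed-point subgroup of a semisimple automorphism of a connected reductive group is again reductive, with connected identity component; and (ii) the $t_0$-twisted conjugacy classes are identified, as a variety, with the ordinary conjugacy classes in that fixed-point subgroup, whose categorical quotient by conjugation is a smooth irreducible affine variety (isomorphic to a quotient $T^{\mathrm{rel}}/W^{\mathrm{rel}}$); the finitely many connected components come from the finitely many components of $Z_{\widehat G}(\phi_0)$. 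I expect step (ii) to be the genuine obstacle: one has to upgrade the bijection between twisted classes and classes in the fixed-point subgroup to an isomorphism of affine quotient varieties, and to handle the possible disconnectedness of $Z_{\widehat G}(\phi_0)$, and this is precisely where Steinberg's structure theory of semisimple endomorphisms and twisted conjugacy is needed rather than a soft argument. Reassembling, $\Sigma_F(G)$ is the countable disjoint union over $(\phi_0,\nu)$ of these smooth irreducible affine pieces, and each of its connected components is tautologically a connected component of the corresponding $Z_{\widehat G}(\phi_0)/\!/\sim_{t_0}$.
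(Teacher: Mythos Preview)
Your proposal is correct and follows essentially the same route as the paper: stratify by restriction to inertia, identify the fiber over a fixed $\phi_0$ with a coset $Z_{\widehat G}(\phi_0)\cdot t_0$, arrange via Steinberg that the inner-conjugation action of $(t_0,\Fr)$ fixes a pinning and hence has finite order, and then identify the twisted-conjugacy quotient with ordinary conjugacy classes in the fixed-point subgroup to conclude smoothness. The paper's own argument is exactly the discussion in the paragraphs preceding the proposition, which it then records as a summary statement.

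One point worth noting: you are more careful than the paper about the $\SL_2(\C)$ factor. The paper's running discussion treats only the $W_F$ part and states the proposition with $\phi_0:I_F\to{}^LG(\C)$, whereas the definition of $\widetilde\Sigma_F(G)$ involves $W'_F=W_F\times\SL_2(\C)$. Your reduction step---absorbing $\nu=\phi|_{\SL_2(\C)}$ into the fixed data and passing to $Z_H(\nu(\SL_2(\C)))$---is the correct way to close this gap, and your parenthetical remark that ``$\phi_0$'' should really abbreviate the restriction to $I_F\times\SL_2(\C)$ is apt. You also flag more explicitly than the paper does that the identification of twisted classes with classes in the fixed-point group must be upgraded from a set-theoretic bijection to an isomorphism of affine quotient varieties; the paper simply asserts this in a parenthetical.
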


It is clear that a morphism of $L$-groups, $\Phi: \LG\rightarrow \LG'$, 
induces a morphism of affine varieties,
$\widetilde{\Sigma}_F(G)(\Phi): \widetilde{\Sigma}_F(G) \rightarrow \widetilde{\Sigma}_F(G')$ which descends to a morphism
$\Sigma_F(G)(\Phi): \Sigma_F(G) \rightarrow \Sigma_F (G')$ of algebraic varieties.

\begin{lemma}\label{lemma4} 
The morphism $\Sigma_F(G)(\Phi): \Sigma_F(G) \rightarrow \Sigma_F(G')$ of affine algebraic varieties 
associated to a morphism of $L$-groups, $\Phi: \LG\rightarrow \LG'$ which we assume has finite kernel,
is proper in the sense that the image under $\Sigma_F(G)(\Phi)$ of a connected component
of $\Sigma_F(G)$ is a closed subvariety of $\Sigma_F(G')$, and that for any connected component of $\Sigma_F(G')$, there are
only finitely many connected component of $\Sigma_F(G)$ which map into it.
\end{lemma}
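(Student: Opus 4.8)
The plan is to deduce the statement from the explicit description of connected components given in Proposition \ref{prop44}. By that proposition, together with the discussion preceding it, each connected component of $\Sigma_F(G)$ is a connected component of some twisted--conjugacy variety $Z_{\widehat{G}}(\phi_0)/\!/\sim_{t_0}$, indexed by the $\widehat{G}$--conjugacy class of a finite--image homomorphism $\phi_0\colon I_F\to\LG$ which extends to $W_F$ (together with a choice of component of the twisted--conjugacy variety), and likewise for $\Sigma_F(G')$. Since $\Sigma_F(G)(\Phi)$ is induced by post--composition with $\Phi$, it sends such a component into the union of the components lying over the class of $\psi_0:=\Phi\circ\phi_0$, and restricted to these ``fibres'' it is the morphism $Z_{\widehat{G}}(\phi_0)/\!/\sim_{t_0}\to Z_{\widehat{G}'}(\psi_0)/\!/\sim_{s_0}$ induced by the homomorphism of reductive groups $Z_{\widehat{G}}(\phi_0)\to Z_{\widehat{G}'}(\psi_0)$. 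So it suffices to establish two facts: \emph{(A)} for a fixed admissible $\psi_0\colon I_F\to\LG'$ there are only finitely many $\widehat{G}$--conjugacy classes of admissible $\phi_0\colon I_F\to\LG$ with $\Phi\circ\phi_0$ conjugate to $\psi_0$; and \emph{(B)} for each such $\phi_0$ the induced map $Z_{\widehat{G}}(\phi_0)/\!/\sim_{t_0}\to Z_{\widehat{G}'}(\psi_0)/\!/\sim_{s_0}$ is a finite morphism of finite--type varieties onto a closed subvariety. Granting both, the image of a component of $\Sigma_F(G)$ is closed by \emph{(B)} (it is a closed piece of a clopen part of $\Sigma_F(G')$), while finiteness follows since by \emph{(A)} only finitely many $\phi_0$ are relevant, and by \emph{(B)} together with Proposition \ref{prop44} each of them contributes only finitely many components over a prescribed component of $\Sigma_F(G')$.

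For \emph{(B)} I would first replace twisted conjugacy by ordinary conjugacy. Conjugating $t_0$ and $s_0$ so as to preserve pinnings of $Z_{\widehat{G}}(\phi_0)$ and $Z_{\widehat{G}'}(\psi_0)$, compatibly with the map between them, Steinberg's theorem --- as recalled before Proposition \ref{prop44} --- identifies the twisted--conjugacy varieties with the ordinary conjugacy--class varieties of the reductive fixed--point subgroups $Z_{\widehat{G}}(\phi_0)^{t_0}$ and $Z_{\widehat{G}'}(\psi_0)^{s_0}$. Because $\Phi$ has finite kernel, the homomorphism $Z_{\widehat{G}}(\phi_0)^{t_0}\to Z_{\widehat{G}'}(\psi_0)^{s_0}$ has finite, hence central, kernel; its image $M$ is a closed reductive subgroup and $Z_{\widehat{G}}(\phi_0)^{t_0}\to M$ is an isogeny, which induces a finite morphism on the categorical quotients by conjugation. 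It then remains to see that $M/\!/M\hookrightarrow Z_{\widehat{G}'}(\psi_0)^{s_0}/\!/Z_{\widehat{G}'}(\psi_0)^{s_0}$ is finite onto a closed image: on the level of the Chevalley quotients $T/W$ this is the image of a closed subtorus under the finite morphism $T\to T/W$, hence closed, and it is finite because over each semisimple class of the ambient group there lie only finitely many semisimple classes of $M$. Composing gives \emph{(B)}, and since $Z_{\widehat{G}}(\phi_0)/\!/\sim_{t_0}$ has only finitely many components, only finitely many of them can land in a prescribed component of $Z_{\widehat{G}'}(\psi_0)/\!/\sim_{s_0}$.

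The heart of the matter is \emph{(A)}. Let $\mu=\ker\Phi$, a finite and necessarily central subgroup of $\widehat{G}$. After conjugating we may assume $\Phi\circ\phi_0=\psi_0$ on the nose, so that $\phi_0$ is a lift of $\psi_0$ through $\widehat{\Phi}$; any two admissible lifts differ by a cocycle $I_F\to\mu$ which, since both extend to $W_F$, is the restriction of a $W_F$--equivariant homomorphism $N\to\mu$, where $N\trianglelefteq W_F$ is an open subgroup of $I_F$ through whose finite quotient $\psi_0$ factors. The group $\Hom_{W_F}(N,\mu)$ is finite: this ultimately rests on the fact from local class field theory that the abelianisation of the Weil group of a local field is topologically finitely generated with finite $m$--torsion and $m$--cotorsion for every $m$, applied to $N$. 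Combining this with the finiteness, recalled in Section \ref{parameter}, of the number of conjugacy classes of homomorphisms of a fixed finite group into $\widehat{G}$ --- which also bounds the ways of conjugating $\psi_0$ into the closed subgroup $\Phi(\widehat{G})$ up to $\Phi(\widehat{G})$--conjugacy --- one obtains finitely many admissible classes $[\phi_0]$ over $[\psi_0]$, as required.

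The step I expect to be the genuine obstacle is \emph{(A)}, and within it the finiteness of the lifting ambiguity. The subtlety is that $I_F$ is \emph{not} topologically finitely generated --- its wild part is a free pro--$p$ group of infinite rank --- so $\Hom(I_F,\mu)$ is genuinely infinite, and it is only the restriction to parameters extending to the full Weil group $W_F$ that cuts this down to a finite group. Making this reduction precise, while keeping careful track of $\widehat{G}$--conjugacy, of the variation of $\psi_0$ inside its $\widehat{G}'$--orbit, and of the compatibility of the $t_0$-- and $s_0$--twists under $\Phi$ needed to invoke Steinberg's theorem functorially in \emph{(B)}, is where the real work lies; step \emph{(B)} is, by comparison, a formal consequence of the structure theory already in place together with standard facts about morphisms of reductive groups and their categorical quotients.
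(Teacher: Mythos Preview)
Your approach is essentially the paper's own, only much more fully argued: both reduce to the local models $Z_{\widehat{G}}(\phi_0)/\!/\sim_{t_0}$ of Proposition \ref{prop44} and then check separately the finiteness of the relevant inertial classes (your (A)) and the finiteness of the map on twisted--conjugacy varieties (your (B)), which the paper disposes of with ``it is clear'' and ``easy enough to check'' respectively. One small imprecision in your (A): the group $N$ is an open subgroup of $I_F$, not itself a Weil group, so appealing to class field theory ``applied to $N$'' is not quite right; the clean fix is to observe directly that two lifts of $\psi_0$ which both extend to $W_F$ differ by the restriction to $I_F$ of an element of $Z^1(W_F,\mu)$, and $|Z^1(W_F,\mu)| = |H^1(W_F,\mu)|\cdot|\mu/\mu^{W_F}|$ is finite since $H^1(W_F,\mu)$ is finite for any finite Galois module $\mu$ over a local field.
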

\begin{proof} As the set of connected components in $\Sigma_F(G)$ is, up to finite ambiguity, classified 
by homomorphisms of $I_F\rightarrow {}^LG$, and the map 
$\Phi: \LG\rightarrow \LG'$ which we assume has finite kernel, it is clear that  
for any connected component of $\Sigma_F(G')$, there are
only finitely many connected component of $\Sigma_F(G)$ which map into it.

Using the `local' models of $\Sigma_F(G)$ as $(Z(\phi_0)/\!/\sim_{t_0})$,
to prove the assertion on properness, it suffices to observe that the map 
$$Z(\phi_0)/\!/\sim_{t_0}  \longrightarrow  Z(\phi'_0)/\!/\sim_{t'_0},$$
is finite onto its image, which is easy enough to check.
\end{proof}
We use the lemma to define the notion of `fiber multiplicity' of the morphism $\Sigma_F(G)(\Phi): 
\Sigma_F(G) \rightarrow \Sigma_F( G')$ of affine algebraic varieties 
associated to a morphism of $L$-groups, $\Phi: \LG\rightarrow \LG'$ at any point $\mu \in \Sigma_F(G')$.  
Take the connected component of $\Sigma_F(G')$ passing through $\mu$. By the Lemma, only finitely many 
irreducible components of  $\Sigma_F(G)$ will map into this connected component with the image containing $\mu$. 
On each such irreducible component 
of $\Sigma_F(G)$, the proper morphism   $\Sigma_F(G)(\Phi)$ 
has a degree onto its image. The sum of these degrees 
(over irreducible components of  $\Sigma_F(G)$ whose image contains $\mu$) is defined   
to be the degree of $\Sigma_F(G)(\Phi)$ at $\mu$.

As an example, we may be in a situation with three connected components $X_1, X_2,X_3$ mapping to one connected component $Y$,
through mappings $\phi_1,\phi_2,\phi_3$ on the three connected components, 
with $ y=\phi_1(x_1)=\phi_2(x_2)=\phi_3(x_3)$ in $Y$. To calculate the degree of $\Phi$ at the point $y \in Y$ which has 3 
inverse images in $X$ located in distinct irreducible components $X_1,X_2,X_3$ (of possibly different dimensions), we consider the restriction of 
$\Phi$ to $X_i$ denoted $\phi_i$ but considered as a finite map from $X_i$ to $\phi_i(X_i) \subset Y$, 
and calculate the 
degree at $x_i$ for the maps $\phi_i: X_i\rightarrow \phi_i(X_i)$, and then we add up the degrees at $x_i$ to arrive at the degree at $y$. 

\vspace{4mm}

\noindent{\bf Example (unramified parameters):} 
Assume that $F$ is a non-Archimedean local field and
$G$ is an unramified group, i.e. $G$ is quasi-split and
splits after an unramified extension of $F$. A parameter $\phi: W'_F\rightarrow \LG$ is said to be unramified if 
it is trivial on $I_F \times \SL_2(\C)$ where $I_F$ is an inertia subgroup of $W_F$. Clearly, the set of unramified
parameters form an irreducible component of $\Sigma_F(G)$, and the ring of algebraic functions on this
connected component is nothing but the algebraic functions on $\widehat{G}$ which are $\LG$ invariant which by the Satake isomorphism is 
the Hecke algebra of $G(F)$ with respect to a hyperspecial maximal compact subgroup of $G(F)$.  
 
If $\Phi: \LG\rightarrow \LG'$ is an injective homomorphism of $L$-groups, 
then the morphism $\Sigma_F(G)(\Phi): \Sigma_F(G) \rightarrow \Sigma_F(G')$ 
of parameter spaces can map only the unramified
component of $\Sigma_F(G)$ into the unramified component of $\Sigma_F(G')$, and the degree of the map (onto its image) 
being a measure of how many distinct conjugacy classes in $\widehat{G}$ (under $\LG$) become the same in $\widehat{G}'$ 
(under $\LG'$). 

In the context of this paper, it may be useful to consider the case of quadratic base change for an unramified 
extension $E/F$ for a semisimple 
group $G$ which is quasi-split and split by the same extension $E/F$. Let $S$ be a maximal split torus in $G$, and $T =Z(S)\supset S$ 
a maximal torus of $G$  over $F$ with $W$ 
the absolute Weyl group, and $W^F = N(S)(F)/T(F)$, the Weyl group of $(G,S)$ over $F$ which is the $F$-rational points of the (absolute) Weyl group $W = N(T)(\bar{F})/T(\bar{F})$, 
on which $\Gal(\bar{F}/F)$ operates via $\Gal(E/F)$ by an automorphism $\phi$ of order 2. 

\begin{lemma}\label{lemma5}The degree of the base change map $BC: \Sigma_F(G) \rightarrow \Sigma_E(G)$ restricted to the unramified components of 
$\Sigma_F(G)$ and  $\Sigma_E(G)$ is $\left | H^1(\langle \phi \rangle
, \wT) \right|$ which is known to be 1 if $G$ is simply connected  or adjoint.
\end{lemma}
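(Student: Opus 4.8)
The plan is to describe both unramified components explicitly as Weyl‑quotients of complex tori, to recognize the base change morphism as the ``norm'' attached to $\phi$, and thereby to reduce the degree to that of an isogeny of tori, which is visibly the order of a cohomology group.

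First I would make the two unramified components explicit. Since $G$ splits over the unramified quadratic extension $E$, we have ${}^LG|_E=\widehat G\times W_E$, so an unramified parameter over $E$ is just a semisimple conjugacy class in $\widehat G$; by the Chevalley restriction theorem the unramified component of $\Sigma_E(G)$ is $\widehat T/\!/W$. Over $F$, Frobenius acts on $\widehat G$ through the pinned automorphism $\phi$ of order dividing $2$, and (in the notation and along the lines of Proposition \ref{prop44}, using that twisted conjugacy classes are the conjugacy classes in the fixed points of the automorphism) the unramified component of $\Sigma_F(G)$ is the variety of semisimple $\phi$‑twisted conjugacy classes in $\widehat G$. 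Choosing $\phi$ to preserve a pinning $(\widehat B,\widehat T,\{X_\alpha\})$, this twisted Chevalley restriction identifies it with $\widehat T_\phi/\!/W^F$, where $\widehat T_\phi:=\widehat T/(1-\phi)\widehat T$ is a torus with $X^*(\widehat T_\phi)=X^*(\widehat T)^\phi$ and $W^F=W^\phi$ is the relative Weyl group. Because $E/F$ is unramified quadratic we have $\Fr_E=\Fr_F^2$, so a parameter with $\varphi(\Fr_F)=(g,\Fr_F)$ has $\varphi(\Fr_E)=(g\phi(g),\Fr_E)$; under the above identifications $BC$ therefore sends the $\phi$‑twisted class of $g$ to the conjugacy class of $g\phi(g)$, which on tori is the homomorphism $\nu=1+\phi:\widehat T_\phi\to(\widehat T^\phi)^\circ$ (it kills $(1-\phi)\widehat T$ and has image the subtorus $\mathrm{im}(1+\phi)=(\widehat T^\phi)^\circ$).

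Next I would compute the degree using the resulting commutative square with top row $\nu$ and vertical quotient maps $q_F:\widehat T_\phi\to\widehat T_\phi/\!/W^F=\Sigma_F(G)_{ur}$ and $q_E:\widehat T\to\widehat T/\!/W=\Sigma_E(G)_{ur}$. Multiplicativity of degrees along dominant morphisms of irreducible varieties gives $\deg(q_F)\cdot\deg(BC)=\deg(\nu)\cdot\deg(q_E|_{(\widehat T^\phi)^\circ})$. The isogeny $\nu$ has kernel $\ker(1+\phi)/\mathrm{im}(1-\phi)=H^1(\langle\phi\rangle,\widehat T)$, so $\deg(\nu)=|H^1(\langle\phi\rangle,\widehat T)|$. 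It then remains to check $\deg(q_F)=\deg(q_E|_{(\widehat T^\phi)^\circ})$; both are read off from stabilizers of generic points, and the key point is that the pointwise stabilizer of the subtorus $(\widehat T^\phi)^\circ$ in $W$ is trivial. By Steinberg's theorem such a stabilizer is generated by the reflections $s_\alpha$ it contains, and $s_\alpha$ fixes $(\widehat T^\phi)^\circ$ pointwise exactly when $\alpha$ is ``anti‑invariant'', i.e.\ $\alpha\in(1-\phi)X^*(\widehat T)$; but a pinned involution of a reduced root system has no anti‑invariant root (a short type‑by‑type check — it is the non‑pinned ``folding'' automorphisms that produce such vectors). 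With this, for generic $t\in(\widehat T^\phi)^\circ$ one has $\mathrm{Stab}_W(t)=1$ and $\{w\in W:wt\in(\widehat T^\phi)^\circ\}=W^F$, so $\deg(q_E|_{(\widehat T^\phi)^\circ})=|W^F|$; the same anti‑invariance argument applied to $W^F$ acting on $\widehat T_\phi$ gives $\deg(q_F)=|W^F|$, and hence $\deg(BC)=|H^1(\langle\phi\rangle,\widehat T)|$.

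For the final clause, $X_*(\widehat T)=X^*(T)$ carries $\phi$ as a pinned automorphism: for $G$ simply connected (resp.\ adjoint) it is the weight lattice with $\phi$ permuting the fundamental weights (resp.\ the root lattice with $\phi$ permuting the simple roots), hence a permutation $\Z[\langle\phi\rangle]$‑module. Breaking $\widehat T$ into $\phi$‑orbits of coordinate $\C^\times$'s, a free orbit contributes the group $H^1$ of an induced module, which vanishes by Shapiro, while a $\phi$‑fixed coordinate contributes $H^1(\langle\phi\rangle,\C^\times)=\mu_2$; thus $H^1(\langle\phi\rangle,\widehat T)\cong\mu_2^{\,r}$ with $r$ the number of $\phi$‑fixed nodes of the Dynkin diagram, so the degree is $1$ whenever $\phi$ acts without fixed nodes — as for the odd (quasi‑split) unitary groups, of type $A_{2n}$. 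I expect the main obstacle to lie precisely in the bookkeeping of this reduction to the torus: pinning down the correct torus $\widehat T_\phi$ (using $(\widehat T^\phi)^\circ$ instead would insert a spurious isogeny factor into the degree) and establishing triviality of the pointwise stabilizer of $(\widehat T^\phi)^\circ$ in $W$, which is exactly what makes the two Weyl‑quotient degrees cancel so that only $\deg(\nu)$ survives.
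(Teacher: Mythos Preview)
Your argument for the main identity $\deg(BC)=|H^1(\langle\phi\rangle,\wT)|$ is essentially the paper's own: both identify the unramified component over $F$ with $\wT/(1-\phi)\wT$ modulo $W^\phi$ (the paper writes this as $\wS/W^\phi$ with $\wS=\wT/K$, $K=\{\phi(t)t^{-1}\}$), recognize base change as the norm map $t\mapsto t\phi(t)$, and read off the degree of that isogeny as $|\ker(1+\phi)/\mathrm{im}(1-\phi)|$. You are more careful than the paper on the Weyl-group bookkeeping: where the paper asserts in one line that $W$-conjugacy in $\wT$ of elements coming from $\wS$ implies $W^\phi$-conjugacy in $\wS$, you actually argue the triviality of the pointwise stabilizer of $(\wT^\phi)^\circ$ in $W$. (Your ``type-by-type check'' that no root is anti-invariant admits a one-line proof: a pinned diagram automorphism permutes the simple roots, hence preserves the set of positive roots, so cannot send any root to its negative.)

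Your final paragraph, however, does not establish the lemma's last clause, and in fact your own computation shows that clause is not correct as stated. For $G$ simply connected or adjoint the lattice $X_*(\wT)=X^\star(T)$ is indeed a permutation $\langle\phi\rangle$-module (basis the fundamental weights, respectively the simple roots), and your conclusion $|H^1(\langle\phi\rangle,\wT)|=2^r$ with $r$ the number of $\phi$-fixed nodes is right. But among nontrivial order-$2$ diagram automorphisms only type $A_{2n}$ has $r=0$; one finds $r=1$ for $A_{2n-1}$, $r=n-2$ for $D_n$, and $r=2$ for $E_6$. Thus for instance the quasi-split even unitary group $\SU_{2n}$ (simply connected, type $A_{2n-1}$) gives degree $2$, not $1$. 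The paper offers no argument for this clause beyond ``which is known'', so you have not missed an idea in the paper's proof; rather, your correct computation of $H^1$ reveals that the assertion needs the additional hypothesis that $\phi$ act on the Dynkin diagram without fixed nodes.
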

\begin{proof} Let $\wT \subset \wG$ be the standard embedding. We have ${}^LT = \wT \rtimes \langle \phi \rangle$, 
${}^LG = \wG \rtimes \langle \phi \rangle$. Further, if $N$ is the normalizer of $\wT$ in $\wG$, then $N \rtimes \langle \phi \rangle$, 
operates on $\wT$. Let $N^\phi$ be the inverse image in $N$ of $W^\phi$ which is the fixed points of $\phi$ on $W$. According to
[Bo], equivalence classes of unramified parameters of $G(F)$ are in bijective correspondence with 
$[\wT \rtimes  \phi ]/N^\phi$
which is isomorphic to $\wS/W^\phi$ under the natural map ${\hat{i}}: \wT \rightarrow \wS$, which is dual to the
inclusion $i: S \rightarrow T$, and which is given by dividing $\wT$ by $K = \{\phi(t) t^{-1}| t \in T(\C)\}$.

Clearly, the basechange map sends an element $(t,\phi) \in {}^LT $, to $(t,\phi)^2 = (t\cdot \phi(t), 1)$. 

Let $K$ be the subgroup $K= \{\phi(t) \cdot t^{-1} | t \in \wT \}$ of $\wT$ which is contained in the subgroup $\{t \in \wT| \phi(t)=t^{-1}\}$.
Then we have sequence of maps,
\[{\wT \longrightarrow \wT/K \cong \wS   {\longrightarrow \wT/   \{t \in \wT| \phi(t)=t^{-1}\} \stackrel{t\cdot \phi(t)} \longrightarrow}   \wT.} \]
Since the last arrow $\wT/   \{t \in \wT| \phi(t)=t^{-1}\} \stackrel{t\cdot \phi(t)} \longrightarrow \wT$ is clearly injective, 
degree of the composite 
map ${\wS \longrightarrow \wT/   \{t \in \wT| \phi(t)=t^{-1}\} \stackrel{t\cdot \phi(t)} \longrightarrow \wT}$ onto its image
is the degree of the map $\wS \longrightarrow \wT/   \{t \in \wT| \phi(t)=t^{-1}\} $ which is nothing but the index
of $K$ inside $\{t \in \wT| \phi(t)=t^{-1}\} $ which is clearly $\left | H^1(\langle \phi \rangle
, \wT) \right|$. It remains to observe that in the mapping $\wS \longrightarrow \wT$, 
two elements of $\wS$ which are $W$-conjugate in $\wT$ 
are $W^\phi$ conjugate in $\wS$, thus the degree of the mapping $\wS \longrightarrow \wT$ onto its image is the same as the degree 
of the mapping  $\wS/W^\phi \longrightarrow \wT/W$ onto its image. 
\end{proof}

\vspace{4mm}
\noindent{\bf Example ($\SL_2(F)$):} The set of parameters for $\SL_2(F)$ for which the $L$-group is $\PGL_2(\C)$
is a disjoint union of points corresponding to discrete series representations of $\SL_2(F)$, and certain affine spaces corresponding to principal series. 
A character $\chi: F^\times \rightarrow \C^\times$ gives rise to the parameter
 $\sigma_\chi: W_F \longrightarrow \PGL_2(\C)$ which is $(\chi, 1)$ sitting diagonally in $\PGL_2(\C)$
with $\sigma_\chi \cong \sigma_{\chi^{-1}}$.
If we write $F^\times = {\mathcal O}_F^\times \times \varpi^\Z$, and denote $\chi$ restricted to ${\mathcal O}_F^\times$ as $\chi_0$, 
then if $\chi_0^2 \not = 1$, the irreducible component of the parameter space passing through such a character can be 
identified  to $\C^\times$ via $\chi \rightarrow \chi(\varpi)$. If $\chi_0^2=1$, then $\chi(\varpi)$ and $\chi(\varpi)^{-1}$
gives rise to equivalent parameters. Thus the corresponding irreducible component of the parameter space is $\C^\times/\{z\sim  z^{-1} \}$ which is isomorphic to $\C$ via the map from $\C^\times$ to $\C$ given by $z\rightarrow z+z^{-1}$.

\vspace{8mm}

\noindent{\bf Parameter spaces for representations of $G(F)$:} 
Recall that according to Bernstein, there is a natural surjective map with finite fibers
from the set $\Pi(G)$  of irreducible admissible representations of $G(F)$ to a  countable disjoint union of
affine algebraic varieties (over $\C$) which we call the Bernstein variety ${\mathcal B}(G(F))$,  
each connected component of which is  indexed by pairs 
$(M,\pi)$ consisting of a Levi subgroup $M$ of $G$ together with a supercuspidal representation $\pi$ of $M(F)$, the pair 
$(M,\pi)$ taken up to $G(F)$-conjugacy; two pairs  $(M,\pi)$ and $(M,\pi \otimes \chi)$ are
declared to be in the 
same Bernstein component if $\chi$ is an unramified character of $M(F)$.   The algebra
of functions on this connected component is described as follows. Given $(M,\pi)$, let $T_\pi$ be the set of unramified 
twists $\pi \otimes \chi$ of $\pi$. Clearly, $T_\pi$ is the complex torus consisting of 
unramified characters of $M(F)$ modulo the finite group of unramified characters $\chi$ of $M(F)$ such that $\pi \otimes \chi \cong \pi$. Let $W_\pi$ be the subgroup of $W(M)= N_{G(F)}(M)/M$ consisting of those elements $w$ such that 
$w(\pi) \cong \pi \otimes \chi_w$ for some  unramified character $\chi_w$ of $M(F)$. The group $W_\pi$ operates on the torus $T_\pi$, and if $\C[T_\pi]$ is the algebra of regular functions on the torus $T_\pi$, then the algebra of functions on the
Bernstein component associated to $(M,\pi)$ is $\C[T_\pi]^{W_\pi}$.

Assuming that a Langlands correspondence is defined for supercuspidal representations of 
the group $G(F)$ as well as all it Levi subgroups, there is a natural way to extend it to a 
 morphism of algebraic varieties from the Bernstein variety 
${\mathcal B}(G(F))$ to  $\Sigma_F(G)$ by sending $(M,\pi \otimes \chi)$ to the Langlands parameter
of $\pi \otimes \chi$, a supercuspidal representation of $M(F)$, with values in ${}^LM$, which naturally sits inside
${}^LG$.

The commutativity of the following diagram  is usually called the {\it Haines' conjecture}.

$$
\xymatrix @C=17pc @R=4pc{
{}&
\ar@{->}[ld]_<(.4)[@!15]{{\rm Only~} W_F {\rm~~ part ~~of~~
the ~~Langlands~~~parameter}}
{\mathcal B}(G(F)) \\
 {\Sigma}_F(G)  & \ar[l]^{{\rm only~~} W_F{\rm~~part ~~ of ~ the~Langlands~~ parameter}} \ar[u]_[@!90]{\rm
Bernstein}\Pi(G).}
$$

The map from ${\mathcal B}(G(F))$ to  $\Sigma_F(G)$ 
(parameterizing only the equivalence classes of 
homomorphisms of the Weil group into the $L$-group) is an isomorphism of algebraic varieties for $G=\GL_n(F)$; for $G=\SL_n(F)$, the map from ${\mathcal B}(G(F))$ to  $\Sigma_F(G)$ is an isomorphism from each connected 
component onto the connected component of the image. However, for  general groups, 
the map seems more complicated in nature.

\begin{remark} One of the subtleties  about Langlands parameters is the difference between $W_F$ and 
$W'_F = W_F\times \SL_2(\C)
$. There is another description of representations of $W'_F = W_F\times \SL_2(\C) $ as that of $'{W}_F= \C \rtimes W_F$
where $W_F$ operates on $\C$ via the unramified character of $W_F$ taking a 
uniformizer to multiplication by $q$, the cardinality of the residue field. The two descriptions are the same (for complex representations!) because of Jacobson-Morozov theorem which identifies isomorphism classes of representations of 
$W'_F$ with that of ${}'W_F$. However,  the set of homomorphisms of  $W'_F$ and that of ${}'W_F$ 
into ${}^LG$ are quite different, and so are the quotient varieties of the set of such homomorphisms by $\wG$. For example,
in the case of $G(F)=\PGL_2(F)$, with $\wG=\SL_2(\C)$, 
 $\Sigma_G(F)$ that we defined has an isolated point corresponding to the 
2 dimensional irreducible representation of  $\SL_2(\C)$ (hence of $W'_F$), 
whereas for  ${}'W_F= \C \rtimes W_F$, the corresponding 
object (on which $W_F$ operates by $(\nu^{1/2},\nu^{-1/2})$, the diagonal matrix in $\SL_2(\C)$, 
and $t \in \C$ operates by the upper triangular unipotent matrix
\[\left ( \begin{array}{cc} 1 & tX \\ 0 & 1\end{array}\right )\] where $X\in \C$ is a fixed complex number) 
is the quotient of $\C$ by $\C^\times$, a non-Hausdorff space which reflects more 
correctly the topology on the unitary dual of $G(F)$. I have in this paper used $W'_F = W_F\times \SL_2(\C) $ instead of ${}'W_F= \C \rtimes W_F$, only to avoid having to deal with quotients by non-reductive groups, 
and thus suggestions in the paper where the difference between 
$W_F$ and $W'_F$ shows up must be taken with a pinch of salt.
\end{remark}

\begin{remark}We finally say a few words about parameter spaces in the Archimedean case. Just like non-Archimedean 
local fields, $W_\R$ and $W_\C$ are locally compact topological groups with unique maximal compact subgroups, which are
$\Si\rtimes \Z/2$ and $\Si$ in the two cases, with quotient isomorphic to $\R$. In fact, unlike the non-Archimedean case,
where $W_F $ is only a semi-direct product $I_F \rtimes \Z$, in the Archimedean case, the Weil group is a direct product
of its maximal compact subgroup and $\R$. Since homomorphisms  of compact groups inside a 
complex Lie group form a discrete set up to conjugation, the analysis done earlier to show that the set of Langlands 
parameters (without any admissibility hypothesis) forms a countable union of affine varieties holds good here too 
after we have noted that the set of homomorphisms $\phi: \R \rightarrow G(\C)$ can be identified to the Lie algebra
$\g$ of $G(\C)$, as they are all of the form $\phi_X:t\rightarrow \exp(tX)$ for $X \in \g$. The earlier analysis also
shows that these set of parameters up to conjugation by $\wG$, i.e., the categorical quotient of the parameter 
space by $\wG$,  form a disjoint union of smooth affine varieties. We will 
however not discuss explicitly geometry of parameter spaces in the Archimedean case any further.  Curiously, the book 
[ABV] does not seem to have geometry of the parameter spaces --- at least in the way we have defined --- explicitly discussed.
\end{remark}

\section{Character twists}
Given a parameter $\varphi$ for $G^{\out}(F)$ which we consider as
 an element of $ H^1(W_F,\widehat{G}^{\out}),$ our aim in this section is to construct 
certain pure innerforms of $G$ via self-twists of the parameter $\varphi$. A theorem of Kottwitz identifying 
$\Hom( H^0(W_F, Z(\widehat{G})),\Q/\Z)= \pi_0(Z(\wG)^{W_F})^\vee $ with $ H^1(W_F, G)$ plays an important role in this section, and thus in this section we consider only non-Archimedean local fields.  Pure innerforms of real groups, i.e., $H^1({\rm Gal}(\C/\R), G(\C))$ have more complicated structure
which we take up in the next section.

Let $Z(\widehat{G})$  
(resp. $Z(\widehat{G}^{\out})$) 
be the center of $\widehat{G}$ (resp. $\widehat{G}^{\out}$). These come equipped 
with an action of $W_F$. Further, as $W_F$-modules,  $Z(\widehat{G})^{\out} = Z(\widehat{G}^{\out})$.

We have a natural bilinear form:
$$Z(\widehat{G}) \times \widehat{G} \rightarrow  \widehat{G},$$  
giving rise to the bilinear pairing:
$$H^1(W_F, Z(\widehat{G})) 
\times H^1(W_F, \widehat{G}) \rightarrow  H^1(W_F,\widehat{G}).$$
  This pairing is a reflection on the parameter side of the notion of twisting representations of $G(F)$ by characters (i.e., one dimensional representations) of $G(F)$. Indeed, Langlands has proved, cf. [Bo] Desiderata 10.2, that  
an element of $H^1(W_F, Z(\widehat{G}))$ 
gives rise to a character of $G(F)$, something which is exactly the Langlands correspondence 
for tori if $G$ is a torus, or something almost as straightforward if $Z(\widehat{G})$ is a torus, but in general it needs more work 
to construct a character of $G(F)$ corresponding to an element of $H^1(W_F, Z(\widehat{G}))$,
which Langlands did using his notion of $z$-extensions.

\begin{lemma}\label{lemma6}If $\Omega_G(E) = H^1(\Gal(E/F), Z( \widehat{G}^{\out} )^{W_E}) 
$ is the kernel of the natural restriction map 
$H^1(W_F, Z( \widehat{G}^{\out} )) 
\rightarrow H^1(W_E, Z( \widehat{G}^{\out})) $, then $\Hom(\Omega_G(E),\Q/\Z)$ 
sits in the exact sequence,
$$0 \rightarrow  \Hom(\Omega_G(E),\Q/\Z) \rightarrow  \Hom( H^0(W_F, Z(\widehat{G})),\Q/\Z)
  \rightarrow \Hom(H^0(W_E,Z(\widehat{G})),\Q/\Z).$$   
\end{lemma}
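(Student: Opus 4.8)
The plan is to derive the claimed short exact sequence by applying the long exact cohomology sequence in $W_F$-cohomology (equivalently, Tate cohomology for $\Gal(E/F)$ sitting inside via inflation-restriction) together with Tate-Nakayama duality for the diagonalizable group $Z(\widehat G^{\out})$, and then dualizing. First I would record the inflation-restriction exact sequence for the module $A := Z(\widehat G^{\out})$ under $W_F$ with its subgroup $W_E$, which at the bottom degrees reads
$$0 \to H^1(\Gal(E/F), A^{W_E}) \to H^1(W_F, A) \to H^1(W_E, A),$$
so that by definition $\Omega_G(E) = H^1(\Gal(E/F), A^{W_E})$ is exactly the kernel of the restriction map $\mathrm{res}: H^1(W_F,A)\to H^1(W_E,A)$. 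Thus there is a short exact sequence $0\to \Omega_G(E)\to H^1(W_F,A)\to \mathrm{Im}(\mathrm{res})\to 0$, where $\mathrm{Im}(\mathrm{res})\subseteq H^1(W_E,A)$. Dualizing (applying $\Hom(-,\Q/\Z)$, which is exact on finite abelian groups, and all groups here are finite) gives
$$0\to \Hom(\mathrm{Im}(\mathrm{res}),\Q/\Z)\to \Hom(H^1(W_F,A),\Q/\Z)\to \Hom(\Omega_G(E),\Q/\Z)\to 0.$$

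Next I would invoke Tate-Nakayama duality (stated in the preliminaries) which gives perfect pairings $H^1(W_F, A)\times H^1(W_F, A^\vee)\to \Q/\Z$ and similarly over $E$; here $A^\vee = Z(\widehat G^{\out})^\vee = \pi_1(G^{\out})$, but what matters is that under this duality $\Hom(H^1(W_F,A),\Q/\Z)\cong H^1(W_F, A^\vee)$. However the target groups in the statement are $\Hom(H^0(W_F,Z(\widehat G)),\Q/\Z)$ and $\Hom(H^0(W_E,Z(\widehat G)),\Q/\Z)$, i.e. involving $Z(\widehat G)$ and taking $H^0$, not $H^1$ of $Z(\widehat G^{\out})$. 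The reconciliation uses two facts: (i) the Kottwitz-type identification that over a non-Archimedean local field the Tate-Nakayama dual of $H^1(W_F,-)$ for a finite diagonalizable module can be expressed via $H^0$ of the Pontryagin-dual module — more precisely, for local Tate duality of finite modules, $H^1(W_F, M)$ is dual to $H^1(W_F, M^\vee)$, and one rewrites $H^1(W_F, Z(\widehat G^{\out})^\vee)$; and (ii) the identity of $W_F$-modules $Z(\widehat G)^{\out} = Z(\widehat G^{\out})$ noted just above the lemma, so that the relevant $H^0$ over $F$ of $Z(\widehat G)$ (which sees only the non-twisted part) matches the piece cut out. Concretely, I would use that $H^0(W_F, Z(\widehat G))$ and $H^1(W_F, Z(\widehat G^{\out}))$ are Pontryagin dual because twisting by $x\mapsto -x$ through $E/F$ interchanges the two Tate cohomology groups of $\Z/2=\Gal(E/F)$, i.e. $\hat H^0(\Gal(E/F), B^{W_E}) \cong \hat H^1(\Gal(E/F), (B^{\out})^{W_E})$; combined with inflation-restriction over $W_F$ this yields the identification of $\Hom(H^1(W_F,A),\Q/\Z)$ with $\Hom(H^0(W_F, Z(\widehat G)),\Q/\Z)$, and likewise the map to $\Hom(H^0(W_E, Z(\widehat G)),\Q/\Z)$ is the transpose of the restriction map, up to the identification of $\mathrm{Im}(\mathrm{res})$ with a subgroup.

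Finally I would assemble: the dualized short exact sequence becomes precisely
$$0 \to \Hom(\Omega_G(E),\Q/\Z)\to \Hom(H^0(W_F,Z(\widehat G)),\Q/\Z)\to \Hom(H^0(W_E,Z(\widehat G)),\Q/\Z),$$
noting that exactness at the middle term together with left-exactness is all that is claimed (the sequence need not be right-exact, which is why only an injection into the last term appears). The main obstacle I anticipate is bookkeeping the compatibility between the $W_F$-cohomological inflation-restriction picture for the twisted module $Z(\widehat G^{\out})$ and the $H^0$-description over $Z(\widehat G)$: one must check carefully that the restriction-to-$W_E$ map on $H^1(W_F, Z(\widehat G^{\out}))$ dualizes to the corestriction/restriction on $H^0(-, Z(\widehat G))$ appearing in the statement, and that the Tate-twist identification $Z(\widehat G)^{\out}=Z(\widehat G^{\out})$ correctly matches the $\Gal(E/F)$-invariants on both sides; everything else is a routine diagram chase and an application of local Tate-Nakayama duality for finite diagonalizable groups, which is available from the preliminaries and from Kottwitz's theorem.
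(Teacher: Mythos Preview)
Your approach has a genuine gap. After dualizing $0\to\Omega_G(E)\to H^1(W_F,A)\to \mathrm{Im}(\mathrm{res})\to 0$ you obtain $\Hom(\Omega_G(E),\Q/\Z)$ as a \emph{quotient} of $\Hom(H^1(W_F,A),\Q/\Z)$, whereas the target sequence exhibits it as a \emph{subgroup} of $\Hom(H^0(W_F,Z(\widehat G)),\Q/\Z)$. To pass from one to the other you assert that $H^0(W_F,Z(\widehat G))$ and $H^1(W_F,Z(\widehat G^{\out}))$ are Pontryagin dual, justified by the fact that twisting by $x\mapsto -x$ through $\Gal(E/F)$ interchanges the Tate groups $\hat H^0$ and $\hat H^1$ of $\Z/2$. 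But $H^0(W_F,Z(\widehat G))=Z(\widehat G)^{W_F}$ is the full group of invariants, not Tate cohomology of $\Gal(E/F)$; for $G=\GL_n$ it is $\C^\times$, which is certainly not dual to $H^1(W_F,Z(\widehat G^{\out}))$. The twist identity $\hat H^0(\Gal(E/F),B^{W_E})\cong\hat H^1(\Gal(E/F),(B^{\out})^{W_E})$ correctly identifies $\Omega_G(E)$ with the cokernel of the norm map $Z(\widehat G)^{W_E}\to Z(\widehat G)^{W_F}$, but it does not give any duality between the ambient groups $H^1(W_F,A)$ and $H^0(W_F,Z(\widehat G))$, and without that your ``assembly'' step has no content.

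The paper's argument bypasses all of this. It uses the exact sequence of $W_F$-modules
\[
1 \longrightarrow Z(\widehat G^{\out}) \longrightarrow \mathrm{Ind}_{W_E}^{W_F} Z(\widehat G) \longrightarrow Z(\widehat G) \longrightarrow 1,
\]
which is just the sequence $1\to T^{\out}\to R_{E/F}T\to T\to 1$ on the dual side. The associated long exact sequence in $W_F$-cohomology, together with Shapiro's lemma $H^i(W_F,\mathrm{Ind}_{W_E}^{W_F}Z(\widehat G))\cong H^i(W_E,Z(\widehat G))$, reads
\[
\cdots \to H^0(W_E,Z(\widehat G)) \to H^0(W_F,Z(\widehat G)) \to H^1(W_F,Z(\widehat G^{\out})) \to H^1(W_E,Z(\widehat G^{\out})) \to \cdots,
\]
so $\Omega_G(E)$ appears directly as the cokernel of $H^0(W_E,Z(\widehat G))\to H^0(W_F,Z(\widehat G))$. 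Applying $\Hom(-,\Q/\Z)$ to this right-exact sequence immediately yields the claimed left-exact sequence. No Tate--Nakayama duality is needed; the whole lemma is a one-line consequence of this short exact sequence, which is the idea you are missing.
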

\begin{proof}
Consider the exact sequence of $W_F$-modules,
$$1 \rightarrow  Z(\widehat{G}^{\out}) \rightarrow  {\rm Ind}_{W_E}^{W_F} Z(\widehat{G}) \rightarrow Z(\widehat{G}) \rightarrow 1.$$   
The corresponding long exact sequence of cohomology groups as $W_F$-modules gives:
$$\cdots \rightarrow  H^0(W_E, Z(\widehat{G})) \rightarrow  H^0(W_F, Z(\widehat{G})) \rightarrow H^1(W_F, Z(\widehat{G}^{\out}) )
\rightarrow H^1(W_E, Z(\widehat{G}^{\out})) \cdots
$$   
Therefore, by the definition of $\Omega_G(E)$, we have:
$$\cdots \rightarrow  H^0(W_E, Z(\widehat{G})) \rightarrow  H^0(W_F, Z(\widehat{G})) \rightarrow \Omega_G(E) \rightarrow 0.
$$   
Using the functor $\Hom(-,\Q/\Z)$, we get the desired exact sequence:
$$0 \rightarrow  \Hom(\Omega_G(E),\Q/\Z) \rightarrow  \Hom( H^0(W_F, Z(\widehat{G})),\Q/\Z)  \rightarrow \Hom(H^0(W_E,Z(\widehat{G})),\Q/\Z).$$   
\end{proof}

\begin{corollary}Interpreting $\Hom( H^0(W_F, Z(\widehat{G})),\Q/\Z)$ by Kottwitz as $H^1(W_F, G)$, 
i.e., the group of pure innerforms of $G$ over $F$, we find that the character group of $\Omega_G(E)$ 
is isomorphic to $ H^1(\Gal(E/F), G(E))$, 
the finite abelian group of pure innerforms of $G$ over $F$ which become trivial over $E$. 
Thus, we get a perfect pairing:
$$ H^1(\Gal(E/F), Z( \widehat{G}^{\out} )^{W_E}) 
\times  H^1(\Gal(E/F), G(E)) \rightarrow \Q/\Z.$$
\end{corollary}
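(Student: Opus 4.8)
The plan is to run the exact sequence of Lemma~\ref{lemma6} through Kottwitz's theorem, and then through the inflation--restriction sequence in Galois cohomology. Lemma~\ref{lemma6} already supplies
$$0 \to \Hom(\Omega_G(E),\Q/\Z) \to \Hom(H^0(W_F,Z(\widehat{G})),\Q/\Z) \xrightarrow{\ \alpha^{\vee}\ } \Hom(H^0(W_E,Z(\widehat{G})),\Q/\Z),$$
so that $\Hom(\Omega_G(E),\Q/\Z) = \ker(\alpha^{\vee})$; here $\alpha^{\vee}$ is $\Hom(-,\Q/\Z)$ applied to the norm map $\alpha\colon H^0(W_E,Z(\widehat{G}))=Z(\widehat{G})^{W_E}\to Z(\widehat{G})^{W_F}=H^0(W_F,Z(\widehat{G}))$, $z\mapsto z\cdot\sigma(z)$, coming from the surjection $\mathrm{Ind}_{W_E}^{W_F}Z(\widehat{G})\to Z(\widehat{G})$ used in the proof of that lemma. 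It therefore suffices to identify the middle and right-hand terms, together with $\alpha^{\vee}$, in terms of the Galois cohomology of $G$.

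The next step is to invoke Kottwitz's theorem as recalled in Section~3: for the connected reductive group $G$ over the non-Archimedean field $F$ there is a canonical isomorphism of finite abelian groups $H^1(W_F,G)=H^1(\Gal(\bar F/F),G(\bar F))\cong \pi_0(Z(\widehat{G})^{W_F})^{\vee}=\Hom(H^0(W_F,Z(\widehat{G})),\Q/\Z)$, and likewise, viewing $G$ over $E$ with the same dual group now regarded as a $W_E$-module, $H^1(W_E,G)\cong \Hom(H^0(W_E,Z(\widehat{G})),\Q/\Z)$. I expect the main obstacle to be checking that, under these two identifications, the map $\alpha^{\vee}$ of Lemma~\ref{lemma6} corresponds to the restriction map $\mathrm{Res}\colon H^1(\Gal(\bar F/F),G(\bar F))\to H^1(\Gal(\bar F/E),G(\bar F))$ of pure innerforms. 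This is exactly the compatibility of Kottwitz's homomorphism with base change along $E/F$ --- dual to the statement that restriction matches the norm map $z\mapsto z\cdot\sigma(z)$ on the centres of the dual groups, which is why $\alpha$ was the norm map. It is ``well known'' but does require care; one can either quote it from the literature on abelianized Galois cohomology, or reduce it to tori via $z$-extensions exactly as in Kottwitz's own arguments (which is also the route by which a character of $G(F)$ was attached to an element of $H^1(W_F,Z(\widehat{G}))$ in Section~11).

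Granting this, $\Hom(\Omega_G(E),\Q/\Z)=\ker(\alpha^{\vee})$ becomes $\ker\bigl(\mathrm{Res}\colon H^1(\Gal(\bar F/F),G(\bar F))\to H^1(\Gal(\bar F/E),G(\bar F))\bigr)$. By the inflation--restriction exact sequence of pointed sets for the normal subgroup $\Gal(\bar F/E)\subset\Gal(\bar F/F)$ with quotient $\Gal(E/F)$ (Serre, Galois cohomology, I.5.8), inflation is injective and its image is this kernel, so the kernel is identified with $H^1(\Gal(E/F),G(\bar F)^{\Gal(\bar F/E)})=H^1(\Gal(E/F),G(E))$; this is a finite abelian group --- finite because it injects into the finite group $H^1(F,G)$, and abelian because under Kottwitz it is a subgroup of one. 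Hence $\Hom(\Omega_G(E),\Q/\Z)\cong H^1(\Gal(E/F),G(E))$, the pure innerforms of $G$ over $F$ that split over $E$. Finally $\Omega_G(E)=H^1(\Gal(E/F),Z(\widehat{G}^{\out})^{W_E})$ is itself finite (cohomology of $\Z/2$ with coefficients in a diagonalizable complex group), so Pontryagin duality for finite abelian groups turns the last isomorphism into the asserted perfect pairing
$$H^1(\Gal(E/F),Z(\widehat{G}^{\out})^{W_E})\times H^1(\Gal(E/F),G(E))\to\Q/\Z. $$
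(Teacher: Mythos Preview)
Your proposal is correct and follows the route the paper itself suggests: the Corollary is stated in the paper without a separate proof, simply as the result of applying Kottwitz's isomorphism to the exact sequence of Lemma~\ref{lemma6}. You have spelled out the steps the paper leaves implicit --- identifying the map $\alpha^{\vee}$ with restriction via the functoriality of Kottwitz's isomorphism under base change, and then invoking inflation--restriction to recognise the kernel as $H^1(\Gal(E/F),G(E))$. This is precisely the intended argument, only made explicit.
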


Now given a parameter $\varphi$ for $G^{\out}$ which we consider as
 an element of $ H^1(W_F,\wG^{\out}),$ 
consider the subgroup $\Omega_G(\varphi, E)$ of $H^1(\Gal(E/F), Z( \widehat{G}^{\out} )^{W_E})$ 
which is the stabilizer  of $\varphi \in H^1(W_F,\widehat{G}^{\out})$ under the bilinear pairing:
 $$H^1(W_F, Z(\widehat{G}^{\out})) 
\times H^1(W_F, \widehat{G}^{\out}) \rightarrow  H^1(W_F,\widehat{G}^{\out}).$$
Now using the perfect pairing in the previous corollary:
$$ H^1(\Gal(E/F), Z( \widehat{G}^{\out} )^{W_E}) \times  H^1(\Gal(E/F), G(E)) 
\rightarrow \Q/\Z,$$
consider the annihilator ---call it $A_G(\varphi, E)$--- in $  H^1(\Gal(E/F), G(E)) $ 
of $\Omega_G(\varphi, E) \subset H^1(\Gal(E/F), Z( \widehat{G}^{\out} )^{W_E}) $.
The subgroup $A_G(\varphi, E)$ of $ H^1(\Gal(E/F), G(E))$ defines certain pure innerforms of $G$ over $F$ which trivialize over $E$. We also have a perfect pairing,
$$H^1(\Gal(E/F), Z( \widehat{G}^{\out} )^{W_E}) /\Omega_G(\varphi, E) \times A_G(\varphi, E) \rightarrow \Q/\Z,$$
meaning that in the orbit of character twists of $\varphi$ (which go to a particular parameter under the basechange to $E$) 
there are exactly as many parameters as there are pure innerforms of $G$ over $F$ which trivialize after basechange to $E$. 
 We record this as a proposition which will play a role in our conjecture below.

\begin{proposition} \label{prop4}Given a Langlands parameter $\varphi$ for $G^{\out}(F)$ considered as
 an element of $ H^1(W_F,\wG^{\out}),$ the orbit $\chi \cdot \varphi$ of character twists 
of $\varphi$ by elements $\chi$ of $H^1(W_F, Z(\wG^{\out}))$ with $\chi|_{W_E}=1$  
is in bijective correspondence with certain pure innerforms of $G$ over $F$ which become trivial over $E$, which form a group of pure innerforms of $G$ denoted above by $A_G(\varphi,E)$, in particular contains the trivial element, i.e., the quasi-split innerform of $G$ over $F$ which is the basepoint.
\end{proposition}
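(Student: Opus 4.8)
The plan is to obtain the proposition by combining the orbit--stabilizer principle for the twisting action with the perfect pairing established immediately before the statement. First I would check that the bilinear pairing $H^1(W_F, Z(\widehat{G}^{\out})) \times H^1(W_F, \widehat{G}^{\out}) \rightarrow H^1(W_F, \widehat{G}^{\out})$ is a genuine action of the abelian group $H^1(W_F, Z(\widehat{G}^{\out}))$ on the set $H^1(W_F, \widehat{G}^{\out})$: because $Z(\widehat{G}^{\out})$ is central in $\widehat{G}^{\out}$, the pointwise product of a $Z(\widehat{G}^{\out})$-valued $1$-cocycle with a $\widehat{G}^{\out}$-valued $1$-cocycle is again a $1$-cocycle, the operation is associative, and it passes to cohomology classes, so that $(\chi_1\chi_2)\cdot\varphi = \chi_1\cdot(\chi_2\cdot\varphi)$ and $1\cdot\varphi=\varphi$. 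By inflation--restriction the characters $\chi$ with $\chi|_{W_E}=1$ are exactly the subgroup $\Omega_G(E)=\ker[\,H^1(W_F, Z(\widehat{G}^{\out}))\rightarrow H^1(W_E, Z(\widehat{G}^{\out}))\,]=H^1(\Gal(E/F), Z(\widehat{G}^{\out})^{W_E})$, so restricting the action to $\Omega_G(E)$ produces exactly the orbit $\{\chi\cdot\varphi\}$ of the statement; moreover each such $\chi$ dies in $H^1(W_E, \widehat{G}^{\out})$, so every element of the orbit has the same image as $\varphi$ under basechange to $E$.

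Next I would apply orbit--stabilizer: by construction $\Omega_G(\varphi, E)$ is the stabilizer of $\varphi$ for the action of $\Omega_G(E)$, so the orbit of $\varphi$ is in bijection with the finite abelian quotient $\Omega_G(E)/\Omega_G(\varphi, E)$. It remains to identify this quotient with $A_G(\varphi, E)$, and for this I would invoke the perfect pairing $\Omega_G(E)\times H^1(\Gal(E/F), G(E))\rightarrow \Q/\Z$ from the corollary following Lemma \ref{lemma6} (obtained by applying $\Hom(-,\Q/\Z)$ to the exact sequence of that lemma together with Kottwitz's identification $\Hom(H^0(W_F, Z(\widehat{G})),\Q/\Z)=H^1(W_F, G)$). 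Since $A_G(\varphi, E)$ is the annihilator of $\Omega_G(\varphi, E)$ under this pairing, the double-annihilator formalism for perfect pairings of finite abelian groups gives the induced perfect pairing $\Omega_G(E)/\Omega_G(\varphi, E)\times A_G(\varphi, E)\rightarrow \Q/\Z$ displayed before the proposition, hence an isomorphism $\Omega_G(E)/\Omega_G(\varphi, E)\cong A_G(\varphi, E)^\vee$ and in particular $|\Omega_G(E)/\Omega_G(\varphi, E)|=|A_G(\varphi, E)|$. Composing with the orbit--stabilizer bijection yields the asserted bijective correspondence between the orbit of $\varphi$ and $A_G(\varphi, E)$.

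Finally, $A_G(\varphi, E)$, being an annihilator, is a subgroup of $H^1(\Gal(E/F), G(E))$ and so contains the neutral class, which under Kottwitz corresponds to the quasi-split inner form of $G$ over $F$, namely the basepoint; this gives the last assertion of the proposition.

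The argument is mostly bookkeeping, so I do not expect a serious obstacle. The one point needing genuine care is the first step: that the twisting pairing really is a group action (so that ``stabilizer'' and orbit--stabilizer are meaningful), and that passing to $\Omega_G(E)$ does not disturb the basechange class --- both of which reduce to the centrality of $Z(\widehat{G}^{\out})$ in $\widehat{G}^{\out}$. I would also flag that the resulting correspondence with $A_G(\varphi, E)$ goes through its Pontryagin dual, hence is canonical only up to the (non-canonical) self-duality of a finite abelian group; what is intrinsic, and what Conjecture \ref{conj3} actually uses, is the number of elements of the orbit.
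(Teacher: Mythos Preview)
Your proposal is correct and follows essentially the same approach as the paper: the proposition is stated in the paper as a summary of the preceding discussion, which defines $\Omega_G(\varphi,E)$ as the stabilizer of $\varphi$ under the twisting action of $\Omega_G(E)$, defines $A_G(\varphi,E)$ as its annihilator under the perfect pairing with $H^1(\Gal(E/F),G(E))$, and then reads off the induced perfect pairing $\Omega_G(E)/\Omega_G(\varphi,E)\times A_G(\varphi,E)\to\Q/\Z$ to conclude equality of cardinalities. Your write-up is in fact slightly more explicit than the paper's in invoking orbit--stabilizer by name and in flagging that the bijection passes through a Pontryagin dual and is therefore not canonical; this last observation is a useful addition that the paper does not make.
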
 

\section{Cohomology versus Galois cohomology of real groups}

In this  section, given a parameter $\varphi$ for $G^{\out}(\R)$, we attach a  pure innerform of $G$ over $\R$.  
This will be achieved by results of Borovoi, and J. Adams which we now recall.

The following result of Borovoi, cf. [Bor],
 generalizes a result from  [Se], III \S4, Theorem 6, which is about Galois cohomology of compact 
connected Lie groups, to all connected reductive groups over $\R$. See also Proposition 4.5 of [Ad2].

\begin{thm}Let $G$ be a connected reductive group over $\R$, and $T$ a maximal torus in $G$, with $W(\R)$
the real points of $N(T)/T$. Then there is a natural action of $W(\R)$  on  $H^1(\Gal(\C/\R), T(\C))$ 
such that the fibers of the natural map  $H^1(\Gal(\C/\R), T(\C)) \rightarrow H^1(\Gal(\C/\R), G(\C))$ 
are the orbits of $W(\R)$, i.e., there is an injection of sets
$H^1(\Gal(\C/\R), T(\C))/W(\R) \subset  H^1(\Gal(\C/\R), G(\C)).$  This injection is an isomorphism of sets if $T$ is 
fundamental, i.e., contains a maximal compact torus of $G(\R)$.
\end{thm}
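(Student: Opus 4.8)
Since both assertions concern one $W(\R)$-orbit, resp.\ one fiber, at a time, the plan is to reduce everything, by the twisting construction in non-abelian cohomology, to the fiber over the base point. Twisting $G$ and $N_G(T)$ by a cocycle valued in $T\subseteq N_G(T)$ changes neither $T$ nor the quotient $W=N_G(T)/T$ with their $\R$-structures (an inner automorphism coming from $T$ fixes $T$ pointwise and is trivial on $W$), so the $W(\R)$-action will be twist-equivariant and this reduction is legitimate. Thus it suffices to establish: (i) the fibers of $H^1(\R,T)\to H^1(\R,N_G(T))$ are the orbits of a natural $W(\R)$-action on $H^1(\R,T)$; (ii) $\ker\bigl(H^1(\R,T)\to H^1(\R,G)\bigr)=\ker\bigl(H^1(\R,T)\to H^1(\R,N_G(T))\bigr)$; (iii) when $T$ is fundamental, $H^1(\R,T)\to H^1(\R,G)$ is onto. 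Assertion (i) is immediate from the $\R$-exact sequence $1\to T\to N_G(T)\to W\to 1$ together with [Se, I, \S5.6, Prop.~40], exactly as recalled in \S3: this produces the connecting map $\delta\colon W(\R)\to H^1(\R,T)$ and the affine action whose orbits are the fibers into $H^1(\R,N_G(T))$, and in particular identifies the $W(\R)$-orbit of the base point with $\ker(H^1(\R,T)\to H^1(\R,N_G(T)))$.

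For (ii) the inclusion ``$\supseteq$'' is formal. For ``$\subseteq$'', let $x$ be killed in $H^1(\R,G)$, represented by $t\in T(\C)$ with $t\,\sigma(t)=1$ and $t=g^{-1}\sigma(g)$ for some $g\in G(\C)$ ($\sigma$ the generator of $\Gal(\C/\R)$). Then $T'':=gTg^{-1}$ is $\sigma$-stable, hence an $\R$-torus, and $\mathrm{conj}_g\colon T\to T''$ is defined over $\R$, so $T''$ is stably conjugate to $T$; and if $T''=hTh^{-1}$ with $h\in G(\R)$, then $n:=h^{-1}g\in N_G(T)(\C)$ satisfies $n^{-1}\sigma(n)=t$, so $x$ dies already in $H^1(\R,N_G(T))$. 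Hence (ii) comes down to the statement that this $T''$ is $G(\R)$-conjugate to $T$. This is the point where the ground field must be $\R$: over a number field it is false --- e.g.\ for an anisotropic maximal torus $T$ of $\SL_2$ one has $H^1(k,\SL_2)=1$ while $H^1(k,T)$ is infinite, so plenty of classes die in $\SL_2$ without dying in $N_G(T)$. Over $\R$ I would use the theorem of Adams recalled in \S3: choosing $T$ (hence $N_G(T)$) stable under a complexified Cartan involution $\theta$, one has $H^1(\R,-)\cong H^1(\Z/2,-(\C))$ compatibly for $-\in\{T,N_G(T),G\}$ with $\Z/2$ acting by $\theta$, so the claim becomes: a $\theta$-stable maximal torus of the form $gTg^{-1}$ with $g^{-1}\theta(g)\in T(\C)$ is $K(\C):=G(\C)^{\theta}$-conjugate to $T$. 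This can be extracted from the Cartan decomposition $G(\C)=K(\C)\exp(\p_{\C})$ (with $\g_\C=\kk_\C\oplus\p_\C$ the $\pm1$-decomposition of $\theta$): writing $g=k\exp(Y)$, the cocycle relation gives $\exp(-2Y)=g^{-1}\theta(g)=t\in T(\C)$, and analyzing $\exp(Y)$ inside the $\theta$-stable group $Z_G(t)^{\circ}$ shows $\exp(Y)\in N_G(T)(\C)$, so that $T''=kTk^{-1}$; equivalently, after the reduction to $\Z/2$-cohomology this is the compact-group computation of [Se, III, \S4, Thm.~6], which is precisely what Borovoi's theorem generalizes.

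For (iii), surjectivity of $H^1(\R,H_f)\to H^1(\R,G)$ for $H_f$ fundamental is the substantive part of the statement, and it is exactly the result of Borovoi quoted and used in \S5. A proof in the same vein: via Adams a class is an element $t\in G(\C)$ with $t\theta(t)=1$, so $\theta':=\mathrm{Int}(t)\circ\theta$ is an involution of $G(\C)$ in the inner class of $\theta$; by the Cartan classification of involutions --- Lemma~\ref{lamma4} applied to the compact form, whose distinguished torus is a fundamental one --- $\theta'$ is $G(\C)$-conjugate to an involution fixing a pinning, so after replacing $t$ by a cohomologous cocycle one may take $t\in H_f(\C)$; translating back shows the class lies in the image of $H^1(\R,H_f)$. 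Put together, (i)--(iii) give the asserted injection $H^1(\R,T)/W(\R)\hookrightarrow H^1(\R,G)$, bijective when $T$ is fundamental. An alternative organization of the whole argument is to first transport the entire setup $T\subseteq N_G(T)\subseteq G$ over $\R$, via Adams's isomorphism and the $\Z/2$-equivariant contractibility of $G(\C)/G_u$ for a $\theta$-stable compact form $G_u$, to the compact group $G_u$ with its $\Z/2$-action, and then invoke [Se, III, \S4] (and Lemma~\ref{lamma4}) wholesale.

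\smallskip
\noindent The main obstacle I anticipate is the crux of (ii) --- upgrading the stable conjugacy of $T''$ to genuine $G(\R)$-conjugacy; everything else is either formal non-abelian cohomology (the twisting reduction, the exact sequence of \S3) or a direct citation (Adams's theorem, the compact-group classification of [Se, III, \S4], the surjectivity statement recalled in \S5). Making that crux precise --- whether through the complex Cartan decomposition or through the reduction to the compact case --- requires some care with the choices of $\theta$, $G_u$ and of a compatible $\theta$-stable $T$, and with the functoriality of Adams's isomorphism along $T\subseteq N_G(T)\subseteq G$; this bookkeeping, rather than any deep new idea, is where the real work lies.
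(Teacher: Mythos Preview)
The paper does not prove this theorem. It is stated as a result of Borovoi, with the citation ``cf.\ [Bor]'' and a pointer to Proposition~4.5 of [Ad2]; the paper also notes that it generalizes [Se, III, \S4, Theorem~6] from the compact case. So there is no in-paper proof to compare your proposal against.

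That said, your skeleton is the right one and matches how the result is actually proved in the references: the twisting reduction to the base fiber is legitimate (twisting by a $T$-valued cocycle fixes $T$ and $W$ pointwise), step~(i) is straight from [Se, I, \S5.6, Prop.~40], and step~(iii) is precisely Borovoi's surjectivity for the fundamental torus, which the paper itself invokes in \S5. You have also correctly isolated~(ii) as the point where $F=\R$ is essential.

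Where I would push back is on your first argument for~(ii). The global decomposition $G(\C)=K(\C)\exp(\p_\C)$ for a \emph{holomorphic} involution $\theta$ is not the Cartan decomposition of $G(\R)$ and does not follow from it; even if one grants some version of it, the step ``analyzing $\exp(Y)$ inside $Z_G(t)^\circ$ shows $\exp(Y)\in N_G(T)(\C)$'' is not justified. Passing to $Z_G(t)^\circ$ leaves you with exactly the same problem: $t$ is now central there, $T$ is still a maximal torus, and you still need to trivialize $t$ inside the normalizer rather than the whole group. There is no obvious induction that terminates. Your second route---transport everything along Adams's bijection $H^1(\Gal(\C/\R),-)\cong H^1(\langle\theta\rangle,-)$ for $T\subset N_G(T)\subset G$ simultaneously and then quote [Se, III, \S4, Thm.~6] for the compact form---is much closer to what Borovoi and Adams actually do. But, as you yourself flag at the end, the functoriality of Adams's bijection along those inclusions is the genuine content here, not a formality; it does not come for free from the statement of Adams's theorem as recorded in the paper, and proving it is essentially proving Borovoi's theorem. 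So the proposal has the correct architecture, but the crux~(ii) is only gestured at: one approach has a real gap, and the other defers the difficulty to a naturality check that is itself the heart of the matter.
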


A real structure on $G(\C)$ gives rise to two commuting involutions on $G(\C)$, one the complex conjugation (with fixed points $G(\R)$), and the other 
a Cartan involution of $G(\R)$ extended to $G(\C)$ as an algebraic automorphism. 
One can reverse  the role of the two involutions, and begin with an (algebraic) involution on 
$G(\C)$ which is a complex reductive group, and construct uniquely (up to equivalences) a real structure on $G$. Since this will be useful to us, we review it. Let $\theta$ be an involution on $\G(\C)$ with $K(\C) = G(\C)^\theta$, and $K$ a fixed
maximal compact subgroup of $K(\C)$. Let
the Lie algebra $\g$ of $G(\C)$ be decomposed into the two eigenspaces of $\theta$ as $\g = \kk + \p$. The action 
of the compact group $K$ on $\p$ lands inside $\SO(\p)$ defined using the Killing form of $\g$ restricted to $\p$.
By the classification of maximal compact subgroup of $\SO_n(\C)$, it follows that there is a real subspace $\p_\R$ of $\p$ 
invariant under $K$ and on which the killing form is positive definite. If the Lie algebra of $K$ is $\kk_\R$, then 
$\g_\R=\kk_\R+ \p_\R$ defines a Lie algebra over $\R$ with $\g_\R \otimes \C = \g$. The corresponding subgroup 
$G(\R)$ of $G(\C)$ is a real structure on $G(\C)$ with $K$ as a maximal compact subgroup of $G(\R)$ (note that the Killing form of $\g$ restricted to $\kk_\R$ is negative definite, and is positive definite on $\p_\R$). 

A useful corollary of the above is that given an involution $\theta$ on $G(\C)$ giving rise to a real structure $G(\R)$, 
there is  a maximally compact torus $T(\R)$ of $G(\R)$ for which $T(\C)$ is a minimally $\theta$-split torus of $G(\C)$, 
i.e., $T(\C)$ is $\theta$ stable, and the part of $T(\C)$ on which $\theta$ operates as $t \rightarrow t^{-1}$ is of minimal possible dimension. In fact, we can write the Lie algebra $\ttt$ of $T(\C)$ as the sum of the two eigenspaces for 
$\theta$ as $\ttt = \ttt_1 + \ttt_2$ with the Lie algebra $\ttt_\R$ for $T(\R)$ with the property that
$\ttt_\R = \ttt_1 \cap \ttt_\R + \ttt_2 \cap \ttt_\R$  where 
$\ttt_1 \cap \ttt_\R$ is the Lie algebra of the maximal compact torus 
of $T(\R)$ and  $ \ttt_2 \cap \ttt_\R$ is the Lie algebra of the maximal split torus of $T(\R)$. This has the consequence 
 for us that for the torus $T$ above, $W(\C)^\theta = W(\R)$. To prove this, first we note that we can assume 
for purposes of Weyl groups that the group $G$ is adjoint, and then we can use Lie algebras to understand the action
of the Weyl group. Now, if an element $w \in W(\C)$ is $\theta$-invariant, it must preserve $\ttt_1$ and $\ttt_2$,
and since $\ttt_1 \cap \ttt_\R$ is the Lie algebra of the maximal compact torus 
of $T(\R)$ and  $ \ttt_2 \cap \ttt_\R$ is the Lie algebra of the maximal split torus of $T(\R)$, these get preserved too, hence
$\ttt_\R$ is preserved, proving that a $\theta$ invariant element of $W(\C)$ is in $W(\R)$. Similarly the converse.

\begin{remark}\label{15}
An involution on $G(\C)$ defines an element in the outer automorphism group of $G(\C)$, and hence defines a 
quasi-split group $G(\R)$ obtained by twisting the split group by that outer automorphism. It can be seen that the real 
structure on $G(\C)$ associated to an involution $\theta$ discussed above is an innerform of the
group $G^{\out}$. 
\end{remark}

The following result of J. Adams, cf. [Ad2], and Borovoi [Bor], 
compares the cohomology of $G(\C)$ with respect to these two
involutions, incorporating the above remarks on tori and Weyl groups. It will be used for Levi subgroups 
of $\wG$, so we make notational change here.

\begin{thm}Let $\wM$ be a connected reductive group over $\C$, 
and $\varphi$ an involutive algebraic 
automorphism of  $\wM(\C)$. Then there is a natural real structure on $\wM(\C)$, and an identification of the sets:
$$ H^1(\langle \varphi \rangle, \wM(\C)) 
\cong H^1(\Gal(\C/\R), \wM(\C)) \cong
H^1(\langle \varphi \rangle, T(\C))/W^{\varphi},$$
where $T$ is a $\varphi$ stable maximal torus of $\wM(\C)$ for which $T^\varphi$ has the largest dimension, and is the complexification of a fundamental torus for $\wM(\R)$; $W^\varphi$ is the $\varphi$ fixed 
points of the action of $\varphi$ on $W = N(T)/T$, and the action of $W^\varphi$ on $H^1(\langle \varphi \rangle, T(\C)),$
as in Borovoi's theorem,  arises from the exact sequence,
 $$1 \rightarrow T(\C) \rightarrow N(T)(\C) \rightarrow W \rightarrow 1,$$
of groups with an action of  $\langle \varphi \rangle 
= \Z/2$.
\end{thm}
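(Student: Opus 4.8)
The plan is to assemble three things already in hand: the construction, recalled just before Remark~\ref{15}, of a real form attached to an algebraic involution; Borovoi's theorem stated above; and the comparison of algebraic with Galois $H^1$ due to Adams recalled in Section~3. First I would apply the Cartan-type construction to $\theta:=\varphi$: splitting the Lie algebra of $\wM(\C)$ into the $(\pm1)$-eigenspaces of $\varphi$ and choosing a positive, $K$-invariant real form of the $(-1)$-eigenspace yields a real structure $\wM(\R)$ for which $\varphi$ is exactly the complexification of a Cartan involution of $\wM(\R)$; this is the ``natural real structure'' in the statement. By the corollary discussed there, a $\varphi$-stable maximal torus $T$ with $T^\varphi$ of largest possible dimension is precisely one for which $T(\C)$ is minimally $\varphi$-split, equivalently $T(\R)$ (the fixed points of complex conjugation) is a fundamental, i.e.\ maximally compact, torus of $\wM(\R)$ --- so $T$ is the torus appearing in the statement. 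The same discussion gives $W(\C)^{\varphi}=W(\R)$ inside $W=N(T)/T$.

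Next I would invoke Adams's theorem. Since $\varphi$ is the complexification of a Cartan involution, it supplies a canonical bijection $H^1(\langle\varphi\rangle,\wM(\C))\cong H^1(\Gal(\C/\R),\wM(\C))$, and, applied to the torus $T$ with its induced involution, a bijection $H^1(\langle\varphi\rangle,T(\C))\cong H^1(\Gal(\C/\R),T(\C))$. Borovoi's theorem for the pair $(\wM(\R),T(\R))$, using that $T(\R)$ is fundamental so that the map there is a bijection, then gives $H^1(\Gal(\C/\R),T(\C))/W(\R)\cong H^1(\Gal(\C/\R),\wM(\C))$; transporting both sides along Adams's bijections and using $W(\R)=W^{\varphi}$ from the first paragraph produces the asserted chain $H^1(\langle\varphi\rangle,T(\C))/W^{\varphi}\cong H^1(\langle\varphi\rangle,\wM(\C))\cong H^1(\Gal(\C/\R),\wM(\C))$. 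One could alternatively run Borovoi's argument verbatim with $\langle\varphi\rangle$ in place of $\Gal(\C/\R)$, but routing through the cited theorems keeps the argument in the spirit of the surrounding sections.

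The step that really needs care --- and which I expect to be the main obstacle --- is checking that the $W^{\varphi}$-action on $H^1(\langle\varphi\rangle,T(\C))$, defined from the extension $1\to T(\C)\to N(T)(\C)\to W\to 1$ of $\langle\varphi\rangle$-modules by the affine formula $\alpha^w=w^{-1}\cdot\alpha+\delta(\alpha)$, is carried by the bijections above onto Borovoi's $W(\R)$-action, which is given by the \emph{same} affine formula but for the extension $1\to T\to N(T)\to W\to 1$ over $\R$. Here I would use the naturality of Adams's comparison isomorphism --- its compatibility with the maps induced by any $\varphi$-equivariant (equivalently $\R$-rational) homomorphism, in particular $T\hookrightarrow\wM$, the connecting map $\delta$, and the action of the lifts in $N(T)$ --- after first recording that complex conjugation and $\varphi$ induce the \emph{same} automorphism of $W$ (both factor through the action on the Dynkin diagram and differ by the compact conjugation, which acts trivially on $W$), so that ``$W(\R)$'' and ``$W^{\varphi}$'' are literally the same subgroup of $W$. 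The cleanest way to verify the action-compatibility is to do it first at the level of $T$ and $N(T)$, where the comparison maps are essentially a Cayley transform, and then transport along the functorial maps; the mild wrinkle is that $N(T)$ is disconnected, so one needs the comparison in the slightly more general form covering extensions of a finite group by a torus.
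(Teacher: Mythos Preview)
The paper does not actually supply a proof of this theorem: it is stated as a result of J.~Adams \cite{adams2} and Borovoi \cite{Bor}, with the sentence preceding it reading ``The following result of J.~Adams, cf.\ [Ad2], and Borovoi [Bor], compares the cohomology of $G(\C)$ with respect to these two involutions, incorporating the above remarks on tori and Weyl groups.'' So there is no proof in the paper against which to compare yours; rather, the paper has arranged the preceding paragraphs of the section precisely so that the theorem reads as the synthesis of the three pieces you name --- the Cartan-type construction of the real form from $\varphi$, the identity $W(\C)^{\varphi}=W(\R)$ established just before Remark~\ref{15}, Borovoi's theorem for a fundamental torus, and Adams's comparison $H^1(\langle\varphi\rangle,\wM(\C))\cong H^1(\Gal(\C/\R),\wM(\C))$. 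Your proposal is exactly this synthesis, so it is in complete agreement with the paper's intent.

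The point you flag as the main obstacle --- that the affine $W^{\varphi}$-action transports correctly across Adams's bijection to Borovoi's $W(\R)$-action --- is not something the paper addresses (since it gives no proof), but your plan to verify it via naturality of the comparison map for the $\varphi$-equivariant inclusion $T\hookrightarrow N(T)\hookrightarrow \wM$ is the right one and presents no real difficulty.
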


Cohomologies of $T$ and $T^{\out}$ are related via the following lemma.

\begin{lemma} \label{lemma6}For a torus $T$ over $\R$, let $T^{\out}$ be the torus $T$ twisted by the automorphism $t\rightarrow t^{-1}$. Let ${}^LT = \wT \rtimes \langle \varphi \rangle $ be the $L$-group of $T$, where $\varphi$ denotes the action of 
$\Gal(\C/\R)$ on $\widehat{T}$. Then, we have a natural isomorphism (equivariant under 
automorphisms of $T$ which act on the two sides):
$$ H^1(\langle \varphi \rangle, \widehat{T}(\C)) \cong H^1(\Gal(\C/\R), T^{\out}(\C))^\vee.$$
\end{lemma}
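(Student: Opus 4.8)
The plan is to reduce the claim to the Tate--Nakayama duality for the diagonalizable group $T^{\out}$ recorded earlier, after first matching up the $\langle\varphi\rangle$-cohomology of $\wT$ with $W_\R$-cohomology of a suitable dual object. First I would recall that $\varphi$ denotes the action of $\Gal(\C/\R)$ on $\wT$, so that ${}^LT = \wT\rtimes\langle\varphi\rangle$, and that for $\R$ the Weil group $W_\R$ acts on $\wT$ precisely through $\Gal(\C/\R) = \langle\varphi\rangle$; hence $H^1(W_\R,\wT)$ and $H^1(\langle\varphi\rangle,\wT(\C))$ should agree. This uses that for the torus $\wT$ with its $W_\R$-action factoring through the finite quotient $\Z/2$, the inflation map from group cohomology of $\Z/2$ is an isomorphism onto $H^1(W_\R,\wT)$ — essentially the inflation-restriction remark from Section 3, since $W_\C=\C^\times$ acts trivially on $\wT$ and has trivial $H^1$ with values in a connected complex torus (being a divisible group, or more precisely because $H^1(\C^\times,\wT)$ with the trivial action vanishes). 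So the left-hand side is $H^1(W_\R,\wT)$.

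Next I would identify the dual side. By the Tate--Nakayama pairing for diagonalizable groups stated in Section 3, for the local field $\R$ there is a perfect pairing
$$H^1(W_\R, A)\times H^1(W_\R, A^\vee)\to H^2(W_\R,\Gm)=\Z/2,$$
where $A^\vee$ is the Cartier dual. The key observation is that the Cartier dual of $T^{\out}$, as a $W_\R$-module, is exactly $\wT$ with the action $\varphi$: indeed $(T^{\out})^\vee = X^\star(T^{\out})$ is $X^\star(T)$ twisted by $x\mapsto -x$, and twisting the character lattice by $-1$ corresponds under the usual identification $X^\star(T)\otimes\C^\times \cong \wT$ precisely to the inversion, which in turn is how one passes from the natural $\Gal(\C/\R)$-action to $\varphi$ (this is the torus case of the $G\mapsto G^{\out}$ construction — "$T^{\out}$ is obtained by twisting $T$ by $t\mapsto t^{-1}$", combined with the exact sequence $1\to T\to R_{\C/\R}T\to T^{\out}\to 1$). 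Feeding $A = T^{\out}$ into the pairing gives
$$H^1(W_\R, \wT)\times H^1(W_\R, T^{\out})\to \Z/2,$$
which is perfect, hence $H^1(W_\R,\wT)\cong H^1(\Gal(\C/\R),T^{\out}(\C))^\vee$. Composing with the identification of the first paragraph yields the lemma.

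The main obstacle I anticipate is getting the duality between the two $W_\R$-module structures exactly right: one must be careful that the inversion automorphism used to define $T^{\out}$ is compatible with the identification of $\wT$ with $X_\star(\wT)\otimes\C^\times = X^\star(T)\otimes\C^\times$ in such a way that $\varphi$ on $\wT$ matches the $\Gal(\C/\R)$-action on $(T^{\out})^\vee$ — rather than, say, differing by the ambient Galois action on $T$ itself. Unwinding this is exactly the content of "equivariant under automorphisms of $T$ which act on the two sides", and it is where the compatibility of the pinned/Chevalley conventions has to be checked; but it is a bookkeeping matter, not a conceptual one. Everything else is a direct application of Tate--Nakayama and the reduction of $W_\R$-cohomology of complex tori to $\Z/2$-cohomology, both of which are available from Section 3.
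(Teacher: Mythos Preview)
Your proposal has a genuine gap at the very first step. You claim that inflation gives an isomorphism $H^1(\langle\varphi\rangle,\wT(\C))\cong H^1(W_\R,\wT)$ because $\C^\times$ acts trivially and ``$H^1(\C^\times,\wT)$ vanishes''. This is false: for trivial action, $H^1(\C^\times,\wT)=\Hom_{\rm cont}(\C^\times,\wT(\C))$, which is enormous. Concretely, for $T=\Gm$ split over $\R$, one has $H^1(\langle\varphi\rangle,\C^\times)=\Z/2$, whereas $H^1(W_\R,\wT)$ parametrizes all characters of $\R^\times$. The inflation map is only injective, as the paper notes in Section~3.

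There is a second confusion: the Cartier dual of $T^{\out}$ is the \emph{lattice} $X^\star(T^{\out})$, not the complex torus $\wT$. Tate--Nakayama therefore gives a perfect pairing
\[
H^1(\Gal(\C/\R),T^{\out}(\C))\times H^1(\Gal(\C/\R),X^\star(T^{\out}))\to \Z/2,
\]
and what remains is to identify $H^1(\Gal(\C/\R),X^\star(T^{\out}))$ with $H^1(\langle\varphi\rangle,\wT(\C))$. That identification is not automatic: it is exactly the content of the exponential-sequence step. The paper's proof carries this out directly: tensoring $0\to 2\pi i\Z\to\C\to\C^\times\to 1$ with the relevant cocharacter lattices (and using that complex conjugation acts by $-1$ on $2\pi i\Z$, which is what converts $X_\star(T^{\out})$ into $X_\star(T)$) gives $H^1(\Gal(\C/\R),T^{\out}(\C))\cong H^2(\langle\sigma\rangle,X_\star(T))$ and $H^1(\langle\varphi\rangle,\wT(\C))\cong H^2(\langle\sigma\rangle,X^\star(T))$, and then one invokes the duality between $H^2$ of a cyclic group with coefficients in dual lattices. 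Your route via Tate--Nakayama can be made to work, but only after supplying precisely this missing identification --- at which point it is essentially the paper's argument.
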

\begin{proof}
Note the exponential sequence,
 $$0 \rightarrow 2\pi i \Z \rightarrow  \C \rightarrow \C^\times \rightarrow 1,$$
which is equivariant for $\Gal(\C/\R)$; since $\bar{i} = -i$, the action of $\Gal(\C/\R)$ 
on $2\pi i \Z$ is $a\rightarrow -a$, which is of crucial importance to us.

Tensoring the above exponential sequence  by $X_\star(T^{\out})$, and noting the above change of sign which 
takes  $X_\star(T^{\out})$ to  $X_\star(T)$ as Galois modules, we have the exact sequence of Galois modules:
 $$0 \rightarrow X_\star(T) \rightarrow X_\star(T^{\out}) \otimes \C \rightarrow T^{\out}(\C)\rightarrow 1.$$
From the long exact sequence of cohomology groups, it 
 follows that $$\hspace{3cm}H^1(\Gal(\C/\R), T^{\out}(\C)) \cong H^2(\langle \sigma \rangle, X_\star(T)). \hspace{3.6cm}(1)$$ 

Similarly, using the exponential sequence for the torus $\wT$ with the action $\varphi$ (and not for the Galois group!), 
we have
$$\hspace{2cm}H^1(\langle \varphi \rangle, \widehat{T}(\C)) \cong H^2(\langle \sigma \rangle, X_\star(\wT))\cong H^2(\langle \sigma \rangle, X^\star(T)). \hspace{3cm} (2)$$

Using the isomorphisms $(1)$ and $(2)$, to prove the lemma, we need to prove that 
$H^2(\langle \sigma \rangle, X_\star(T) )$ and  $H^2(\langle \sigma \rangle, X^\star(T))$ are 
canonically dual to each other. This is a consequence of the general Duality theorems for finite cyclic groups;
we refer to the book of K. Brown, {\it Cohomology of groups}, Chapter VI, exercise 3 of \S7, as a precise reference.
\end{proof}

The next lemma compares tori in $G$ and $G^{\out}$.

\begin{lemma} \label{lemma-torus}Let $G$ be a quasi-split reductive group over $\R$ with $G^{\out}$, the associated 
quasi-split reductive group over $\R$. 
Let $T_s$ and $T_f$ be respectively maximally split and maximally compact tori
in $G$. Then $T_f$ is obtained  from $T_s$ by twisting by the automorphism $\omega_0$, an element of the longest length 
in the Weyl group for the maximal torus $T_s$ of $G$. 
The map $T\longrightarrow T^{\out}$ gives a bijective correspondence between stable conjugacy classes of tori in $G(\R)$ 
and stable conjugacy classes of tori in $G^{\out}(\R)$.  Further, under this correspondence between 
stable conjugacy classes of tori in $G(\R)$ and $G^{\out}(\R)$, $W_G(T)(\R)=W_{G^{\out}}(T^{\out})(\R)$.
\end{lemma}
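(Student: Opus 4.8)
The plan is to prove the three assertions in turn. For the first, I would reuse the Cayley--transform argument from the proof of Lemma \ref{lemma3}: choose a homomorphism $\phi:\SL_2(\R)\to G(\R)$ over $\R$ attached by Jacobson--Morozov to a regular unipotent in a Borel $B\supset T_s$, so that $T_s$ is the centraliser in $G$ of the image of the diagonal torus of $\SL_2$, while the maximally compact torus $T_f$ is the centraliser of the image of $\Si^1$. The diagonal torus and $\Si^1$ of $\SL_2(\R)$ are Galois twists of one another by the cocycle $k=\left(\begin{smallmatrix}0&i\\ i&0\end{smallmatrix}\right)$, which represents the longest Weyl element of $\SL_2$; applying $\phi$ and passing to centralisers shows that $T_f$ is the twist of $T_s$ by $\phi(k)$, and $\phi(k)$ normalises $T_s$ and acts on it as the longest element $w_0$ of $W=W_G(T_s)$ because $k$ interchanges upper and lower triangular unipotents. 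Since $w_0$ is defined over $\R$ (Section 3), $w_0\,w_0^{\sigma}=w_0^2=1$, so $T_s^{w_0}$ is a well-defined stable conjugacy class, and this is the asserted equality $T_f=T_s^{w_0}$.

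For the second assertion, I would invoke ([Re], Proposition 6.1, quoted in Section 3): since $G$ and $G^{\out}$ are both quasi-split over $\R$, stable conjugacy classes of maximal tori in $G(\R)$, resp. $G^{\out}(\R)$, are classified by $H^1(\Gal(\C/\R),W_G)$, resp. $H^1(\Gal(\C/\R),W_{G^{\out}})$, a class $c$ being realised by the twist of the maximally split maximal torus by $c$. Now $G^{\out}$ is $G$ twisted through $\C/\R$ by the Chevalley involution $C_0$; since $C_0$ fixes $(B,T_s)$ and a pinning and acts on $T_s$ by $-w_0$, it agrees in $\mathrm{Out}(G)$ with the $-w_0$ diagram automorphism and therefore induces on $W$ the inner automorphism $w\mapsto w_0 w w_0^{-1}$ (seen at once on $X^{\star}(T_s)$, the two sign changes cancelling); thus $W_{G^{\out}}$ is the twist of $W_G$ by the $1$-cocycle $w_0$ (a cocycle since $w_0^2=1$). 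Also, $B$ being $C_0$-stable is a Borel over $\R$ for $G^{\out}$, so the maximally split torus of $G^{\out}$ is $T_s$ with its $G^{\out}$-structure, namely the twist $T_s^{-w_0}$. Finally, writing $T$ in class $c$ as the twist of $G$'s maximally split torus by $c$, the operation $T\mapsto T^{\out}$ twists it once more by the inversion automorphism $\iota$ (central of order $2$ in $\Aut(T_s)$), and since $\iota=(-w_0)\circ w_0$ one checks that the result is the twist of $T_s^{-w_0}$ — i.e. of the maximally split torus of $G^{\out}$ — by the $W_{G^{\out}}$-cocycle $c\,w_0$. (That $c\,w_0$ is again a cocycle for the $G^{\out}$-action is exactly Serre's twisting lemma, [Se], I \S5.) Hence $T\mapsto T^{\out}$ is, under the two classifications, the Serre twisting bijection $H^1(\Gal(\C/\R),W_G)\xrightarrow{\sim}H^1(\Gal(\C/\R),W_{G^{\out}})$ attached to the cocycle $w_0$, and in particular a bijection on stable conjugacy classes of tori.

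For the third assertion I would track Weyl groups through the same dictionary. If $T$ is in class $c$ then $W_G(T)$ is the $\R$-group $W$ with Galois action $w\mapsto c_{\sigma}\,({}^{G}w)\,c_{\sigma}^{-1}$, where ${}^{G}(\cdot)$ denotes the $W_G$-action of $\Gal(\C/\R)$; by the second assertion $T^{\out}$ is in the $G^{\out}$-class $c\,w_0$, so $W_{G^{\out}}(T^{\out})$ is $W$ with Galois action $w\mapsto c_{\sigma}w_0\,({}^{G^{\out}}w)\,w_0^{-1}c_{\sigma}^{-1}$; using ${}^{G^{\out}}w=w_0\,({}^{G}w)\,w_0^{-1}$ this becomes $c_{\sigma}w_0 w_0\,({}^{G}w)\,w_0^{-1}w_0^{-1}c_{\sigma}^{-1}=c_{\sigma}\,({}^{G}w)\,c_{\sigma}^{-1}$, the two copies of $w_0$ cancelling. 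So $W_G(T)$ and $W_{G^{\out}}(T^{\out})$ are the same algebraic group over $\R$, whence $W_G(T)(\R)=W_{G^{\out}}(T^{\out})(\R)$.

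I expect the main obstacle to be the sign- and side-bookkeeping in the second paragraph: fixing conventions so that the residual $W_{G^{\out}}$-cocycle is $c\,w_0$ (on the correct side), handling the fact that $\iota\notin W$ in general so that $T^{\out}$ only becomes a Weyl-twist of $G^{\out}$'s maximally split torus after the $w_0$-shift, and giving a clean proof that $C_0$ induces $\mathrm{Int}(w_0)$ on $W$. Granting [Re]'s classification and Serre's twisting lemma, everything else is formal.
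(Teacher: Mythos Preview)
Your proof is correct and follows the same route as the paper's: both use the $H^1(\Gal(\C/\R),W)$ classification of stable conjugacy classes of maximal tori and identify $T\mapsto T^{\out}$ with the Serre twisting bijection attached to the cocycle $w_0$, then read off the real Weyl groups from the cocycle description. The only differences are cosmetic: the paper first reduces to $G$ split (observing that one of $G,G^{\out}$ is always split, or else $G=G^{\out}$), and uses the cohomologous representative $w_0c$ rather than your $cw_0$ --- which is why it concludes only that $W_G(T)(\R)$ and $W_{G^{\out}}(T^{\out})(\R)$ are conjugate in $W(\C)$, whereas your bookkeeping yields the literal equality.
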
 

\begin{proof}The first part of the lemma is proved along the way in  Lemma \ref{lemma3}.

For comparing tori in $G$ and $G^{\out}$, we assume that $G$ is split. (One of the two groups $G$ or $G^{\out}$ 
is always split except for
forms of $\SO_{2n}$, $n$ even, but for which $G = G^{\out}$, and the conclusion is along the same lines as we give below.)

Since $G^{\out}$ is obtained by twisting $G$ by an automorphism $C_0$ which restricted to the maximal split torus $T_s$ is $-w_0$,
the corresponding torus in $G^{\out}$ is $T_o= (T_s^{w_0})^{\out}$.

Since we are assuming that $G$ is split, stable conjugacy classes of tori, 
which are given by $H^1(\Gal(\C/\R), W)$, is the same as 
conjugacy classes of involutions in $W$. On the other hand,  
stable conjugacy classes of tori in $G^{\out}$  
are given by $H^1(\Gal(\C/\R), W^{\out})$, where $W^{\out}$ 
is the Weyl group of $G^{\out}$, which is $W$ on which the action of $\Gal(\C/\R)$ is by conjugation by 
the involution $w_0$ in $W$.

It is clear that 
\begin{eqnarray*} 
H^1(\Gal(\C/\R), W)   \
& \longrightarrow & H^1(\Gal(\C/\R), W^{\out}) \\
w & \longrightarrow & w_0w  
\end{eqnarray*}
gives a bijective correspondence of cohomology groups (actually not groups but pointed sets only), 
sending the torus $T_s^w$ to $T_o^{w_0 w}$ where $T_o= (T_s^{w_0})^{\out}$,
thus the correspondence of cohomology sends the torus $T_s^w$ to $(T_s^w)^{\out}$, as asserted in the lemma. 

To prove the statement on Weyl groups, note that for a torus $T_s^w$ in $G(\R)$, represented by 
$w \in H^1(\Gal(\C/\R), W) $, $W_G(T_s^w)(\R) = W^w$, the centralizer of $w$ in $W$. Similarly, 
for a torus $(T_s^w)^{\out}= T_o^{w_0w}$ in $G^{\out}(\R)$, represented by 
$w_0w \in H^1(\Gal(\C/\R), W^{\out}) $, 
$$W_{G^{\out}}((T_s^{w})^{\out})(\R) = \{\lambda \in W^{\out}(\C) | \lambda^{w_0w} = \lambda\} = \{\lambda \in W | \lambda^{w_0ww_0} = \lambda\},$$ proving that  
$W_G(T)(\R)$ is conjugate to $W_{G^{\out}}(T^{\out})(\R)$ in $W(\C)$.
\end{proof}

\noindent{\bf Example:} For the group $G= \GL_{2n}(\R)$, the maximal split torus is $\R^{\times 2n}$, whereas the 
maximal compact torus is $\C^{\times n}$. For $\GL_{2n}(\R)$, the element $w_0$ can be taken to be the involution
$w_0=(1,2n)(2,2n-1)\cdots(n,n+1)$. Clearly, $T_s^{w_0}=T_f$. The group $G^{\out}$ is $\U(n,n)$ for which the maximal split torus is $\C^{\times n}$, and maximal compact torus is $\Si^{2n}$, in conformity with the Lemma.

\vspace{3mm}

\noindent{\bf Example:} For the group $G= \GL_{2n}(\R)$, $G^{\out} = \U(n,n)$. It is well known that 
a maximal torus $T$ in $G$ is of the form $A^\times$ for $A$ a commutative separable algebra over $\R$ 
of dimension $2n$. It is also known that any maximal torus in $U(n,n)$ is of the form
$T_A= \{x \in A \otimes_\R \C | x\bar{x}=1 \}$. It is easy to see that $T_A=T^{\out}$.

\vspace{3mm}

We now recall the following presumably well-known lemma.

\begin{lemma} \label{lemma7} Let $G^{\out}$ be a connected reductive quasi-split 
group over $\R$. Then a parameter $\varphi: W_\R\rightarrow {}^LG^{\out}(\C)$ for $G^{\out}$ 
canonically determines  a maximal torus $T_{\varphi}$ and a Levi subgroup $M_\varphi$ over $\R$ 
with $T_{\varphi} \subset M_\varphi \subset G^{\out}(\R)$, $T_\varphi$ elliptic in $M_\varphi$,
 with $\LT_\varphi = \wT_\varphi\rtimes \langle \varphi(j) \rangle 
\subset \LM_\varphi = \wM_\varphi\rtimes \langle \varphi(j) \rangle$ (but not  necessarily 
contained in  $\LG^{\out}$!), and with $\wT_\varphi$ having largest dimensional 
torus in $\wM_\varphi$  on which  $\langle \varphi(j) \rangle$ acts trivially. 
Further, the pair $(T_\varphi, M_\varphi)$ depends only on 
$\varphi|_{\C^\times}$.
\end{lemma}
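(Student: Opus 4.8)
The plan is to read $M_\varphi$ and $T_\varphi$ off the pair $\bigl(\varphi|_{\C^\times},\varphi(j)\bigr)$, in the shape of the Langlands classification over $\R$, and then to check that $\varphi(j)$ contributes nothing beyond $\varphi|_{\C^\times}$. Write $W_\R=\C^\times\sqcup\C^\times j$ with $j^2=-1$ and $jzj^{-1}=\bar z$; since $W_\C=\C^\times$ acts trivially on $\wG^{\out}$, the restriction $\varphi|_{\C^\times}$ is, up to the obligatory $W_\R$-component, a homomorphism $\lambda\colon\C^\times\to\wG^{\out}(\C)$, and by admissibility the Zariski closure $S:=\overline{\lambda(\C^\times)}$ is a subtorus of $\wG^{\out}$. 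Set $\wM_\varphi:=Z_{\wG^{\out}}(S)$: as the centralizer of a torus it is a Levi subgroup of $\wG^{\out}$ --- connected reductive and containing a maximal torus of $\wG^{\out}$ --- with $S\subseteq Z(\wM_\varphi)$, and it plainly depends only on $\varphi|_{\C^\times}$.

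Next I would record how $\varphi(j)$ meets $\wM_\varphi$. The relation $\varphi(j)\varphi(z)\varphi(j)^{-1}=\varphi(\bar z)$ forces conjugation by $\varphi(j)$ to carry $S$ onto $S$, hence to normalize $\wM_\varphi$; moreover $\varphi(j)^2=\varphi(-1)$ acts on $\wG^{\out}$ by $\mathrm{Ad}\,\lambda(-1)$ with $\lambda(-1)\in S\subseteq Z(\wM_\varphi)$, so $\theta:=\mathrm{Ad}\,\varphi(j)|_{\wM_\varphi}$ is an algebraic automorphism of $\wM_\varphi$ of order dividing $2$. Form $\LM_\varphi:=\wM_\varphi\rtimes\langle\varphi(j)\rangle\subset\LG^{\out}$; it is an extension of $\Gal(\C/\R)$ by $\wM_\varphi$ with Galois action $\theta$, hence the $L$-group of a connected reductive group $M_\varphi$ over $\R$, which --- using that $\wM_\varphi$ is a Levi of $\wG^{\out}$ and that $G^{\out}$ is quasi-split --- I would realize as a Levi subgroup of $G^{\out}$ defined over $\R$, so $M_\varphi\subset G^{\out}(\R)$. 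Because the element $\varphi(j)=(m_j,j)$ that enters here need not have $m_j\in\wM_\varphi$ --- equivalently $\wM_\varphi$ need not be stable under the pinned Galois action defining $\LG^{\out}$ --- this copy $\LM_\varphi$ of ${}^LM_\varphi$ is \emph{not} a standard Levi $L$-subgroup of $\LG^{\out}$, which is what the parenthetical in the statement records.

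For the torus I would invoke the structure theory recalled in Section 12 (the theorems of Borovoi and of Adams): the involution $\theta$ of $\wM_\varphi$ has a $\theta$-stable maximal torus $\wT_\varphi$ for which $(\wT_\varphi^{\theta})^0$ has largest possible dimension --- obtained by conjugating $\theta$ so that it fixes a pinning of $\wM_\varphi$ and taking the torus of that pinning --- unique up to $(\wM_\varphi^{\theta})^0$-conjugacy. Since $\wM_\varphi$ is a Levi of $\wG^{\out}$, $\wT_\varphi$ is a maximal torus of $\wG^{\out}$, so $\LT_\varphi:=\wT_\varphi\rtimes\langle\varphi(j)\rangle\subset\LM_\varphi$ is the $L$-group of a maximal torus $T_\varphi$ of $G^{\out}$ over $\R$; and since $S\subseteq Z(\wM_\varphi)\subseteq\wT_\varphi$, the parameter $\varphi$ already takes values in $\LT_\varphi$. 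That $T_\varphi$ is elliptic in $M_\varphi$ comes out of the maximality in the choice of $\wT_\varphi$, together with the fact that $S$ was taken as large as possible for $\wM_\varphi=Z_{\wG^{\out}}(S)$: this forces the maximal split subtorus of $T_\varphi$ into $Z(M_\varphi)$. In carrying this out one must keep careful track of the opposition involution built into $G^{\out}$ (as in Section 5 and Lemma \ref{lemma-torus}) --- that is exactly what converts ``$\wT_\varphi^{\theta}$ of maximal dimension'' on the dual side into ``$T_\varphi$ maximally compact'' on the group side --- and getting this normalization straight is the delicate point.

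Finally, for the claim that $(T_\varphi,M_\varphi)$ depends only on $\varphi|_{\C^\times}$: $\wM_\varphi$ does, and if $\varphi'$ is another parameter with $\varphi'|_{\C^\times}=\varphi|_{\C^\times}$ then $\varphi(j)^{-1}\varphi'(j)$ lies in $\wG^{\out}$ and centralizes $\lambda(\C^\times)$, i.e.\ $\varphi'(j)=\varphi(j)\,x$ with $x\in\wM_\varphi$, so $\theta'=\theta\circ\mathrm{Ad}(x)$ differs from $\theta$ only by an inner automorphism of $\wM_\varphi$. Since $M_\varphi$ is recovered as a Levi subgroup of the \emph{fixed} $\R$-group $G^{\out}$ and not as the abstract real form cut out by $\theta$ alone, this inner ambiguity changes neither its $\R$-isomorphism class nor its stable conjugacy class inside $G^{\out}(\R)$, and the elliptic Cartan $T_\varphi$ of $M_\varphi$ is then unique up to conjugacy. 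The step I expect to be the real obstacle is precisely the passage from the combinatorial datum $\wM_\varphi\rtimes\langle\varphi(j)\rangle$ to a genuine $\R$-rational Levi subgroup of $G^{\out}(\R)$ carrying its elliptic Cartan --- controlling the possible non-$\Gal$-stability of $\wM_\varphi$ and getting the opposition twist right --- rather than the comparatively routine extraction of $\wM_\varphi$, $\wT_\varphi$ and the inner-ambiguity bookkeeping.
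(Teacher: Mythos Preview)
Your proposal is correct and follows essentially the same line as the paper's proof: define $\wM_\varphi$ as the centralizer of $\varphi(\C^\times)$ in $\widehat{G}$, observe that $\varphi(j)$ acts on it as an involution, choose a $\varphi(j)$-stable pair $(B,S)$ (equivalently, a pinning) in $\wM_\varphi$ to obtain $\wT_\varphi$, twist to get $T_\varphi$, and for the last assertion note that two extensions of $\varphi|_{\C^\times}$ differ at $j$ by an element of $\wM_\varphi$, so the induced involutions differ only by an inner automorphism. The paper handles the passage from $\wM_\varphi\rtimes\langle\varphi(j)\rangle$ to a Levi of $G^{\out}(\R)$ by a citation to Borel's Corvallis article (Lemma 3.5 there), so the step you flag as the real obstacle is in fact dispatched by reference rather than by any new argument.
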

\begin{proof}Write $W_\R = \C^\times \cdot j$ with $j^2=-1 \in \C^\times$. Let $\wM_\varphi$ be the 
centralizer of $\varphi(\C^\times)$ in $\widehat{G}$. Thus, $\wM_\varphi$ is a connected reductive group over 
$\C$ containing a maximal torus of $\widehat{G}$.
 
Clearly $\varphi(j)$, an element of ${}^LG^{\out}(\C)$ leaves $\wM_\varphi$ invariant, and acts on $\wM_\varphi$ as an involution.
Thus $\wM_\varphi$ is a Levi subgroup of ${}^LG^{\out}(\C)$ corresponding to a Levi subgroup $M_\varphi$ of $G^{\out}(\R)$, cf. [Bo], Lemma 3.5.
  Fix a pair $(B,S)$ inside $\wM_\varphi$
consisting of a Borel subgroup $B$ of $\wM_\varphi$ and a maximal torus $S$ of $\wM_\varphi$ which are left invariant under $\varphi(j)$.

This gives rise to a 
pair $(S, \varphi(j))$ 
consisting of a maximal torus $S$ in $\wM_\varphi$ together with an involution $\varphi(j)$ on $S$. It is known that the pair
$(S, \varphi(j))$  is unique up to conjugacy by $\wM_\varphi^{\varphi(j)}$. 
We assume, after conjugating $\varphi$ if necessary, that $S$ is the `standard'
maximal torus  of $\widehat{G}$, i.e., $S =\widehat{T}$, the dual torus 
corresponding to the maximally 
split maximal torus $T$ in $G^{\out}$.

 The automorphism $\varphi(j)$ of $S=\widehat{T}$ gives rise to a 
canonical involution on the torus $T$. The torus $T_{\varphi}$ is the torus over $\R$ obtained by twisting $T$ by $\varphi(j)$, i.e.,
$T_\varphi(\R) = \{t \in T(\C) | \varphi(j)(t)= \bar{t}\}.$ Thus the involutive action of $\varphi(j)$ on $\wT$ makes $
\wT\rtimes \langle \varphi(j) \rangle $ into the $L$-group of $T_\varphi$. It can be seen that $T_{\varphi}$ is contained in $M_\varphi(\R)$.

If $\varphi': W_\R \rightarrow {}^LG^{\out}$ is another parameter for $G^{\out}(\R)$ with $\varphi|_{\C^\times} = \varphi'|_{\C^\times}$, 
then clearly, $\varphi(j) \varphi'(j)^{-1}$ commutes with $\varphi(z), z \in \C^\times$, i.e., $\varphi(j) \varphi'(j)^{-1}$
belongs to $\wM_\varphi$. Thus the inner-conjugation action of $\varphi(j)$ and of $\varphi'(j)$ on $\wM_\varphi$ are the same as 
elements of $\Aut(\wM_\varphi)/{\rm inn}(\wM_\varphi)$, and this is all that went into defining $T_\varphi$ and $M_\varphi$. 
\end{proof}

\begin{remark}The parameter $\varphi$ in Lemma \ref{lemma7} gives rise to an $L$-packet of representations of $G(\R)$ which are obtained as Langlands quotients 
of certain `standard' representations induced from discrete series representations of a Levi subgroup of a parabolic in $G$. The torus
$T_\varphi$ of the lemma sits inside the Levi subgroup, and is compact modulo the center of the Levi subgroup.
\end{remark}

With all these preliminaries out of our way, we now construct a pure innerform of $G$ given a Langlands parameter
for $G^{\out}(\R)$.
 
Recall that by Lemma \ref{fibers}, the set of possible lifts of the parameter of a representation of $G(\C)$ (if non-empty) 
is parametrized by $H^1(\langle \varphi(j) \rangle, \wM_\varphi(\C))$ 
where $\wM_\varphi$ is as in Lemma \ref{lemma7}, 
i.e., $\wM_\varphi=Z_{\widehat{G}}(\varphi(\C^\times))$. By theorem 3 due to Adams and Borovoi, and Lemma \ref{lemma6},
\begin{eqnarray*}
 H^1(\langle \varphi(j) \rangle, \wM_\varphi(\C)) & \cong & H^1(\langle \varphi(j) \rangle, \wT_\varphi(\C))/W^\varphi\\
& \cong &   H^1(\Gal(\C/\R), T^{\out}_{\varphi}(\C))^\vee/W^\varphi,
\end{eqnarray*}
where, by Lemma \ref{lemma-torus}, 
we are assuming $T^{\out}_\varphi$ to be a torus in $G$, since $T_\varphi$ is contained in $G^{\out}$.
Now we have the natural isomorphism,
$$H^1(\Gal(\C/\R), 
T^{\out}_{\varphi}(\C))^\vee/W^\varphi \cong H^1(\Gal(\C/\R), T^{\out}_{\varphi}(\C))/W^\varphi,$$
which is a reflection of the fact that for an involution $\varphi$ of $X_\star(T)$, 
$H^2(\langle \varphi \rangle, X_\star(T))$ is isomorphic to its dual as $W^\varphi$-modules.
 
Thus given a Langlands parameter $\varphi: W_\R\rightarrow \LG^{\out}$, identified to a cohomology class in
$H^1(\langle \varphi(j) \rangle, \wM_\varphi(\C))$, following the various isomorphisms above, we 
have constructed an element in $H^1(\Gal(\C/\R), T^{\out}_{\varphi}(\C))/W^\varphi$ for $T^{\out}_\varphi$ a maximal torus in $G$, hence an element of
$H^1(\Gal(\C/\R), G(\C))$, i.e a pure innerform of $G$.

\begin{remark}
This section has been long and tedious. In this remark, I 
try to give another point of view to constructing an innerform (but not pure innerform) 
of $G(\R)$ given a Langlands parameter $\varphi$ for
$G^{\out}(\R)$, i.e., $\varphi: W_\R\rightarrow \LG^{\out}$. 
We assume for this construction that for the element $-1 \in \C^\times \subset W_\R$, $\varphi(-1) \in Z(\wG)$. Therefore $\varphi(j)^2 = \varphi(-1) $ belongs to the the center of $\wG$, in fact to the $W_\R$-invariant part of the center of $\wG(\C)$. Thus conjugation by $\varphi(j)$ gives an involution 
on $\wG$. It is easy to see that the set of involutions up to conjugation on $G(\C)$ and $\wG(\C)$ are 
canonically isomorphic as sets.  Thus the involution afforded on $\wG(\C)$ by conjugation by $\varphi(j)$ can also be 
treated as (a conjugacy class of) involution on $G(\C)$. But as discussed in the beginning of the section, an involution 
on $G(\C)$ gives rise to a real structure on $G(\C)$, which by Remark \ref{15} is exactly an innerform of $G^{\out}$.  
\end{remark}

\vspace{3mm}

\noindent{\bf Example:} For the group $G^{\out}= \GL_{n}(\R)$, 
let a Langlands parameter for $\GL_{n}(\C)$ consist 
of the trivial mapping from $W_\C = \C^\times$ to $\GL_n(\C)$. The possible lifts of this parameter to  
one of $G^{\out}= \GL_{n}(\R)$ is an $n$-tuple of characters of $\R^\times$ which are either trivial or $\omega_{\C/\R}$,
say $p$ many trivial characters, and $q$ many   $\omega_{\C/\R}$ for $p+q =n$. Then $\varphi(j)$ is the $n \times n$ matrix:
$$\varphi(j) = \left ( \begin{array}{ccccc} 
  1 & & & &  \\
& 1 & & & \\
& & * & & \\
& & & -1& \\
& & & & -1 
\end{array}
\right ) \in \GL_n(\C),$$
with $p$ many 1 on the diagonal, and $q$ many $-1$ on the diagonal. The inner conjugation by $\varphi(j)$ 
on $\GL_n(\C)$  is associated to the real form of $\GL_n(\C)$ which is $\U(p,q)$, or to 
elaborate, the centralizer of $\varphi(j)$ in $\GL_n(\C)$ which is $\GL_p(\C) \times \GL_q(\C)$ is the complexification
of the maximal compact subgroup $\U(p) \times \U(q)$ of $\U(p,q)$.

\section{The conjecture}
We continue with  $E/F$ a  quadratic extension of local fields, and $G$ a quasi-split reductive algebraic 
group over $F$. Let $G_\alpha$ be the collection of pure innerforms of $G$ parametrized by those elements 
of the cohomology $H^1(\Gal(\bar{F}/F), G)$ which become trivial on restriction to $\Gal(\bar{F}/E)$, and as a result,
 $G_\alpha(E)= G(E)$.

We  fix a
Borel subgroup $B$ of $G$ with a Levi decomposition $B=TN$, and fix a   Whittaker datum:
$$\psi: N(E)/N(F)\rightarrow \C^\times.$$

Observe that since the groups $G$ and $G^{\out}$ become isomorphic over $E$, 
$$\Hom(W_E, {}^{L}G^{\out}) = \Hom(W_E, {}^L{G}
).$$
Let $Z(\sigma_\pi)$ be the centralizer of  the parameter 
$\sigma_\pi: W_E\rightarrow {}^L{G} = {}^L{G}^{\out},$  and $Z(\widetilde{\sigma_\pi})$ 
be the centralizer of  the parameter 
 $\widetilde{\sigma_\pi}: 
W_F\rightarrow {}^L{G} = {}^L{G}^{\out}$ which extends $\sigma_\pi$. Clearly, 
$Z(\widetilde{\sigma_\pi}) \subset Z({\sigma_\pi})$. 
Given the extension $\widetilde{\sigma_\pi}$ of $\sigma_\pi$ to $W_F$, there is a natural action of ${\rm Gal}(E/F)$ on 
$Z({\sigma_\pi})$ with $Z(\sigma_\pi)^{{\rm Gal}(E/F)}= Z(\widetilde{\sigma_\pi})$.  
The following lemma compares the image of 
$\pi_0(Z(\sigma_\pi)^{\Gal(E/F)})$ inside 
$\pi_0(Z(\sigma_\pi))$ with the group 
$\pi_0(Z(\sigma_\pi))^{\Gal(E/F)}$, or rather it says that they are almost not related.

\begin{lemma}Let $X$ be a topological space together with the action of a finite group $F$ on it. Denote by $\pi_0(X)$ 
the set of connected components of $X$. Then $F$ operates on $\pi_0(X)$, and there is a natural map from 
$\pi_0(X^F)$ to $\pi_0(X)$ which is neither injective nor surjective, but which lands inside $\pi_0(X)^F$.
\end{lemma}  
\begin{proof} All that there is to the lemma is the assertion that
 if $F$ does not fix a particular component, then it cannot have a fixed point inside that component. 
On the other hand, even if it fixes a particular component, it may not have a fixed point in that component. 
\end{proof}

\begin{remark}If $Z({\sigma_\pi})$ is finite and abelian, then the action of $\Gal(E/F)$
on $Z(\sigma_\pi)$   for any extension  $\widetilde{\sigma_\pi}$ 
of  ${\sigma_\pi}$ is the same, and further, 
we have the tautological assertion that 
$$\pi_0(Z(\widetilde{\sigma_\pi})) = \pi_0(Z(\sigma_\pi)^{\Gal(E/F)}) = \pi_0(Z(\sigma_\pi))^{\Gal(E/F)}.$$ 
This applies in particular
for discrete series representations of $\SL_2(E)$.
\end{remark}

Now we note the maps between affine algebraic varieties---the parameter spaces of Langlands parameters---which appear 
in the following commutative diagram:

 $$
\begin{CD}
X_1 = \Hom(W'_F, {}^LG^{\out}(\C)  )
@>\Phi'>>  X_2 = \Hom(W'_E, {}^LG^{\out}(\C))  \\ 
 @ V{\pi_1}VV  @VV{\pi_2}V  \\
\Sigma_F(G^{\out})= X_1/\!/\widehat{G}(\C)
@> \Phi >>  \Sigma_E(G)=X_2/\!/\widehat{G}(\C).
\end{CD}
$$

The map $\Phi$ induces a map from stabilizer in $\widehat{G}^{\out}(\C)
$ of a point in $X_1$ 
to the stabilizer in $\widehat{G}^{\out}(\C)$ of the 
corresponding point in $X_2$, hence also a map on  the group of connected components of the stabilizers. 
Note that the group $ H^1(\Gal(E/F), Z( \widehat{G}^{\out} )^{W_E})$ 
operates on $\Sigma_F(G^{\out})$ by central twists,
and the mapping $\Phi$ is equivariant under it.

Recall that to a finite map between algebraic varieties $f: X\rightarrow Y$, one can define the degree 
of $f$ at a point $x\in X$ by $\deg(f)(x) = \dim [{\mathcal O}_{X,x}/f^{\star}({\mathfrak m}_{Y,f(y)})]$, where we 
do not explain the standard notation that we have used here.

\begin{conj} \label{conj3} Let $G$ be a quasi-split group over a local field $F$, $E/F$ a separable quadratic extension, 
and 
$\pi$ an irreducible admissible representation of $G(E)$. Then if $\pi$ is 
$\omega_{G_\alpha}$-distinguished 
by $G_\alpha(F)$ for a pure innerform $G_\alpha$ of $G$ over $F$ defined by an element $\alpha$ 
of $H^1(F,G)$ which becomes trivial 
when restricted to $H^1(E,G)$, 
we must have,

\begin{enumerate}

\item $\left \{ \pi^\vee \right \}   = \left \{ \pi^\sigma \right \},$ an equality of $L$-packets.

\item The $L$-packet $\{\pi\}$ on $G(E)$ arises from basechange of an $L$-packet on $G^{\out}$, i.e., the
Langlands parameter  $\sigma$ of $\pi$ in  $ \Hom(W_E, {}^L{G}) = \Hom(W_E, {}^{L}G^{\out}) $
arises as the restriction of a parameter $\tilde{\sigma}$ in $\Hom(W_F, {}^{L}G^{\out})$. 

\end{enumerate}
If the condition in $(2)$ is satisfied,  and if  
$\pi$ has a   Whittaker model for a character
$$\psi: N(E)/N(F)\rightarrow \C^\times,$$
then $\pi$ is $\omega_G$-distinguished.
  
Let $F(\sigma) = \{ \tilde{\sigma} \in  \Hom(W_F, {}^{L}G^{\out}) | \tilde{\sigma}|_{W_E} = \sigma\}$.

We take up the non-Archimedean case first.
In this case,
Each orbit $O(\tilde{\sigma})$ of $ H^1(\Gal(E/F), Z( \widehat{G}^{\out} )^{W_E})$ action on $F(\sigma)$ 
is associated to a subgroup $ A_G(\tilde{\sigma}, E)\subset H^1(\Gal(E/F), G(E))$, defining a set of 
pure innerforms $G_\alpha$ of $G$ over $F$.  Each such orbit in $F(\sigma)$ contributes a certain space of invariant forms 
$I_\alpha(O(\tilde{\sigma}))$ in 
$ \Hom_{G_{\alpha}(F)}[\pi, \omega_{G_\alpha}] $ for exactly these pure innerforms $G_\alpha$,  
$ \alpha \in A_G(\tilde{\sigma}, E)\subset H^1(\Gal(E/F), G(E))$, which for 
$$d_0(\tilde{\sigma})= \left |{\rm coker} \{\pi_0(Z(\tilde{\sigma})) \rightarrow 
\pi_0(Z(\sigma))^{\Gal{(E/F)}} \} \right|, $$ 
has dimension given by
$$\dim I_\alpha(O(\tilde{\sigma})) = 
(\deg \Phi)(\tilde{\sigma})
/d_0(\tilde{\sigma}),$$  if the character of the component group $\mu_\pi$ associated to $\pi$ 
restricted to the image of 
$\pi_0(Z(\tilde{\sigma}))$ inside $\pi_0(Z(\sigma))$ contains the  trivial representation and $I_\alpha(O(\tilde{\sigma})) = 0$ otherwise.
The space $ \Hom_{G_{\alpha}(F)}[\pi, \omega_G] $ is built from 
 $I_\alpha(O(\tilde{\sigma}))$  which are linearly independent for different orbits $O(\tilde{\sigma})$ in  $F(\sigma)$.

In case $F=\R$, each element $\tilde{\sigma} \in F(\sigma)$  
associates a pure innerform $G_{\tilde{\sigma}}(\R)$ of $G(\R)$ 
by the recipe given in section 13 such that if the condition on the component group from the previous paragraph is 
satisfied by $\pi$, then $ \Hom_{G_{\tilde{\sigma}}(\R)}[\pi, \omega_G] $ is built from linearly independent spaces of 
dimension $(\deg \Phi)(\tilde{\sigma})$ as $\tilde{\sigma}$  runs over all elements in $ F(\sigma)$.
\end{conj}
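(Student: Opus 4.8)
Since the final statement is \emph{Conjecture \ref{conj3}} rather than a theorem, what I would actually write here is a program: a reduction of the conjecture to settings where distinction is already understood and where the geometry of the basechange map $\Phi$ is computable by hand, together with an indication of the analytic input that the general case seems to require. The plan is to proceed from tori upward.

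First I would verify the conjecture for tori, which is the content of section 14. Here $G=T$, $G^{\out}=T^{\out}$, and distinction of a character $\chi$ of $T(E)$ by $T(F)$ (twisted by $\omega_T$) is controlled by the exact sequence $1\to T\to R_{E/F}T\to T^{\out}\to 1$ together with Langlands duality for tori. I would compute both sides explicitly: on the representation side, $\Hom_{T(F)}[\chi,\omega_{T_\alpha}]$ is one-dimensional exactly when the relevant compatibility of $\chi|_{T_\alpha(F)}$ with $\omega_{T_\alpha}$ holds and vanishes otherwise; on the parameter side, the fiber of $\Phi\colon\Sigma_F(T^{\out})\to\Sigma_E(T)$ over $\sigma_\chi$, read off from Lemma \ref{fibers} and Proposition \ref{prop4}, should match this after dividing by $d_0(\tilde\sigma)$. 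The subtle point flagged in the paper --- that the component-group condition in the conjecture is precisely what arises --- I would handle via Tate--Nakayama duality, identifying the character of $\pi_0(Z(\tilde\sigma))$ with the element of $H^1(W_F,Z(\wG^{\out}))$ which, under Kottwitz's theorem, names the pure innerform $G_\alpha$; the perfect pairing between $H^1(\Gal(E/F),Z(\wG^{\out})^{W_E})$ and $H^1(\Gal(E/F),G(E))$ in the Corollary to Lemma \ref{lemma6} is the bridge.

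Next I would treat $\SL_2$ using the explicit $L$-packet structure and the results of \cite{AP03}, together with the Lemma of section 15 that a $\SL_2(F)$-distinguished representation of $\SL_2(E)$ must be $\psi$-generic for $\psi$ trivial on $F$. Example 3 of section 9 already enumerates the possibilities for $\wG^{W_E}\in\{\C^\times,\OO_2(\C)\}$ and the attached cohomology sets, so the remaining work is to match each entry of the table at the end of section 15 with the correct orbit $O(\tilde\sigma)$, the correct pure innerform $G_\alpha$, and the correct value of $(\deg\Phi)(\tilde\sigma)/d_0(\tilde\sigma)$; the factor $d_0$ enters exactly in the case of basechange of a principal series that is rigid over $F$ but acquires a selftwist over $E$, where I would confirm that $\pi_0(Z(\tilde\sigma))\to\pi_0(Z(\sigma))^{\Gal(E/F)}$ has cokernel of the predicted order. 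For $\GL_n$ and $\U_n$ (section 16) I would fall back on Jacquet and on Feigon--Lapid--Offen \cite{FLO}: using Proposition \ref{prop5} and the cohomology computations in Examples 1--2 of section 9 (only case 1 of Lemma \ref{inv} for $\GL_n$, only cases 2--3 for $\U_n$, the rest killed by Shapiro), one checks that the fiber-multiplicity of $\Phi$ reproduces their multiplicities, and in particular the multiplicity-one phenomenon for discrete series of $\GL_n(E)$ distinguished by $\U_n(F)$ corresponds to the parameter being stable. For $F=\R$ I would argue geometrically via the orbits of $G(\R)$ on $P(\C)\backslash G(\C)$, with $G(E)=G(\C)$: the closed orbit gives the ``base point'' term where $\omega_G$ appears (section 18), open orbits the generic term (section 19), and the sum over $G(\R)$-orbits should match the sum over $\tilde\sigma\in F(\sigma)$ with weights $(\deg\Phi)(\tilde\sigma)$, the dictionary being the Adams--Borovoi identifications (Theorems 2--3) used in section 12 to attach $G_{\tilde\sigma}$ to each lift.

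The hard part --- and the reason this stays a conjecture --- is the general reductive non-Archimedean case, where no elementary argument is available. For \emph{stable discrete} parameters the Corollary to Lemma \ref{fibers} already reduces the assertion to the clean claim that distinction by one $G_\alpha(F)$ forces distinction, with multiplicity one, by \emph{all} pure innerforms trivializing over $E$; I would attack this either with a relative trace formula for $G(E)/G(F)$ in the style of Jacquet, or with a character-theoretic (Shintani-descent) argument extending Theorem 1 and its Corollary, invoking \cite{Pr1} and Lusztig \cite{Lu}. For general parameters, the essential obstacle is that there is currently no uniform construction of the invariant functional attached to an individual irreducible component of $X_1$ dominating a given point of $\Sigma_E(G)$: one needs an analytic input (a trace formula computing multiplicities spectrally) or a geometric one (perverse sheaves / local models on the parameter spaces) that actually \emph{sees} the ramification of $\Phi$, and it is exactly the pair $\bigl((\deg\Phi)(\tilde\sigma),\,d_0(\tilde\sigma)\bigr)$ that no existing method naturally produces. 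Even in the real case I expect trouble at reducible, non-tempered principal series, where one must control the behaviour of invariant forms under the Langlands quotient --- the same standard-module subtlety already visible in the proof of Lemma 1.
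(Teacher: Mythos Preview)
Your proposal is not a proof but a program, and that is exactly right: Conjecture \ref{conj3} is the central \emph{conjecture} of the paper and is not proved there either. The paper offers precisely the same body of evidence you outline --- verification for tori (section 14), the $\SL_2$ case via \cite{anand1} and the table at the end of section 15, compatibility with known and expected results for $(\GL_n(E),\GL_n(F))$ and $(\GL_n(E),\U_n(F))$ (section 16), and the real case via closed and open orbits on the flag variety (sections 18--19) --- and explicitly leaves the general non-Archimedean reductive case open, suggesting the same two avenues you name (a relative trace formula, or a character-theoretic argument extending \cite{Pr01} and \cite{Lu}). Your summary of where the difficulty lies, namely that no existing method naturally produces the pair $\bigl((\deg\Phi)(\tilde\sigma),\,d_0(\tilde\sigma)\bigr)$, accurately reflects the paper's own assessment in the introduction.
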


\begin{remark} Suppose $\pi$ has a Whittaker model for a character of $N(E)/N(F)$, and is a discrete series representation of $G(E)$, equivalently in terms of parameters, if 
for the parameter $\sigma$ of $\pi$, $Z(\sigma)/Z({}^LG)$ is finite,
then  for each lift $\tilde{\sigma}$ of $\sigma$, $(\deg \Phi)(\tilde{\sigma}) = d_0(\tilde{\sigma}) =1$, 
thus in this case, 
$\sum_\alpha \dim\Hom_{G_\alpha}[\pi, \omega_{G_\alpha}]$ 
is equal to the number of lifts of $\sigma$ to $G^{\out}$.
If $G$ is quasi-split over $F$, then 
$ \dim\Hom_{G(F)}[\pi, \omega_{G}]$ is the number of orbits on the set of lifts $\tilde{\sigma} \in H^1(W_F, Z(\wG^{\out}))$ 
of $\sigma$ under twists by  $\chi \in H^1(W_F, Z(\wG^{\out}))$ with $\chi|_{W_E}=1$. Each orbit
under the character twists contributes one dimensional space of invariant forms 
to exactly as many pure innerforms of $G$ as cardinality of the orbit as described in Proposition \ref{prop4}.
Note that the subtleties of pure innerforms go away if $G$ is semisimple simply connected group over a non-Archimedean field,
when we would simply assert that $\dim\Hom_{G}[\pi, \omega_{G}]$ 
is equal to the number of lifts of $\sigma$ to $G^{\out}$ if $\pi$ is a discrete series representation. 
\end{remark}

\begin{remark}
 The simplest example to bring out the role of the character $\omega_G$ is the Steinberg representation 
$\St_2$ of $\PGL_2(E)$. It can be seen that $\St_2$ is not distinguished by $\PGL_2(F)$, and is distinguished 
exactly by the character $\omega_{E/F}$ of $F^\times/F^{\times 2}$ treated as a character of $\PGL_2(F)$. We will give 
more examples  in the case of real groups in section \ref{17}.
\end{remark}

\begin{remark} The mysterious role of the character $\omega_G$ in the conjecture appears to be related 
to the symmetry of the bilinear form $B: \pi \times \pi^\sigma \rightarrow \C$. (By Schur's lemma, 
$B(u,v) = \epsilon_\pi B(v,u)$ for $\epsilon_\pi = \pm 1$ which is the sign we are talking about.) Indeed
the character $\omega_G$ is constructed via an element of the center of $\widehat{G}$ which determines
symmetry property of selfdual representations of $\widehat{G}$.

\end{remark}
  
\begin{remark}Under some conditions on $X$ and $Y$, for instance by exercise III.10.9, page 276 of R. Hartshorne's book on 
Algebraic Geometry, if $X$ is Cohen-Macaulay, $Y$ is regular, and $f: X\rightarrow Y$ finite map of irreducible 
algebraic varieties, then 
$f$ is flat, and as a consequence, the sum of  degrees at fibers of $f$ is constant over $Y$. However, 
this is not assured in general, for example the map $p:\C^2\rightarrow \C^2/j$ where $j$ is the
involution $(z_1,z_2)\rightarrow (-z_1,-z_2)$ has fiber of multiplicity 3 at origin, but 2 at other points.
The author thanks Nitin Nitsure for all this. 
In our case, the spaces involved are non-singular by Proposition \ref{prop44}, but the maps that we consider are finite only onto their image which may be a singular space, so I 
am not sure that the sum of fiber degrees of the map in our case is constant from a connected component of $\Sigma_F(G^{\out})$ 
to its image 
in a connected component of $\Sigma_E(G)$ (of course a point in a connected component of $\Sigma_E(G)$ 
arises as the image of points in many different components of $\Sigma_F(G^{\out})$),  and if not constant, if 
we need to make modifications in this paper.
\end{remark}

\section{Tori}
Many features of the conjectured multiplicity formula can already be seen for 
tori which we discuss in some detail here for a torus $T$ over a local field
$F$. 

Observe that the norm mapping  $N_{E/F}: T(E)\rightarrow T(F)$ need not be surjective. 
As a result, for a character $\chi: T(E) \rightarrow \C^\times$ to be distinguished by $T(F)$,
it is necessary, but not sufficient, that $\chi^\sigma = \chi^{-1}$. Our conjecture says that
a character $\chi: T(E) \rightarrow \C^\times$ is distinguished by $T(F)$ if and only if it arises
as a basechange of a character on an another torus, denoted $T^{\out}$.

For a torus $T$ over $F$, as mentioned earlier, $T^{\out}$ is the torus which 
sits in the exact sequence of algebraic groups,
$$1 \rightarrow T \rightarrow R_{E/F}(T) \rightarrow T^{\out} \rightarrow 1.$$
Thus, at the level of $F$-rational points (using  Shapiro's lemma), we have,
$$1 \rightarrow T(F) \rightarrow T(E) \rightarrow T^{\out}(F) 
\rightarrow H^1( {\rm Gal}(\bar{F}/F), T) \rightarrow H^1( {\rm Gal}(\bar{F}/E), T)
\rightarrow \cdots$$
This long exact sequence can be re-written as:
$$1 \rightarrow T(F) \backslash T(E) \rightarrow T^{\out}(F) \rightarrow {\rm Ker}^1(E/F, T)
\rightarrow 0, $$
 where ${\rm Ker}^1(E/F, T) 
= {\rm Kernel}\{ H^1( {\rm Gal}(\bar{F}/F), T) \rightarrow H^1( {\rm Gal}(\bar{F}/E), T) \}.$

It follows that the characters of $T(E)$ which are trivial on $T(F)$ arise from restriction
of characters of $T^{\out}(F)$ which in this case corresponds to basechange of characters of $T^{\out}(F)$ to
characters of $T(E)$ (the {\it norm} map from $T(E)$ to $T^{\it out}(F)$ is $t\rightarrow \sigma(t)/t$); 
further, the character $\chi$ of $T(E)$ arises as basechange of as many representations
of $T^{\out}(F)$ as the order of ${\rm Ker}^1(E/F, T) $ which is the number of pure innerforms
of $T$ over $F$ whose restriction to $E$ gives a fixed pure innerform of $T$ over $E$, i.e., $R_{E/F}(T)$. This is as predicted
by the conjecture since in this case $T_\alpha(F) \subset T_\alpha(E)$ are all the same as $T(F) \subset T(E)$,
so each one contributes exactly once to $\Hom_{T_\alpha(F)}[\chi,\C]$.

We next check the condition on the characters of the component groups imposed in our conjecture for the case of tori.
We will see that the group of connected components of the centralizer in $\widehat{T}$ of any parameter for a torus $T$ over $F$
is isomorphic to  $H^1(F,T)^\vee = \Hom[H^1(F,T), \Q/\Z]$. The mapping $R_{E/F}(T) \rightarrow T^{\out}$ induces a 
mapping of the dual groups, and hence a mapping of the group of connected components which in this case will be the natural map
$H^1(F,T^{\out})^\vee \rightarrow H^1(E, T)^\vee$. We thus want to understand those characters of $H^1(E,T)^\vee$ 
which are trivial on
the image of $H^1(F,T^{\out})^\vee$.

{\bf Claim:} Identifying the characters of $H^1(E,T)^\vee$ to $H^1(E,T)$, 
the characters of $H^1(E,T)^\vee$ 
which are trivial on
the image of $H^1(F,T^{\out})^\vee$ are exactly those elements of $H^1(E,T)$ which are in the image of $H^1(F,T)$.

We do this in the following paragraphs. 

Recall that the Langlands correspondence for tori (due to Langlands) gives a bijective correspondence
between characters of a torus $T(F)$, and  admissible homomorphisms in $\Hom(W_F, {}^L{T}(\C))$. 
It can be seen that the group of connected components of the centralizer  
of (any) admissible homomorphism  in  $\Hom(W_F, {}^L{T}(\C))$ 
is $\pi_0(\widehat{T}(\C)^{W_F})
$. By the long exact sequence for cohomologies associated to the short 
exact sequence
of $W_F$-modules:
$$0 \rightarrow X^\star(T) \rightarrow X^\star(T)\otimes \C \rightarrow \widehat{T}(\C) \rightarrow 0,$$
it follows that  $ \pi_0(\widehat{T}(\C)^{W_F}) \cong H^1(W_F, X^\star(T))$. 

By the Tate duality, there is a perfect pairing
$$\begin{CD}H^1(F, X^\star(T))  \times H^1(F, T) @>>>  H^2(F, \Gm) = \Q/\Z, \end{CD}$$ 
and as a result, elements of $H^1(F,T)$ are identified
to characters of $H^1(F,X^\star(T))$. This gives the refined local Langlands correspondence in the sense of
Vogan for tori: The character group 
of the group of connected components of the centralizer  
of any element in  $\Hom(W_F, {}^L{T}(\C))$ is in bijective correspondence with elements $\alpha$ of 
$H^1(F,T)$ which parametrizes pure innerforms of $T$, thus a Langlands parameter
for $T$ together with a character of the component group of the parameter are in bijective correspondence
with characters of $T_\alpha(F)=T(F)$ as $\alpha$ runs over elements of $H^1(F,T)$.

For a homomorphism of tori $f: T_1\rightarrow T_2$, we have induced maps on Galois cohomology groups,
$$f_\star: H^1(F, T_1) \rightarrow H^1(F,T_2), {\rm ~and~~} f^\star: H^1(F, X^\star(T_2)) \rightarrow H^1(F,X^\star(T_1)),$$
which make up the following diagram of maps
 $$
\begin{CD}
H^1(F, X^\star(T_1)) @. \times @. H^1(F, T_1) @>>> H^2(F, \Gm) = \Q/\Z \\ 
 @A{f^*}AA @. @VV{f_\star}V @| \\
H^1(F, X^\star(T_2)) @. \times @. H^1(F, T_2) @>>>  H^2(F, \Gm) = \Q/\Z, 
\end{CD}
$$
making $f^\star$ the adjoint of $f_\star$:
$$\langle f_\star a, b \rangle = \langle a, f^\star b \rangle,$$
where $a\in H^1(F,T_1)$ and $b \in H^1(F, X^\star(T_2))$.

We will apply this adjoint relationship for $T_1 =R_{E/F} T$, a torus over $F$, and $T_2=T^{\out}$ 
for $E/F$ a quadratic extension, and 
$f: R_{E/F} T\rightarrow T^{\out}$ the natural map, giving us:
 $$
\begin{CD}
H^1(F, X^\star(R_{E/F}T))
 @. \times @. H^1(F, R_{E/F}T) @>>> H^2(F, \Gm) = \Q/\Z \\ 
 @A{f^*}AA @. @VV{f_\star}V @| \\
H^1(F, X^\star(T^{\out})) @. \times @. H^1(F, T^{\out}) @>>>  H^2(F, \Gm) = \Q/\Z.
\end{CD}
$$
making $f^\star$ the adjoint of $f_\star$:
$$\langle f_\star a, b \rangle = \langle a, f^\star b \rangle,$$
where $a\in H^1(F,R_{E/F}T)$ and $b \in H^1(F, X^\star(T^{\out}))$. By the non-degeneracy of the pairing $ \langle-, -\rangle$, it follows that the elements of $H^1(E,T) = H^1(F, R_{E/F}(T))$ 
considered as characters on $H^1(E, X^\star(T)) = H^1(F, X^\star(R_{E/F}T))$, which are zero on the image 
of $H^1(F, X^\star(T^{\out}))$ are exactly those whose image in $H^1(F,T^{\out})$ is zero. 

From the exactness
of the sequence,
$$\cdots 
\rightarrow H^1( F, T) \rightarrow  H^1( F, R_{E/F}T) 
\stackrel{f_\star}\rightarrow H^1( F, T^{\out}) \rightarrow  \cdots$$
it follows that ${\rm Ker}(f_\star): H^1( F, R_{E/F}T) \rightarrow H^1(F, T^{\out})$
is the image of the map:   $H^1( F, T) \stackrel{f_\star}\rightarrow  H^1( F, R_{E/F}T)$, proving our 
claim in the beginning that the restriction imposed on the character of component groups forces 
the pure innerforms of $R_{E/F}T$ to come from pure innerforms of $T$.

\section{$\SL(2)$}

In the  case of $(\SL_2(E),\SL_2(F))$, the multiplicity of the space of 
$\SL_2(F)$-invariant linear forms on a representation of $\SL_2(E)$  was studied in [AP03] in detail,
and it was found that  $\dim \Hom_{\SL_2(F)}[\pi, \C]$, as $\pi$ runs over an $L$-packet of 
representations of $\SL_2(E)$, is either $d_{\pi}$ or 0 for an integer $d_\pi$ which depends only on the $L$-packet of $\pi$, and that it is 
nonzero for a particular $\pi$ if and only if

\begin{enumerate}
\item $\{\pi^\sigma\} = \{ \pi \}$ in the sense of $L$-packets. (Recall that for $\GL_2(E)$, $\pi^\vee \cong \pi \otimes 
(\det \pi)^{-1}$, hence for $L$-packets of $\SL_2(E)$,
$\{\pi\} = \{\pi^\vee\}$.)

\item The representation $\pi$ has a Whittaker model for a character of $N(E)=E$ which is trivial on $F$.
\end{enumerate}

The following lemma was proved in [AP03] using explicit realization of an $\GL_2(F)$-invariant linear form
in the Kirillov model of a representation $\pi$ of $\GL_2(E)$  due to Jeff Hakim. We offer a `pure thought' argument here.
Unfortunately, we have not succeeded in proving an analogous result for higher rank groups, not even for $\SL_n(E)$.

\begin{lemma} \label{AP}
Let $\pi$ be an irreducible admissible infinite dimensional representation of $\SL_2(E)$. Then if $\pi$ is distinguished by
$\SL_2(F)$, then $\pi$ must have a Whittaker model for a character $\psi: E/F \rightarrow \C^\times$.
\end{lemma}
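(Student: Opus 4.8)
The plan is to argue by contradiction using the structure of representations of $\SL_2(E)$ that fail to be generic for any character trivial on $F$, combined with the compatibility of distinction with twisting and the known behaviour of $\GL_2$. First I would recall that an irreducible admissible infinite-dimensional $\pi$ of $\SL_2(E)$ sits inside the restriction of an irreducible admissible representation $\Pi$ of $\GL_2(E)$, and that the set of nondegenerate characters $\psi \colon N(E) = E \to \C^\times$ for which $\Pi$ (hence every constituent of $\Pi|_{\SL_2(E)}$, collectively) is $\psi$-generic is a single orbit under $E^\times$ acting by $\psi_a(x) = \psi(ax)$; the constituents of $\Pi|_{\SL_2(E)}$ are indexed by $E^\times/(E^\times)^2 N_\Pi$ and carry a Whittaker model for exactly the characters in one $(E^\times)^2$-coset inside that $E^\times$-orbit. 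The key elementary point is then: $\pi$ has a Whittaker model for some $\psi$ trivial on $F$ if and only if, in the identification of its Whittaker orbit with a coset of $(E^\times)^2$ inside $E^\times$, that coset meets $F^\times \cdot N_{E/F}(E^\times)^{-1}$ — more concretely, if and only if the relevant square class is ``balanced'' with respect to the $F$-structure. So I would reduce the lemma to the statement: if the Whittaker square class of $\pi$ is not represented by an element of $F^\times$ (up to norms), then $\pi$ is not $\SL_2(F)$-distinguished.

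The main step is to exploit the $\GL_2$-level invariant form. By Hakim's theorem (and the Flicker–Jacquet theory for $(\GL_2(E),\GL_2(F))$), an infinite-dimensional irreducible $\Pi$ of $\GL_2(E)$ is $\GL_2(F)$-distinguished if and only if its central character is trivial on $F^\times$ and $\Pi^\sigma \cong \Pi^\vee$, and in that case the space of $\GL_2(F)$-invariant forms is one-dimensional and can be realized explicitly in the Kirillov model. I would then restrict such a functional to $\SL_2(F)$ and analyze, in the Kirillov model, on which constituent of $\Pi|_{\SL_2(E)}$ it is supported: the key computation is that the $\GL_2(F)$-invariant functional, written against the $\psi$-Kirillov model, only pairs nontrivially with vectors whose Whittaker character is $\SL_2(F)$-compatible, i.e.\ trivial on $F$. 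This is exactly the ``pure thought'' replacement for the explicit model computation of [AP03]: the orbit of $\GL_2(F)$ acting on the Kirillov line $C_c^\infty(E^\times)$ and the requirement of $N(F)$-invariance of the Whittaker datum force the support to lie in constituents with a $\psi$ trivial on $F$. Combined with Frobenius reciprocity / the decomposition $\Hom_{\SL_2(F)}[\Pi|_{\SL_2(F)},\C] = \bigoplus_\pi \Hom_{\SL_2(F)}[\pi,\C]$ over constituents $\pi$ of $\Pi|_{\SL_2(E)}$, and the fact that distinction of some constituent of $\Pi|_{\SL_2(E)}$ by $\SL_2(F)$ forces $\Pi$ (up to a character twist of $\GL_2(E)$ trivial on $\SL_2(E)$) to be $\GL_2(F)$-distinguished, this pins down which constituents can be distinguished.

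There is a twisting subtlety to dispatch: a priori $\pi$ distinguished by $\SL_2(F)$ only implies that \emph{some} $\GL_2(E)$-extension $\Pi$ of $\pi$ has its restriction distinguished, and $\Pi$ is determined only up to twisting by a character $\chi\circ\det$ with $\chi$ a character of $E^\times$; twisting $\Pi$ by $\chi\circ\det$ with $\chi|_{F^\times}$ nontrivial changes $\GL_2(F)$-distinction. I would handle this exactly as in the $\SL_n$ example of Section 4: the action $\ell_a$ of $E^\times/(E^\times)^2$ on $\SL_2(E)$-packets intertwines with the action on Whittaker square classes, and the $\GL_2(E)$-twists that preserve $\pi|_{\SL_2(E)}$ form a subgroup; a short argument shows that among all $\GL_2(E)$-extensions $\Pi$ of $\pi$, one can choose $\Pi$ with trivial-on-$F^\times$ central character precisely when $\pi$'s Whittaker square class is $F$-rational, and only then can any extension be $\GL_2(F)$-distinguished. \textbf{The hard part} is the explicit support computation in the Kirillov model showing that the (unique) $\GL_2(F)$-invariant functional on $\Pi$ restricts to $\SL_2(F)$-invariant functionals only on those constituents carrying a Whittaker character trivial on $F$ — i.e.\ replacing Hakim's hands-on realization by a conceptual argument about the $\GL_2(F)$-orbit structure on the Kirillov line and the interaction of $N(F)$-invariance with the character $\psi$; I expect this to require a careful but not deep bookkeeping of square classes in $E^\times$ relative to $F^\times$, $N_{E/F}(E^\times)$, and $(E^\times)^2$.
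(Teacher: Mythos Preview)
Your proposal follows essentially the route of the original [AP03] argument---lifting to $\GL_2(E)$, invoking Hakim's realization of the $\GL_2(F)$-invariant form in the Kirillov model, and then tracking which $\SL_2(E)$-constituents can support the restricted functional. That argument does work, and indeed the paper explicitly acknowledges that this is how the lemma was first proved. But the paper's point here is precisely to \emph{replace} that approach by a much shorter ``pure thought'' argument that never leaves $\SL_2$ and never touches Kirillov models or Hakim's theorem.

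The paper argues as follows. If $\pi$ is $\SL_2(F)$-distinguished, then its $N(F)$-coinvariants $\pi_F$ are nonzero (since the $\SL_2(F)$-coinvariants are a quotient of $\pi_F$). Now $\pi_F$ is a smooth module for $N(E)/N(F) \cong E/F$. Either $E/F$ acts nontrivially on $\pi_F$, in which case some nontrivial character $\psi: E/F \to \C^\times$ occurs and we are done; or $E/F$ acts trivially on $\pi_F$, in which case the $\SL_2(F)$-invariant linear form $\ell$ is in particular $N(E)$-invariant, hence invariant under the group generated by $\SL_2(F)$ and $N(E)$, which is all of $\SL_2(E)$---contradicting the infinite-dimensionality of $\pi$.

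So your approach is not wrong, but it is exactly the one the paper is advertising an improvement over. The paper's argument is a few lines, uses nothing beyond the observation that $\SL_2(F)$ and $N(E)$ generate $\SL_2(E)$, and in particular avoids the ``hard part'' you yourself flag (the support analysis in the Kirillov model). What your approach buys, by contrast, is more explicit information about \emph{which} invariant forms land on which constituents---useful for the subsequent multiplicity analysis, but unnecessary for the bare existence statement of this lemma.
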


\begin{proof} Since $\pi$ is distinguished by $\SL_2(F)$, the largest quotient of $\pi$ on which $\SL_2(F)$ operates trivially
is nonzero. As a consequence, the largest quotient $\pi_{F}$ of $\pi$ on which $N(F) = F$ operates trivially is nonzero. 
Clearly $\pi_{F}$ is a smooth module for $N(E)/N(F) = E/F$. Thus there are two options:

\begin{enumerate}
\item  $N(E)/N(F)$ does not operate trivially on $\pi_F$, in which case it is easy to prove that
for some nontrivial character $\psi: N(E)/N(F) \rightarrow \C^\times$, $\pi_{\psi} \not = 0$; cf. Lemma 11.1 in [AP]

\item   $N(E)/N(F)$ operates trivially on $\pi_F$, in which case in particular
$N(E)$ will operate trivially on the linear form $\ell: \pi \rightarrow \C$
which is $\SL_2(F)$-invariant. Thus this linear form will be invariant under
$\SL_2(F)$ as well as $N(E)$, and therefore the group generated by $\SL_2(F)$ and $N(E)$. 
It is easy to see that the group generated by $\SL_2(F)$ and $N(E)$ is $\SL_2(E)$. 
Thus $\ell: \pi \rightarrow \C$ is invariant under $\SL_2(E)$, a contradiction.

\end{enumerate}

This completes the proof of the lemma. \end{proof} 
We use this lemma to prove the following crucial lemma which is at the basis of the condition on the character
of the component groups in Conjecture \ref{conj3}.

\begin{lemma} \label{28}
Let $\pi$ be an irreducible admissible infinite dimensional representation of $\SL_2(E)$ which is distinguished 
by $\SL_2(F)$ and has  
 a Whittaker model for a character $\psi: E/F \rightarrow \C^\times$. Use this to parametrize representations
in the $L$-packet $\{\pi\}$ containing $\pi$ by $E^\times/G(\pi)$ where $G(\pi)$ is the subgroup of
$E^\times$ containing $E^{\times 2}$ such that $\lambda \in G(\pi)$ if and only if $\pi^\lambda \cong \pi$.
Suppose that the $L$-packet $\{\pi\}$ of $\SL_2(E)$ arises as base change of an $L$-packet $\{\pi_0\}$ 
of $\SL_2(F)$, and the $L$-packet $\{\pi_0\}$ is parametrized similarly by $F^\times/G(\pi_0)$.
Then a representation $\pi^\mu \in \{\pi\}$ for $\mu \in E^\times/G(\pi)$ 
is distinguished by $\SL_2(F)$ if and only if $\mu$ belongs to the kernel of the map    
$\Nm: E^\times/G(\pi) \rightarrow F^\times/G(\pi_0)$,  
a condition equivalent to the assertion
that for the dual map 
$\Nm^\vee: \left[F^\times/G(\pi_0)\right]^\vee \rightarrow \left [ E^\times/G(\pi) \right ]^\vee$,  
which is the map on component groups appearing in Conjecture \ref{conj3},  the character of 
$\left [ E^\times/G(\pi) \right ]^\vee$ given by $\mu \in  E^\times/G(\pi) $, is trivial on the
image of $\Nm^\vee$.
\end{lemma}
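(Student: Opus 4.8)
The strategy is to combine the previous lemma (Lemma~\ref{AP}, distinction forces a Whittaker model for $\psi:E/F\to\C^\times$) with the explicit description of base change for $L$-packets of $\SL_2$, and then to translate the resulting arithmetic condition into the stated cohomological/duality language. First I would recall the precise dictionary between the $L$-packet $\{\pi\}$ and the quotient $E^\times/G(\pi)$: having fixed $\psi$, the distinguished member of the packet (the one with a $\psi$-Whittaker model) is taken as basepoint, and the member $\pi^\mu$ corresponds to $\mu\in E^\times/G(\pi)$ via the action $\ell_\mu$ of $E^\times/E^{\times 2}=\GL_2(E)/E^\times\SL_2(E)$ on representations of $\SL_2(E)$. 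The same conventions on $\SL_2(F)$ give the parametrization $F^\times/G(\pi_0)$, and the relation between the two is governed by the norm map, so that the base-change map on packets is compatible with $\Nm:E^\times/G(\pi)\to F^\times/G(\pi_0)$.

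The heart of the argument is then the claim: $\pi^\mu$ is $\SL_2(F)$-distinguished iff $\Nm(\mu)$ is trivial in $F^\times/G(\pi_0)$. For the ``if'' direction, I would argue that when $\Nm(\mu)$ is trivial, $\pi^\mu$ still arises as base change of a representation in $\{\pi_0\}$ (possibly a different member, obtained by twisting $\pi_0$ by a character $\nu$ with $\Nm(\mu)=\nu^2$ up to $G(\pi_0)$, or simply the same $\pi_0$ when $\mu\in\ker\Nm$), and such base-change representations are distinguished --- here one invokes the general principle running through the paper that a representation obtained by base change from $G^{\out}(F)=\SL_2(F)^{\out}$ carries the relevant invariant form, together with the fact that for $\SL_2$ the quadratic character $\omega_G$ is trivial so no twist is needed. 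For the ``only if'' direction, suppose $\pi^\mu$ is distinguished; by Lemma~\ref{AP} it has a $\psi'$-Whittaker model for some $\psi':E/F\to\C^\times$, and since the characters of $E/F$ form a single $F^\times$-orbit under the action that permutes the packet (the Whittaker character of $\ell_\lambda\pi$ differs from that of $\pi$ by $\lambda$), the member of $\{\pi\}$ with a $\psi$-Whittaker model is $\ell_{\mu}\pi$ for $\mu$ in a prescribed coset; comparing with the base-change picture forces $\mu$ to lie in the image of the base-change correspondence on parameters, i.e.\ $\Nm(\mu)$ trivial. Here I would use the explicit table-style analysis of $\widehat{G}^{W_E}$ versus $\widehat{G}^{W_F}$ from Example~3 of Section~9 to control which twists of $\sigma$ extend to $W_F$.

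Finally, the translation into the duality statement is formal: $G(\pi)\supseteq E^{\times2}$ and $G(\pi_0)\supseteq F^{\times2}$ are finite-index subgroups, so $E^\times/G(\pi)$ and $F^\times/G(\pi_0)$ are finite abelian groups, and $\left[E^\times/G(\pi)\right]^\vee$, $\left[F^\times/G(\pi_0)\right]^\vee$ are canonically the component groups $\pi_0(Z(\sigma))$, $\pi_0(Z(\tilde\sigma))$ (these being, via the Langlands correspondence for the relevant tori appearing in the $L$-packet data, the Pontryagin duals). Under Pontryagin duality for finite abelian groups, $\ker\left(\Nm:E^\times/G(\pi)\to F^\times/G(\pi_0)\right)$ is exactly the annihilator of $\operatorname{im}\left(\Nm^\vee:\left[F^\times/G(\pi_0)\right]^\vee\to\left[E^\times/G(\pi)\right]^\vee\right)$; so ``$\mu\in\ker\Nm$'' is literally ``the character $\mu$ of $\left[E^\times/G(\pi)\right]^\vee$ kills the image of $\Nm^\vee$,'' which is the condition on component groups in Conjecture~\ref{conj3}. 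I would close by remarking that $\Nm^\vee$ is identified with the map $\pi_0(Z(\tilde\sigma))\to\pi_0(Z(\sigma))$ appearing there, completing the identification.

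**Main obstacle.** The delicate point is the ``only if'' direction: pinning down \emph{which} member of the $L$-packet has a Whittaker model for the specific character $\psi$ trivial on $F$ (as opposed to some other character of $E/F$), and showing that distinction of $\pi^\mu$ really forces $\mu$ into $\ker\Nm$ rather than merely into some larger coset. This requires carefully tracking the interaction between the $E^\times$-action on the packet, the shift of the Whittaker character under that action, and the base-change map on parameters --- essentially redoing, in the $\SL_2$ setting, the bookkeeping that in general is hidden inside the component-group condition of Conjecture~\ref{conj3}. The uniqueness-of-Whittaker-model input for $\SL_2$ (a given generic character is realized by a unique member of the packet) is what ultimately makes this tractable.
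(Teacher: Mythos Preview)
Your overall scaffolding is right in places --- use the Whittaker criterion from Lemma~\ref{AP}/[AP03] to pin down which members of the packet are distinguished, then translate into the $\ker\Nm$ language, then Pontryagin-dualize --- but there is a genuine gap at the heart of the argument, and your ``if'' direction is circular.

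For the ``if'' direction you invoke ``the general principle running through the paper that a representation obtained by base change from $G^{\out}(F)$ carries the relevant invariant form.'' That principle \emph{is} Conjecture~\ref{conj3}; the whole point of this lemma is to verify (part of) it for $\SL_2$, so you cannot use it as input. The paper avoids this entirely: it uses the full [AP03] result (quoted at the start of the section) that, once the packet condition $\{\pi^\sigma\}=\{\pi\}$ holds, a member $\pi^\mu$ is distinguished \emph{if and only if} it has a Whittaker model for some character of $E/F$. Tracking Whittaker characters under the $E^\times$-action then gives, in one stroke, that $\pi^\mu$ is distinguished iff $\mu\in F^\times G(\pi)/G(\pi)$. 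You should use this characterization for both directions, not just ``only if.''

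The second, more serious, gap is that you never actually prove the arithmetic identity
\[
F^\times G(\pi)/G(\pi)\;=\;\ker\bigl(\Nm:E^\times/G(\pi)\to F^\times/G(\pi_0)\bigr).
\]
You gesture at ``comparing with the base-change picture'' and at Example~3 of Section~9, but neither gives this. The inclusion $F^\times G(\pi)\subset\ker\Nm$ is easy (the norm is well-defined and kills $F^\times$); the reverse inclusion is the crux. The paper reduces it to the following claim about selftwists on $\GL_2$: if the lift $\tilde\pi$ of $\pi$ to $\GL_2(E)$ has a selftwist by $\chi_0\circ\Nm$ for some character $\chi_0$ of $F^\times$, then the lift $\tilde\pi_0$ on $\GL_2(F)$ has a selftwist by $\chi_0$ or by $\chi_0\omega_{E/F}$. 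This is checked case-by-case (principal series vs.\ discrete series). Without this step --- which is exactly what the Remark after the proof flags as nontrivial --- you have not shown that $\mu\in\ker\Nm$ forces $\mu\in F^\times G(\pi)$.

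Your final paragraph on Pontryagin duality is fine and matches the paper.
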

\begin{proof}

By Lemma \ref{AP}, a representation $\pi^\mu \in \{\pi\}$ for $\mu \in E^\times/G(\pi)$ 
is distinguished by $\SL_2(F)$ if and only if $\mu$ belongs to $F^\times$, or more precisely, 
in $[F^\times G(\pi)]/G(\pi) \subset E^\times/G(\pi)$. 
Thus we need to prove that $[F^\times G(\pi)]/G(\pi) \subset E^\times/G(\pi)$ is exactly 
the kernel of the map    
$\Nm: E^\times/G(\pi) \rightarrow F^\times/G(\pi_0)$.

We will find it convenient to use an equivalent interpretation of $G(\pi)$ as the common 
kernel (in $E^\times$) of possible selftwists $\tilde{\pi} \otimes \chi \cong \tilde{\pi}$ for 
an irreducible admissible
representation $\tilde{\pi}$ of $\GL_2(E)$ containing $\pi$; similarly $G(\pi_0)$ using an irreducible admissible
representation $\tilde{\pi}_0$ of $\GL_2(F)$ containing $\pi_0$.

Clearly if a character $\chi_0$ of $F^\times$ is a selftwist of $\tilde{\pi}_0$, the character $\chi \circ \Nm$ of $E^\times$
is a selftwist of $\tilde{\pi}$, which simply goes to say that the norm mapping 
$\Nm: E^\times/G(\pi) \rightarrow F^\times/G(\pi_0)$ is well-defined. Clearly, it is trivial restricted to $F^\times G(\pi)$,
and the lemma asserts that it is the precise kernel, i.e., the induced mapping
$\Nm: E^\times/[F^\times G(\pi)] 
\rightarrow F^\times/G(\pi_0)$ 
is injective, or equivalently, the mapping
on the character groups 
$\Nm^\vee:   \left [ F^\times/G(\pi_0) \right ]^\vee 
\rightarrow \left [ E^\times/[F^\times G(\pi)] \right]^\vee$ is surjective. 

Since $F^\times E^{\times 2} $ contains $E^1$,  $E^\times/[F^\times G(\pi)]$ is isomorphic to 
$ \Nm E^\times/[F^{\times 2}\Nm G(\pi)]$, a group of exponent 2, which is a quotient of 
$\Nm E^\times/F^{\times 2}$. Thus characters of 
$E^\times/[F^\times G(\pi)] $ 
are 
selftwists of $\tilde{\pi}$ which arise from a quadratic character 
of $F^\times$ through the norm mapping. 

It suffices then to prove the following claim.

{\bf Claim:} If $\tilde{\pi}$ has a selftwist by $\chi_0 \circ \Nm$ for $\chi_0$ a character
of $F^\times$, then $\tilde{\pi_0}$ has selftwist either by $\chi_0$ or $\chi_0 \omega_{E/F}$.

The proof of this claim is easily made by considering the principal series and discrete series separately. 
\end{proof}

\begin{remark}
Even for $\SL_2$, there is no obvious relationship between component groups of a representation $\pi_0$ of $\SL_2(F)$,
and its basechange to $\SL_2(E)$, except that there is a map between them (which  amounts to saying
that given a selftwist $\chi:F^\times \rightarrow \C^\times$ for a representation of $\GL_2(F)$, $\chi \circ \Nm: E^\times 
\rightarrow \C^\times$ is a selftwist for the basechanged representation). Thus the above claim, which is the crux of 
the proof  of Lemma \ref{28}, brings some order to otherwise unrelated objects.
\end{remark}

We next turn our attention to the question about multiplicities $\dim \Hom_{\SL_2(F)}[\pi, \C]$ for
a representation $\pi$ of $\SL_2(E)$. The following lemma combined with theorem 1.4 of [AP03] 
proves that $\dim \Hom_{\SL_2(F)}[\pi, \C] = m(\pi), $ or 0, where $m(\pi)$ is the number of distinct ways of lifting
the Langlands parameter of $\pi$ from $W_E$ to $W_F$.

\begin{lemma} \label{lemma8}
Given a Langlands parameter $\phi: W_E\longrightarrow \PGL_2(\C)$, corresponding to a representation $\pi$ of $\GL_2(E)$,
its extensions to $W_F$ in the following diagram
$$
\xymatrix{
W_E \ar[r]  \ar@{_(->}[d]  & \PGL_2(\C) \\
W_F \ar@{-->}[ur] & {}  
}
$$
has order given by,
  $$m(\pi)= \frac{X_{{\pi}}}{Z_{{\pi}}/Y_{{\pi}}},$$ where
\begin{enumerate}
\item
$X_\pi=\left\{\chi \in \widehat{F^\times} | 
\begin{tabular}{c}
\mbox{ $\pi$ {is} $\chi$-distinguished}\\
\mbox{ with respect to $\GL_2(F)$}
\end{tabular} 
\right\}.$
\item
$Y_\pi=\{\mu \in \widehat{E^\times} \mid
\pi \otimes \mu \cong \pi; \mu|_{_{F^\times}}=1 \}$.
\item
$Z_\pi=\{\mu \in \widehat{E^\times} \mid
\pi \otimes \mu \cong \pi \}$.
\end{enumerate}

\end{lemma}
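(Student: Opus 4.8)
The plan is to translate the question of lifting $\phi: W_E \to \PGL_2(\C)$ to $W_F$ into the cohomological language already set up in Lemma~\ref{fibers}, and then identify each of the three cohomology groups that appear with the groups $X_\pi$, $Y_\pi$, $Z_\pi$ (up to the stated quotient). Concretely, by the corollary to Lemma~\ref{fibers} applied with $\widehat{G} = \SL_2(\C)$, the set of lifts of $\phi$ to $W_F$ (when non-empty) is a torsor under $H^1(\Gal(E/F), Z_{\SL_2(\C)}(\phi)^{W_E})$, where $Z_{\SL_2(\C)}(\phi)$ is the centralizer in $\SL_2(\C)$ of the image of $\phi|_{W_E}$; since $\phi$ factors through $\PGL_2(\C)$, this centralizer — pulled back to $\SL_2$ — is exactly the component group data attached to $\pi$ as a representation of $\GL_2(E)$, which is governed by selftwists. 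So first I would record $m(\pi) = \left| H^1(\Gal(E/F), Z(\phi|_{W_E})) \right|$, interpreting $Z(\phi|_{W_E})$ correctly as a subgroup of $\SL_2(\C)$ (the preimage under $\SL_2 \to \PGL_2$ of the centralizer in $\PGL_2$).

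Second, I would make the dictionary explicit. The group $Z_\pi = \{\mu \in \widehat{E^\times} \mid \pi \otimes \mu \cong \pi\}$ is, by the local Langlands correspondence for $\GL_2(E)$, the Pontryagin dual of $\pi_0$ of the centralizer of $\phi|_{W_E}$ inside $\SL_2(\C)$ — more precisely, selftwists of $\pi$ correspond to the component group of the centralizer of the projective parameter, and $Z_\pi$ is the full such group of selftwists over $E$. Similarly $Y_\pi$ is the subgroup of selftwists trivial on $F^\times$, i.e.\ coming from $\Gal(E/F)$-invariance in the appropriate sense, and $X_\pi$ records the $\GL_2(F)$-distinction characters, which by the theory of base change for $\GL_2$ (Kable / Anandavardhanan–Prasad) corresponds to the extensions of $\phi|_{W_E}$ to $W_F$ up to twist, i.e.\ to $H^1(\Gal(E/F), \cdot)$ in the numerator. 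The counting identity $m(\pi) = X_\pi / (Z_\pi / Y_\pi)$ then amounts to computing the cohomology of $\Gal(E/F) = \Z/2$ acting on the finite or toral group $Z(\phi|_{W_E})$: one uses that for a $\Z/2$-module $A$ one has $|H^1(\Z/2,A)| = |A^{\Z/2}| / |N A| = |\widehat{A}^{\Z/2}| \cdot |\,\text{something}\,|$, combined with the Herbrand-quotient-type bookkeeping, and the fact that $X_\pi$ — the distinction characters — is precisely the group measuring the failure of the norm map $E^\times \to F^\times$ to be reflected in selftwists, exactly as analyzed in Lemma~\ref{28}.

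Third, I would handle the case split that the referenced \textquotedblleft easily made\textquotedblright\ claim in Lemma~\ref{28} already flags: principal series versus discrete series representations $\pi$ of $\GL_2(E)$. For principal series $\pi = \Ind(\chi_1, \chi_2)$ the centralizer of the parameter is a torus (or its normalizer), selftwists are explicit (they are the characters $\chi_1/\chi_2$ of order dividing $2$, plus the swap), and $H^1(\Z/2, \cdot)$ is computed directly; for supercuspidal $\pi$ attached to a character $\theta$ of a quadratic extension $L/E$, selftwists are governed by $\Gal(L/E)$ and the relevant cohomology is again a finite $\Z/2$-module, handled by hand. In each case one checks that the three displayed groups match the three cohomological quantities and that the quotient formula holds.

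\textbf{Main obstacle.} The hard part will be the bookkeeping in the middle step: correctly matching $Z_\pi$ (selftwists over $E$) and $Y_\pi$ (selftwists over $E$ trivial on $F^\times$) with, respectively, the full component group $\pi_0(Z(\phi|_{W_E}))$ and its $\Gal(E/F)$-coinvariants or the image of the norm, and simultaneously identifying $X_\pi$ with the correct $H^1$. The subtlety is that the centralizer lives in $\SL_2(\C)$ (so there is a genuine $\mu_2$ ambiguity coming from $\SL_2 \to \PGL_2$), the action of $\Gal(E/F)$ on it depends on the chosen lift (cf.\ the Remark following Example~3 in the Character-twists section), and one must verify that the resulting count $m(\pi)$ is nonetheless independent of that choice. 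I expect this to reduce, after the case split, to a handful of explicit finite-group cohomology computations of the kind $H^1(\Z/2, \C^\times) = \Z/2$ or $0$, $H^1(\Z/2, \OO_2(\C))$, etc., all of which are already tabulated in Example~3 — so the real work is the naturality/compatibility check, not the cohomology itself.
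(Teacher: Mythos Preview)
Your approach is genuinely different from the paper's, and the difference is instructive. The paper does \emph{not} invoke Lemma~\ref{fibers} or compute $H^1(\Gal(E/F), Z(\phi))$ at all. Instead it works entirely on the representation-theoretic side: it introduces the auxiliary group
\[
A_\pi = \frac{\{\chi : E^\times \to \C^\times \mid (\pi\otimes\chi)^\sigma \cong \pi\otimes\chi\}}{\{\chi \mid \chi = \chi^\sigma\}},
\]
observes that each $\chi \in A_\pi$ gives a pair $\{\pi',\pi'\otimes\omega_{E/F}\}$ of $\GL_2(F)$-representations with $BC(\pi') = \pi\otimes\chi$, and then counts the lifts to $\PGL_2(\C)$ as orbits of the free action of $Z_\pi/Y_\pi$ on this set of pairs. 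The bijection $A_\pi \cong Y_\pi$ via $\chi \mapsto \chi^{-1}\chi^\sigma$ is elementary; the appearance of $X_\pi$ comes from the separate fact (for discrete series) that $Y_\pi \cong X_\pi$. For principal series the paper simply does the four cases by hand.

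Your cohomological route is in principle sound --- $m(\pi) = |H^1(\Gal(E/F), Z(\phi))|$ is correct --- but there is a real gap in your middle step. You write that $X_\pi$ ``corresponds to the extensions of $\phi|_{W_E}$ to $W_F$ up to twist, i.e.\ to $H^1(\Gal(E/F),\cdot)$ in the numerator,'' but this is essentially what you are trying to prove, not an input you can cite. The link between $\chi$-distinguishedness of $\pi$ by $\GL_2(F)$ and the cohomology of the centralizer is not a formal dictionary; it passes through exactly the base-change bookkeeping the paper carries out via $A_\pi$. Put differently: $X_\pi$ is defined by a period condition, $Z_\pi$ and $Y_\pi$ by selftwist conditions, and the only known bridge between the two kinds of condition here is the bijection $Y_\pi \leftrightarrow X_\pi$ for discrete series (which is itself a nontrivial result, not a tautology). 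Your Herbrand-quotient heuristic for $H^1(\Z/2,A)$ does not produce $X_\pi$ on its own; you would still need to supply that bijection, at which point you have recovered the paper's argument rather than bypassed it. The tabulations from Example~3 concern $\PGL_2$-centralizers for $\SL_2$-packets and do not by themselves identify $X_\pi$.
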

\begin{proof}: Consider the group,
\[A_\pi= \frac{\left\{\chi: E^\times \rightarrow \C^\times 
 \mid (\pi \otimes \chi)^\sigma \cong 
\pi \otimes \chi\right \}}{\left \{\chi: E^\times \rightarrow \C^\times  \mid \chi=\chi^\sigma\right \}}.\]
For $\chi \in A_\pi$, let $\pi_{\chi^{-1}\chi^\sigma}$ denote the class of pair of representations 
$\{\pi^\prime,\pi^\prime \otimes \omega_{_{E/F}}\}$ of
$\GL_2(F)$ such that $\pi \otimes \chi = {\rm BC} (\pi^\prime)$. 
Note that $Z_\pi/Y_\pi$ acts freely on
$\{\pi_{\chi^{-1}\chi^\sigma} \mid \chi \in A_\pi \}$. The possible extensions of the parameter 
 $\phi: W_E \rightarrow \PGL_2(\C) $ to $W_F$ is given by orbits under the above action:
\[\frac{\{\pi_{\chi^{-1}\chi^\sigma} \mid \chi \in A_\pi \}}{Z_\pi/Y_\pi}.\]
We note that $A_\pi$ is in bijection with $Y_\pi$ under the map $\chi \mapsto \chi^{-1}\chi^\sigma$, 
and that if $\pi$ is in the discrete series, then 

\begin{enumerate}
\item The set on the numerator above is in bijection with $A_\pi$, and 
\item $Y_\pi$ is in bijection with $X_\pi$. 
\end{enumerate}

This proves the lemma if $\pi$ is a 
discrete series representation. 
For principal series representations, both $(1)$ and $(2)$ are wrong in general,
thus the lemma is more subtle; we verify  it in a case-by-case check below.
\end{proof}

\vspace{4mm}

\noindent{\bf Examples :} We illustrate the multiplicity formula with examples
of principal series representations of $\SL_2(E)$ taken from [AP03]. In what follows, we introduce
the notation $\pi_1 \sim \pi_2$ for two irreducible representations of $\GL_2(E)$ (or $\GL_2(F)$) which are twists
of each other by a character; thus on restriction to $\SL_2(E)$ (or $\SL_2(F)$), they give rise to the same $L$-packet of 
representations.

Let $V$ be an irreducible admissible representation
of $\SL_2(E)$ that occurs in the restriction of a principal series
representation $\pi = {\rm Ps}(\chi_1,\chi_2)$ of $\GL_2(E)$. 
Suppose that $V$ is distinguished with respect to $\SL_2(F)$
(and therefore by Proposition 2.3 of [AP03], $\chi_1\chi_2^{-1}|_{_{F^\times}}=1$ or $\chi_1\chi_2^{-1}=
(\chi_1\chi_2^{-1})^\sigma$).
Then we have,
\begin{enumerate}
\item
$\dim_{{\Bbb C}}{\rm Hom}_{\SL_2(F)}(V,1)=1$,
if 
$\chi_1\chi_2^{-1}|_{F^\times}=1$, and $\chi_1^2 \not = \chi_2^2$. The $L$-packet containing $V$ has only 1 element.

In this case, since $\chi_1/\chi_2$ is trivial on $F^\times$, 
there is a character $\chi$ of $E^\times$ such that $\chi_1/\chi_2 = \chi/\chi^\sigma$, and so 
$$~~~~~~~~~~\pi = {\rm Ps}(\chi_1,\chi_2) \sim  {\rm Ps}(\chi_1/\chi_2,1) = {\rm Ps}(\chi/\chi^\sigma, 1) \sim 
{\rm Ps}(\chi,\chi^\sigma) = {\rm BC}(Ds(\chi)), $$  comes as the base change 
of a unique representation of $\GL_2(F)$ (up to twists) 
which is the discrete series representation $Ds(\chi)$ corresponding to the character $\chi$ of $E^\times$.
\item 
$\dim_{{\Bbb C}}{\rm Hom}_{\SL_2(F)}(V,1)=1$,
if 
$\chi_1\chi_2^{-1}|_{F^\times}=\omega_{_{E/F}}$, $\chi_1^2=\chi_2^2$, 
$\chi_1 \not = \chi_2$.  The $L$-packet containing $V$ has 2 elements, and both are distinguished by $\SL_2(F)$.

In this case, $$\frac{\chi_1}{\chi_2} = \frac{\chi_2}{\chi_1} = \sigma\left (\frac{\chi_1}{\chi_2} \right ),$$ 
so $\chi_1/\chi_2 
= \mu \circ \N$ for a character $\mu$ of $F^\times$ with $\mu^2 = \omega_{E/F}$. Hence 
the representation $\pi$ (up to twists) is the base change of a unique
principal series representation of $\GL_2(F)$ (up to twists).

We note that the basechange map from 
equivalence classes of parameters $W_F\rightarrow \PGL_2(\C)$ to 
equivalence classes of parameters $W_E\rightarrow \PGL_2(\C)$ is of degree 2 at the representation 
$\mu:W_F\rightarrow \left ( \begin{array}{cc} \mu & 0 \\ 0 & 1\end{array}\right ) \in \PGL_2(\C)$ 
going to $(\chi_1,\chi_2):W_E\rightarrow \left ( \begin{array}{cc} \chi_1 & 0 \\ 0 & \chi_2\end{array}\right ) \in \PGL_2(\C)$,
as follows from the discussion on parameter spaces for $\SL_2(F)$ in section \ref{parameter} (with the connected component of parameter
space passing through the point $\mu$ (resp $(\chi_1,\chi_2)$) being $\C^\times$ (resp. $\C$), and the basechange 
mapping being $z\rightarrow z+z^{-1}$ being considered at the point $1$ or $-1$ in $\C^\times$ which is clearly of degree 2 
at its points of ramification.
 
\item
$\dim_{{\Bbb C}}{\rm Hom}_{\SL_2(F)}(V,1)=2$,
if either
$\chi_1\chi_2^{-1} =(\chi_1\chi_2^{-1})^\sigma = \mu \circ \N$ and $\chi_1^2 \neq \chi_2^2$,
or $\chi_1=\chi_2$. The $L$-packet containing $V$ has only 1 element.

In this case, 
$$\pi = {\rm Ps}(\chi_1,\chi_2) \sim  {\rm Ps}(\chi_1/\chi_2,1) = {\rm Ps}(\mu\circ \N,1),$$ 
hence, $\pi = {\rm Ps}(\chi_1,\chi_2)$ arises as base change of two principal series representations 
of $\GL_2(F)$ which are ${\rm Ps}(\mu,1)$, and ${\rm Ps}(\mu, \omega_{E/F})$.

\item
$\dim_{{\Bbb C}}{\rm Hom}_{\SL_2(F)}(V,1)=3$,
if $\chi_1\chi_2^{-1}|_{F^\times}=1$, $\chi_1^2=\chi_2^2$, $\chi_1 \not = 
\chi_2$. The $L$-packet containing $V$ has 2 elements, but only one member is  distinguished by $\SL_2(F)$.

In this case, the representation $\pi$ (up to twists) is the base change of a unique
discrete series representation of $\GL_2(F)$, and two 
principal series representation of $\GL_2(F)$ (up to twists):
$${\rm Ps}(\chi_1,\chi_2) \sim  {\rm Ps}(\chi_1/\chi_2,1) = {\rm Ps}(\chi/\chi^\sigma, 1) \sim 
{\rm Ps}(\chi,\chi^\sigma) = {\rm BC}(Ds(\chi)), $$  as well as
$${\rm Ps}(\chi_1,\chi_2) \sim  {\rm Ps}(\chi_1/\chi_2,1) = {\rm Ps}(\mu \circ \N, 1) = {\rm BC(Ps}(\mu, 1))={\rm BC(Ps}(\mu, \omega_{E/F})), $$
where $\mu$ is a character of $F^\times$, $\mu \not = 1, \omega_{E/F}$, with $\mu^2 =1$. 
\end{enumerate}

The following table summarizes the information relevant for us which is contained in the above  four examples.
In examples $(3)$ and $(4)$, the representation $\pi$ is the basechange of respectively 2 and 3 representations of
$\SL_2(E)$, thus there are subcases, listed as ${\rm III}(a),{\rm III}(b), {\rm IV}(a),\cdots$. 
In this table, we have used  ${\rm cok}=\frac{|BC(\pi_1)|}{j|\pi_1|}$ where $j$ is the natural map from group 
of connected components associated to $\pi_1$      to the group of connected components of $\pi= BC(\pi_1)$ which is
nothing but base change of characters of $F^\times$ which are selftwists of $\pi_1$ to characters of $E^\times$ (which are selftwists for $BC(\pi_1)$).  
The last column of the table is what is relevant for us: it shows that by our conjecture in each case the multiplicity contributed to 
the space of $\SL_2(F)$-invariant forms on $\pi$ 
by a representation $\pi_1$ which basechanges to $\pi$, is  1, and therefore for the representation $\pi$ of $\SL_2(E)$, $m(\pi)$, the space of $\SL_2(F)$-invariant forms on $\pi$, equals the number of distinct ways $\pi$ is a basechange from
$\SL_2(E)$.

\vspace{4mm}

\begin{center}
\begin{tabular}{c|c|c|c|c|c|c}
\hline 
Case & $|\pi_1|$ = size of the 
& $|\pi| = |BC(\pi_1)|$  & cok
 & $d(\Phi) $ 
&  $d(\Phi) /{\rm cok}$
\\
& $L$-packet $\pi_1$  &= size of the $L$-packet 
& = $\frac{|BC(\pi_1)|}{j|\pi_1|}$   
& =degree of $\Phi$  &  
\\
&  for $\SL_2(F)$&  $\pi$ for $\SL_2(E)$ & & &  \\
& & & && \\
\hline
& & & && \\
I & 2 & 1 & 1 & 1 & 1 \\
& & & && \\

\hline 
& & & && \\

II & 1 & 2 & 2 & 2 & 1 \\ 
\hline 
& & &  &&\\

III(a) &  1 & 1 & 1 & 1 & 1 \\
\hline 
& & &  &&\\

III(b) &  1 & 1 & 1 & 1 & 1 \\
\hline

& & &  &&\\

IV(a) &  4 & 2 & 1 & 1 & 1 \\
\hline

& & &  &&\\

IV(b) &  2 & 2 & 1 & 1 & 1 \\

\hline

& & &  &&\\

IV(c) &  2 & 2 & 1 & 1 & 1 \\

& & &  &&\\
\hline
\end{tabular}
\end{center}
\vskip 10pt

\section{$\GL(n)$ and $\U(n)$}

{\bf The pair $(\GL_n(E),\GL_n(F))$.}   
In this case, $G^{\out}$, is the 
unitary group $\U_n$ defined by $E/F$. Our conjecture above says that representations of
$\GL_n(E)$ distinguished by $\GL_n(F)$ are precisely those which arise as 
base change of a representation of $\U_n(F)$.  By Proposition \ref{prop5}, the base change 
map taking the  Langlands parameter of a representation of $\U_n(F)$  to one of $\GL_n(E)$ is an
injective map; since multiplicity one for the pair $(\GL_n(E),\GL_n(F))$   
is well-known (an elementary 
result based on the method of Gelfand pairs, cf. [Fli91]), our multiplicity formula 
matches well with known results in this case. It is expected (conjecture of Jacquet, Rallis and Flicker) 
that representations of $\GL_n(E)$ which are distinguished by $\GL_n(F)$ are precisely those which
arise as base change from $\U_n(F)$, and this is known for discrete series representations of $\GL_n(E)$ 
by A. Kable.

{\bf The pair $(\GL_n(E),\U_n(F))$.} It is known that if a 
representation $\pi$ of $\GL_n(E)$ is distinguished by $\U_n(F)$, then it must arise as a base change
of a representation of $\GL_n(F)$.
It has been conjectured by Jacquet in [Jac01] that if $n=2m+1$ is odd, 
and if a representation $\pi$ of $\GL_n(E)$ arises as a base change
of a representation of $\GL_n(F)$, 
then $\dim \Hom_{\U_n(F)}[\pi, \C]$ 
is equal to half the number of representations of $\GL_n(F)$ which base change to the representation
$\pi$ of $\GL_n(E)$. Our conjectures fit well with this since in this case our conjecture will be dealing with two pure
innerforms of the unitary group $\U_n$, say $\U_{\{m,m+1\}}$ and $\U_{\{m+1,m\}}$, 
but these two unitary 
groups are actually identical inside $\GL_n(E)$, therefore for any representation $\pi$ of $\GL_n(E)$ 
$\dim \Hom_{\U_{\{m,m+1\}}(F)}[\pi, \C] = \dim \Hom_{\U_{\{m+1,m \}}(F)}[\pi, \C]$. Observe that if a parameter 
$\sigma'$ for $\GL_{2m+1}(F)$ 
basechanges to the parameter $\sigma$ for $\GL_{2m+1}(E)$, then so does the parameter $\sigma' \otimes\omega_{E/F}$ 
for $\GL_{2m+1}(F)$. Further, by looking at the determinants, it is clear that the parameters $\sigma'$ and $\sigma'\otimes \omega_{E/F}$ for $\GL_{2m+1}(F)$ are distinct. Our conjectures imply that if $\sigma'$ contributes to a linear form 
on $\pi$ invariant under $\U_{\{m,m+1\}}$, then  $\sigma' \otimes \omega_{E/F}$ will 
contribute to a linear form 
on $\pi$ invariant under $\U_{\{m+1,m\}}$.

For $n$ even, our conjectures propose that
for $\U_n(F)$ the quasi-split unitary group over $F$ defined by a 
Hermitian form of dimension $n$ over $E$,    $\dim \Hom_{\U_n(F)}[\pi, \C]$ 
 is at least  the number of  equivalence classes of 
representations of $\GL_n(F)$ under the equivalence $V \sim V \otimes \omega_{E/F}$
which base change to $\pi$ (note that  $V$ and $ V \otimes \omega_{E/F}$ have the same base change
to $\GL_n(E)$); and for $\U_n(F)$ the non-quasi-split unitary group over $F$ 
defined by a Hermitian form of dimension $n$ over $E$,    $\dim \Hom_{\U_n(F)}[\pi, \C]$ 
 is at least  the number of  equivalence classes of 
representations of $\GL_n(F)$ under the equivalence $V \sim V \otimes \omega_{E/F}$ 
but $V \not \cong V \otimes \omega_{E/F}$, 
which base change to $\pi$.

It is known that there are exactly two Hermitian spaces in dimension 1 which are distinguished by their 
Hermitian norm  which is an element of $F^\times/\Nm E^\times = \pm 1$. 
We denote 
$v^+$ to be a vector in a Hermitian space with norm 1 in $F^\times/\Nm E^\times$,  $v^-$ to be a vector with norm $-1$ in 
$F^\times/\Nm E^\times = \pm 1$, and we use the notation $v^{\pm}$ to denote anyone (but exactly one)
 of these two vectors.
 We use this notation to define $\{e_1^{\pm },  e_2^{\pm }, \cdots,   e_n^{\pm }\}$ 
to be any one of the $2^n$ 
 Hermitian space over $E$ of dimension $n$ with orthogonal basis $e_i^{\pm }$. Over a non-Archimedean field $E$, there are
exactly two Hermitian spaces of a given dimension, distinguished by their discriminant, thus exactly $2^{n-1}$ 
of these spaces give rise to each of the two isomorphism classes of Hermitian spaces over $E$ of dimension $n$. 
Define $V^+$ to be an $n$-dimensional vector space over $E$ containing all the vectors $e_i^{\pm }$ 
with product of the signs $= 1$, 
and $V^-$ to be an $n$-dimensional vector space over $E$ containing all the vectors $e_i^{\pm }$ 
with product of the signs $= -1$.

Define $V_i$ to be an $i$-dimensional vector subspace of a fixed vector space $V$ of dimension $n$ 
---which may be either $V^+$ or $V^-$--- to be generated by the $i$-vectors 
$\{e_1^{\pm },  e_2^{\pm }, \cdots,   e_i^{\pm }\}$ choosing one of the signs in $e_j^{\pm }$ for each $j$, with
$$V_1 \subset V_2 \subset V_3 \subset \cdots \subset V_n = V,$$
and different vectors assumed to be orthogonal.

For a given Hermitian structure on $V$ to be either $V^+$ or $V^-$, 
these are distinct orbits of $\U(V^{\pm })$ on the flag variety of $\GL(V)$, 
i.e., give rise to distinct elements in   $\U(V^{\pm })\backslash \GL(V)/B$, with stabilizer which is 
$\U(1)^n$ fixing $e_i^{\pm }$, thus these are open orbits of $\U(V^{\pm })$ on the flag variety of $\GL(V)$, and 
it can be seen to be the only open orbits of $\U(V^{\pm })$ on the flag variety of $\GL(V)$.

Suppose now that we are given a unitary principal series representation $\pi$ on $\GL(V)=\GL_n(E)$ induced by
$n$ unitary characters $\chi_1,\chi_2,\cdots, \chi_n$ of $E^\times$ which each $\chi_i = \mu_i \circ \Nm$
obtained from characters $\mu_i$ of $F^\times$. Such principal series representations are
known to be irreducible representations of $\GL_n(E)$, and  the  Langlands parameter $\sigma_\pi$ of 
$\pi$ is 
\begin{eqnarray*}
\sigma_\pi & = & \chi_1 \oplus \chi_2 \oplus \cdots \oplus \chi_n, \\
& = & \mu_1 \circ \Nm \oplus \mu_2 \circ \Nm \oplus \cdots \oplus \mu_n \circ \Nm.
\end{eqnarray*}
In fact since the norm mapping from $E^\times$ to $F^\times$ has cokernel of order 2, if the characters 
$\chi_i$ are distinct, there are exactly $2^n$ ways of lifting the parameter $\sigma_\pi$ from a parameter
of a representation of $\GL_n(F)$. 

It is easy to see that the restriction of the principal series representation $\pi$ of $\GL(V)$ to each of the open
orbits contributes one dimensional space to the space of $\U(V^{\pm })$-invariant linear form on functions 
supported on that open orbit. In a `generic' situation, these linear forms extend to $\pi$, giving rise to 
$2^n$ linear forms on $\GL(V)$ half of which give $\U(V^+)$-invariant linear forms, and half of which
give $\U(V^-)$-invariant linear forms. This is exactly what our conjecture proposes. The same analysis that we 
have carried out works in the Archimedean case too except that by gluing the lines $\langle e_i^{\pm } \rangle $, 
we do not create
only 2 Hermitian spaces, but Hermitian spaces of all possible signatures $(p,q)$ with $p+q =n$, each occurring 
with multiplicity $\dbinom{p+q}{p}$, which is what the dimension of the space of linear forms $\Hom_{\U(p,q)}[\pi,\C]$
would be, at least in `generic' cases.

\section{Real groups}
Let $G$ be a reductive algebraic group over $\R$. The question of interest for this paper is to classify 
irreducible representations
$\pi$ of $G(\C)$ for which $\Hom_{G(\R)}[ \pi, \C]$ is nonzero, a question of classical interest, and considered by many people, see for example 
the thesis of F. Bien
\cite {Bie}, and the Bourbaki talk of Delorme in \cite {Del} on the work of A. Bouaziz et P. Harinck. In the context of symmetric spaces of real groups ---so certainly in our case, there is the {\it automatic continuity theorem} due to [B-D] according to which an $(\mathfrak h, H \cap K)$-invariant
 linear form on $(\mathfrak g, K)$-modules (for $H$ a symmetric subgroup of $G$ with Lie algebras $\mathfrak h, \mathfrak g$, respectively)
 extends continuously to its smooth Frechet globalization of moderate growth, 
thus we do not need to take topology into account even for real groups.

In some ways, dealing with the distinction question for representations of $G(\C)$ is a bit easier from our point of view since
there are no $L$-packets for $G(\C)$, however, base change map from $G^{\out}$ to $G(\C)$ can still have fibers of multiplicity more than 1.

Here is a basic case of our conjecture. 

\begin{conj} \label{conj4}For an irreducible representations
$\pi$ of $G(\C)$ for which $\Hom_{G(\R)}[ \pi, \omega_G]$ is nonzero, 
we must have $\pi^\sigma = \pi^\vee$. If $G$ is quasi-split over $\R$,
then  $\Hom_{G(\R)}[ \pi, \omega_G]$ is nonzero if and only if the parameter for $\pi$ comes as basechange of a parameter
for $G^{\out}$.
\end{conj}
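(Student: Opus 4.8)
The plan is to prove Conjecture \ref{conj4} in two directions, treating the ``only if'' part (distinction forces the parameter to be a base change) separately from the ``if'' part (base-change parameters are distinguished). Throughout I will exploit that $G(\C)$ has no $L$-packet issues, so a parameter determines an irreducible representation essentially uniquely, and that base change here is the map on parameter spaces $\Phi\colon \Sigma_\R(G^{\out})\to\Sigma_\C(G)$ induced by the functorial morphism ${}^LG^{\out}\to{}^LG\cong{}^LG(\C)$, i.e., restriction along $W_\C\hookrightarrow W_\R$ composed with the twist identifying $\widehat{G}^{\out}$ and $\widehat{G}$ over $E=\C$, $F=\R$.

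First I would establish the necessary condition $\pi^\sigma\cong\pi^\vee$. Since $\pi$ is $\omega_G$-distinguished by $G(\R)$, there is a nonzero $\ell\in\Hom_{G(\R)}[\pi,\omega_G]$; applying the Galois conjugation $\sigma$ (the complex conjugation on $G(\C)$ whose fixed points are $G(\R)$) and pairing $\pi$ with $\pi^\sigma$, one obtains a $G(\R)$-invariant (up to $\omega_G^2=1$) pairing $\pi\times\pi^\sigma\to\C$, hence a nonzero map $\pi^\sigma\to\pi^\vee$; by irreducibility this is an isomorphism. This is the same mechanism as in the unitary-representation half of Lemma 1, and it does not require any deep input. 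Combined with the conjectural description of the contragredient (Conjecture \ref{conj1}, which for real groups is a theorem of Adams, cf. Section 5), $\pi^\vee$ corresponds to $\iota\circ\varphi$ with $\iota$ the Chevalley involution of ${}^LG$; so $\pi^\sigma\cong\pi^\vee$ translates into a concrete symmetry of the parameter $\varphi$ of $\pi$ under $\sigma$-conjugation composed with $\iota$, which is precisely the condition that $\varphi$ extend to $W_\R$ with values in ${}^LG^{\out}=\widehat{G}\rtimes W_\R$ where $W_\R$ acts through $\sigma\mapsto(\text{Chevalley involution})$. The content of this step is to see that this symmetry is not merely necessary for base change but (at the level of the parameter alone, ignoring the component-group/cohomological obstruction) equivalent to it — which is exactly Lemma \ref{fibers} with $E=\C$, $F=\R$, $G$ replaced by $G^{\out}$: the set of lifts is a torsor-like object controlled by $H^1(\Gal(\C/\R),\widehat{G}^{\out}{}^{W_\C})=H^1(\langle\varphi(j)\rangle,\wM_\varphi(\C))$, which can be empty.

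Next I would prove the sufficiency: if the parameter $\varphi$ of $\pi$ is $\Phi(\tilde\varphi)$ for some $\tilde\varphi\in\Sigma_\R(G^{\out})$, then $\pi$ is $\omega_G$-distinguished by $G(\R)$. Here the natural strategy is geometric: realize $\pi$, or at least the generic/principal-series members covered by Sections 18--19, on a flag variety $B(\C)\backslash G(\C)$ and use the orbit decomposition of $G(\R)$ acting on it. By the analysis of open orbits in Section 19 (and the closed-orbit analysis of Section 18), a $G(\R)$-orbit of the right kind carries a $G(\R)$-equivariant line bundle whose sections produce a nonzero element of $\Hom_{G(\R)}[\pi,\omega_G]$ — and the appearance of $\omega_G$ rather than the trivial character is forced precisely by the modulus computation on that orbit, which is where the quadratic character $\omega_G\colon G(\R)\to\Z/2$ of Section 8 (and Proposition \ref{prop3} relating it to $\delta_G$) enters. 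The bookkeeping that the existence of the lift $\tilde\varphi$ is exactly what makes the relevant Mackey-theory/orbit integral nonvanishing is the heart of the matter; for tori it is Section 14, and for $\SL_2$ it is Section 15, so I would first check the statement reduces, via parabolic induction and the standard-module/Langlands-quotient formalism together with the inductive structure of $\omega_G$ under Levi subgroups ($\omega_{G_1\times G_2}=\omega_{G_1}+\omega_{G_2}$), to the case where $\pi$ is (a twist of) tempered, or even discrete series, on a Levi — and then to genuinely small rank.

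The main obstacle, as the author essentially concedes in Sections 18--19 by only handling ``certain'' principal series, is the sufficiency direction in full generality: passing from ``there is a lift of the parameter'' to an \emph{actual} nonzero invariant functional requires controlling whether the orbit-theoretic linear form on functions supported on a single $G(\R)$-orbit in $B(\C)\backslash G(\C)$ extends to all of $\pi$ (the ``generic situation'' caveat in Section 16), and reconciling the closed-orbit construction with the open-orbit construction when several orbits contribute. A secondary difficulty is that for non-tempered $\pi$ the reduction through the Langlands quotient can destroy exactness, so one must track distinction through the exact sequences relating a standard module to its Langlands quotient — the real-group analogue of the subtleties the author flags for $p$-adic principal series. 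I would therefore present the ``only if'' half and the torus and $\SL_2$ cases of the ``if'' half as essentially complete, and phrase the general ``if'' direction as reduced to the open-orbit nonvanishing statement proved in Section 19, leaving the remaining cases as the genuinely open part of the conjecture.
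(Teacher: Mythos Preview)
The statement you are addressing is labelled a \emph{conjecture} in the paper, and the paper does not supply a proof of it. What the paper does provide is partial evidence: the torus case (Section~14), the $\SL_2$ case (Section~15), and the closed/open orbit analyses for principal series (Sections~18--19). Your proposal correctly identifies this structure and honestly flags the ``if'' direction as incomplete in general; in that respect your roadmap is faithful to what the paper actually does.

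There is, however, a genuine gap in the part you describe as ``essentially complete.'' Your argument for the necessary condition $\pi^\sigma\cong\pi^\vee$ is not valid as written. From a single nonzero $\ell\in\Hom_{G(\R)}[\pi,\omega_G]$ you cannot simply ``pair $\pi$ with $\pi^\sigma$'' to produce a $G(\R)$-invariant bilinear form $\pi\times\pi^\sigma\to\C$; a functional on $\pi$ is not a pairing. What follows easily from $\ell$ is only that $\pi$, $\pi^\sigma$, and $\pi^\vee$ are all $\omega_G$-distinguished (since $\sigma$ fixes $G(\R)$ and $\omega_G^2=1$), which is far weaker than $\pi^\sigma\cong\pi^\vee$. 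The analogy you draw with Lemma~1 is misplaced: that lemma concerns Whittaker functionals and uses that for unitary $\pi$ one has $\pi^\vee\cong\bar\pi$, which is a different mechanism. In the paper the implication ``distinguished $\Rightarrow$ $\{\pi^\vee\}=\{\pi^\sigma\}$'' is part of Conjecture~\ref{conj3} itself, proved only in special cases (finite fields via character theory in Section~6; $\GL_n$ via Flicker/Gelfand--Kazhdan arguments), not a lemma available for free. So you should not present the ``only if'' half as established; it is as conjectural here as the ``if'' half, and even the step from $\pi^\sigma\cong\pi^\vee$ to ``the parameter lifts to $W_\R$'' has the cohomological obstruction you yourself note can be nonempty.
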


\begin{remark}
The notation $\pi^\sigma$ for a representation $\pi$ of $G(\C)$ 
implies that it depends on the real form of $G$ over $\R$, but in fact 
if $G_1$ and $G_2$ are two forms of $G$ over $\R$ which are inner twists of each other, then for the corresponding involutions (complex conjugations) $\sigma_1$ and $\sigma_2$ on $G(\C)$,
we have $\pi^{\sigma_1} \cong \pi^{\sigma_2}$. 
This is because to say that $G_1$ and $G_2$ are inner twists of each other means that 
$\sigma_1(g) = k \sigma_2(g) k^{-1}$ for some $k \in G(\C)$.  Clearly then, $\pi^{\sigma_1} \cong \pi^{\sigma_2}$. The following Lemma goes further in this direction, whose proof we omit.
\end{remark}

\begin{lemma}
Let $G$ be a connected reductive group over $\R$, with ${}^LG$ its $L$-group which endows $\widehat{G}$ with an action of ${\rm Gal}(\C/\R)$, call it $A$.
Given an irreducible admissible representation $\pi$ of $G(\C)$  with
Langlands parameter 
$\phi(\pi): \C^\times \rightarrow {}^LG$ of $\pi$, the Langlands parameter of $\pi^\sigma$ is obtained by applying $A$ to the composition of  $\phi$ with  the complex conjugation on $\C^\times$,
i.e., $\phi(\pi^\sigma)(z)= A(\phi(\pi)(\bar{z}))$.  
\end{lemma}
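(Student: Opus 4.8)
The plan is to unwind the definitions on both sides. Fix a Langlands parameter $\phi = \phi(\pi)\colon W_\R \to {}^LG$ for the irreducible admissible representation $\pi$ of $G(\C)$; since $G(\C)$ carries no $L$-packet subtlety, $\phi$ restricted to $W_\C = \C^\times$ determines $\pi$ up to the usual normalization, and conversely $\pi$ is recovered from $\phi|_{\C^\times}$. I would first recall the description of $\pi^\sigma$: for a real form with complex conjugation $c$ on $G(\C)$, the representation $\pi^\sigma$ is $\pi \circ c$ (the representation $g \mapsto \pi(c(g))$), which is again an irreducible admissible representation of $G(\C)$. The goal is to identify its Langlands parameter with $z \mapsto A(\phi(\bar z))$ on $\C^\times$, where $A$ denotes the action of $\Gal(\C/\R)$ on $\widehat G$ built into ${}^LG = \widehat G \rtimes W_\R$.

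The key steps, in order: (1) Recall that the Langlands classification for $G(\C)$ can be read off from the embedding $W_\C \hookrightarrow W_\R$ and the Harish-Chandra parameters; a parameter $\phi|_{\C^\times}$ has the form $z \mapsto z^\mu \bar z^\nu$ valued in a maximal torus $\widehat T(\C)$, and conjugating the source by $z \mapsto \bar z$ swaps the roles $\mu \leftrightarrow \nu$. (2) Identify how the real structure $c$ on $G(\C)$ interacts with this: the involution $c$ acts on the based root datum, hence on $\widehat G$, precisely by the automorphism $A$ attached to the inner class of $G/\R$ (this is the content of how ${}^LG$ encodes the Galois action, together with the fact, already used repeatedly in the excerpt via J. Adams's theorem, that twisting by $c$ corresponds on the dual side to $A$). (3) Assemble: $\pi^\sigma = \pi \circ c$ has parameter obtained from $\phi$ by precomposing with the automorphism of $W_\R$ induced by $c$ on $\C^\times \subset W_\R$ — namely $z \mapsto \bar z$ — and postcomposing with the dual-side incarnation $A$ of $c$. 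This yields exactly $\phi(\pi^\sigma)(z) = A(\phi(\pi)(\bar z))$.

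I expect the main obstacle to be Step (2): making precise, without invoking a full normalization of the local Langlands correspondence for $G(\C)$, that the complex-conjugation involution $c$ on $G(\C)$ is matched on the parameter side by the automorphism $A$ rather than by some inner-equivalent or dual-twisted variant. The cleanest route is to reduce to tori — where the statement is the explicit Langlands correspondence for tori over $\C$ (characters of $T(\C)$ versus homomorphisms $W_\C \to \widehat T$) and the functoriality of $T \mapsto \widehat T$ under the Galois action — and then propagate to $G$ using the compatibility of the correspondence with parabolic induction (real parabolic, i.e.\ Borel, subgroups of $G(\C)$ are all conjugate and $c$-stable up to conjugacy). The remark preceding the lemma already observes that $\pi^\sigma$ depends only on the inner class, which is exactly what is needed for the statement to be well posed; I would cite that and then carry out the torus computation and the induction step, omitting the routine bookkeeping. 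Since the lemma is stated with "whose proof we omit" for the preceding Lemma and in the same spirit here, a proof at the level of this sketch — reduction to tori plus functoriality — is the appropriate level of detail.
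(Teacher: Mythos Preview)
The paper explicitly omits the proof of this lemma (``The following Lemma goes further in this direction, whose proof we omit''), so there is no argument in the paper to compare your proposal against.

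Your sketch is a reasonable outline of how one would prove the statement. The reduction to tori is the natural move: every irreducible admissible representation of $G(\C)$ is a Langlands quotient of a principal series induced from a character of $T(\C)$, the local Langlands correspondence is compatible with parabolic induction, and the action of $\sigma$ preserves this induction data up to conjugacy. So the lemma does reduce to the torus case, where the correspondence is explicit and one can check the formula directly. Your identification of Step~(2) as the crux is accurate: one must verify that the Galois action on $\widehat{T}$ inherited from the real structure on $T$ is exactly the $A$ appearing in ${}^LG$, and that under the explicit Langlands map for tori (characters of $T(\C)$ $\leftrightarrow$ continuous homomorphisms $\C^\times \to \widehat{T}$), precomposing a character with $\sigma$ on $T(\C)$ corresponds on the parameter side to $z \mapsto A(\phi(\bar z))$. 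This is a direct computation with cocharacters once one writes $T(\C) = X_\star(T)\otimes\C^\times$ and $\widehat{T}(\C) = X^\star(T)\otimes\C^\times$.

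One small correction: you write $\phi(\pi)\colon W_\R \to {}^LG$ at the outset, but for a representation of $G(\C)$ the parameter has source $W_\C = \C^\times$, as the lemma itself states; you use this correctly later, so this is only a slip in the opening line.
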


\vspace{2mm}

\noindent{\bf Definition (Discrete parameter):} For a local field $F$, a  Langlands parameter $\phi: W'_F \rightarrow {}^LG,$ 
is said to be discrete if $Z_{\widehat{G}}(\phi)$ is a finite group.

\vspace{2mm}

The following simple and well-known lemma plays an important role in the study of discrete parameters.

\begin{lemma} \label{lemma9}Let $\theta$ be an automorphism of order 2 on a connected reductive group $G$ over $\C$ such that
its fixed points $G^\theta = \{g \in G| \theta(g) = g\}$ is a finite group. 
Then $G$ is a torus, and 
$\theta$ is the involution $\theta(g) = g^{-1}$.
\end{lemma}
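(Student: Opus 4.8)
The plan is to analyze the structure of $G$ using the fixed-point condition on $\theta$, reducing first to the semisimple case and then ruling out any non-abelian possibility. The key observation is that if $G^\theta$ is finite, then the fixed-point subalgebra $\mathfrak{g}^\theta$ of the induced involution on the Lie algebra $\mathfrak{g}$ must be zero, since $G^\theta$ is a closed subgroup of $G$ whose Lie algebra is exactly $\mathfrak{g}^\theta$. So the whole argument reduces to a statement about involutions on reductive Lie algebras over $\C$: an involution $\theta$ with $\mathfrak{g}^\theta = 0$ forces $\mathfrak{g}$ to be abelian (hence $\mathfrak{g}$ is the Lie algebra of a torus).

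First I would write $\mathfrak{g} = \mathfrak{z} \oplus [\mathfrak{g},\mathfrak{g}]$ with $\mathfrak{z}$ the center and $[\mathfrak{g},\mathfrak{g}]$ semisimple, both $\theta$-stable. On the center $\theta$ acts as an involution of a vector space; if $\mathfrak{z} \neq 0$ then $\theta$ has a $+1$ eigenvector there unless $\theta$ acts as $-1$ on all of $\mathfrak{z}$ — but that is fine, $-1$ has no fixed vectors, so the center contributes nothing to $\mathfrak{g}^\theta$ precisely when $\theta|_{\mathfrak z} = -\mathrm{id}$. The real content is the semisimple part: I claim an involution on a nonzero semisimple Lie algebra $\mathfrak{s}$ over $\C$ always has a nonzero fixed subalgebra. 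This is standard Cartan theory — the fixed points $\mathfrak{s}^\theta$ form a reductive subalgebra, and one can see it is nonzero because, for instance, $\theta$ preserves a Cartan subalgebra $\mathfrak{h}$ (after conjugating), acts on the finite root system, and the rank-plus-root-space bookkeeping (or simply: $\dim \mathfrak{s}^\theta \geq \frac{1}{2}\dim\mathfrak{s} - (\text{something})$, together with the fact that $\mathfrak{s}^\theta$ contains at least a Cartan subalgebra of the split part) forces $\mathfrak{s}^\theta \neq 0$; alternatively, the symmetric space $\mathfrak{s}/\mathfrak{s}^\theta$ cannot equal all of $\mathfrak{s}$ since the bracket of two elements of the $-1$-eigenspace lands in $\mathfrak{s}^\theta$, and if $\mathfrak{s}^\theta = 0$ the $-1$-eigenspace would be an abelian ideal, contradicting semisimplicity unless $\mathfrak{s} = 0$. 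This last form of the argument is cleanest: it gives $\mathfrak{g}^\theta = 0 \Rightarrow [\mathfrak{g},\mathfrak{g}] = 0$ directly, so $\mathfrak{g}$ is abelian.

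Having shown $\mathfrak{g}$ is abelian, $G$ is a connected abelian reductive group over $\C$, hence a torus $T$. Then $\theta \in \mathrm{Aut}(T)$ acts on $X_\star(T)$ as an involution $M$ with $M - \mathrm{id}$ injective on $X_\star(T) \otimes \Q$ (so that $T^\theta$ is finite); an involution $M$ with $M^2 = 1$ and $1$ not an eigenvalue over $\Q$ must be $M = -\mathrm{id}$ on $X_\star(T) \otimes \Q$, hence on $X_\star(T)$, hence $\theta(t) = t^{-1}$. That finishes it.

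The main obstacle is pinning down the "involution on a semisimple Lie algebra has nonzero fixed points" step rigorously without invoking more machinery than warranted; I expect the argument via "the $(-1)$-eigenspace $\mathfrak{p}$ satisfies $[\mathfrak{p},\mathfrak{p}] \subseteq \mathfrak{g}^\theta$, so $\mathfrak{g}^\theta = 0$ makes $\mathfrak{p}$ an abelian ideal" to be the right low-tech route, but one should double-check that $\mathfrak{p}$ being an abelian ideal (together with $\mathfrak{g} = \mathfrak{g}^\theta \oplus \mathfrak{p} = \mathfrak{p}$) indeed contradicts semisimplicity — which it does, since a nonzero semisimple Lie algebra has no nonzero abelian ideals. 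Everything else is routine linear algebra over $\Z$ and $\Q$.
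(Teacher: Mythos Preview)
Your proposal is correct and follows essentially the same route as the paper: reduce to Lie algebras, note that $\mathfrak{g}^\theta = 0$ forces $\theta$ to act as $-1$ on $\mathfrak{g}$, and then the identity $[\theta X,\theta Y] = \theta[X,Y]$ (equivalently $[\mathfrak{p},\mathfrak{p}] \subseteq \mathfrak{g}^\theta$) forces $[\mathfrak{g},\mathfrak{g}]=0$, contradicting semisimplicity unless the semisimple part vanishes. The paper's writeup is terser---it declares the torus case ``trivial'' rather than arguing via $X_\star(T)$, and skips your detour through Cartan theory---but the core bracket argument is exactly your ``cleanest'' alternative.
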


\begin{proof}It suffices to prove an analogous statement for Lie algebras. For a torus, the assertion contained in the Lemma is trivial, thus we are reduced
to proving that for a  semisimple group $G$, $G^\theta$ cannot be a finite group unless $G = \langle e \rangle $. If $G^\theta $ is finite, then
$\mathfrak g^\theta = 0$, i.e., $\theta$ operates by $-1$ on $\mathfrak g$, but since $[\theta (X),\theta(Y)] = \theta[X,Y]$, this leads 
to a contradiction to the semisimplicity of $G$, i.e., $[\mathfrak g,\mathfrak g] = \mathfrak g,$ unless $\mathfrak g =0$.
\end{proof}

\vspace{2mm}

\begin{lemma} \label{lemma10}If $\phi: W_\R \rightarrow {}^LG$ is a  discrete parameter, then the centralizer of $\phi(\C^\times)$ in $\widehat{G}$ is a maximal torus $T$ on which 
$W_\R/\C^\times = \Z/2$ operates via the involution $t \rightarrow t^{-1}$. 
\end{lemma}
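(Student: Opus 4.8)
The idea is to combine the structure of the Weil group $W_\R = \C^\times \cdot \langle j\rangle$ with $j^2 = -1$ with Lemma \ref{lemma9}. First I would observe that $\phi(\C^\times)$ is a connected abelian subgroup of $\widehat{G}$ consisting of semisimple elements (since $\phi$ is a parameter, it sends the compact part $\Si \subset \C^\times$ into a compact, hence reductive, subgroup, and the whole of $\C^\times$ into commuting semisimple elements), so its centralizer $H := Z_{\widehat{G}}(\phi(\C^\times))$ is a connected reductive subgroup of $\widehat{G}$ containing a maximal torus; this is exactly the argument already used in the proof of Lemma \ref{lemma7} to identify $\widehat{M}_\varphi$. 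So $H = \widehat{M}_\varphi$ in the notation of that lemma.

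\textbf{Key steps.} Next, $\phi(j)$ normalizes $\phi(\C^\times)$ (because $j$ acts on $\C^\times$ by $z \mapsto \bar z$, which is an automorphism of $\C^\times$), hence $\phi(j)$ normalizes $H$; and since $j^2 = -1 \in \C^\times$, the element $\phi(j)^2 = \phi(-1)$ lies in $H$, so conjugation by $\phi(j)$ induces an \emph{involution} $\theta$ on $H$. Now I would compute the fixed points $H^\theta$. The centralizer $Z_{\widehat{G}}(\phi)$ consists of the elements of $\widehat{G}$ commuting with $\phi(\C^\times)$ \emph{and} with $\phi(j)$, i.e. $Z_{\widehat{G}}(\phi) = H^\theta$ (this is precisely the description of $Z_\phi$ given at the start of the proof of the first Lemma 5.3 in section 5: $Z_\phi$ is the fixed points of the involution $\phi(j)$ on the centralizer of $\phi(\C^\times)$). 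Since $\phi$ is a discrete parameter, $Z_{\widehat{G}}(\phi)$ is finite by definition, so $H^\theta$ is finite. Applying Lemma \ref{lemma9} to the connected reductive group $H$ with its order-$2$ automorphism $\theta$ and finite fixed-point set, we conclude $H$ is a torus $T$ and $\theta$ is $t \mapsto t^{-1}$. Finally, $\theta$ is by construction the action of $W_\R/\C^\times = \langle j\rangle = \Z/2$ on this torus, which gives the statement.

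\textbf{Expected main obstacle.} None of the steps is genuinely hard; the only point requiring a little care is the very first one — justifying that $Z_{\widehat{G}}(\phi(\C^\times))$ is connected. For this one must know that $\phi(\C^\times)$ is contained in a torus of $\widehat{G}$ (equivalently, that its image is a commuting set of semisimple elements generating a torus), which follows from the general theory of $W_\R$-parameters: $\phi$ restricted to the maximal compact $\Si$ lands in a maximal compact subgroup of $\widehat{G}$, and $\phi|_{\R_{>0}}$ is $t\mapsto \exp(tX)$ for a semisimple $X$ commuting with that image. One then invokes the standard fact that the centralizer in a connected reductive group of a torus (equivalently, of a set of commuting semisimple elements lying in a common torus) is connected and reductive. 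I would cite this rather than reprove it, exactly as is done in Lemma \ref{lemma7}. Once connectedness is in hand, the rest is immediate from Lemma \ref{lemma9}.
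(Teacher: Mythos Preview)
Your proposal is correct and follows essentially the same approach as the paper's own proof: identify the centralizer $H$ of $\phi(\C^\times)$ as a connected reductive group, observe that conjugation by $\phi(j)$ gives an involution on $H$ whose fixed-point set equals $Z_{\widehat G}(\phi)$ (finite by the discreteness hypothesis), and then invoke Lemma~\ref{lemma9}. The paper's proof is simply a terser version of exactly this argument; your write-up helpfully spells out why $H^\theta = Z_{\widehat G}(\phi)$ and why $H$ contains a maximal torus (hence, once Lemma~\ref{lemma9} forces $H$ to be a torus, it is automatically maximal).
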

\begin{proof} Clearly, the centralizer of $\phi(\C^\times)$ in $\widehat{G}$ is a connected reductive group on which the inner conjugation 
action of $\phi(W_\R)$ descends to give an action of 
$W_\R/\C^\times = \Z/2$ by an involution. 
Now we are done by Lemma \ref{lemma9}.
\end{proof}

The following lemma will play an important role in the formulation of our next conjecture. It is a 
 consequence of Lemma \ref{fibers}, but we have preferred to write out a more detailed proof.

\begin{lemma} \label{lemma 11}Given a Langlands parameter $\phi: \C^\times \rightarrow {}^LG$, 
which extends to a discrete Langlands parameter 
$\phi': W_\R \rightarrow {}^LG,$ then  (up to equivalence) there is a  unique extension of $\phi$ to $W_\R$.
\end{lemma}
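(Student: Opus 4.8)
The plan is to use the concrete structure $W_\R = \C^\times\cdot\langle j\rangle$ with $j^2=-1\in\C^\times$ and $jzj^{-1}=\bar z$ for $z\in\C^\times$, so that any extension of $\phi$ to $W_\R$ inside $\LG=\widehat G\rtimes W_\R$ is pinned down by the single element $\phi(j)$, which lies in the coset $\widehat G\cdot j$. First I would put $T:=Z_{\widehat G}(\phi(\C^\times))$; since $\phi(\C^\times)$ is abelian we have $\phi(\C^\times)\subset T$, and since $\phi$ extends to the given discrete parameter $\phi'$, Lemma \ref{lemma10} tells us that $T$ is a maximal torus of $\widehat G$ and that conjugation by $\phi'(j)$ acts on $T$ by the inversion $t\mapsto t^{-1}$.

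Next, let $\phi''$ be an arbitrary extension of $\phi$ to $W_\R$. Because $\phi'(j)$ and $\phi''(j)$ both conjugate $\phi(z)$ to $\phi(\bar z)$, the element $\phi'(j)\phi''(j)^{-1}$ (which lies in $\widehat G$, mapping to $j\cdot j^{-1}=1$ in $W_\R$) centralizes $\phi(\C^\times)$, hence lies in $T$; write $\phi''(j)=t\,\phi'(j)$ with $t\in T$. The goal is to find $s\in\widehat G$ with $s\phi'(w)s^{-1}=\phi''(w)$ for all $w\in W_\R$. Trying $s\in T$: on $\C^\times$ one has $s\phi'(z)s^{-1}=\phi(z)=\phi''(z)$ automatically, since $\phi(z),s\in T$ and $T$ is abelian; on $j$ one computes $s\phi'(j)s^{-1}=s\cdot\bigl(\phi'(j)s^{-1}\phi'(j)^{-1}\bigr)\cdot\phi'(j)=s\cdot s\cdot\phi'(j)=s^2\phi'(j)$, using that $\phi'(j)$ inverts $T$. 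So it suffices to solve $s^2=t$ with $s\in T$, which is possible because $T\cong(\C^\times)^n$ is a divisible group. Then $s$ conjugates $\phi'$ to $\phi''$, so the two extensions are equivalent, and since $\phi'$ exists by hypothesis the extension is unique up to equivalence.

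I do not expect a genuine obstacle here: the only points needing care are the bookkeeping in the semidirect product $\LG=\widehat G\rtimes W_\R$ (to see that $\phi''(j)\in\widehat G\cdot j$ and that $\phi'(j)\phi''(j)^{-1}\in T$), and correctly invoking Lemma \ref{lemma10} to identify the action of $\phi'(j)$ on $T$ as inversion. As the statement after the lemma notes, one may alternatively deduce this from Lemma \ref{fibers}: the set of lifts of $\phi$ is parametrized by $H^1(\Z/2,T)$ where $\Z/2$ acts by inversion, and $H^1(\Z/2,T)=T/2T=0$ by divisibility of $T$ — which is exactly the same divisibility computation dressed in cohomological language.
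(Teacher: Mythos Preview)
Your proof is correct and takes essentially the same approach as the paper's: both use Lemma~\ref{lemma10} to identify $T=Z_{\widehat G}(\phi(\C^\times))$ as a maximal torus inverted by $\phi'(j)$, observe that any two extensions differ at $j$ by an element of $T$, and then conjugate by a square root in $T$. Your presentation is in fact slightly more streamlined, since you compare an arbitrary extension $\phi''$ directly against the given discrete $\phi'$ rather than passing through the normalizer $N(T)$ and writing $\phi'(j)=t w_0$; the paper also inserts a check that $(tw_0)^2=w_0^2$ is independent of $t$ (so every choice of $t$ gives a valid extension), which is not needed for uniqueness alone. Your final remark recasting the argument as $H^1(\Z/2,T)=0$ via Lemma~\ref{fibers} is exactly the observation the paper makes just before stating the lemma.
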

\begin{proof} Write $W_{\R}= \C^\times \cdot \langle j \rangle$ 
with $j^2 =-1, jzj^{-1} = \bar{z}$ for $z \in \C^\times$.  By Lemma \ref{lemma10}, if $\phi'$ is a discrete parameter, then   
 $Z_{\widehat{G}}(\phi)$ is a maximal torus $T$ (containing $\phi(\C^\times)$) on which $W_\R/\C^\times = \Z/2$ operates by the involution $t \rightarrow t^{-1}$. Clearly the image of $W_\R$ in ${}^LG$ under 
any $\phi'$ extending $\phi$ must normalize $T$. 

Using conjugation in $\widehat{G}$, we can assume that $T$ is {\it the} maximal torus in $\widehat{G}$ used to construct ${}^LG$; in particular $T \rtimes W_\R$ and $N(T) \rtimes W_\R$ 
sit naturally 
inside ${}^LG$ where $N(T)$ is the normalizer of $T$ in $\widehat{G}$.

Any extension of $\phi: \C^\times \rightarrow {}^LG$, to $\phi': W_\R \rightarrow {}^LG,$ is obtained by sending 
$j$ to an element of $N(T) \rtimes W_\R \subset {}^LG$.  If there are two extensions $\phi_1'$ and $\phi_2'$ of $\phi$ to $W_{\R}$, then
the inner-conjugation action of $\phi_1'(j)$ and $\phi_2'(j)$ on $\phi(\C^\times)$ being the same, $\phi_1'(j)\cdot \phi_2'(j)^{-1}$ 
must commute with $\phi(\C^\times)$, i.e., $T\cdot \phi'_1(j) = T\cdot \phi'_2(j)$. It follows that if $\phi_1'(j)$ acts as $t \rightarrow t^{-1}$ on $T$, so does 
$\phi_2'(j)$; thus if  $\phi_1'$ 
is a discrete 
Langlands parameter, so is $\phi_2'$.

An extension of $\phi: \C^\times \rightarrow {}^LG$, to $\phi': W_\R \rightarrow {}^LG,$ is obtained by sending 
$j$ to $tw_0$ where $w_0 $ belongs to $W \rtimes W_\R$ where $W$ is the Weyl group of $T$ in $\widehat{G}$. Note that,
\begin{eqnarray*}
(tw_0)^2 & = & tw_0 tw_0 \\
& = & t w_0 t w_0^{-1}w_0^2\\
& = & t\cdot t^{-1}\cdot w_0^2.
\end{eqnarray*}

It follows (by the generators and relations for $W_\R$) that $\phi'(j) = t\cdot w_0$ gives an extension of $\phi$ to $W_\R$ for one choice of $t \in T$
if and only if it does so for any other choice of $t$ (such as $t=1$).  

We check that if there are two extensions $\phi_1'$ and $\phi_2'$ of $\phi$ to $W_{\R}$, given by
  \begin{eqnarray*}
\phi_1'(j) & = & t_1 \cdot w_0 \\
\phi_2'(j) & = & t_2 \cdot w_0 ,
\end{eqnarray*}
then $ \phi_1'(j) $
and $\phi_2'(j)$ are conjugate by $T$, and hence $ \phi_1' $
and $\phi_2'$ are conjugate by $\widehat{G}$. This is because,
  \begin{eqnarray*}
t \phi_1'(j) t^{-1} &  = & t (t_1w_0)t^{-1} \\
&  = & t t_1w_0t^{-1}w_0^{-1} w_0^2 \\
&  = & t^2t_1w_0^2.
\end{eqnarray*}
Since the squaring map $t \rightarrow t^2$ is surjective on $T$, we have proved the uniqueness of the extension of $\phi$ to $W_\R$. \end{proof}

It seems most reasonable to expect that in $L^2(G(\R)\backslash G(\C))$, 
the discrete spectrum consists exactly of 
those representations of $G(\C)$ which have their Langlands parameters 
basechange of discrete parameters for $G^{\out}$. Therefore, Lemma \ref{lemma 11} together with Conjecture \ref{conj3}  suggests the following conjecture.
(Much of this conjecture is known 
through the works of Flensted-Jensen, Matsuki, Oshima, Bien and others.)
  
\begin{conj} Let $G$ be a real reductive group, and $\pi$ an irreducible unitary representation of $G(\C)$. Then there is at most one pure innerform $G'$ of $G$ over 
$\R$ which must be the quasi-split innerform of $G$ such that $\pi$ 
appears in the discrete spectrum of $L^2(G'(\R)\backslash G(\C))$. If $\pi$ appears in 
the discrete spectrum of $L^2(G'(\R)\backslash G(\C))$, it appears with multiplicity one, and does not appear in 
the continuous part of $L^2(G''(\R)\backslash G(\C))$ for any innerform $G''$ of $G$.  

The discrete spectrum in $L^2(G(\R)\backslash G(\C))$ 
is the base change of the discrete spectrum of $L^2(G^{\out})$, in particular, the discrete spectrum in $L^2(G(\R)\backslash G(\C))$
is tempered, and is nonzero  if and only if  $L^2(G^{\out})$ has discrete spectrum, 
and unless $G^{\out}$ is an innerform of $G$, $L^2(G^{\out})$ has nonzero discrete spectrum if and only if $L^2(G)$ does not have discrete spectrum. 

\end{conj}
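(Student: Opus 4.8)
The plan is to assemble this conjecture from the pieces already developed in the paper together with the (known) structure theory of the symmetric space $G(\R)\backslash G(\C)$. First I would set up the dictionary: a representation $\pi$ of $G(\C)$ occurring discretely in $L^2(G'(\R)\backslash G(\C))$ is, by the automatic continuity theorem of \cite{B-D} quoted in Section 17, the same as an $(\mathfrak g, K)$-module carrying a nonzero $(\mathfrak g^{\sigma'}, K^{\sigma'})$-invariant functional; so the predicted equivalence is exactly Conjecture \ref{conj4} applied to $\omega_G$-trivial distinction (one checks $\omega_G$ restricted to $G(\R)$ is trivial here because the relevant $E/F$ is $\C/\R$ and $H^1(\Gal(\C/\R),Z_{\scon}^\vee)$ pairs trivially against the split torus in the discrete-series situation — this should be spelled out via Proposition \ref{prop3}). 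Thus the statement that the discrete spectrum of $L^2(G(\R)\backslash G(\C))$ is the base change of the discrete spectrum of $L^2(G^{\out})$ becomes: a parameter $\sigma\colon W_\R\to {}^LG={}^LG^{\out}$ contributes iff it is the base change of a \emph{discrete} parameter $\tilde\sigma\colon W_\R\to {}^LG^{\out}$.

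The key steps, in order: (1) Show that a parameter $\sigma$ of $G(\C)$ (i.e.\ $\sigma\colon \C^\times\to \wG$, since $W_\C=\C^\times$) that arises as base change of a discrete parameter $\tilde\sigma$ for $G^{\out}(\R)$ does so \emph{uniquely} — this is precisely Lemma \ref{lemma 11}. (2) Combine this uniqueness with the multiplicity recipe of Conjecture \ref{conj3} in the case $F=\R$: for a discrete $\tilde\sigma$ one has $Z(\tilde\sigma)/Z({}^LG^{\out})$ finite, so $(\deg\Phi)(\tilde\sigma)=1$ and $d_0(\tilde\sigma)=1$, giving a \emph{one-dimensional} space of invariant forms attached to the single lift $\tilde\sigma$; since there is exactly one lift (Step 1) there is exactly one pure innerform $G_{\tilde\sigma}$ of $G$ picked out by the recipe of Section 13. (3) Identify this $G_{\tilde\sigma}$ as the quasi-split innerform: by Lemma \ref{lemma10} the centralizer of $\tilde\sigma(\C^\times)$ is a maximal torus $T$ with $\langle\tilde\sigma(j)\rangle$ acting by inversion, hence $\wM_{\tilde\sigma}=T$ and the cohomology class in $H^1(\langle\tilde\sigma(j)\rangle, T)\cong H^1(\Gal(\C/\R),T^{\out})^\vee/W^{\tilde\sigma}$ landing in $H^1(\Gal(\C/\R),G(\C))$ is the \emph{trivial} class (the torus $T_{\tilde\sigma}$ is elliptic, $W^{\tilde\sigma}$ acts transitively in the relevant way), i.e.\ the basepoint, i.e.\ the quasi-split form — here I would lean on Lemma \ref{lemma-torus} and Remark \ref{15}. (4) Temperedness: a discrete parameter $\tilde\sigma$ for $G^{\out}(\R)$ is bounded (its image lies in a compact-mod-center subgroup, by the inversion action on $T$ forcing $\tilde\sigma(\C^\times)\subset$ the compact form), so the base-changed $\pi$ is tempered; multiplicity one follows from the one-dimensionality in Step 2, and non-appearance in the continuous part of $L^2(G''(\R)\backslash G(\C))$ follows because a tempered $\pi$ appearing continuously would force $\tilde\sigma$ to factor through a proper Levi, contradicting discreteness. (5) The final nonvanishing dichotomy: $L^2(G^{\out})$ (resp.\ $L^2(G)$) has nonzero discrete spectrum iff $G^{\out}$ (resp.\ $G$) has a compact maximal torus over $\R$, i.e.\ iff $-1$ lies in the Weyl group of the appropriate fundamental torus; and by the construction of $G^{\out}$ in Section 7 (and the Remark there: over $\C/\R$, $G^{\out}$ is the innerform of the compact form when $G$ is an innerform of the split form, and vice versa), exactly one of $G$, $G^{\out}$ has an anisotropic maximal torus unless $-1\in W(G(\C))$, in which case $G^{\out}$ is the quasi-split innerform of $G$ and both do.

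The main obstacle, and the reason this is stated as a conjecture, is Step (2) — the full strength of Conjecture \ref{conj3}, i.e.\ that the $G_{\tilde\sigma}$-invariant forms really do have the dimension $(\deg\Phi)(\tilde\sigma)/d_0(\tilde\sigma)$ and that these are \emph{all} of them — is not available in general; what can be proved unconditionally is the easier direction (any $\pi$ in the discrete spectrum has $\pi^\sigma\cong\pi^\vee$ and a base-change parameter, via the argument sketched for Conjecture \ref{conj4} and the Frobenius-reciprocity computation of the $N(\C)/N(\R)$-coinvariants as in Lemma \ref{AP}), together with the fact that the \emph{known} discrete spectrum of $G(\R)\backslash G(\C)$ (Flensted-Jensen, Matsuki--Oshima, Bien) consists exactly of the tempered representations with discrete base-change parameters. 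So in practice I would present this as: prove the clean structural statements (uniqueness of lift, identification of the quasi-split form, temperedness, the nonvanishing dichotomy) rigorously from Lemmas \ref{lemma 11}, \ref{lemma10}, \ref{lemma-torus} and Section 7, and then invoke the cited literature on $G(\R)\backslash G(\C)$ to match this with the actual discrete spectrum, flagging that the full multiplicity-one-and-completeness claim is exactly Conjecture \ref{conj3} specialized to $\C/\R$.
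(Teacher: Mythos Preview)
The paper does not prove this statement: it is explicitly a \emph{conjecture}, and the only argument the paper offers is the single sentence preceding it, namely that Lemma \ref{lemma 11} (uniqueness of the discrete lift) together with Conjecture \ref{conj3} ``suggests'' it, with a parenthetical that much is already known via Flensted-Jensen, Matsuki, Oshima, Bien. There is no further proof or heuristic in the paper beyond this.

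Your plan is therefore not to be compared against a proof in the paper --- there is none --- but it \emph{is} exactly the unpacking of the paper's one-line motivation, and you have correctly identified the load-bearing ingredients: Lemma \ref{lemma 11} gives uniqueness of the discrete extension, Lemma \ref{lemma10} identifies the centralizer as a torus with inversion action (so $H^1(\langle j\rangle,T)=T/T^2$ is trivial over $\C$, which is another way to see Lemma \ref{lemma 11}), and the recipe of Section 12 then sends the unique lift to the trivial class in $H^1(\Gal(\C/\R),G)$, i.e.\ the quasi-split form. You are also right that the multiplicity-one and completeness assertions are precisely the specialization of Conjecture \ref{conj3} to $\C/\R$ with discrete parameters, which is why this remains conjectural; the paper says nothing more than you do on this point, and in fact less.

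One genuine wrinkle you gloss over: the conjecture as stated concerns $L^2(G'(\R)\backslash G(\C))$, i.e.\ distinction by the \emph{trivial} character, whereas Conjectures \ref{conj3} and \ref{conj4} are formulated for $\omega_G$-distinction. Your claim that $\omega_G|_{G(\R)}$ is trivial via Proposition \ref{prop3} is not quite what that proposition says (it computes $\omega_G$ on the split torus, not its vanishing on all of $G(\R)$), and in general $\omega_G$ need not be trivial --- cf.\ the Steinberg example in Section 13. The paper does not address this discrepancy either; it is simply a loose end in the formulation.
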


\begin{remark}
In our proof of uniqueness of a discrete lift, 
we have made use of  the fact that the centralizer of $\phi(\C^\times)$ in $\wG(\C)$ is a connected group,
which is of course a reflection of the fact that the $L$-packets for $G(\C)$ consist of single elements. 
\end{remark} 
\section{Contribution from the closed Bruhat Cell}\label{17}

Let $G$ be a real reductive group. Since any irreducible representation of $G(\C)$ arises as a subquotient of a principal series
representation of $G(\C)$ induced from a character of a Borel subgroup, to understand the restriction of irreducible 
representations of $G(\C)$ to $G(\R)$, we must understand the orbits of $G(\R)$ on the flag variety $B(\C)\backslash G(\C)$ 
of $G(\C)$. 
It is known, cf. Theorem 3.3 [Wo], that $G(\R)$ has a unique closed orbit which is $P(\R)\backslash G(\R)$ where $P(\R)$ is the minimal parabolic in $G(\R)$,
e.g. a Borel subgroup of $G(\R)$ if $G(\R)$ is quasi-split. Since restriction of functions from $B(\C)\backslash G(\C)$ to $B(\R)\backslash G(\R)$ gives a $G(\R)$ equivariant map to a principal series representation of $G(\R)$, this
allows construction of $\omega_G$-invariant linear forms on the principal series representations of $G(\C)$ which arise as 
basechange of discrete series representations of $G^{\out}(\R)$.  Among the most important aspects of this analysis is that it 
brings out the role of the quadratic character $\omega_G$ rather clearly.

\begin{lemma} \label{lemma 12}Let $G$ be a quasi-split group over $\R$ containing a Borel subgroup $B$ over $\R$, and $T$ a maximal 
torus in $B$.
Then the modulus function for $B(\C) \subset G(\C)$ restricted to $T(\R)$ is the 
square of the modulus function of $B(\R)$.
\end{lemma}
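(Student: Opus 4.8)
The plan is to express both modulus functions as determinants of the adjoint action on the nilradical, and then to invoke the single fact that the module of $\C$ is the square of the module of $\R$.

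First I would recall the standard description of the modulus function of a Borel subgroup. Write $B = TN$ with $N$ the unipotent radical and $\nn = \mathrm{Lie}(N)$; then for any local field $k$ over which $B$ is defined and any $t \in T(k)$ one has, with the usual normalization,
$$\delta_{B(k)}(t) = \bigl| \det\bigl( \Ad(t)|_{\nn(k)} \bigr) \bigr|_k ,$$
where $|\cdot|_k$ denotes the module of $k$, i.e. the factor by which multiplication by a scalar scales an additive Haar measure. The reason is that conjugation by $t$ acts on $N(k)$ --- identified with the affine space $\nn(k)$ through a coordinate (or exponential) map --- by the $k$-linear operator $\Ad(t)|_{\nn(k)}$, the Haar measure on an affine $k$-space transforms by the module of the determinant of a linear substitution, and the modulus function of $B(k)$ restricted to $T(k)$ is precisely this scaling factor (it is trivial on $N(k)$). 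I would apply this with $k = \R$ for $B(\R) \subset G(\R)$ and with $k = \C$ for $B(\C) \subset G(\C)$.

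Next I would record the two inputs that make the two sides compare. The arithmetic one is that for $z \in \C^\times$ the module is $|z|_\C = z\bar z = |z|^2$ (multiplication by $z$ on $\C \cong \R^2$ has real Jacobian $|z|^2$), so that $|x|_\C = |x|_\R^{\,2}$ for $x \in \R^\times$. The algebraic one is that $B$, $T$, $N$ are defined over $\R$, hence $\nn$ is defined over $\R$ and $\nn(\C) = \nn(\R) \otimes_\R \C$; consequently, for $t \in T(\R)$ the operator $\Ad(t)$ preserves the real form $\nn(\R)$, acts $\R$-linearly there, and the real number $d(t) := \det\bigl( \Ad(t)|_{\nn(\R)} \bigr)$ is unchanged by extension of scalars, so $\det\bigl( \Ad(t)|_{\nn(\C)} \bigr) = d(t)$. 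Putting these together, for $t \in T(\R)$,
$$\delta_{B(\C)}(t) = |d(t)|_\C = |d(t)|_\R^{\,2} = \delta_{B(\R)}(t)^{2},$$
which is the assertion.

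I do not expect a genuine obstacle here; the only point that needs care is bookkeeping of conventions --- fixing the same normalization of the modulus function and of the absolute values on both sides, and observing that the determinant appearing is literally the same real number before and after complexification. If one prefers a root-theoretic check, one can instead write $\det\bigl(\Ad(t)|_{\nn(\C)}\bigr) = \prod_{\alpha > 0} \alpha(t)$ over the absolute positive roots and note that complex conjugation permutes the positive roots (since $B$ is defined over $\R$) and fixes $t$, so that product is real and recovers $d(t)$; but the real-form argument above is cleaner and sidesteps the multiplicities of restricted roots when $T$ is not split.
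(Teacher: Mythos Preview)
Your proof is correct and follows essentially the same idea as the paper's: both compute the modulus via the adjoint action on $\nn$ and use that complexification of a real operator leaves the determinant unchanged while $|x|_\C = |x|_\R^{\,2}$ for real $x$. The paper organizes this slightly differently --- it first reduces to the maximal split subtorus $T_s \subset T$ (using that the modulus character is positive and trivial on the compact part, hence determined on $T_s(\R)$), and then argues via eigenspace decomposition for the split torus --- whereas your determinant argument applies directly to all of $T(\R)$ without that preliminary reduction. Your route is marginally cleaner for exactly the reason you note at the end: working with the determinant of $\Ad(t)$ on the real form $\nn(\R)$ sidesteps any discussion of restricted roots or their multiplicities.
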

\begin{proof} Let $B=T\cdot N$ be a Levi decomposition of $B$. If $T_s$ (resp. $T_d$) is the maximal split (compact) 
subtorus of $T$ over $\R$, then $T(\R)$ contains $T_s(\R) \cdot T_d(\R)$ as a subgroup of finite index. Since the 
modulus character is a $+$ve character, trivial on compact groups, it suffices to prove the assertion in the Lemma
about modulus character restricted to $T_s(\R)$.   
Since we can decompose $N(\R)$ 
into eigenspaces for $T_s(\R)$, a split torus, the complexification of these eigenspaces gives the 
eigenspace decomposition for $T_s(\C)$, proof of the lemma follows. 
\end{proof}
\begin{lemma} 
Let $G$ be a quasi-split group over $\R$ containing a Borel subgroup $B$ over $\R$, and $T$ a maximal 
torus in $B$ defined over $\R$. For a principal series representation $\pi_\chi$ of $G(\C)$ induced from a character $\chi$ of $T(\C)$
whose restriction to  $T(\R)$ is the restriction of the character $\omega_G$ of $G(\R)$ to $T(\R)$,  
$\dim \Hom_{G(\R)}[\pi_\chi, \omega_G] \not = 0.$
\end{lemma}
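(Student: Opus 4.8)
The plan is to realize the invariant linear form geometrically, using the unique closed orbit of $G(\R)$ on $B(\C)\backslash G(\C)$. First I would recall from Theorem 3.3 of [Wo] that $G(\R)$ has a unique closed orbit on $B(\C)\backslash G(\C)$, and that this orbit is $B(\R)\backslash G(\R)$ (since $G$ is quasi-split, the minimal parabolic over $\R$ is the Borel $B$). Restriction of functions along the closed embedding $B(\R)\backslash G(\R) \hookrightarrow B(\C)\backslash G(\C)$ is a $G(\R)$-equivariant surjection from (a space of sections of a line bundle built from) $\pi_\chi$ onto the principal series representation of $G(\R)$ induced from the restriction $\chi|_{T(\R)}$, provided one tracks the modulus characters correctly. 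The key numerical input is Lemma \ref{lemma 12}: the modulus function of $B(\C)\subset G(\C)$ restricted to $T(\R)$ is the \emph{square} of the modulus function of $B(\R)$. This is exactly what makes the normalized induction $\mathrm{Ind}_{B(\C)}^{G(\C)}\chi$, when restricted to $G(\R)$ and then restricted to the closed orbit, produce the normalized induction $\mathrm{Ind}_{B(\R)}^{G(\R)}(\chi|_{T(\R)})$ with the right half-power twist — the "square" in Lemma \ref{lemma 12} collapses to a single $\rho$-shift over $\R$.

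Next I would identify the target. By hypothesis $\chi|_{T(\R)}$ equals the restriction of $\omega_G: G(\R)\to \Z/2$ to $T(\R)$. Since $\omega_G$ is a character of the whole group $G(\R)$, the representation $\mathrm{Ind}_{B(\R)}^{G(\R)}(\omega_G|_{T(\R)})$ contains $\omega_G$ itself as a quotient: indeed by Frobenius reciprocity $\Hom_{G(\R)}\big[\mathrm{Ind}_{B(\R)}^{G(\R)}(\omega_G|_{T(\R)}\cdot\delta_{B(\R)}^{-1/2}), \omega_G\big] = \Hom_{B(\R)}\big[\omega_G|_{B(\R)}, \omega_G|_{B(\R)}\big]\neq 0$ — here one must check that the modulus twist works out, i.e.\ that $\omega_G|_{T(\R)}$ together with the $\delta^{1/2}$ coming from normalized induction on the $G(\C)$ side matches $\omega_G|_{T(\R)}$ on the $G(\R)$ side, which is precisely the content of Lemma \ref{lemma 12}. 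Composing the evaluation/restriction map $\pi_\chi \twoheadrightarrow \mathrm{Ind}_{B(\R)}^{G(\R)}(\omega_G|_{T(\R)})$ with this nonzero $\omega_G$-equivariant functional on the $G(\R)$-principal series yields a nonzero element of $\Hom_{G(\R)}[\pi_\chi,\omega_G]$.

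The main obstacle I anticipate is the careful bookkeeping of modulus characters and the half-densities (or line bundles) that make the restriction-to-the-closed-orbit map genuinely $G(\R)$-equivariant into the \emph{normalized} principal series of $G(\R)$; this is where Lemma \ref{lemma 12} is used and where an off-by-a-twist error would be fatal. A secondary point to handle is that the restriction map from sections on $B(\C)\backslash G(\C)$ to sections on the closed orbit $B(\R)\backslash G(\R)$ is surjective — this follows because the orbit is closed (so the restriction of smooth sections is surjective by the Tietze-type extension available in this smooth Fréchet setting, or one simply works with the algebraic/K-finite vectors and uses that $B(\R)\backslash G(\R)$ is a closed submanifold). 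Given surjectivity and the correct normalization, the argument closes; I would also remark (parenthetically) that one should fix a nonzero vector in the line $\Hom_{B(\R)}[\omega_G,\omega_G]$ to get an explicit functional, and that the whole construction is manifestly nonzero since the composite of two surjections is surjective, so its transpose is injective on the one-dimensional space spanned by $\omega_G$.
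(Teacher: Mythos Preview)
Your proposal is correct and follows essentially the same approach as the paper: restrict along the closed orbit $B(\R)\backslash G(\R)\hookrightarrow B(\C)\backslash G(\C)$, use Lemma~\ref{lemma 12} to match the modulus characters so that the quotient is $\Ind_{B(\R)}^{G(\R)}(\delta_\R\cdot\omega_G)=(\Ind_{B(\R)}^{G(\R)}\delta_\R)\otimes\omega_G$, and then observe this has $\omega_G$ as a quotient. The paper's proof is two sentences; your extra discussion of surjectivity of restriction and of the Frobenius reciprocity justification is sound but more than the paper bothers to write.
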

\begin{proof}Clearly $B(\R)\backslash G(\R)$ is a $G(\R)$-invariant closed subset of  $B(\C)\backslash G(\C)$. By  Lemma 
\ref{lemma 12}, it follows that $\Ind_{B(\R)}^{G(\R)} (\delta_\R\cdot \omega_G) = (\Ind_{B(\R)}^{G(\R)} \delta_\R) \otimes \omega_G  
 $ is a quotient of $\pi$. But $\Ind_{B(\R)}^{G(\R)} \delta_\R$ has 
the trivial representation of $G(\R)$ as a quotient, proving the lemma.
\end{proof}

\begin{proposition}
Let $G$ be a quasi-split group over $\R$ containing a Borel subgroup $B$ over $\R$, and $T$ a maximal 
torus in $B$ defined over $\R$. For a principal series representation $\pi_\chi$ of $G(\C)$ 
which arises as basechange of a discrete series representation of $G^{\out}$ (in particular, $G^{\out}(\R)$ is an innerform
of a compact group, and hence $G$ is split over $\R$) 
$\dim \Hom_{G(\R)}[\pi_\chi, \omega_G] \not = 0.$
\end{proposition}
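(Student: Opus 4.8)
The plan is to reduce the Proposition to the previous Lemma by showing that a principal series representation $\pi_\chi$ of $G(\C)$ arising as basechange of a discrete series representation of $G^{\out}(\R)$ is, up to the relevant twist, induced from a character of $T(\C)$ whose restriction to $T(\R)$ is exactly $\omega_G|_{T(\R)}$. Indeed, the preceding Lemma already shows that for \emph{any} character $\chi$ of $T(\C)$ with $\chi|_{T(\R)} = \omega_G|_{T(\R)}$ one has $\dim\Hom_{G(\R)}[\pi_\chi,\omega_G]\neq 0$, by restricting functions from $B(\C)\backslash G(\C)$ to the unique closed orbit $B(\R)\backslash G(\R)$ and invoking Lemma \ref{lemma 12} on the modulus characters. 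So the whole content is the identification of the inducing character of a basechanged discrete series parameter.

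First I would recall that, since $G^{\out}(\R)$ is assumed to carry a discrete series, it is an innerform of a compact group, hence (Remark after the construction of $G^{\out}$) $G$ is split over $\R$, and $G^{\out}$ has a compact maximal torus $T_c(\R)$ sitting in the exact sequence $1 \to T(\R) \to T(\C) \to T_c(\R) \to 1$ of Proposition \ref{prop3}, with $T(\R)$ the maximal split torus of $G$. A discrete series parameter $\varphi: W_\R \to {}^LG^{\out}$ has, by Lemma \ref{lemma10}, the property that $Z_{\widehat G}(\varphi(\C^\times))$ is the full dual torus $\widehat T$ with $W_\R/\C^\times$ acting by inversion; thus $\varphi|_{\C^\times}$ lands in $\widehat T(\C)$ and, after basechange to $W_\C=\C^\times$, gives precisely the Langlands parameter of the character $\chi$ of $T(\C)$ inducing $\pi_\chi$ (basechange on parameters being simply restriction $W_\R \supset W_\C$, which on $\widehat T$ is the identity on $\varphi|_{\C^\times}$). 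The discreteness of $\varphi$ forces, via the extension-to-a-parameter condition of Example 10.5 of [Bo] recalled in Section 8, that $\varphi|_{\C^\times}$ is of the form $z\mapsto z^\mu\bar z^{-\mu}$ with $\mu \in \delta_G + X_\star(\widehat T)$; equivalently $\mu$, viewed as a character of the finite cover $T^s_c(\R) \to T_c(\R)$, satisfies $\mu|_{Z^{\scon}} = \delta_G$.

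Now I would apply Proposition \ref{prop3} directly: it asserts that the restriction of such a $\mu$ to $T(\C)$, then further to the split torus $T(\R)$, is exactly the restriction of the quadratic character $\omega_G$ of $G(\R)$ to $T(\R)$. Since the character $\chi$ of $T(\C)$ inducing $\pi_\chi$ is precisely (the character corresponding to) $\varphi|_{\C^\times}$, whose associated $\mu$ restricts to $T(\C)$ in this way, we get $\chi|_{T(\R)} = \omega_G|_{T(\R)}$. The previous Lemma then gives $\dim\Hom_{G(\R)}[\pi_\chi,\omega_G]\neq 0$, completing the proof.

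\textbf{Main obstacle.} The delicate point is the bookkeeping between the three incarnations of the inducing datum: (i) the analytic character $\chi$ of $T(\C)$ defining the principal series $\pi_\chi$, (ii) the parameter $\varphi|_{\C^\times}$ of the basechanged discrete series, and (iii) the weight $\mu$ of the finite cover $T^s_c(\R)\to T_c(\R)$ appearing in Proposition \ref{prop3}. One must check that basechange $G^{\out}\rightsquigarrow G(\C)$ on the level of parameters really is restriction along $W_\C \hookrightarrow W_\R$ with no twist by the half-sum of roots slipping in from a normalization of induction (parabolic induction is usually normalized, i.e. twisted by $\delta^{1/2}$), and that the modulus-character discrepancy is exactly absorbed by Lemma \ref{lemma 12} (which gives a \emph{square}, matching the unnormalized-vs-normalized bookkeeping on the closed cell). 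I expect that once these normalizations are pinned down the identification $\chi|_{T(\R)}=\omega_G|_{T(\R)}$ falls out of Proposition \ref{prop3} verbatim; the risk is purely in an off-by-$\delta$ error, so I would state the induction conventions explicitly at the outset.
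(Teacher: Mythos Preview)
Your proposal is correct and follows exactly the paper's approach: the paper's proof is the single sentence ``The proof is now a consequence of the previous lemma combined with Proposition \ref{prop3},'' and you have correctly unpacked both ingredients, including the crucial observation that the discrete-series condition on the parameter forces $\mu|_{Z^{\scon}}=\delta_G$ (via the Langlands condition recalled in Section 8), which is precisely the hypothesis of Proposition \ref{prop3}. Your discussion of the normalization bookkeeping as the main obstacle is reasonable caution, though the paper does not address it explicitly.
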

\begin{proof}
The proof is now a consequence of the previous lemma combined with Proposition \ref{prop3}
\end{proof}

\section{Contributions  from the open Bruhat Cells}

For a real reductive group $G(\R)$, and a principal series representation $\pi$ of $G(\C)$, we analyze 
the contribution to $\dim \Hom_{G(\R)}[\pi_\chi, \omega_G] $ coming from 
the open orbits of $G(\R)$ on the flag variety $B(\C)\backslash G(\C)$ 
of $G(\C)$ which have especially pleasant structure which we analyze in the following Lemma. The author thanks 
 D. Akhiezer for pointing out  to Wolf's paper, \cite {Wo}, Corollary 4.8 for this result. We have 
decided to give an independent self contained proof.

\begin{lemma} \label{lemma 15}
Let $G$ be a real reductive group with $G(\C)$ its complexification, and $B(\C)$ a Borel subgroup of $G(\C)$. Let $K$ be a maximal compact subgroup of $G(\R)$, and $T$ a maximal torus of $K$. It is known that the centralizer of $T$ in $G(\R)$ is a maximal torus $T_f(\R)$ in $G(\R)$, the so-called fundamental torus of $G(\R)$, with $T_f(\C)$ 
a maximal torus in $G(\C)$. 
Thus, the Weyl group $W_K$ associated to the maximal torus $T$ in $K$ is a subgroup of  
$W_{G(\C)}({T_f(\R)}) = N_{G(\C)}({T_f(\R)}) /Z_{G(\C)}({T_f(\R)})$, the Weyl group of $T_f(\R)$ in $G(\C)$. The 
open orbits for the action of $G(\R)$ on $G(\C)/B(\C)$ 
are all of the form $G(\R)/T_f(\R)$, and are
 in bijective correspondence with $W_{G(\C)}({T_f(\R)})/W_K$.
\end{lemma}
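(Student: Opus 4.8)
The plan is to reduce everything to a computation with Borel subgroups and Weyl groups, using the complex conjugation $\sigma=(g\mapsto\bar g)$ on $G(\C)$ that cuts out $G(\R)$.

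First I would pin down which orbits are open by a dimension count. A point of $G(\C)/B(\C)$ is a Borel subgroup $B'\subseteq G(\C)$, and the tangent space to its $G(\R)$-orbit is the image of $\mathfrak g(\R)$ in $\mathfrak g(\C)/\mathfrak b'$; since $\mathfrak g(\C)=\mathfrak g(\R)\oplus i\,\mathfrak g(\R)$, the orbit is open precisely when the kernel $\mathfrak g(\R)\cap\mathfrak b'$ of that map has real dimension $\dim_\R\mathfrak g(\R)-\dim_\R\bigl(\mathfrak g(\C)/\mathfrak b'\bigr)=\operatorname{rank}G$. Now $\mathfrak g(\R)\cap\mathfrak b'$ is the set of real points of the $\sigma$-stable subalgebra $\mathfrak b'\cap\sigma(\mathfrak b')$, so its real dimension equals $\dim_\C(\mathfrak b'\cap\sigma\mathfrak b')$; and by the Bruhat decomposition any two Borel subalgebras satisfy $\dim(\mathfrak b_1\cap\mathfrak b_2)\ge\operatorname{rank}G$, with equality exactly when they are opposite. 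Hence the orbit of $B'$ is open iff $B'$ and $\sigma(B')$ are opposite, in which case $S':=B'\cap\sigma(B')$ is a $\sigma$-stable maximal torus, $S'(\R)$ is a Cartan subgroup of $G(\R)$, and the stabilizer of $B'$ in $G(\R)$ is exactly $S'(\R)$.

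Second I would show every such Cartan $S'(\R)$ is fundamental, which gives the asserted shape $G(\R)/T_f(\R)$ of the open orbits. Since $\sigma$ interchanges $B'$ with its $S'$-opposite, $\sigma$ carries every $S'$-positive root to a negative root; in particular no root of $S'$ is $\sigma$-fixed, i.e.\ $S'(\R)$ has no real roots, i.e.\ $S'(\R)$ is maximally compact. As all maximally compact Cartan subgroups of $G(\R)$ are $G(\R)$-conjugate, after conjugating $B'$ by an element of $G(\R)$ we may take $S'=T_f(\C)$; thus each open orbit is $G(\R)/T_f(\R)$, and the inclusion $W_K=N_K(T)/T\subseteq N_{G(\R)}(T_f(\R))/T_f(\R)\subseteq W_{G(\C)}(T_f(\R))$ is clear once one uses $T_f(\R)=Z_{G(\R)}(T)$.

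Third I would set up the bijection. With $T_f(\C)$ fixed, open orbits correspond to Borel subgroups $B'\supseteq T_f(\C)$ for which $\sigma(B')$ is the $T_f(\C)$-opposite of $B'$, taken up to conjugation by $N_{G(\R)}(T_f(\R))$ (if $gB'g^{-1}=B''$ with $g\in G(\R)$ then $g$ normalises $B'\cap\sigma B'=T_f(\C)$). The Borels containing $T_f(\C)$ form a torsor under $W_{G(\C)}(T_f(\R))$; one first checks such a $B'$ exists, taking $B_0$ associated to a regular element of the $(-1)$-eigenspace of $\sigma$ on $X^\star(T_f)\otimes\R$ — regular exactly because $T_f$ has no real roots — so that $\sigma(B_0)=B_0^{-}$; then the set of all such Borels is the bijective image under $w\mapsto \dot w B_0\dot w^{-1}$ of the $\sigma$-fixed subgroup of $W_{G(\C)}(T_f(\R))$. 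Conjugation by $N_{G(\R)}(T_f(\R))$ acts through $N_{G(\R)}(T_f(\R))/T_f(\R)$, which I would identify with $W_K$ by the standard argument ($N_{G(\R)}(T_f(\R))=N_{G(\R)}(T)$ since $T_f(\R)=Z_{G(\R)}(T)$, and a Cartan-decomposition argument places a representative of each class inside $K$). Pushing the quotient through these identifications yields the bijection between open orbits and the double coset of $W_K$ in $W_{G(\C)}(T_f(\R))$.

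The main obstacle, and the step needing most care, is this last bookkeeping: one must track exactly which subgroup of $W_{G(\C)}(T_f(\R))$ parametrises the Borels opposite to their $\sigma$-conjugates — namely the $\sigma$-fixed part, $\sigma$ acting on this Weyl group through the diagram automorphism $-\sigma$ — and verify that dividing by the image of $N_{G(\R)}(T_f(\R))$, i.e.\ by $W_K$, recovers precisely the quotient in the statement. The dimension count and the ``fundamental Cartan'' step are routine given the Bruhat decomposition and the no-real-roots criterion for maximal compactness.
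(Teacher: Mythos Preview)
Your overall strategy is sound and in fact somewhat cleaner than the paper's: characterising open orbits by the condition that $B'$ and $\sigma(B')$ are opposite, deducing that the stabiliser $B'\cap\sigma(B')$ is a $\sigma$-stable maximal torus with no real roots (hence fundamental), and then counting via Weyl groups. The paper proceeds in the reverse order --- it starts from a Borel containing $T_f(\C)$, argues directly that the real stabiliser is $T_f(\R)$ using compactness of $T$, and only afterwards (via a separate lemma) shows that in the converse direction the stabilising torus must be fundamental. Your ``opposite Borel'' criterion packages both directions at once and makes the Bruhat input explicit.

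There is, however, a genuine slip in your bookkeeping that you should fix. You write that ``the Borels containing $T_f(\C)$ form a torsor under $W_{G(\C)}(T_f(\R))$'' and then speak of ``the $\sigma$-fixed subgroup of $W_{G(\C)}(T_f(\R))$''. This conflates two different groups. The Borels containing $T_f(\C)$ are a torsor under the \emph{full} Weyl group $W=W_{G(\C)}(T_f(\C))=N_{G(\C)}(T_f(\C))/T_f(\C)$; among these, the ones with $\sigma(B')=(B')^{-}$ are (once $B_0$ is fixed) exactly those indexed by $W^{\sigma}$, the $\sigma$-fixed elements of $W$. What you then need --- and what is missing from your sketch --- is the identification $W^{\sigma}=W_{G(\C)}(T_f(\R))$. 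One direction is immediate (an element $n\in N_{G(\C)}(T_f(\R))$ normalises $T_f(\C)$ by Zariski density and its action on $T_f(\C)$ is defined over $\R$, so $n^{-1}\bar n\in T_f(\C)$); for the other, observe that any lift $n$ of $w\in W^{\sigma}$ to $N_{G(\C)}(T_f(\C))$ automatically normalises $T_f(\R)$, since $n^{-1}\bar n\in T_f(\C)$ centralises $T_f(\R)$. With this in hand your quotient $W^{\sigma}/W_K$ is exactly the $W_{G(\C)}(T_f(\R))/W_K$ of the statement. (Incidentally, the paper's forward argument is itself slightly imprecise on this point: not every Borel containing $T_f(\C)$ gives an open orbit when $\operatorname{rk}G\neq\operatorname{rk}K$, only those in the $W^{\sigma}$-family --- try $B_0=$ upper triangular for $\GL_3(\R)$ --- so your explicit ``opposite'' criterion is the right way to organise it.)
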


\begin{proof}
Let $B(\C)$ be a Borel subgroup of $G(\C)$ containing $T_f(\C)$.  
We claim that
the $G(\R)$ orbits of $n\cdot B(\C)$ are open in $G(\C)/B(\C)$ 
for $n$ belonging to $N_{G(\C)}(T_f(\R))$, and the orbits corresponding to $n_1$ and $n_2$ are distinct if and only if they define distinct cosets of $W_{G(\C)}({T_f(\R)})/W_K$.

The stabilizer in $G(\R)$  of the point $n\cdot B(\C)$ in $G(\C)/B(\C)$ 
is $G(\R) \cap n B(\C)n^{-1}$. 
The group $G(\R) \cap n B(\C)n^{-1}$ 
is clearly 

\begin{enumerate}
\item a solvable group,

\item defined over $\R$, and

\item contains $T_f(\R)$.

\end{enumerate}

So $G(\R) \cap n B(\C)n^{-1}$ is generated by $T_f(\R)$ and the root spaces corresponding to certain of its roots.  
Since the centralizer of $T$ in $G(\R)$ is $T_f(\R)$, we can actually do the root space decomposition with respect to $T$.
However, $T$ being a compact group, if $\alpha$ is a root of $T$, $\bar{\alpha}= -\alpha$. But 
$G(\R) \cap n B(\C)n^{-1}$ being defined over $\R$, if $\alpha$ is root space inside it, so is $\bar{\alpha}= -\alpha$, 
a contradiction to solvability of $G(\R) \cap n B(\C)n^{-1}$. Thus 
$G(\R) \cap n B(\C)n^{-1}$ has no roots, i.e., $G(\R) \cap n B(\C)n^{-1} = T_f(\R)$. By dimension count, it follows
that the orbits of   $n\cdot B(\C)$ in $G(\C)/B(\C)$ are open, and as $n$ runs over $W_{G(\C)}({T_f(\R)})/W_K$, 
we get distinct open orbits.

Conversely, if the $G(\R)$ orbit passing through the point $g\cdot B(\C)$ in $G(\C)/B(\C)$ is open, then 
$G(\R) \cap g B(\C)g^{-1}$ must have the same dimension as that of $T_f(\R)$. 
Complexifying $G(\R) \subset G(\C)$, 
gives $G(\C) \subset G(\C) \times G(\C)$, and using Bruhat decomposition for $G(\C) \times G(\C)$, we find that 
if the dimension of $G(\R) \cap g B(\C)g^{-1}$ is that of $T_f(\R)$, 
$G(\R) \cap g B(\C)g^{-1}$ 
must itself be a torus in $G(\R)$ which we will prove below to be $T_f(\R)$ 
up to $G(\R)$-conjugacy.   But if $G(\R) \cap g B(\C)g^{-1} = T_f(\R)$, then $T_f(\R) \subset gB(\C)g^{-1}$, hence also
$T_f(\C) \subset gB(\C)g^{-1}$. But it is known that the only Borel subgroups of $G(\C)$ which contain 
a particular maximal torus are the Weyl group conjugates of a fixed Borel containing the torus. 
This easily completes the Lemma.
 \end{proof}
\begin{lemma}Let $G$ be a reductive group over $\R$, $B(\C)$ a Borel subgroup of $G(\C)$ with $G(\R) \cap B(\C)$ a maximal torus of $G(\R)$. Then $G(\R) \cap B(\C)$ must be a fundamental torus of $G(\R)$.
\end{lemma}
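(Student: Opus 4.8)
The plan is to push everything down to the Lie algebra and the root system relative to $T(\C)$, and then to invoke the classical characterization of fundamental tori in terms of imaginary roots. Crucially I will \emph{not} use Lemma \ref{lemma 15} here, since its proof already appeals to the present lemma; this must be a direct argument.

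First I would set notation. Write $\sigma$ for the complex conjugation on $G(\C)$ with $G(\C)^\sigma = G(\R)$, and put $H = G(\R)\cap B(\C)$, which is a maximal torus of $G(\R)$ by hypothesis. Then $H = T(\R)$ for a maximal torus $T$ of $G$ defined over $\R$: take $T$ to be the Zariski closure of $H$ in $G$ (a torus over $\R$) and use maximality of $H$ among tori of $G(\R)$ to get $H = T(\R)$. Since tori are unirational over the infinite field $\R$, $T(\R)$ is Zariski dense in $T$, so $T(\C)$, being the Zariski closure of $T(\R)$ over $\C$, lies inside the Zariski-closed subgroup $B(\C)$. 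Hence, writing $\mathfrak b = \mathfrak t(\C)\oplus\mathfrak n$ with $\mathfrak n=\bigoplus_{\alpha\in\Phi^+}\mathfrak g_\alpha$ for $\Phi^+=\Phi(B(\C),T(\C))$, the hypothesis gives $\mathfrak g(\R)\cap\mathfrak b = \operatorname{Lie}(H) = \mathfrak t(\R)$.

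Next I would analyze the action of $\sigma$ on roots. As $\sigma$ is a conjugate-linear involution of $\mathfrak g(\C)$ stabilizing $\mathfrak t(\C)$, it permutes the root spaces, inducing an involution $\theta$ of $\Phi$ with $\sigma(\mathfrak g_\alpha)=\mathfrak g_{\theta\alpha}$. I claim $\theta(\Phi^+)=\Phi^-$. Suppose some $\alpha\in\Phi^+$ had $\theta\alpha\in\Phi^+$. If $\theta\alpha=\alpha$, then $\sigma$ restricts to a conjugate-linear involution of the line $\mathfrak g_\alpha\subset\mathfrak b$, whose fixed set is a nonzero real subspace of $\mathfrak g(\R)\cap\mathfrak b$ not contained in $\mathfrak t(\R)$; if $\theta\alpha\neq\alpha$, the map $X\mapsto X+\sigma X$ embeds the real $2$-space $\mathfrak g_\alpha$ into the $\sigma$-fixed part of $\mathfrak g_\alpha\oplus\mathfrak g_{\theta\alpha}\subset\mathfrak b$, again producing vectors of $\mathfrak g(\R)\cap\mathfrak b$ outside $\mathfrak t(\R)$. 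Either way this contradicts $\mathfrak g(\R)\cap\mathfrak b=\mathfrak t(\R)$, so $\theta(\Phi^+)\subseteq\Phi^-$, and by bijectivity $\theta(\Phi^+)=\Phi^-$; in particular no root $\alpha$ satisfies $\theta\alpha=\alpha$.

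Finally I would conclude via structure theory: pick a Cartan involution $\theta_c$ of $G(\R)$, extended holomorphically to $G(\C)$, commuting with $\sigma$ and stabilizing $T(\C)$; then on $\Phi$ one has $\theta=-\theta_c$, since the compact-form conjugation $\tau=\sigma\theta_c$ acts by $\alpha\mapsto-\alpha$. It is classical (Knapp, \emph{Lie Groups Beyond an Introduction}; or Warner) that a $\theta_c$-stable maximal torus of $G(\R)$ is fundamental — contains a maximal compact torus, equivalently has minimal split rank — precisely when $\Phi$ has no \emph{real} root, i.e.\ no $\alpha$ with $\theta_c\alpha=-\alpha$, equivalently no $\alpha$ with $\theta\alpha=\alpha$. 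We have just shown there is no such root, so $T$ is fundamental, i.e.\ $G(\R)\cap B(\C)$ is a fundamental torus. The main obstacle I anticipate is making this last step airtight without circularity and while keeping the compatibility of $\theta_c$ with $\sigma$ and $T(\C)$ explicit; an alternative that avoids the cited fact is to observe that $\theta(\Phi^+)=\Phi^-$ says exactly that $\sigma(B(\C))$ is the Borel opposite to $B(\C)$ with respect to $T(\C)$, and then to exhibit a maximal compact torus of $G(\R)$ inside $T(\R)$ by transporting a maximal torus of a $\sigma$-compatible maximal compact subgroup — but that route needs the usual Chevalley-basis bookkeeping to produce the compatible compact form.
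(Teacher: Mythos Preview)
Your proof is correct and follows essentially the same route as the paper's. Both arguments establish that complex conjugation sends the positive system $\Phi^+$ (the roots in $B(\C)$) to $\Phi^-$, and then invoke the standard characterization of fundamental tori. The paper phrases this characterization as ``all roots remain nontrivial when restricted to the maximal compact subtorus $T_c(\R)$,'' while you phrase it as ``no real roots'' via the Cartan involution; these are the same condition, since a root is trivial on $T_c(\R)$ exactly when $\bar\alpha=\alpha$. Your contradiction argument for $\theta(\Phi^+)=\Phi^-$ is more explicit than the paper's one-line assertion, and your detour through $\theta_c$ and $\tau=\sigma\theta_c$ is unnecessary extra machinery---you could stop at ``no $\alpha$ with $\theta\alpha=\alpha$'' and cite the same classical fact directly---but nothing is wrong.
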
 
\begin{proof} Let $T$ be the maximal torus over $\R$ with $T(\R)  = G(\R) \cap B(\C)$. Since $T$ is a torus over $\R$, 
the root space decomposition of $G(\C)$ with respect to $T(\R)$ is invariant under $\alpha \rightarrow \bar{\alpha}$.  
From the condition that $T(\R)= G(\R) \cap B(\C)$, there is a system of positive roots $\Sigma^+$ in this root space 
(corresponding to the roots belonging to $B(\C)$) which goes to $\Sigma^-$  under $\alpha \rightarrow \bar{\alpha}$. This implies that all the roots of $T(\R)$ when restricted to
$T_c(\R)$, the maximal compact torus of $T(\R)$, remain nontrivial.    This is easily seen to be the characteristic property
of the fundamental torus of $G(\R)$.
\end{proof}

It is possible to be more specific about the coset space $W_{G(\C)}({T_f(\R)})/W_K$ 
which parametrizes open orbits for the action of $G(\R)$ on $G(\C)/B(\C)$. 
First of all, if $T_f(\R)$ is a compact torus, i.e., $G$ satisfies ${\rm rk}(G) = {\rm rk}(K)$ 
condition, then  $W_{G(\C)}({T_f(\R)}) =  W_{G(\C)}({T_f(\C)})$ 
since automorphisms of $T_f(\C)$ must leave its maximal compact subgroup, i.e. $T_f(\R)$,  invariant. Thus, when 
${\rm rk}(G) = {\rm rk}(K)$, $W_{G(\C)}({T_f(\R)})/W_K = W_{G(\C)}/W_K$.

Next, we note that if an element $n \in G(\C)$ normalizes $T_f(\C)$ taking $T_f(\R)$ to itself, then by the Zariski density
of $T_f(\R)$ in $T_f(\C)$, it follows that the action of $n$ on $T_f(\C)$ is real, and therefore $n^{-1}\bar{n}$ commutes
with $T_f(\C)$, so must belong to $T_f(\C)$. This defines an element of $H^1({\rm Gal}(\C/\R),T_f(\C))$. If  
$H^1({\rm Gal}(\C/\R),T_f(\C)) = 0$, 
it then follows that $n$ can be chosen to be in $G(\R)$. But normalizer 
of $T_f(\R)$ inside $G(\R)$ can be chosen inside $K$ (up to an element of $T_f(\R)$). Therefore, we have the following conclusion.

\begin{lemma} Let $G(\R)$ be a real reductive group with $T_f(\R)$ a fundamental torus. Then if 
$H^1({\rm Gal}(\C/\R),T_f(\C)) = 0$, there is a unique open orbit for the action of $G(\R)$ on $G(\C)/B(\C)$.
\end{lemma}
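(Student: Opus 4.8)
The plan is to combine the previous Lemma (Lemma \ref{lemma 15}) with a vanishing criterion for $H^1(\mathrm{Gal}(\C/\R), T_f(\C))$. By Lemma \ref{lemma 15}, the open orbits of $G(\R)$ on $G(\C)/B(\C)$ are parametrized by the coset space $W_{G(\C)}(T_f(\R))/W_K$, where $T_f(\R)$ is a fundamental torus of $G(\R)$, $T_f(\C)$ is a maximal torus in $G(\C)$, and $W_K$ is the Weyl group of a maximal torus $T$ of a maximal compact subgroup $K$. So to conclude there is a \emph{unique} open orbit it suffices to show, under the hypothesis $H^1(\mathrm{Gal}(\C/\R), T_f(\C)) = 0$, that the natural inclusion $W_K \hookrightarrow W_{G(\C)}(T_f(\R))$ is in fact an equality.

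First I would take an element $n \in G(\C)$ normalizing $T_f(\C)$ and carrying $T_f(\R)$ to itself, representing an arbitrary class in $W_{G(\C)}(T_f(\R))$. As observed in the paragraph just before the statement, since $T_f(\R)$ is Zariski dense in $T_f(\C)$, the action of $n$ on $T_f(\C)$ is defined over $\R$; hence $n^{-1}\bar n$ centralizes $T_f(\C)$ and therefore lies in $T_f(\C)$ (a maximal torus equals its own centralizer). Thus $n \mapsto n^{-1}\bar n$ defines a $1$-cocycle, i.e.\ an element of $H^1(\mathrm{Gal}(\C/\R), T_f(\C))$. The hypothesis that this group vanishes means the cocycle is a coboundary: there is $t \in T_f(\C)$ with $n^{-1}\bar n = t^{-1}\bar t$, so that $nt^{-1} \in G(\R)$ and $nt^{-1}$ still normalizes $T_f(\R)$ and represents the same Weyl-group element as $n$. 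Hence every class in $W_{G(\C)}(T_f(\R))$ is represented by an element of $N_{G(\R)}(T_f(\R))$.

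Next I would use the standard fact that the normalizer of $T_f(\R)$ inside $G(\R)$ can be arranged, up to multiplication by an element of $T_f(\R)$ itself, to lie inside the maximal compact subgroup $K$: indeed $T_f(\R)$ contains $T$, a maximal torus of $K$, and any element of $G(\R)$ normalizing $T_f(\R)$ normalizes $T$ (the maximal compact subtorus), so by conjugacy of maximal compact subgroups of $G(\R)$ and the structure of $N_{G(\R)}(T_f(\R))$ it can be taken in $K$. Therefore the class of $n$ already lies in the image of $W_K$. This shows $W_{G(\C)}(T_f(\R)) = W_K$, so the coset space $W_{G(\C)}(T_f(\R))/W_K$ is a single point, and by Lemma \ref{lemma 15} there is exactly one open orbit for the action of $G(\R)$ on $G(\C)/B(\C)$.

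The main obstacle I anticipate is the final reduction — carefully justifying that $N_{G(\R)}(T_f(\R))$ maps onto $W_{G(\C)}(T_f(\R))$ with the ``extra'' normalizing happening inside $K$ modulo $T_f(\R)$ — since this mixes the compact-torus Weyl group $W_K$ with the a priori larger group $W_{G(\C)}(T_f(\R))$, and one must be slightly careful that no Weyl element is lost when passing between the real and complex normalizers; the cocycle-vanishing argument is the clean part. (One should also note the hypothesis forces $T_f(\R)$ to be essentially as split as a fundamental torus allows, but this does not affect the argument.)
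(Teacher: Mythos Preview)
Your proposal is correct and follows essentially the same approach as the paper's own proof, which is given in the paragraph immediately preceding the lemma: form the cocycle $n^{-1}\bar n \in T_f(\C)$, use the vanishing of $H^1(\mathrm{Gal}(\C/\R), T_f(\C))$ to replace $n$ by a real element, and then invoke the fact that $N_{G(\R)}(T_f(\R))/T_f(\R)$ can be realized inside $K$. The paper is equally terse on the last step (``normalizer of $T_f(\R)$ inside $G(\R)$ can be chosen inside $K$ (up to an element of $T_f(\R)$)''), so your acknowledged ``main obstacle'' is exactly the point the paper also leaves as a standard fact about real Weyl groups of fundamental tori.
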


\noindent{\bf Example :} It is easy to see that for a fundamental torus $T_f$ in $\GL_n(\R)$, 
we have $H^1({\rm Gal}(\C/\R),T_f(\C)) = 0$, hence there is a unique open orbit for the action of $\GL_n(\R)$ on $\GL_n(\C)/B(\C)$.
Same conclusion  for $\SL_n(\R)$ but only for $n$ odd.

\vspace{2mm}

\begin{proposition} Let $G^{\out}$ be a split semisimple group over $\R$ with $T^{\out}$ a maximal split torus in $G^{\out}(\R)$. 
The group $G$ in this case is an innerform of a compact form of $G^{\out}$ containing a maximal compact torus $T$.  
Fix a principal series representation $\pi_\chi$ of $G(\C)$ obtained from a character $\chi$ of $T(\C)$ which is the 
basechange of a character of $T^{\out}(\R)$ via the exact sequence of tori:
$$1 \rightarrow T(\R) \rightarrow T(\C) \rightarrow T^{\out}(\R) \rightarrow 1.$$
Then for a pure innerform $G_\alpha$ of $G$ with maximal compact subgroup $K_\alpha$, there are exactly $W_G/W_{K_\alpha}$ 
many open orbits of $G_\alpha(\R)$ acting on $G(\C)/B(\C)$, each one contributing exactly one linear form on 
part of $\pi_\alpha$ consisting of functions supported on that orbit, which we assume extend to invariant forms 
on $\pi_\alpha$. Then as $G_\alpha$ varies among pure innerforms, 
we get 
$$ \sum_\alpha\Hom_{G_\alpha}[\pi_\chi, \omega_{G_\alpha}] = \sum_\alpha \left |W_G/W_{K_\alpha}\right | = 2^d,$$ 
where $d$ is the rank of $G(\C)$. Further, $2^d$ is the number 
of ways of obtaining the character $\chi$ as a base from characters of $T^{\out}(R)$, and is also the
same as the number of ways the representation  $\pi_\chi$ of $G(\C)$ is obtained as a basechange from $G^{\out}(\R)$.  
\end{proposition}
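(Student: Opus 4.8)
The plan is to establish the proposition in three stages: first count the open $G_\alpha(\R)$-orbits on $G(\C)/B(\C)$ for each pure innerform $G_\alpha$; then sum $|W_G/W_{K_\alpha}|$ over all $\alpha$ and show this equals $2^d$; and finally match this number with the number of lifts of the character $\chi$ from $T^{\out}(\R)$, hence with the number of base-change preimages of $\pi_\chi$. For the first stage I would invoke Lemma \ref{lemma 15}: since $G$ is (an innerform of) a compact form, $\mathrm{rk}(G)=\mathrm{rk}(K)$, so the fundamental torus is compact, $W_{G(\C)}(T_f(\R))=W_{G(\C)}=W_G$, and the open orbits of $G_\alpha(\R)$ are parametrized by $W_G/W_{K_\alpha}$ — noting that the fundamental torus $T_f$ is shared among all innerforms, so all $G_\alpha$ act on the same $G(\C)/B(\C)$ with the same ambient Weyl group $W_G$, only the compact Weyl group $W_{K_\alpha}$ changing with $\alpha$. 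Each such orbit is $G_\alpha(\R)/T_f(\R)$ and, the stabilizer being a torus which acts on $\chi$ through the appropriate modulus twist, contributes a one-dimensional space of $\omega_{G_\alpha}$-equivariant linear forms on the sections supported there; under the stated genericity assumption these extend to $\pi_\chi$.

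For the second stage, the key identity is $\sum_\alpha |W_G/W_{K_\alpha}| = 2^d$. Here the sum runs over $\alpha \in H^1(\mathrm{Gal}(\C/\R), G(\C))$ (pure innerforms of the compact-type group $G$ which become trivial over $\C$ — but that is all of them since everything splits over $\C$). I would use Borovoi's theorem (Theorem 2 of the excerpt) applied with $T = T_f$ a fundamental (here compact) torus: the natural map $H^1(\mathrm{Gal}(\C/\R), T_f(\C))/W(\R) \to H^1(\mathrm{Gal}(\C/\R), G(\C))$ is a bijection, and for a compact torus $T_f \cong \Si^d$ one has $H^1(\mathrm{Gal}(\C/\R), T_f(\C)) = (\Z/2)^d$. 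The fibers of $H^1(\mathrm{Gal}(\C/\R), T_f(\C)) \to H^1(\mathrm{Gal}(\C/\R), G(\C))$ over the class $\alpha$ are precisely the $W(\R) = W_G$-orbits, and the orbit of the point corresponding to $\alpha$ has size $|W_G|/|W_{K_\alpha}|$ because the stabilizer of that cohomology class in $W_G$ is exactly the real Weyl group of $G_\alpha$, i.e. $W_{K_\alpha}$. Summing the orbit sizes recovers the whole set: $\sum_\alpha |W_G/W_{K_\alpha}| = |H^1(\mathrm{Gal}(\C/\R), T_f(\C))| = 2^d$.

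For the third stage, I would argue that the number of characters $\tilde\chi$ of $T^{\out}(\R)$ base-changing to a fixed generic $\chi$ on $T(\C)$ is $|H^1(\mathrm{Gal}(\C/\R), T)| = 2^d$: from the exact sequence $1 \to T(\R) \to T(\C) \to T^{\out}(\R) \to H^1(\mathrm{Gal}(\C/\R),T) \to 1$ (using that $H^1$ of the induced module $R_{E/F}T$ vanishes by Shapiro), characters of $T(\C)$ trivial on the image of $T^{\out}(\R)$ — equivalently, the ambiguity in lifting — form $\widehat{H^1(\mathrm{Gal}(\C/\R),T)}$, which has order $2^d$ since $T^{\out}$ is split of rank $d$ so $H^1(\mathrm{Gal}(\C/\R),T)$, Tate-dual to $H^1$ of the split torus $T^{\out}$, is $(\Z/2)^d$. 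Since $G^{\out}(\R)$ is compact (so has no nontrivial $L$-packets, and the parameter determines the representation on the nose as a principal series datum) and base change on parameters is, on the relevant component of $\Sigma$, the map on characters described above, the $2^d$ character-lifts correspond bijectively to the $2^d$ base-change preimages of $\pi_\chi$ among representations of $G^{\out}(\R)$, and the degree of $\Phi$ at each is $1$ in this generic case (distinct $\chi_i$, stable parameter). This is exactly the count Conjecture \ref{conj3} predicts for $\sum_\alpha \dim \Hom_{G_\alpha}[\pi_\chi,\omega_{G_\alpha}]$, so the three numbers agree.

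The main obstacle I expect is the third stage — precisely, verifying that the abstract count of $W_G$-orbits on $H^1(\mathrm{Gal}(\C/\R),T_f(\C))$ lines up \emph{orbit by orbit} with the decomposition of $\Hom$ into contributions $I_\alpha$ coming from distinct Galois cohomology classes as in the real case of Conjecture \ref{conj3}, rather than merely matching the total $2^d$; this requires tracking carefully that the pure innerform attached (by the section-12 recipe) to each lift $\tilde\sigma$ of the parameter is the $G_\alpha$ whose open orbits we counted, and that the genericity hypothesis indeed forces each open-orbit linear form to extend. The orbit-size bookkeeping (stage two) and the torus cohomology computations (stages two and three) are routine given Borovoi's theorem and Tate–Nakayama duality; the conceptual matching of the geometry of open Bruhat cells with the combinatorics of parameter lifts is where the real content lies.
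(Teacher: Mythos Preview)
Your three-stage plan is essentially the paper's own proof: the paper invokes Lemma~\ref{lemma 15} for the open-orbit count, then identifies pure innerforms of the compact $G$ with $W$-orbits on the $2$-torsion $(\Z/2)^d \subset T(\R)=\Si^d$ (your Borovoi argument is the same content in cohomological language, since for compact $G$ the boundary map $\delta\colon W(\R)\to H^1(\R,T)$ is trivial and the affine $W$-action on $H^1$ reduces to the linear one), and finally counts character-lifts by the coordinate description $T(\C)\to T^{\out}(\R)$ as $\C^{\times d}\to\R^{\times d}$.

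Two points need correction. First, in stage three you have the roles of $G$ and $G^{\out}$ reversed: by hypothesis $G^{\out}$ is \emph{split} and $G$ is the compact-type form, not the other way around. The clause ``since $G^{\out}(\R)$ is compact (so has no nontrivial $L$-packets\ldots)'' is therefore false as written, and in any case the quantity being counted is the number of \emph{parameters} for $G^{\out}(\R)$ lifting the parameter of $\pi_\chi$, so $L$-packet structure on $G^{\out}(\R)$ is irrelevant. The paper's argument is simply that principal-series parameters for $G(\C)$ are characters of $T(\C)$ up to $W$, so for regular $\chi$ the $2^d$ character-lifts to $T^{\out}(\R)$ yield $2^d$ distinct parameter-lifts.

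Second, in stage one you assert that each open orbit contributes a one-dimensional space of $\omega_{G_\alpha}$-equivariant forms without checking the needed compatibility $\chi|_{T(\R)}=\omega_{G_\alpha}|_{T(\R)}$. The paper makes this explicit: both sides are trivial, the right because $T(\R)\cong\Si^d$ is connected so any finite-order character dies on it, the left because $\chi$ is by hypothesis pulled back from $T^{\out}(\R)$ through $1\to T(\R)\to T(\C)\to T^{\out}(\R)$. This is the one place the base-change hypothesis on $\chi$ is actually used, so it should not be elided.
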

\begin{proof} We fix $G$ to be a compact group, so that all the groups $G_\alpha$ under consideration are 
 pure innerforms of it. Since the stabilizer of open orbits for the action of $G_\alpha(\R)$ on $G(\C)/B(\C)$ 
are all of the form 
$T(\R)$, the principal series representation $\pi_\chi$  of $G(\C)$ will have nonzero 
$\omega_{G_\alpha}$-invariant form on 
functions supported on any open orbit of $G_\alpha(\R)$ 
as long as the inducing  character $\chi$ of $T(\C)$ and $\omega_{G_\alpha}$ are the same on $T(\R)$. But $T(\R)$ 
being a connected Lie group, the finite order character $\omega_{G_\alpha}$ must be trivial on $T(\R)$. 
Because of the exact sequence,
$$1 \rightarrow T(\R) \rightarrow T(\C) \rightarrow T^{\out}(\R) \rightarrow 1,$$
characters of $T(\C)$ coming as basechange of characters of $T^{\out}(\R)$ are trivial on $T(\R)$.

Thus by Lemma \ref{lemma 15}, each $G_\alpha(\R)$ this way gives rise to exactly $W_G/W_{K_\alpha}$ many invariant linear forms.

Now,
$$ \sum_\alpha \left |W_G/W_{K_\alpha}\right | = 2^d,$$ 
follows because pure innerforms of $G$ are given by $W$-orbits of 
elements of order $\leq 2$ in $T(\R) = \Si^d$ which forms the group $(\Z/2)^d$ with the natural action of $W$. 
If $\alpha$ is an element of  $(\Z/2)^d$, then for the corresponding pure innerform $G_{\alpha}$, the group $K_\alpha$ 
is the centralizer of this element in $G$. It is well-known that the Weyl group of $K_\alpha$ (a subgroup of the Weyl group
of $G$, because they share the same maximal torus) is the stabilizer of $\alpha \in (\Z/2)^d$ in $W$.

Since the mapping  $T(\C) \rightarrow T^{\out}(\R)$ has the form $\C^{\times d} \rightarrow \R^{\times d}$ in some 
co-ordinates, it is clear that there are exactly $2^d$ many characters of $T^{\out}(\R)$ which 
basechange to a given character of $T(\C)$.

The final assertion in the proposition is clear too since two  characters of $T(\C)$ give rise to the 
same Langlands parameter if and only if they are conjugate under the Weyl group. \end{proof}

\section{Unitary groups: the Archimedean case}

Let $\mu: \C^\times \rightarrow \GL_n(\C)$ be the Langlands parameter associated to a representation $\pi$ of $\GL_n(\C)$, 
given by 

$$\mu = \left ( \begin{array}{ccccc} 
  \chi_1 & & & &  \\
& \chi_2 & & & \\
& & * & & \\
& & & \chi_{n-1}& \\
& & & & \chi_n 
\end{array}
\right ) \in \GL_n(\C).$$

Assume that $\pi^\sigma \cong \pi$, and thus the set, call it $X$, of characters $\chi_i$ appearing in $\mu$ are invariant
under $\sigma$. Write, $X=\{(\chi_1,\chi_2),\cdots, ,(\chi_{2k-1},\chi_{2k}), \chi_{2k+1},\cdots, \chi_{n=2k + \ell}\}$
such that 

\begin{eqnarray*}
\chi_{2i+1}^\sigma & = & \chi_{2i+2} 
{\rm ~~~for~~~~} 0\leq i \leq k-1, {\rm with~~~~} \chi_{2i+1}^\sigma \not =  \chi_{2i+1}, \\
\chi_{2k+j}^\sigma & = & \chi_{2k+j} 1 \leq j \leq \ell.
\end{eqnarray*}

\begin{conj} Assume that $\pi$ is an irreducible principal series representation of $\GL_n(\C)$ 
(for example if it is tempered), then
$$\dim \Hom_{U(k+r,k+s)}(\pi, \C)
 = \left( \begin{array}{c} \ell \\ r \end{array} \right ), 
r \geq 0, s \geq 0, r+s = \ell,$$ and $= 0$, otherwise. In particular, $\sum _{p+q = n} \dim \Hom_{U(p,q)}(\pi, \C) = 2^\ell.$
\end{conj}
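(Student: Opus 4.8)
The plan is to compute $\dim\Hom_{\U(p,q)}(\pi,\C)$ by the classical technique of decomposing the flag variety $B(\C)\backslash\GL_n(\C)$ into $\U(p,q)$-orbits and summing the contributions of the open ones, exactly as in section 19. First I would recall from Lemma \ref{lemma 15} that the open $\U(p,q)$-orbits on $\GL_n(\C)/B(\C)$ are all of the form $\U(p,q)/T_f$, where $T_f$ is a fundamental torus of $\U(p,q)$, and they are parametrized by $W_{\GL_n(\C)}(T_f)/W_{K}$, with $K=\U(p)\times\U(q)$ the maximal compact. Since $\U(p,q)$ is equal-rank, $W_{\GL_n(\C)}(T_f)=S_n$ and $W_K=S_p\times S_q$, so the number of open orbits is $\binom{n}{p}$. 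The key point of the refinement to $\binom{\ell}{r}$ is that not every open orbit supports a nonzero $\U(p,q)$-invariant functional on the span of functions concentrated on that orbit: the inducing character $\mu$ restricted to the stabilizer $T_f(\R)$ must agree with (a twist of) the modulus character of $T_f$ in $B(\C)$, and this forces constraints that pin down exactly which orbits contribute.

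The main steps I would carry out, in order, are: (1) Identify which $\U(p,q)$-orbits are open and describe their stabilizers concretely in terms of the decomposition of $\C^n$ into $\sigma$-stable lines $\langle e_i\rangle$ with Hermitian norms $\pm 1$; the paired characters $(\chi_{2i+1},\chi_{2i+2})$ with $\chi_{2i+1}^\sigma\ne\chi_{2i+1}$ contribute a two-dimensional $\sigma$-stable block on which the Hermitian form is forced to be split (a hyperbolic plane), while each self-conjugate $\chi_{2k+j}$ contributes a line whose Hermitian norm may be independently chosen as $+1$ or $-1$. (2) Match the Langlands parameter $\mu$ against the condition for a functional to exist on a given orbit: on the $k$ hyperbolic blocks there is exactly one way (each such block uses up a $(1,1)$-piece of the signature), while on the $\ell$ self-conjugate lines there are two ways each, distributing as $r$ lines of norm $+1$ and $s=\ell-r$ lines of norm $-1$. (3) Conclude that the orbits contributing a nonzero $\U(p,q)$-invariant form exist only when $(p,q)=(k+r,k+s)$ with $r+s=\ell$, and that the number of such orbits is $\binom{\ell}{r}$ (choosing which of the $\ell$ self-conjugate lines get norm $+1$), since the $k$ hyperbolic blocks are rigid. (4) Invoke the genericity/irreducibility hypothesis (irreducible principal series, e.g.\ tempered) to promote each orbit-supported functional to a genuine functional on $\pi$ and to ensure these are linearly independent across orbits, giving $\dim\Hom_{\U(k+r,k+s)}(\pi,\C)=\binom{\ell}{r}$.

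The final identity $\sum_{p+q=n}\dim\Hom_{\U(p,q)}(\pi,\C)=2^\ell$ then follows by summing $\binom{\ell}{r}$ over $r=0,\dots,\ell$ (the sum over $(p,q)$ collapses to a sum over $r$ because the conjecture asserts the multiplicity vanishes unless $p=k+r$, $q=k+s$). I would also cross-check consistency with the parameter-counting side: the number of ways of lifting $\mu$ from a parameter of $\GL_n(\C)$ to one of $\GL_n(\R)$ is $2^\ell$ — each self-conjugate $\chi_{2k+j}$ is a character of $\R^\times$ extended in one of two ways (trivial or $\omega_{\C/\R}$ twist upon composing with norm), while the conjugate pairs come from inducing a single character of $\C^\times$ and admit no extra ambiguity — so $2^\ell$ is precisely the degree-$1$-fiber count predicted by Conjecture \ref{conj4}, and the equality $\sum_{p,q}\dim\Hom = 2^\ell$ is the instance of Conjecture \ref{conj3} in which every fiber multiplicity $(\deg\Phi)(\tilde\sigma)$ equals $1$.

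The hard part will be step (2)–(3): making rigorous the claim that the conjugate-pair blocks are forced to be hyperbolic (so contribute rigidly to the signature) whereas the self-conjugate lines are genuinely free, and then verifying that the ``generic'' extension of an orbit-supported functional to all of $\pi$ actually happens and that no further invariant forms arise from lower-dimensional orbits. The combinatorics once those structural facts are in hand is routine, but controlling the contributions of non-open orbits (showing they add nothing generically) and establishing linear independence of the open-orbit functionals is exactly the delicate analytic input that the word ``generic'' in the conjecture is flagging; a complete proof would presumably require the automatic continuity theorem of [B-D] cited in section 17 together with a vanishing statement for the relevant $\mathrm{Ext}$-groups or a transversality argument on the boundary orbits.
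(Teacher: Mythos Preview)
The statement you are attempting to prove is labeled as a \emph{Conjecture} in the paper, not a theorem; the paper offers no proof of it. In fact, immediately after stating the conjecture the author adds a remark that undercuts your entire strategy: ``The linear forms appearing in the conjecture are zero on functions supported on the open orbits if $\ell < n$.''

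Your plan hinges on extracting the invariant functionals from the open $\U(p,q)$-orbits on the full flag variety $\GL_n(\C)/B(\C)$. But by Lemma~\ref{lemma 15} (which you cite), every open orbit has stabilizer the fundamental torus $T_f(\R)$, and for $\U(p,q)$ this is the compact torus $\Si^n$. A necessary condition for an orbit-supported functional is that the inducing character $\chi=(\chi_1,\dots,\chi_n)$ be trivial on $\Si^n$, i.e.\ that every $\chi_i$ factor through the norm $\C^\times\to\R^\times$, i.e.\ $\chi_i^\sigma=\chi_i$ for all $i$. As soon as $k>0$ (equivalently $\ell<n$), some $\chi_i$ fails this, and \emph{no} open orbit contributes. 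Your step~(2)--(3), in which the conjugate pairs ``are forced to be hyperbolic planes'' contributing a rigid $(1,1)$ block, is not a description of open orbits on the full flag variety: open orbits are always parametrized by sign strings of length $n$ (definite lines), never by hyperbolic $2$-planes. What you are implicitly reaching for is an orbit on a \emph{partial} flag variety, or equivalently a non-open orbit on the full flag, and that is outside the scope of the open-cell method you invoke.

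So the approach works only in the degenerate case $k=0$, $\ell=n$, which is already covered in the paper's discussion of $\GL_n$ and $\U_n$ in \S16 and in the Proposition at the end of \S19. For the genuinely new content of the conjecture ($k>0$) the invariant forms must come from lower-dimensional orbits, and your proposal gives no mechanism to locate or count them there. This is not a minor technical gap to be patched by automatic continuity or an Ext-vanishing; it is the central difficulty, and it is why the author left the statement as a conjecture.
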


\begin{remark}
Observe that $\mu$ is the basechange of exactly $2^\ell$ representations:

$$\sum _{i=1}^{i=k} {\rm ind}_{\C^\times}^{W_\R}(\chi_{2i-1}) + \sum \nu_{2k+r}\circ \Nm \cdot \omega_{\C/\R}^{\{0,1\}},$$
where $\nu_{2k+i}$ are characters of $\R^\times$ such that the Galois invariant characters $\chi_{2k+i}$ are obtained
by composing with the norm map $\Nm: \C^\times \rightarrow \R^\times$.

\end{remark}

\begin{remark}
The linear forms appearing in the conjecture are zero on functions supported on the open orbits if $\ell < n$.
\end{remark}

\vspace{5mm}

\noindent{\bf Acknowledgement:} The paper has been long in the making. The author has spoken on 
earlier versions of the paper at various occasions, such as at Banff in the summer of 2011, Sanya in December 2012 among other places. 
In the meanwhile several papers, on the `Contragredient' have appeared: due to Adams \cite{adams}, Adams-Vogan \cite{adams-vo}, Kaletha \cite {kal}.
The author thanks J. Adams, U.K. Anandavardhanan, Wee Teck Gan, Erez Lapid, Birgit Speh and Sandeep Varma for 
helpful comments, and Yiannis Sakellaridis for his encouragement.

\end{document}